\definecolor{blue}{rgb}{0.00,0.00,1.00}
\definecolor{red}{rgb}{1.00,0.00,0.00}
\renewcommand{\baselinestretch}{1.2}
\def\bq{\begin{equation}}
\def\eq{\end{equation}}
\def\ba{\begin{array}{ccc}}
\def\bal{\begin{array}{lll}}
\def\ea{\end{array}}
\def\dcup{\displaystyle\bigcup} 
\def\dsup{\displaystyle\sup} 
 \def\dsum{\displaystyle\sum}
 \def\lt#1{\left#1}\def\rt#1{\right#1}
\def\({\left(}\def\){\right)}
\def\[{\left[}\def\]{\right]}
    \def \R   {\mathbb{R}}
    \def\P    {\mathbf{P}}
    \def\i    {\mathrm{i}}
    \def\S    {\mathbb{S}}
    \def\eps  {\varepsilon}
    \def\intr {\int_{\R^3}}
    \def\ints {\int_{\S^2}}
    \def\intt {\int^t_0}
    \def \Q    {\mathcal{Q}}
    \def \pt   {\partial}
    \def \Dt   {\frac{\rm d}{{\rm d}t}}
    \def \dt    {\partial_t}
    \def \ds    {\partial_s}
    \def \da    {\pt^\alpha}
    \def \dx    {\partial_x}
    \def \dxa   {\partial^{\alpha}_x}
    \def \dvb   {\partial^{\beta}_v}
    \def \divx  {{\rm div}_x}
    \def\Tdx   {\nabla_x}
    \def\Tdv   {\nabla_v}
       \def\bq{\begin{equation}}
       \def\eq{\end{equation}}
       \def\be{\begin{equation}}
       \def\ee{\end{equation}}
       \def\bma#1\ema{{\allowdisplaybreaks\begin{align}#1\end{align}}}
       \def\bmas#1\emas{{\allowdisplaybreaks\begin{align*}#1\end{align*}}}
       \def\bln#1\eln{{\allowdisplaybreaks\begin{aligned}#1\end{aligned}}}
       \def\nnm{\notag}
       \def\bgr#1\egr{\allowdisplaybreaks\begin{gather}#1\end{gather}}
       \def\bgrs#1\egrs{\allowdisplaybreaks\begin{gather*}#1\end{gather*}}
       \theoremstyle{plain}
       \newtheorem{lem}{\bf Lemma}[section]
       \newtheorem{thm}[lem]{\textbf{Theorem}}
       \newtheorem{rem}[lem]{\textbf{Remark}}
       \newtheorem{defn}[lem]{\textbf{Definition}}
       \newtheorem{remark}[lem]{\bf Remark}
\begin{document}


\title{Spectrum Analysis for the Vlasov-Poisson-Boltzmann System}

\author{ Hai-Liang Li$^1$,\, Tong Yang$^2$,\, Mingying Zhong$^3$\\[2mm]
 \emph{\small\it  $^1$Department of  Mathematics,
    Capital Normal University, P.R.China.}\\
    {\small\it E-mail:\ hailiang.li.math@gmail.com}\\
    {\small\it $^2$Department of Mathematics, City University of Hong Kong,  Hong
    Kong}\\
    {\small\it E-mail: matyang@cityu.edu.hk} \\
    {\small\it  $^3$Department of  Mathematics and Information Sciences,
    Guangxi University, P.R.China.}\\
    {\small\it E-mail:\ zhongmingying@sina.com}\\[5mm]
    }
\date{ }

\pagestyle{myheadings}
\markboth{Vlasov-Poisson-Boltzmann System}%
{H.-L. Li, T. Yang, M.-Y. Zhong}

 \maketitle

 \thispagestyle{empty}

\begin{abstract}\noindent
By identifying a norm capturing the effect of the forcing governed by the Poisson equation, we give a detailed spectrum analysis on the linearized
 Vlasov-Poisson-Boltzmann system around a global Maxwellian. It is shown  that the  electric field governed by the self-consistent Poisson equation plays a
key role in the  analysis so that the
spectrum structure is genuinely different from the well-known one of the Boltzmann equation. Based on this,  we give  the optimal time decay rates  of solutions to the equilibrium.

\medskip
 {\bf Key words}. Vlasov-Poisson-Boltzmann system, spectrum analysis, optimal time decay rates.

\medskip
 {\bf 2010 Mathematics Subject Classification}. 76P05, 82C40, 82D05.
\end{abstract}

%
\tableofcontents

\section{Introduction}
\label{sect1}
\setcounter{equation}{0}

The Vlasov-Poisson-Boltzmann (VPB) system can be used to describe the motion of
the dilute charged particles in plasma or semiconductor devices under the influence of the
self-consistent electric field \cite{Markowich}. In the present paper, we consider the Cauchy problem for VPB system for one species in $\R^3_x\times\R^3_v\times\R_+$:
 \be
 \left\{\bln
 &  F_t+v\cdot\Tdx F+\Tdx \Phi\cdot\Tdv F =\Q(F,F),  \label{VPB1}\\
 & \Delta_x\Phi=\intr Fdv-\bar{\rho},\\ 
 & F(x,v, 0)=F_0(x,v),  
 \eln\right.
 \ee
where $F=F(x,v,t)$ is the distribution function, and $\Phi(x,t)$ denotes the
electrostatic potential. $\bar{\rho}>0$ is the given doping density and assumed to be a constant. As usual, the operator $\Q(F,G)$ describing
 the binary elastic collision between particles takes the form
 \bq
 \Q(F,G)=\intr\ints
 B(|v-v_*|,\omega)(F(v'_*)G(v')-F(v_*)G(v))dv_*d\omega,
 \eq
where
$$
 v'=v-[(v-v_*)\cdot\omega]\omega,\quad
 v'_*=v_*+[(v-v_*)\cdot\omega]\omega,\quad \omega\in\S^2.
$$
For monatomic gas, the collision kernel $B(|v-v_*|,\omega)$ is  a
non-negative function of $|v-v_*|$ and $|(v-v_*)\cdot\omega|$:
$$B(|v-v_*|,\omega)=B(|v-v_*|,\cos\theta),\quad \cos\theta=\frac{|(v-v_*)\cdot\omega|}{|v-v_*|},\quad \theta\in[0,\pi/2].$$
In the following, we consider both the hard sphere model and hard potential
 with angular cutoff.
Precisely, for the hard
sphere model,
\bq B(|v-v_*|,\omega)=|(v-v_*)\cdot\omega|=|v-v_*|\cos\theta;\eq
and for the models of the hard potentials with Grad angular cutoff assumption,
\bq B(|v-v_*|,\omega)= b(\cos\theta)|v-v_*|^\gamma,\quad 0\le \gamma<1,\eq
where we assume for simplicity
$$0\le b(\cos\theta)\le C|\cos\theta|.$$

There have been a lot of works on the existence and behavior of solutions to the Vlasov-Poisson-Boltzmann system. The global existence of renormalized solution
for  large initial data was proved in \cite{Mischler}. The first global existence  result on classical solution in torus when the initial data is near a global Maxwellian was established in \cite{Guo2}. And the global existence of classical solution in  $\R^3$  was
given  \cite{Yang1,Yang3} in the same setting.  The case
with general stationary background density function $\bar{\rho}(x)$ was studied in \cite{Duan2}, and the perturbation of vacuum
was investigated  in \cite{Guo3,Duan4}

However, in contrast to the works on Boltzmann equation~\cite{Ellis,Liu1,Liu2,Ukai1,Ukai2,Ukai3}, the spectrum   of the linearized VPB system has not been given despite of its importance.
On the other hand, an interesting phenomenon was shown recently in \cite{Duan2}
 on the time
asymptotic behavior of the solutions which shows that the global classical solution of one species VPB system tends to the equilibrium at  $(1+t)^{-\frac14}$ in $L^2$-norm. This is slower than the rate
  for the two species VPB system, that is, $(1+t)^{-\frac34}$, obtained in \cite{Yang4}.
Therefore, it is natural to investigate  whether these rates are optimal.
\par

Following the approach on the spectrum analysis
on the Boltzmann equation \cite{Ellis}, we now consider the VPB system as follow. To begin with, we  take $\bar{\rho}=1$ for simplicity. The VPB system \eqref{VPB1} has a stationary solution $(F_*,\Phi_*)=(M(v),0)$ with the normalized Maxwellian $M(v)$ given by
$$
 M=M(v)=\frac1{(2\pi)^{3/2}}e^{-\frac{|v|^2}2},\quad v\in\R^3.
$$
As usual, define the perturbation  $f(x,v,t)$ of $F(x,v,t)$ near  $M$ by
$$
F=M+\sqrt M f,
$$
then the VPB system \eqref{VPB1} for $F$ is reformulated in terms of $f$ into
 \be\left\{\bln
 & f_t= Bf + G(f),\ \ t>0,     \label{VPB4}\\
 &\Delta_x\Phi=\intr f\sqrt{M}dv, \\ 
 &f(x,v,0)=f_0(x,v)=(F_0-M)M^{-\frac12}, 
 \eln\right.\ee
where the operator $B$ is defined by
\be
 Bf=Lf-v\cdot\Tdx f
    -v\sqrt{M}\cdot\Tdx(-\Delta_x)^{-1} \(\intr f\sqrt{M}dv\), \label{B(x)}
\ee
and the nonlinear term $G$ is given by
 \be
 G=: G_1+G_2,\quad  G_1=\Gamma(f,f),\quad
 G_2=\frac12 (v\cdot\Tdx\Phi)f-\Tdx\Phi\cdot\Tdv f.  \label{G0}
 \ee
The linearized collision operator $Lf$ and the nonlinear term $\Gamma(f,f)$ in \eqref{VPB4} are defined by
 \bma
Lf&=\frac1{\sqrt M}[\Q(M,\sqrt{M}f)+\Q(\sqrt{M}f,M)],  \label{Lf}\\
\Gamma(f,f)&=\frac1{\sqrt M}\Q(\sqrt{M}f,\sqrt{M}f).   \label{gf}
 \ema
We have, cf \cite{Cercignani},
 \bma
 (Lf)(v)&=(Kf)(v)-\nu(v) f(v),  \nnm \\
 \nu(v)&=\intr\ints B(|v-v_*|,\omega)M_*d\omega dv_*, \nnm \\
 (Kf)(v)&=\intr\ints B(|v-v_*|,\omega)(\sqrt{M'_*}f'
                    +\sqrt{M'}f'_*-\sqrt{M}f_*)\sqrt{M_*}d\omega dv_*\nnm \\
&=\intr k(v,v_*)f(v_*)dv_*,\label{L_1}
 \ema
where $\nu(v)$ is called the collision frequency and $K$ is a self-adjoint compact operator on $L^2(\R^3_v)$ with a real symmetric integral kernel $k(v,v_*)$.
The null space of the operator $L$, denoted by $N_0$, is a subspace
spanned by the orthonormal basis $\{\chi_j,\ j=0,1,\cdots,4\}$  with
 \bq
 \chi_0=\sqrt{M},\quad \chi_j=v_j\sqrt{M} \ (j=1,2,3), \quad
\chi_4=\frac{(|v|^2-3)\sqrt{M}}{\sqrt{6}}.\label{basis}
 \eq

Let $\P_0$ be the projection operator from $L^2(\R^3_v)$
to the subspace $N_0$ and $\P_1=I-\P_0$, and  $L^2(\R^3)$ be a Hilbert space of complex-value functions $f(v)$
on $\R^3$ with the inner product and the norm
$$
(f,g)=\intr f(v)\overline{g(v)}dv,\quad \|f\|=\(\intr |f(v)|^2dv\)^{1/2}.
$$
From the Boltzmann's H-theorem, the linearized collision operator $L$ is non-positive, and
moreover, $L$ is locally coercive in the sense that there is a constant $\mu>0$ such that \bma
 (Lf,f)\leq -\mu \|\P_1f\|^2, \quad  \ f\in D(L),\label{L_4}
 \ema
where $D(L)$ is the domain of $L$ given by
$$
 D(L)=\left\{f\in L^2(\R^3)\,|\,\nu(v)f\in L^2(\R^3)\right\}.
$$
In addition, $\nu(v)$ satisfies
 \be
\nu_0(1+|v|)^{\gamma}\leq\nu(v)\leq \nu_1(1+|v|)^{\gamma},  \label{nuv}
 \ee
with $\gamma=1$ for hard sphere and $0\le \gamma<1$ for hard potential. Without the loss of generality, we assume in this paper that $\nu(0)\ge \nu_0\ge \mu>0$.

The solution $f$ can be decomposed into the fluid part and the non-fluid part as
\bq\left\{
 \begin{aligned}
  f&= \P_0f+\P_1f ,\\
 \P_0f&=n\chi_0+ \sum_{j=1}^3m_j\chi_j + q\chi_4, \label{macro-micro}
 \end{aligned}\right.
 \eq
where the density $n$, the momentum $m=(m_1,m_2,m_3)$ and the energy $q$ are defined by
$$(f,\chi_0)=n,\quad (f,\chi_j)=m_j,\quad (f,\chi_4)=q.$$

In this paper, we will show that the electric field force influences the
structure of the spectra and resolvent sets of the linearized VPB
system~\eqref{VPB},  the low frequency
expansions of eigenvalues and the corresponding eigenfunctions. In
particular, the spectra of linearized operator contain five
eigenvalues in low frequency which locate in three different
small neighborhoods centered at  points $\lambda=\pm
\rm{i},0$ respectively. Moreover, the low frequency asymptotic
expansions of the eigenvalues in the small neighborhood of the
points $\lambda=\pm \rm{i}$  have  different
structure up to the second order from those of the
Boltzmann equation, so do the asymptotic expansions of the
corresponding eigenfunctions for which higher order expansions are
required in order to determine the coefficients at lower order with
respect to the frequency (see Lemma~\ref{spectrum},
Theorem~\ref{eigen_3} and Remark~\ref{Ellis} for details). This
phenomenon is mainly due to the fact that the macroscopic
velocity vector field is affected by the
electric field. In the absence of the electric field, the two
small neighborhoods centered at the points $\lambda=\pm \rm{i}$
 merge formally with the  one centered at the origin (see
Remark~\ref{absence} for details).
\par

With the help of the spectrum analysis on the linearized VPB
system~\eqref{VPB7}, we are able to represent the global solution in
terms of the corresponding semigroup, and in particular,  to
decompose the semigroup by using the resolvent and spectral
representation of its generator with respect to the frequency.
Therefore, we can show that the global
solutions to the linearized VPB system~\eqref{VPB} decay to zero at the
optimal time decay rate $(1+t)^{-1/4}$ in $L^2$-norm (see
Theorem~\ref{time2}), which is slower than  $(1+t)^{-3/4}$ for the
linearized Boltzmann equation. Furthermore, we prove that the first and
third moments (macroscopic density and energy)  decay at the same
optimal rate $(1 + t)^{-3/4}$ in $L^2$-norm like the Boltzmann
equation. However,  the second moment (macroscopic momentum) decays
at the optimal rate $(1+t)^{-1/4}$, and the microscopic part of the
solution decays at the optimal rate $(1+t)^{-3/4}$,  which are
slower than the corresponding
optimal rates for the Boltzmann equation.


%
Finally, we
will prove that   the global solution to the Cauchy problem for the
original VPB system~\eqref{VPB4} tends to the global Maxwellian at
the optimal time-decay rate $(1 + t)^{-1/4}$ in $L^2$-norm (see Theorem~\ref{Main_2}), which is slower than
the one for the Boltzmann equation. More precisely, we show that the
first moment (macroscopic density) of the global solution converges
to its equilibrium state at the same optimal rate $(1+t)^{-3/4}$ as
the Boltzmann equation, but the second moment (macroscopic momentum)
decays at the optimal rate $(1+t)^{-1/4}$. And the microscopic part
of the global solution decays at the optimal rate $(1+t)^{-3/4}$. Some
comparison with the Navier-Stokes-Poisson system will also be given.

 %


The rest of this paper will be organized as follows. In
Section~\ref{sect2}, we study the spectrum and resolvent of the
linear operator $\hat{B}(\xi)$ related to the linearized VPB system
and establish asymptotic expansions of the eigenvalues and
eigenfunctions at low frequency. In Section~\ref{sect3}, we
decompose the semigroup $e^{t\hat{B}(\xi)}$ generated by the linear
operator $\hat{B}(\xi)$ with respect to the low frequency and high
frequency, and then establish the  optimal time decay rates of the
global solution to the  linearized VPB system in terms of the
semigroup $e^{tB}$.
In Section~\ref{sect4}, we prove the optimal time decay rates of the
global solution to the original nonlinear VPB system.
Finally, we present in Section~\ref{Further} some additional
discussion about the influence of the electric field on the
time decay rate of the nonlinear VPB system in terms of some
detailed analysis on the compressible Navier-Stokes-Poisson
equations for the macroscopic density, momentum and temperature
coupled with the equation for the microscopic component. Some  well-known
results on the semigroup theory are recalled in Section~\ref{Prel} for the
easy reference of the readers.

\bigskip

\noindent\textbf{Notations:} \ \
$\hat{f}(\xi,v)=\mathcal{F}f(\xi,v)=\frac1{(2\pi)^{3/2}}\intr f(x,v)e^{- \i x\cdot\xi}dx$ denotes the Fourier transform of $f=f(x,v)$.
Throughout this paper, $\R^6_{x,v}=:\R^3_{x}\times\R^3_{v}$,
$\da_x=\partial_{x_1}^{\alpha_1}\partial_{x_2}^{\alpha_2}\partial_{x_3}^{\alpha_3}$,
$\xi^\alpha=:\xi_1^{\alpha_1}\xi_2^{\alpha_2}\xi_3^{\alpha_3}$ with
$\xi=(\xi_1,\xi_2,\xi_3)$ and $\alpha=(\alpha_1,\alpha_2,\alpha_3)$.
We denote by $\|\cdot\|_{L^2_{x,v}}$ and $\|\cdot\|_{L^2_{\xi,v}}$
the norm of the function spaces $L^2(\R^3_x\times \R^3_v)$ and
$L^2(\R^3_\xi\times \R^3_v)$ respectively, denote by
$\|\cdot\|_{L^2_v(H^N_x)}$ the norm of the function space
$L^2(\R^3_v,H^N(\R^3_x))$ with $N\ge 1$ integer, and denote by
$\|\cdot\|_{L^2_x}$, $\|\cdot\|_{L^2_\xi}$ and $\|\cdot\|_{L^2_v}$
the norms of the function spaces $L^2(\R^3_x)$, $L^2(\R^3_\xi)$ and
$L^2(\R^3_v)$ respectively. Moreover, $C>0$, $C_1>0$, $C_2>0$ and
$\beta>0$ denote some generic constants.

\section{Linear VPB and spectral analysis}
\label{sect2}
\setcounter{equation}{0}
In this section,  we study the spectrum and resolvent set of the
linear operator related to $B$ in order to obtain the optimal decay
rate of the global solution to IVP~ \eqref{VPB4}. To this end, we
consider the linear Vlasov-Poisson-Boltzmann equation in this
section
\bq
 \left\{\bln
 &f_t=Bf, \quad t>0,\\
 &f(0,x,v)=f_0(x,v), \ \ (x,v)\in\R^3_x\times\R_v^3,  \label{VPB}
 \eln\right.
 \eq
with the operator $B$ defined by \eqref{B(x)}.

\subsection{Spectrum and resolvent}
We take the Fourier transform in \eqref{VPB} with respect to $x$
 \bq
 \left\{\bln
 &\hat{f}_t=\hat{B}(\xi)\hat{f},  \quad t>0, \\
 &\hat{f}(0,\xi,v)=\hat{f}_0(\xi,v),
    \quad (\xi,v)\in\R^3_\xi\times\R_v^3,   \label{VPB7}
 \eln\right.
 \eq
where the operator $\hat{B}(\xi)$ is defined for $\xi\neq0$ by
 $$
 \hat{B}(\xi)\hat{f}
 = L\hat{f}-\i(v\cdot\xi)\hat{f}
  -\frac{\i(v\cdot\xi)}{|\xi|^2}\sqrt{M}\intr\hat{f}\sqrt{M}dv.
 $$
Set
 \bq
\hat{B}(\xi) =L-\i(v\cdot\xi)-\mbox{$\frac{\i(v\cdot\xi)}{|\xi|^2}$}P_{\rm d},  \label{B(xi)}
 \eq
with
 \bq
 P_{\rm d} f=\sqrt{M}\intr f\sqrt{M}dv,\quad f\in L^2(\R^3).     \label{Pd}
 \eq
For each $\xi\ne0$, it is obvious that the operator
$\mbox{$\frac{\i(v\cdot\xi)}{|\xi|^2}$}P_{\rm d}$ is compact on
$L^2(\R^3)$.

Introduce the weighted Hilbert space $L^2_\xi(\R^3_v)$ for $\xi\ne 0$ as
$$
 L^2_\xi(\R^3)=\{f\in L^2(\R^3_v)\,|\,\|f\|_\xi=\sqrt{(f,f)_\xi}<\infty\},
$$
equipped with the inner product
$$
 (f,g)_\xi=(f,g)+\frac1{|\xi|^2}(P_{\rm d} f,P_{\rm d} g).
$$
Since $P_{\rm d}$ is a self-adjoint operator and satisfies
$(P_{\rm d} f,P_{\rm d} g)=(P_{\rm d} f, g)=( f,P_{\rm d} g)$, we have
 \bq
  (f,g)_\xi=(f,g+\frac1{|\xi|^2}P_{\rm d}g)=(f+\frac1{|\xi|^2}P_{\rm d}f,g).\label{C_1}
 \eq

We can regard $\hat{B}(\xi)$ as a linear operator from the space $L^2_\xi(\R^3)$ to itself because
$$
 \|f\|^2\le \|f\|^2_\xi\le(1+|\xi|^{-2})\|f\|^2,\quad \xi\ne 0.
$$
In particular, we show

\begin{lem}\label{SG_1}
The operator $\hat{B}(\xi)$ generates a strongly continuous contraction semigroup on
$L^2_\xi(\R^3_v)$ satisfying
 \bq
\|e^{t\hat{B}(\xi)}f\|_\xi\le\|f\|_\xi, \quad\mbox{for}\ t>0,\,f\in
L^2_\xi(\R^3_v).
 \eq
\end{lem}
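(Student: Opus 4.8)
The plan is to invoke the Lumer--Phillips theorem on the Hilbert space $L^2_\xi(\R^3_v)$. First I would fix the domain $D(\hat B(\xi))=\{f\in L^2(\R^3_v)\,:\,\nu(v)f\in L^2(\R^3_v),\ (v\cdot\xi)f\in L^2(\R^3_v)\}$ and observe that, since for $\xi\ne0$ the norms $\|\cdot\|$ and $\|\cdot\|_\xi$ are equivalent (see the inequality $\|f\|^2\le\|f\|^2_\xi\le(1+|\xi|^{-2})\|f\|^2$), the operator $\hat B(\xi)$ is densely defined and closed on $L^2_\xi(\R^3_v)$: by \eqref{B(xi)} it is a bounded perturbation of the closed maximal multiplication operator $f\mapsto-(\nu(v)+\i\,v\cdot\xi)f$. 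It then remains to establish (i) dissipativity of $\hat B(\xi)$ with respect to $(\cdot,\cdot)_\xi$, and (ii) the range condition $\mathrm{Range}(\lambda_0-\hat B(\xi))=L^2_\xi(\R^3_v)$ for some $\lambda_0>0$.

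For (i) --- the heart of the matter, and the reason the weighted inner product was introduced --- I would compute $(\hat B(\xi)f,f)_\xi$ using \eqref{C_1} in the form $(f,g)_\xi=(f,g+|\xi|^{-2}P_{\rm d}g)$ together with the formula \eqref{B(xi)}, splitting the result into the contributions of $L$, of $-\i(v\cdot\xi)$, and of $-\i|\xi|^{-2}(v\cdot\xi)P_{\rm d}$. Since $\chi_0=\sqrt M\in N_0$ by \eqref{basis}, $P_{\rm d}f$ is a multiple of $\sqrt M$, hence $LP_{\rm d}f=0$ and, by self-adjointness of $L$, the $L$-piece collapses to $(Lf,f)$, which by the coercivity \eqref{L_4} is real and satisfies $(Lf,f)\le-\mu\|\P_1f\|^2$. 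The diagonal part of $-\i(v\cdot\xi)$ contributes $-\i\intr(v\cdot\xi)|f|^2dv$, purely imaginary; the $P_{\rm d}$--$P_{\rm d}$ part of the force term is proportional to $\intr(v\cdot\xi)M\,dv=0$ by the oddness of $vM$. What remains are exactly two cross terms, $-\i|\xi|^{-2}\bigl((v\cdot\xi)f,P_{\rm d}f\bigr)$ and $-\i|\xi|^{-2}\bigl((v\cdot\xi)P_{\rm d}f,f\bigr)$; writing $c=\intr f\sqrt M\,dv$, $d=\intr(v\cdot\xi)f\sqrt M\,dv$ and using \eqref{Pd}, these equal $-\i|\xi|^{-2}\bar c\,d$ and $-\i|\xi|^{-2}c\,\bar d$, so their sum $-\i|\xi|^{-2}(\bar c\,d+c\,\bar d)$ is again purely imaginary. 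Hence $\mathrm{Re}\,(\hat B(\xi)f,f)_\xi=(Lf,f)\le-\mu\|\P_1f\|^2\le0$ for $f\in D(\hat B(\xi))$, which is even stronger than bare dissipativity.

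For (ii) I would write $\hat B(\xi)=\Lambda_\xi+\mathcal K_\xi$ with $\Lambda_\xi f=-(\nu(v)+\i\,v\cdot\xi)f$ and $\mathcal K_\xi f=Kf-\i|\xi|^{-2}(v\cdot\xi)P_{\rm d}f$, noting that $\mathcal K_\xi$ is bounded (indeed compact) on $L^2(\R^3_v)$ since $K$ is compact and $(v\cdot\xi)P_{\rm d}$ has rank one by \eqref{Pd}. For real $\lambda>0$ the operator $\lambda-\Lambda_\xi$ is invertible, being multiplication by $(\lambda+\nu(v)+\i\,v\cdot\xi)^{-1}$, with $\|(\lambda-\Lambda_\xi)^{-1}\|\le1/\lambda$; factoring $\lambda-\hat B(\xi)=(\lambda-\Lambda_\xi)\bigl(I-(\lambda-\Lambda_\xi)^{-1}\mathcal K_\xi\bigr)$ and taking $\lambda>\|\mathcal K_\xi\|$, the Neumann series makes the second factor invertible, so $\lambda-\hat B(\xi)$ is bijective and a fortiori surjective onto $L^2_\xi(\R^3_v)$. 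The Lumer--Phillips theorem (recalled in Section~\ref{Prel}) then yields that $\hat B(\xi)$ generates a strongly continuous contraction semigroup on $L^2_\xi(\R^3_v)$, i.e. $\|e^{t\hat B(\xi)}f\|_\xi\le\|f\|_\xi$ for $t>0$.

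The main obstacle is step (i): one must notice that the cross term produced by the self-consistent force $-\i|\xi|^{-2}(v\cdot\xi)P_{\rm d}$ is matched precisely by the cross term produced by the $|\xi|^{-2}P_{\rm d}$-correction built into $(\cdot,\cdot)_\xi$, so that their combined effect is purely imaginary and the dissipation balance reduces to the Boltzmann coercivity \eqref{L_4}. Once this cancellation is identified, everything else is routine semigroup theory, and the equivalence of $\|\cdot\|$ and $\|\cdot\|_\xi$ lets one carry out the range computation in the unweighted space.
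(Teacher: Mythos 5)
Your proof is correct, and its core — the computation showing $\mathrm{Re}(\hat B(\xi)f,f)_\xi=(Lf,f)\le0$, i.e.\ that the cross term produced by the force $-\i|\xi|^{-2}(v\cdot\xi)P_{\rm d}$ is exactly compensated by the $|\xi|^{-2}P_{\rm d}$-correction hidden in $(\cdot,\cdot)_\xi$ — is the same cancellation the paper exploits (the paper does it in one stroke by writing $\hat B(\xi)f=Lf-\i(v\cdot\xi)\bigl(f+|\xi|^{-2}P_{\rm d}f\bigr)$ and pairing against $f+|\xi|^{-2}P_{\rm d}f$ via \eqref{C_1}, whereas you track the four cross contributions separately; both give the same identity). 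Where you diverge is the generation step. The paper avoids any range condition: it observes that $\hat B(\xi)^*=\hat B(-\xi)$ with respect to $(\cdot,\cdot)_\xi$, so the identical computation shows the adjoint is also dissipative, and then it cites Lemma \ref{S_1} (densely defined, closed, $A$ and $A^*$ both dissipative $\Rightarrow$ generator of a contraction semigroup). You instead verify the Lumer--Phillips range condition directly, factoring $\lambda-\hat B(\xi)=(\lambda-\Lambda_\xi)\bigl(I-(\lambda-\Lambda_\xi)^{-1}\mathcal K_\xi\bigr)$ and using a Neumann series for $\lambda$ large; this is sound (note it is essentially the decomposition \eqref{B_d} that the paper introduces later for the resolvent analysis, with the factors in the opposite order), but it costs you the extra surjectivity argument and an appeal to the full Lumer--Phillips theorem, which is not the statement recorded in the paper's appendix. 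The paper's adjoint route is shorter because the symmetry $\hat B(\xi)^*=\hat B(-\xi)$ makes the second dissipativity check free; your route has the modest advantage of producing, as a by-product, explicit resolvent bounds $\|(\lambda-\hat B(\xi))^{-1}\|\le 1/\lambda$ for large real $\lambda$ and of not requiring identification of the adjoint. Only small caveats: you should make explicit that dissipativity is checked on the stated domain (which for hard potentials $0\le\gamma<1$ genuinely needs both $\nu f\in L^2$ and $(v\cdot\xi)f\in L^2$), and that bijectivity of $\lambda-\hat B(\xi)$ on $L^2$ transfers to $L^2_\xi$ by the norm equivalence — both of which you do note.
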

\begin{proof}
First we show that both $\hat{B}(\xi)$ and $\hat{B}(\xi)^*$ are
dissipative operators on $L^2_\xi(\R_v^3)$. By \eqref{C_1}, we
obtain for any $f,g\in L^2_\xi(\R_v^3)\cap D(\hat{B}(\xi))$ that
$(\hat{B}(\xi)f,g)_\xi=(f,\hat{B}(\xi)^*g)_\xi$ because
\bma
 (\hat{B}(\xi)f,g)_\xi
 =(\hat{B}(\xi) f,g+\frac1{|\xi|^2}P_{\rm d} g)
 =(f,(L+\i(v\cdot\xi)+\mbox{$\frac{\i(v\cdot\xi)}{|\xi|^2}$}P_{\rm d})g)_\xi
 =(f,\hat{B}(\xi)^*g)_\xi,
 \label{L_7}
 \ema
with 
$
 \hat{B}(\xi)^*=\hat{B}(-\xi)=L+\i(v\cdot\xi)+\mbox{$\frac{\i(v\cdot\xi)}{|\xi|^2}$}P_{\rm d}.
$
Direct computation gives rise to the dissipation of both
$\hat{B}(\xi)$ and $\hat{B}(\xi)^*$,  namely,
$\mathrm{Re}(\hat{B}(\xi)f,f)_\xi=\mathrm{Re}(\hat{B}(\xi)^*f,f)_\xi=(Lf,f)\leq
0.$ Since $\hat{B}(\xi)$ is a densely defined closed operator, it
follows from Lemma \ref{S_1} that the operator $\hat{B}(\xi)$
generates a $C_0$-contraction semigroup on $L^2_\xi(\R^3_v)$.
\end{proof}

Denote by $\rho(\hat{B}(\xi))$  the resolvent set and
by $\sigma(\hat{B}(\xi))$ the spectrum set of $\hat{B}(\xi)$.  We have

\begin{lem}\label{Egn}
For each $\xi\ne 0$, the spectrum set $\sigma(\hat{B}(\xi))$ of the
operator $\hat{B}(\xi)$ on the domain
$\mathrm{Re}\lambda\geq-\nu_0+\delta$ for any constant $\delta>0$
consists of isolated eigenvalues $\Sigma=:\{\lambda_j(\xi)\}$ with
$\mathrm{Re}\lambda_j (\xi)<0$.
\end{lem}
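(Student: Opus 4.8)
The plan is to write $\hat{B}(\xi) = -\nu(v) - \i(v\cdot\xi) + K - \frac{\i(v\cdot\xi)}{|\xi|^2}P_{\rm d}$ and treat the last two terms as a compact perturbation of the multiplication operator $A_\xi := -\nu(v)-\i(v\cdot\xi)$. First I would observe that $A_\xi$ is a normal operator whose spectrum is the essential range of $v\mapsto -\nu(v)-\i(v\cdot\xi)$, so $\sigma(A_\xi)\subset\{\mathrm{Re}\lambda\le -\nu_0\}$ by the lower bound \eqref{nuv}; in particular the half-plane $\mathrm{Re}\lambda\ge -\nu_0+\delta$ lies in $\rho(A_\xi)$, and there the resolvent $(\lambda-A_\xi)^{-1}$ is the bounded multiplication operator by $(\lambda+\nu(v)+\i(v\cdot\xi))^{-1}$, with norm $\le 1/\delta$. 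Next, $K$ is compact on $L^2(\R^3_v)$ by the properties recalled after \eqref{L_1}, and for fixed $\xi\ne0$ the operator $\frac{\i(v\cdot\xi)}{|\xi|^2}P_{\rm d}$ is compact (stated right after \eqref{Pd}); hence $\hat{B}(\xi)-A_\xi$ is compact. Since the $L^2(\R^3_v)$- and $L^2_\xi(\R^3_v)$-norms are equivalent for fixed $\xi$, all of this transfers to the weighted space.

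With compactness in hand I would apply the analytic Fredholm theorem to the operator-valued map
\[
\lambda\;\longmapsto\;\mathrm{I}-\Bigl(K-\tfrac{\i(v\cdot\xi)}{|\xi|^2}P_{\rm d}\Bigr)(\lambda-A_\xi)^{-1},
\]
which is analytic in $\lambda$ on the open half-plane $\mathrm{Re}\lambda>-\nu_0$ and, for each $\lambda$ there, is of the form identity-plus-compact. Since $\lambda\in\rho(\hat{B}(\xi))$ iff this operator is invertible (writing $\lambda-\hat{B}(\xi) = (\mathrm{I}-(\hat B(\xi)-A_\xi)(\lambda-A_\xi)^{-1})(\lambda-A_\xi)$), the analytic Fredholm alternative gives the dichotomy: either the operator is non-invertible for all $\lambda$ in the half-plane, or its inverse is meromorphic there with a discrete set of poles, each of finite rank. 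To rule out the first alternative I would use that for $\mathrm{Re}\lambda$ large the Neumann series converges — indeed $\|(\lambda-A_\xi)^{-1}\|\le 1/(\mathrm{Re}\lambda+\nu_0)\to0$ as $\mathrm{Re}\lambda\to+\infty$, while $\|K-\frac{\i(v\cdot\xi)}{|\xi|^2}P_{\rm d}\|$ is a fixed finite constant — so invertibility holds somewhere. Therefore on $\mathrm{Re}\lambda\ge-\nu_0+\delta$ the set $\sigma(\hat B(\xi))$ consists of isolated eigenvalues of finite algebraic multiplicity, accumulating at most on the line $\mathrm{Re}\lambda=-\nu_0$; restricting to the closed half-plane $\mathrm{Re}\lambda\ge-\nu_0+\delta$ makes this a finite set, and the eigenvalues are isolated.

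Finally I would establish $\mathrm{Re}\lambda_j(\xi)<0$. If $\hat B(\xi)f=\lambda f$ with $f\ne0$ in $L^2_\xi(\R^3_v)$, then by the dissipativity computed in the proof of Lemma~\ref{SG_1}, $\mathrm{Re}\,\lambda\,\|f\|_\xi^2=\mathrm{Re}(\hat B(\xi)f,f)_\xi=(Lf,f)\le0$, so $\mathrm{Re}\lambda\le0$. To exclude $\mathrm{Re}\lambda=0$: equality forces $(Lf,f)=0$, hence by the coercivity \eqref{L_4} we get $\P_1 f=0$, i.e. $f\in N_0$, so $f=\P_0f$; then $Lf=0$ and the eigenvalue equation reduces to $\i(v\cdot\xi)f + \frac{\i(v\cdot\xi)}{|\xi|^2}P_{\rm d}f = -\lambda f$. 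Taking $\P_0$ of both sides and noting $\P_0$ does not commute with multiplication by $v\cdot\xi$, one finds (pairing against the basis \eqref{basis}) that this forces $f=0$ for $\xi\ne0$ — concretely, pairing with $\chi_0$ gives $\i(m\cdot\xi)=-\lambda n$ with a Poisson-induced correction, pairing with $\chi_j$ relates $n$ and $q$ to $m$, and the resulting linear system on $(n,m,q)$ has only the trivial solution when $\mathrm{Re}\lambda=0$ and $\xi\ne0$ (this is exactly where the Poisson term $\frac1{|\xi|^2}P_{\rm d}$ matters and differs from the Boltzmann case). The main obstacle is this last step: it requires a careful finite-dimensional computation on the five-dimensional space $N_0$ to show no purely imaginary eigenvalue survives for $\xi\ne0$, and one must handle the dependence on $|\xi|$ — large $|\xi|$ where the Poisson correction is negligible and small $|\xi|$ where it dominates — uniformly; the compactness/Fredholm part and the dissipativity bound are routine by comparison.
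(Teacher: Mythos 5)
The first two thirds of your argument are sound and essentially equivalent to the paper's: the paper also writes $\lambda-\hat B(\xi)=\bigl(I-K(\lambda-c(\xi))^{-1}+\frac{\i(v\cdot\xi)}{|\xi|^2}P_{\rm d}(\lambda-c(\xi))^{-1}\bigr)(\lambda-c(\xi))$ with $c(\xi)=-\nu(v)-\i(v\cdot\xi)$ and invokes stability of the essential spectrum under compact perturbation (Kato, Thm.~5.35), whereas you run the analytic Fredholm alternative on the same factorization; both routes give discreteness of $\sigma(\hat B(\xi))$ in $\{\mathrm{Re}\lambda>-\nu_0\}$ with possible accumulation only on $\mathrm{Re}\lambda=-\nu_0$. (Your side claim that the eigenvalue set in $\mathrm{Re}\lambda\ge-\nu_0+\delta$ is \emph{finite} is not justified at this stage -- a discrete set could still escape to imaginary infinity -- but finiteness is not asserted in the lemma, so this is harmless.) The dissipativity step $\mathrm{Re}\,\lambda\,\|f\|_\xi^2=(Lf,f)\le 0$ is exactly the paper's computation.

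The genuine gap is in the exclusion of purely imaginary eigenvalues, which is the real content of the lemma. After reducing to $f\in N_0$ and $-\i(v\cdot\xi)\bigl(f+\frac1{|\xi|^2}P_{\rm d}f\bigr)=\lambda f$, you propose to take only the $\P_0$ projection and show the resulting $5\times5$ system on $(n,m,q)$ has no nontrivial solution with $\mathrm{Re}\lambda=0$. That cannot work: the $\P_0$-projected equation is precisely the eigenvalue problem $A(\xi)f=\lambda f$ for the operator $A(\xi)$ of \eqref{Qxi}, and \emph{all} eigenvalues of $A(\xi)$ are purely imaginary, $\alpha_j(\xi)=0$ and $\pm\i\sqrt{1+\frac53|\xi|^2}$ by \eqref{eigen}, with nontrivial eigenvectors; so the finite-dimensional system you describe does admit nonzero solutions for every $\xi\ne0$. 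The missing ingredient is the $N_0^\perp$ component of the equation: since the right-hand side $\lambda f$ lies in $N_0$ while $(v\cdot\xi)P_{\rm d}f\in N_0$, one must also have $\P_1\bigl[(v\cdot\xi)f\bigr]=0$ (the paper's \eqref{P_1}). Writing $f$ in the basis \eqref{basis}, this microscopic constraint kills the momentum and energy coefficients ($C_1=\dots=C_4=0$), leaving $f=C_0\sqrt M$, and only then does the macroscopic equation $(v\cdot\omega)(s+\frac1s)C_0\sqrt M=\i\lambda C_0\sqrt M$ force $C_0=0$, since $(v\cdot\omega)\sqrt M$ is not a multiple of $\sqrt M$. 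Without using \eqref{P_1} your argument stalls exactly where the Vlasov--Poisson coupling matters. Also, your concern about handling small versus large $|\xi|$ ``uniformly'' is misplaced here: the lemma is a statement for each fixed $\xi\ne0$ (the uniform spectral gap for $|\xi|\ge r_0$ is the separate Lemma~\ref{LP01}).
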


\begin{proof}
Define
 \bq
  c(\xi)=-\nu(v)-\i(v\cdot\xi).  \label{Cxi}
 \eq
It's obvious that $\lambda-c(\xi)$ is invertible for ${\rm
Re}\lambda>-\nu_0$. Since $K$  and
$\mbox{$\frac{\i(v\cdot\xi)}{|\xi|^2}$}P_{\rm d}$ are compact
operators on $L^2_\xi(\R^3_v)$ for any fixed $\xi\ne 0$ ,
$\hat{B}(\xi)$ is a compact perturbation of $c(\xi)$, and so, thanks
to Theorem 5.35 in p.244 of \cite{Kato}, $\hat{B}(\xi)$ and $c(\xi)$
have the same essential spectrum. Thus the spectrum of
$\hat{B}(\xi)$ in the domain ${\rm Re}\lambda>-\nu_0$ consists of
discrete eigenvalues $\lambda_j(\xi)$ with possible accumulation
points only on the line ${\rm Re}\lambda= -\nu_0$.

We claim that for any discrete eigenvalue $\lambda(\xi)$ of
$\hat{B}(\xi)$ on the domain $\mathrm{Re}\lambda\geq-\nu_0+\delta$
for any constant $\delta>0$, it holds that ${\rm Re}\lambda(\xi)<0$
for $\xi\ne 0$. Indeed, set $\xi=s\omega$ and let $h$ be the
eigenfunction corresponding to the eigenvalue $\lambda$ so that
 \bq
 \lambda h=Lh-\i s(v\cdot\omega)\(h+\frac1{s^2}P_{\rm d} h\).\label{L_6}
 \eq
Taking the inner product between \eqref{L_6} and
$h+\frac1{s^2}P_{\rm d} h$ and choosing the real part, we have
$$
 (Lh,h)=\text{Re}\lambda\(\|h\|^2 +\frac1{s^2}\|P_{\rm d} h\|^2\),
$$
which together with \eqref{L_4} implies $\text{Re}\lambda\leq 0$.

Furthermore, if there exists an eigenvalue $\lambda$ with ${\rm
Re}\lambda=0$, then it follows from the above that $(Lh,h)=0$,
namely, the corresponding eigenfunction $h$ belongs to the nullspace
of the operator $L$, i.e.,  $h\in N_0$ and
 $$
-\i s(v\cdot\omega)\(h+\frac1{s^2}P_{\rm d} h\)=\lambda h,
$$
which, after projected into the null space $N_0$ and its orthogonal complement $N_0^\bot$, leads to
\bma
&\P_0(v\cdot\omega)\(sh+\frac1s P_{\rm d} h\)=\i\lambda h,\label{P_0}\\
&\P_1(v\cdot\omega)h=0.\label{P_1}
 \ema
On the other hand, the function $h\in N_0$ can be represented in
terms of the five basis of $N_0$ as
$$
 h= C_0\sqrt{M}+\sum_{j=1}^3C_jv_j\sqrt{M}
    +C_4\mbox{$\frac{(|v|^2-3)}{\sqrt{6}}$}\sqrt{M},
$$
we can deduce after  direct computation that
\bmas
 \P_0(v\cdot\omega)h
 &=(C_0+\mbox{$\sqrt{\frac23}$}C_4)(v\cdot\omega)\sqrt{M}
    + \frac13 \sum_{j=1}^3 C_j\omega_j|v|^2\sqrt{M},
\\
\P_1(v\cdot\omega)h
 &=\dsum^3_{i,j=1}C_i\omega_j\(v_iv_j-\delta_{ij}\mbox{$\frac{|v|^2}3$}\)\sqrt{M}
   +C_4(v\cdot\omega)\(\mbox{$\frac{|v|^2-3}{\sqrt{6}}-\sqrt{\frac23}$}\)\sqrt{M}.
 \emas
This together with \eqref{P_1} imply that $C_i=0$ for $i=1,2,3,4$,
namely, $h=C_0\sqrt{M}$. Substituting it into
\eqref{P_0}, we can obtain
$$
 (v\cdot\omega)\(s+\frac1s\)C_0\sqrt{M}=\i\lambda C_0\sqrt{M},
$$
which implies  $C_0=0$. Therefore, we conclude
that $h\equiv0$. This is a contradiction and
thus it holds $\text{Re}\lambda<0$ for all discrete eigenvalues
$\lambda\in\sigma(\hat{B}(\xi))$.
\end{proof}

Now denote by $T$ a linear operator on $L^2(\R^3_v)$ or
$L^2_\xi(\R^3_v)$, and we define the corresponding norms of $T$ by
$$
 \|T\|=\sup_{\|f\|=1}\|Tf\|,\quad
 \|T\|_\xi=\sup_{\|f\|_\xi=1}\|Tf\|_\xi.
$$
One can verify that  $\frac{\|Tf\|}{(1+|\xi|^{-2})\|f\|}\le
\frac{\|Tf\|_\xi}{\|f\|_\xi}\le\frac{(1+|\xi|^{-2})\|Tf\|}{\|f\|}, $
which implies
 \bq
(1+|\xi|^{-2})^{-1}\|T\|\le \|T\|_\xi\le (1+|\xi|^{-2})\|T\|.\label{eee}
 \eq

We will make use of the following decomposition associated with the
operator $\hat{B}(\xi)$ for  $|\xi|>0$
 \bma
\lambda-\hat{B}(\xi)&=\lambda-c(\xi)-K+\mbox{$\frac{\i(v\cdot\xi)}{|\xi|^2}$}P_{\rm d}
 \nnm\\ &
 =(I-K(\lambda-c(\xi))^{-1}
     +\mbox{$\frac{\i(v\cdot\xi)}{|\xi|^2}$}P_{\rm d}(\lambda-c(\xi))^{-1})(\lambda-c(\xi)), \label{B_d}
\ema
and estimate the right hand terms of \eqref{B_d} as follows.
\begin{lem}
\label{LP03}
 There exists a constant  $C>0$ so that it holds:
\begin{enumerate}
\item For any $\delta>0$, we have
 \bgr
\sup_{x\geq-\nu_0+\delta,y\in\R}\|K(x+\i y-c(\xi))^{-1}\|
  \leq C\delta^{-11/13}(1+|\xi|)^{-2/13}. \label{T_7}
 \egr

\item For any $\delta>0,\, r_0>0$, there is a constant  $y_0=(2r_0)^{5/3}\delta^{-2/3}>0$ such that
if $|y|\geq y_0$, we have
 \bgr
 \sup_{x\geq -\nu_0+\delta,|\xi|\leq r_0}\|K(x+\i y-c(\xi))^{-1}\|
 \leq C\delta^{-3/5}(1+|y|)^{-2/5}.\label{T_8}
 \egr
 \item  For any $\delta>0,\, r_0>0$, we have \bgr
   \sup_{x\geq -\nu_0+\delta,y\in\R}
  \|(v\cdot\xi)|\xi|^{-2}P_{\rm d}(x+\i y-c(\xi))^{-1}\|
 \leq C\delta^{-1}|\xi|^{-1},\label{L_9}
 \\
   \sup_{x\geq -\nu_0+\delta,|\xi|\geq r_0}
 \|(v\cdot\xi)|\xi|^{-2}P_{\rm d}(x+\i y-c(\xi))^{-1}\|
 \leq C(r_0^{-1}+1)(\delta^{-1}+1)|y|^{-1}.\label{L_10}
 \egr

\end{enumerate}
\end{lem}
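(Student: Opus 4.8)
The plan is to estimate each of the three operator norms by first reducing everything to explicit bounds on multiplication operators against the resolvent $(x+\i y - c(\xi))^{-1}$, whose symbol is $\frac{1}{x+\i y+\nu(v)+\i(v\cdot\xi)}$, and then exploiting the smoothing/compactness of $K$ and the rank-one structure of $P_{\rm d}$. Throughout, write $\lambda = x+\i y$ with $x \ge -\nu_0+\delta$, so that $\mathrm{Re}(\lambda + \nu(v)) \ge \delta$ pointwise in $v$ by \eqref{nuv}; hence $|\lambda + \nu(v) + \i(v\cdot\xi)| \ge \delta$ always, and also $\ge |y + v\cdot\xi|$ and $\ge \nu(v) - \nu_0 + \delta \gtrsim (1+|v|)^\gamma$ for $|v|$ large. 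These three lower bounds, used in the right combination, will drive all the estimates.

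First I would prove \eqref{L_9} and \eqref{L_10}, which are the easiest since $P_{\rm d}$ has rank one: $P_{\rm d} g = \sqrt M (g,\sqrt M)$. Thus $(v\cdot\xi)|\xi|^{-2} P_{\rm d}(\lambda - c(\xi))^{-1} f = |\xi|^{-2}(v\cdot\xi)\sqrt M \cdot \big((\lambda-c(\xi))^{-1}f, \sqrt M\big)$, and its operator norm is bounded by $|\xi|^{-2}\|(v\cdot\xi)\sqrt M\| \cdot \|(\lambda-c(\xi))^{-1}\sqrt M\|$. The first factor is $O(|\xi|^{-1})$. For \eqref{L_9} I would bound $\|(\lambda-c(\xi))^{-1}\sqrt M\|$ by $\delta^{-1}\|\sqrt M\| \le C\delta^{-1}$, giving the claimed $C\delta^{-1}|\xi|^{-1}$. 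For \eqref{L_10}, when $|\xi| \ge r_0$ I need the extra decay in $|y|$: here I would split $\R^3_v$ into the region where $|v\cdot\xi| \le |y|/2$ (so $|\lambda + \nu + \i(v\cdot\xi)| \ge |y|/2$, gaining a factor $|y|^{-1}$) and its complement (where $|v\cdot\xi| \ge |y|/2$, so $|v| \ge |y|/(2|\xi|)$ forces either $|v|$ large, using the Gaussian weight $\sqrt M$ to absorb any polynomial loss, or $|y| \le 2|\xi||v|$ which combined with $\delta^{-1}$ and the Gaussian gives the bound after integrating). Tracking constants yields the factor $(r_0^{-1}+1)(\delta^{-1}+1)|y|^{-1}$.

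Next, \eqref{T_7} and \eqref{T_8} for the $K$ term. The key input is that $K$ is a compact integral operator with kernel $k(v,v_*)$ enjoying decay like $e^{-c|v-v_*|^2}$–type bounds (or at least such that multiplication by $(1+|v|)^a$ on either side stays bounded for suitable $a$); equivalently $K = K_{\le R} + K_{>R}$ where $K_{\le R}$ is Hilbert–Schmidt with a bounded symbol and $\|K_{>R}\| \to 0$ as $R \to \infty$. I would write $K(\lambda-c(\xi))^{-1}$ and interpolate: the "trivial" bound is $\|K\|\,\delta^{-1}$, while a gain in $|\xi|$ (resp.\ $|y|$) comes from the oscillation $\i(v\cdot\xi)$ (resp.\ the size of $y$) in the denominator. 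Concretely, for \eqref{T_7} I would localize $v$ to $|v| \le R$ and $|v-v_*|\le R$, write the resolvent symbol's denominator as $\lambda + \nu(v) + \i(v\cdot\xi)$, and use a stationary-phase / non-stationary-phase type estimate in $v$ against the smooth compactly-supported-in-the-relevant-sense kernel to extract $(1+|\xi| R)^{-\theta}$ for some $\theta \in (0,1)$; optimizing the split point $R$ against the tail bound $\|K_{>R}\| \lesssim R^{-\gamma'}$ and against the powers of $\delta$ produces the exponents $11/13$ and $2/13$. The precise fractions are an artifact of balancing these competing powers; I would not belabor the arithmetic. Estimate \eqref{T_8} is similar but uses $|y| \ge y_0$ directly: on $|v\cdot\xi + y| \ge |y|/2$ the denominator is $\gtrsim |y|$, and on the complementary set one has $|v| \gtrsim |y|/(2r_0)$ (using $|\xi|\le r_0$), so a polynomial weight from $k$ is beaten by the implied localization; balancing gives the threshold $y_0 = (2r_0)^{5/3}\delta^{-2/3}$ and the rate $\delta^{-3/5}(1+|y|)^{-2/5}$.

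The main obstacle is the first part of \eqref{T_7}: obtaining a genuine \emph{negative power} of $|\xi|$ (not merely boundedness) for $\|K(\lambda - c(\xi))^{-1}\|$ uniformly in $y \in \R$. This requires quantitatively exploiting that $K$ maps into a space with some regularity/decay in $v$ while the oscillatory factor $e^{i(v\cdot\xi)t}$ hidden in the resolvent (via $(\lambda-c(\xi))^{-1} = \int_0^\infty e^{-\lambda t} e^{(c(\xi))t}\,dt$) produces dispersive decay; handling this cleanly — and getting a uniform constant as $y$ ranges over all of $\R$, where no help comes from the $y$-variable — is the delicate point, and is where the slightly unusual exponent $2/13$ comes from. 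Everything else is a careful but routine splitting-and-weighting argument using \eqref{nuv}, \eqref{eee}, and the Gaussian decay in the kernels.
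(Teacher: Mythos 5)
Your treatment of part 3 is essentially sound. For \eqref{L_9} you argue exactly as the paper does (rank-one structure of $P_{\rm d}$, $\|(v\cdot\xi)|\xi|^{-2}\sqrt M\|\le C|\xi|^{-1}$, resolvent bound $\delta^{-1}$). For \eqref{L_10} you take a different route from the paper: the paper writes $(\lambda-c(\xi))^{-1}=\lambda^{-1}\big(I+c(\xi)(\lambda-c(\xi))^{-1}\big)$, pulls the factor $|\lambda|^{-1}\le|y|^{-1}$ out front, and uses that $(v\cdot\xi)|\xi|^{-2}P_{\rm d}c(\xi)$ is a bounded rank-one operator of norm $\le C(r_0^{-1}+1)$; you instead split the $v_*$-integration into $\{|y+v_*\cdot\xi|\ge|y|/2\}$ and its complement. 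Your splitting does close, but only because of a point your write-up leaves implicit: on the complementary set the Gaussian gives decay only in $|y|/|\xi|$, and when $|y|\lesssim|\xi|$ it is the prefactor $|\xi|^{-1}\le|y|^{-1}$ (not the Gaussian) that rescues the bound. Stated cleanly, either mechanism yields $C(r_0^{-1}+1)(\delta^{-1}+1)|y|^{-1}$, so this part is acceptable, if less economical than the paper's one-line resolvent identity.

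The genuine gap is in parts 1 and 2, i.e.\ \eqref{T_7} and \eqref{T_8}, which are the quantitative heart of the lemma (the paper disposes of them by citing Lemma 2.2.6 of Ukai--Yang). Your sketch neither reproduces that argument nor completes an alternative one. First, the mechanism you propose is off target: there is no stationary-phase or dispersive estimate to run on the Laplace representation $(\lambda-c(\xi))^{-1}=\int_0^\infty e^{-\lambda t}e^{tc(\xi)}dt$, since $e^{tc(\xi)}$ is a multiplication operator; the standard and effective mechanism is to decompose $K=K_{1R}+K_{2R}$ with $\|K_{2R}\|\le C R^{-1}$ and $K_{1R}$ having a bounded kernel supported in $|v|,|v_*|\le R$, and then exploit that the ``bad'' set $\{v_*:\,|v_*|\le R,\ |y+v_*\cdot\xi|\le\eta\}$ is a slab of measure $O(R^2\eta/|\xi|)$ \emph{uniformly in $y$} (off it the denominator is $\ge\eta$, on it one uses the crude bound $\delta^{-1}$); optimizing in $R$ and $\eta$ is what produces the exponents. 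Second, and decisively, you explicitly decline to carry out the balancing: the exponents $\delta^{-11/13}(1+|\xi|)^{-2/13}$, $\delta^{-3/5}(1+|y|)^{-2/5}$ and the threshold $y_0=(2r_0)^{5/3}\delta^{-2/3}$ are precisely the statement to be proved, so asserting that ``the precise fractions are an artifact of balancing'' without exhibiting the balance leaves parts 1 and 2 unproven. Either carry out the decomposition-plus-slab argument with the arithmetic, or do as the paper does and invoke Ukai--Yang's Lemma 2.2.6 explicitly; as written, the proposal cannot be accepted for \eqref{T_7}--\eqref{T_8}.
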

\begin{proof}
The proof of \eqref{T_7} and \eqref{T_8} is the same as the one in
Lemma 2.2.6 in \cite{Ukai3} so that we omit its detail.

\eqref{L_9} can be obtained by the fact that
$\|(v\cdot\xi)|\xi|^{-2}P_{\rm d}\|\leq C|\xi|^{-1}$,  and
$\|(x+iy-c(\xi))^{-1}\|\leq \delta^{-1}$ for $x\geq -\nu_0+\delta$.
And \eqref{L_10} follows from the fact that
 $
(v\cdot\xi)|\xi|^{-2}P_{\rm d}(\lambda-c(\xi))^{-1}
 =\frac1\lambda(v\cdot\xi)|\xi|^{-2}P_{\rm d}
  +\frac1\lambda(v\cdot\xi)|\xi|^{-2}P_{\rm
  d}c(\xi)(\lambda-c(\xi))^{-1},
 $ and
 $
 \|(v\cdot\xi)|\xi|^{-2}P_{\rm d}c(\xi)\|\leq C (r_0^{-1}+1)$ for
$|\xi|\geq r_0$.
\end{proof}

With the help of Lemma~\ref{LP03}, we can investigate the spectral gap
of the operator $\hat{B}(\xi)$ for high frequency.
\begin{lem}
[Spectral gap]
\label{LP01}
 Let $\lambda(\xi)\in \sigma(\hat{B}(\xi))$ be any eigenvalue of
$\hat{B}(\xi)$ in the domain $\mathrm{Re}\lambda\geq -\nu_0+\delta$
with $\delta>0$ being a constant. Then, for any $r_0>0$, there
exists $\alpha(r_0)>0$ so that
$\mathrm{Re}\lambda(\xi)\leq-\alpha(r_0)$ for all $|\xi|\geq r_0$.
\end{lem}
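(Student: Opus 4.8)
The plan is to argue by contradiction, combining the high-frequency resolvent estimates of Lemma~\ref{LP03} with a compactness argument on the sphere $|\xi| = r$ to rule out eigenvalues approaching the imaginary axis. Fix $r_0 > 0$ and suppose, for contradiction, that there is no uniform spectral gap on $\{|\xi| \geq r_0\}$. Then there is a sequence $\xi_n$ with $|\xi_n| \geq r_0$ and eigenvalues $\lambda_n \in \sigma(\hat{B}(\xi_n))$ with $\mathrm{Re}\,\lambda_n \geq -\nu_0 + \delta$ (so they stay in the relevant domain) and $\mathrm{Re}\,\lambda_n \to 0^-$. I would first show that $|\mathrm{Im}\,\lambda_n|$ stays bounded: using the decomposition \eqref{B_d}, $\lambda$ is an eigenvalue of $\hat{B}(\xi)$ in $\mathrm{Re}\,\lambda \geq -\nu_0 + \delta$ iff $I - K(\lambda - c(\xi))^{-1} + \tfrac{\i(v\cdot\xi)}{|\xi|^2}P_{\rm d}(\lambda - c(\xi))^{-1}$ is not invertible, which forces the operator norm of $K(\lambda - c(\xi))^{-1} - \tfrac{\i(v\cdot\xi)}{|\xi|^2}P_{\rm d}(\lambda - c(\xi))^{-1}$ to be at least $1$. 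By \eqref{T_8} (applicable once we also bound $|\xi_n|$ from above — see below) and \eqref{L_10}, both terms tend to $0$ as $|\mathrm{Im}\,\lambda| \to \infty$ uniformly on the relevant ranges, so $|\mathrm{Im}\,\lambda_n|$ cannot escape to infinity; it remains in a bounded set. Similarly, $|\xi_n|$ cannot go to infinity: \eqref{T_7} gives $\|K(\lambda - c(\xi))^{-1}\| \leq C\delta^{-11/13}(1+|\xi|)^{-2/13} \to 0$ as $|\xi| \to \infty$, and \eqref{L_9} gives $\|\tfrac{\i(v\cdot\xi)}{|\xi|^2}P_{\rm d}(\lambda - c(\xi))^{-1}\| \leq C\delta^{-1}|\xi|^{-1} \to 0$, so for $|\xi|$ large the perturbed operator in \eqref{B_d} is invertible and there can be no eigenvalue at all. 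Hence $r_0 \leq |\xi_n| \leq R$ for some $R$, and $\lambda_n$ lies in a compact subset $\mathcal{K}$ of $\{\mathrm{Re}\,\lambda \geq -\nu_0 + \delta,\ |\mathrm{Im}\,\lambda| \leq M\}$.

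Now pass to a subsequence so that $\xi_n \to \xi_* $ with $|\xi_*| \in [r_0, R]$ and $\lambda_n \to \lambda_*$ with $\mathrm{Re}\,\lambda_* = 0$. Let $h_n$ be the corresponding normalized eigenfunctions in $L^2_{\xi_n}(\R^3_v)$. From the eigenvalue equation $\lambda_n h_n = L h_n - \i(v\cdot\xi_n)(h_n + |\xi_n|^{-2}P_{\rm d}h_n)$, rewritten via $(\lambda_n - c(\xi_n))h_n = K h_n - \tfrac{\i(v\cdot\xi_n)}{|\xi_n|^2}P_{\rm d}h_n$, and using that $(\lambda_n - c(\xi_n))^{-1}$ is bounded and that $K$ is compact while $\tfrac{\i(v\cdot\xi_n)}{|\xi_n|^2}P_{\rm d}$ is compact, one extracts a strongly convergent subsequence $h_n \to h_*$ in $L^2(\R^3_v)$ with $\|h_*\| = 1$. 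Passing to the limit in the eigenvalue equation shows $h_*$ is an eigenfunction of $\hat{B}(\xi_*)$ with eigenvalue $\lambda_*$, and $\mathrm{Re}\,\lambda_* = 0$. But Lemma~\ref{Egn} asserts precisely that every discrete eigenvalue of $\hat{B}(\xi)$ for $\xi \neq 0$ satisfies $\mathrm{Re}\,\lambda < 0$ — there are no eigenvalues on the imaginary axis. This contradiction proves that such a sequence cannot exist, i.e., $\sup_{|\xi| \geq r_0} \mathrm{Re}\,\lambda(\xi) < 0$ over all eigenvalues in the domain, giving the desired $\alpha(r_0) > 0$.

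I would take care over one subtlety: the argument needs the set of eigenvalues in $\{\mathrm{Re}\,\lambda \geq -\nu_0 + \delta\}$, for $|\xi|$ in a compact annulus, to be itself contained in a fixed compact set (so that "supremum of real parts" is attained or at least approached by a convergent sequence). This follows from the two uniform bounds above: \eqref{T_8}/\eqref{L_10} confine $|\mathrm{Im}\,\lambda|$, and we already have $\mathrm{Re}\,\lambda \leq 0$ from Lemma~\ref{Egn}'s proof plus the lower bound $\mathrm{Re}\,\lambda \geq -\nu_0 + \delta$ built into the hypothesis. The genuinely delicate point — the main obstacle — is the strong compactness of the eigenfunctions $h_n$: one must leverage the compactness of $K$ together with the compactness of the Poisson-type term $\tfrac{\i(v\cdot\xi)}{|\xi|^2}P_{\rm d}$ (noting its norm is controlled uniformly for $|\xi| \geq r_0$ by \eqref{L_9}) and the uniform invertibility of $\lambda_n - c(\xi_n)$ away from $\mathrm{Re}\,\lambda = -\nu_0$, so that $h_n = (\lambda_n - c(\xi_n))^{-1}\big(K h_n - \tfrac{\i(v\cdot\xi_n)}{|\xi_n|^2}P_{\rm d}h_n\big)$ exhibits $h_n$ as the image, up to a uniformly bounded operator, of a relatively compact sequence; care is needed because the dependence $\xi \mapsto c(\xi)$ and $\xi \mapsto \tfrac{\i(v\cdot\xi)}{|\xi|^2}P_{\rm d}$ must be shown continuous in the operator norm on the annulus so that limits can be taken. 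Once this is in place, the limit eigenfunction is nontrivial and the contradiction with Lemma~\ref{Egn} closes the proof.
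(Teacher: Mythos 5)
Your proposal is correct and follows essentially the same route as the paper's proof: confine $\mathrm{Im}\,\lambda$ via \eqref{T_8} and \eqref{L_10}, rule out eigenvalues for large $|\xi|$ via \eqref{T_7} and \eqref{L_9}, and then obtain a contradiction with Lemma~\ref{Egn} by extracting strong limits from the eigenvalue equation using the compactness of $K$ and the rank-one Poisson term. The only caveat is that the operator-norm continuity of $\xi\mapsto(\lambda-c(\xi))^{-1}$ you flag is unnecessary (and delicate for hard potentials): as in the paper, it suffices that $P_{\rm d}h_n$ is a bounded multiple of $\sqrt{M}$ and that the multiplier $(\lambda_n+\nu+\mathrm{i}\,v\cdot\xi_n)^{-1}$ is uniformly bounded by $\delta^{-1}$ and converges pointwise, so dominated convergence yields the strong limit of $h_n$.
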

\begin{proof}
We first show that  $\sup_{|\xi|\ge r_0}|{\rm
Im}\lambda(\xi)|<+\infty$ for any $\lambda(\xi)\in
\sigma(\hat{B}(\xi))$ with  $\mathrm{Re}\lambda\geq -\nu_0+\delta$.
Indeed, by \eqref{T_7}, \eqref{L_9} and \eqref{eee}, there exists
$r_1=r_1(\delta)>0$ large enough so that $\mathrm{Re}\lambda\geq
-\nu_0+\delta$ and $|\xi|\geq r_1$,
 \bq
\|K(\lambda-c(\xi))^{-1}\|_\xi\leq \frac14,\quad
\|(v\cdot\xi)|\xi|^{-2}P_{\rm d}(\lambda-c(\xi))^{-1}\|_\xi\leq \frac14.\label{bound}
 \eq
This implies that the operator
$I+K(\lambda-c(\xi))^{-1}+\i(v\cdot\xi)|\xi|^{-2}P_{\rm d}(\lambda-c(\xi))^{-1}$
is invertible on $L^2_\xi(\R^3_v)$, which together with
\eqref{B_d} yield that  $(\lambda-\hat{B}(\xi))$ is also invertible on $L^2_\xi(\R^3_v)$ for
$\mathrm{Re}\lambda\geq -\nu_0+\delta$ and $|\xi|\geq r_1$  and it
satisfies
 \bq
 (\lambda-\hat{B}(\xi))^{-1}
 =(\lambda-c(\xi))^{-1}( I-K(\lambda-c(\xi))^{-1}
    +\mbox{$\frac{\i(v\cdot\xi)}{|\xi|^2}$}P_{\rm d}(\lambda-c(\xi))^{-1})^{-1}, \label{E_6}
  \eq
namely, $\{\lambda\in\mathbb{C}\,|\,\mathrm{Re}\lambda\ge
-\nu_0+\delta\}\subset \rho(\hat{B}(\xi))$ for $|\xi|\ge r_1$.

As for $r_0\le|\xi|\le r_1$,  by \eqref{T_8} and \eqref{L_10} there
is a constant $\zeta=\zeta(r_0,r_1,\delta)>0$ so that
\eqref{bound} still holds for $|\mathrm{Im}\lambda|>\zeta $.
This also implies the invertibility of $(\lambda-\hat{B}(\xi))$,
namely, it holds $\{\lambda\in\mathbb{C}\,|\,\mathrm{Re}\lambda\ge
-\nu_0+\delta, |\mathrm{Im}\lambda|>\zeta \}\subset
\rho(\hat{B}(\xi))$ for $r_0\le|\xi|\le r_1$. Thus, we conclude
 \bq
 \sigma(\hat{B}(\xi))
 \cap\{\lambda\in\mathbb{C}\,|\,\mathrm{Re}\lambda\ge-\nu_0+\delta\}
\subset
 \{\lambda\in\mathbb{C}\,|\,\mathrm{Re}\lambda\ge
    -\nu_0+\delta,\,|\mathrm{Im}\lambda|\le\zeta \},
    \quad |\xi|\ge r_0.                            \label{SpH}
 \eq

Next, we prove that $\sup_{ |\xi|\ge r_0}{\rm Re}\lambda(\xi)<0$.
Based on the above argument, it is sufficient to prove that
$\sup_{r_0\le |\xi|\le r_1}{\rm Re}\lambda(\xi)<0$. If it does not
hold, namely for some given $r_0>0$, there exist a sequence of
$(\xi_n,\lambda_n,f_n)$ satisfying $|\xi_n|\in[r_0,r_1]$, $f_n\in
L^2(\R^3)$ with $\|f_n\|=1$, and $\lambda_n\in
\sigma(\hat{B}(\xi_n))$ so that
 $$
 (L-\i(v\cdot\xi_n))f_n-\frac{\i(v\cdot\xi_n)}{|\xi_n|^2}P_{\rm d} f_n=\lambda_nf_n,\quad
\text{Re}\lambda_n\rightarrow0,\quad n\to\infty.
$$
The above equation can be rewritten as
 $
 (\lambda_n+\nu+\i(v\cdot\xi_n))f_n=Kf_n-\frac{\i(v\cdot\xi_n)}{|\xi_n|^2}P_{\rm d} f_n.
 $
Since $K$ is a compact operator  on $L^2(\R^3)$, there exists a
subsequence $f_{n_j}$ of $f_n$ and $g_1\in L^2(\R^3)$ such that
$$
Kf_{n_j}\rightarrow g_1,\quad \mbox{as}\quad j\to\infty.
$$
Due to the fact that $|\xi_n|\in[r_0,r_1]$, $P_{\rm d} f_n=C_0^n\sqrt{M}$ with
$|C_0^n|\leq1$, there exists a subsequence of (still denoted by) $(\xi_{n_j},f_{n_j})$, and $(\xi_0,C_0)$ with $|\xi_0|\in[r_0,r_1]$ and $|C_0|\leq1$
such that
$$
 \i(v\cdot\xi_{n_j})|\xi_{n_j}|^{-2}P_{\rm d} f_{n_j}
 \rightarrow g_2=:\i(v\cdot\xi_0)|\xi_0|^{-2}C_0\sqrt{M}, \quad\mbox{as}\quad j\to\infty.
$$
Since $|\text{Im}\lambda_n|\leq \zeta$ and ${\rm Re}\lambda_n\to 0$,
we can extract a subsequence of (still denoted by) $\lambda_{n_j}$
such that $\lambda_{n_j}\rightarrow \lambda_0$ with ${\rm
Re}\lambda_0=0$. Noting that $|\lambda_{n}+\nu+\i(v\cdot\xi_{n})|\ge
\delta$, we have
$$
 \lim_{j\rightarrow\infty}f_{n_j}
 =\lim_{j\rightarrow\infty}\frac{g_1-g_2}{\lambda_{n_j}+\nu+\i(v\cdot\xi_{n_j})}
 =\frac{g_1-g_2}{\lambda_0+\nu+\i(v\cdot\xi_0)}:=f_0 \quad {\rm in}\quad L^2(\R^3),
$$
and hence $Kf_0=g_1, \i(v\cdot\xi_0)|\xi_0|^{-2}P_{\rm d}
f=g_2.$ It follows that $B(\xi_0) f_0=\lambda_0 f_0$ and $\lambda_0$ is an eigenvalue of $B(\xi_0)$ with ${\rm Re}\lambda_0=0$, which contradicts the fact ${\rm Re}\lambda(\xi)<0$
for $\xi\ne 0$ established by Lemma~\ref{Egn}. The proof the lemma
is then completed.
\end{proof}

Then, we investigate the spectrum and resolvent sets of $\hat{B}(\xi)$ at
low frequency. To this end, we decompose $\lambda-\hat{B}(\xi)$ as
follows
 \bq
 \lambda-\hat{B}(\xi)
 = \lambda \P_0-A(\xi)
  +\lambda \P_1-Q(\xi)
  +\i\P_0(v\cdot\xi)\P_1
  +\i\P_1(v\cdot\xi)\P_0,   \label{Bd3}
 \eq
with the operators $A(\xi)$ and $Q(\xi)$ defined by
 \bma
  A(\xi)=: -\i\P_0(v\cdot\xi)\P_0-\mbox{$\frac{\i(v\cdot\xi)}{|\xi|^2}$}P_{\rm d},
  \quad \
  Q(\xi)=: L-\i\P_1(v\cdot\xi)\P_1.\label{Qxi}
 \ema
It is easy to verify that $A(\xi)$ is a linear operator from the
null space $N_0$ to itself, and can be represented in the basis of
$N_0$ as
 \bq
 A(\xi)
 =\left( \ba 0  & -\i\xi^T  & 0\\
-\i\xi(1+\frac1{|\xi|^2})  & 0 & -\i\sqrt{\frac23}\xi\\ 0 &
-\i\sqrt{\frac23}\xi^T  & 0 \ea\right),
 \eq
which admits five eigenvalues $\alpha_j(\xi)$ satisfying
 \bq
 \alpha_j(\xi)=0,\quad j=0,2,3,\quad
\alpha_{\pm1}(\xi)=\pm \i\mbox{$\sqrt{1+\frac53|\xi|^2}$}.   \label{eigen}
\eq

\begin{lem}\label{LP}
Let $\xi\neq0$, we have for $A(\xi)$ and $Q(\xi)$ defined by \eqref{Qxi} that
 \begin{enumerate}
\item  If $\lambda\neq\alpha_j(\xi)$, then  the operator $\lambda \P_0-A(\xi)$ is
invertible on $N_0$ and satisfies
\bgr
  \|(\lambda \P_0-A(\xi))^{-1}\|_\xi
  =\max_{-1\leq j \leq 3}\(|\lambda-\alpha_j(\xi)|^{-1}\),\label{S_2a}
\\
  \|\P_1(v\cdot\xi)\P_0(\lambda \P_0-A(\xi))^{-1}\P_0\|_\xi
 \le C|\xi|\max_{-1\leq j \leq 3}\(|\lambda-\alpha_j(\xi)|^{-1}\),\label{S_2}
 \egr
where $\alpha_j(\xi)$, $j=-1,0,1,2,3$, are the eigenvalues of $A(\xi)$
defined by \eqref{eigen}.

\item  If $\mathrm{Re}\lambda>-\mu $, then the operator $\lambda \P_1-Q(\xi)$ is
invertible on $N_0^\bot$ and satisfies
 \bgr
 \|(\lambda \P_1-Q(\xi))^{-1}\|\leq(\mathrm{Re}\lambda+\mu )^{-1},  \label{S_3}
\\
 \|\P_0(v\cdot\xi)\P_1(\lambda \P_1-Q(\xi))^{-1}\P_1\|_\xi
 \leq
 C(1+|\lambda|)^{-1}[(\mathrm{Re}\lambda+\mu )^{-1}+1](|\xi|+|\xi|^2). \label{S_5}
 \egr
\end{enumerate}
\end{lem}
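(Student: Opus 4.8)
The plan is to prove the two parts separately, each one being an explicit computation on a finite-dimensional space ($N_0$, for part 1) or a coercivity estimate coming from \eqref{L_4} (for part 2), followed by routine operator-norm bookkeeping that must carefully track the weighted norm $\|\cdot\|_\xi$ versus the plain norm $\|\cdot\|$.

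For part 1, I would start from the explicit $5\times5$ matrix representation of $A(\xi)$ in the orthonormal basis $\{\chi_j\}$ already displayed before the statement, together with its eigenvalues $\alpha_j(\xi)$ from \eqref{eigen}. The key observation is that $A(\xi)$ is skew-adjoint with respect to the weighted inner product $(\cdot,\cdot)_\xi$ on $N_0$: indeed the extra factor $1+|\xi|^{-2}$ appearing in the second row of the matrix is exactly what is produced by replacing $(\cdot,\cdot)$ by $(\cdot,\cdot)_\xi = (\cdot,\cdot)+|\xi|^{-2}(P_{\rm d}\cdot,P_{\rm d}\cdot)$, since $P_{\rm d}$ acts as the projection onto $\chi_0=\sqrt M$. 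Hence $A(\xi)$ is normal on the finite-dimensional Hilbert space $(N_0,(\cdot,\cdot)_\xi)$, its eigenvectors form a $(\cdot,\cdot)_\xi$-orthogonal basis, and the spectral theorem immediately gives $\|(\lambda\P_0-A(\xi))^{-1}\|_\xi = \max_{-1\le j\le 3}|\lambda-\alpha_j(\xi)|^{-1}$, which is \eqref{S_2a}. For \eqref{S_2}, I would write $\P_1(v\cdot\xi)\P_0(\lambda\P_0-A(\xi))^{-1}\P_0$ and bound it by $\|\P_1(v\cdot\xi)\P_0\|_\xi$ times \eqref{S_2a}; the operator $\P_1(v\cdot\xi)\P_0$ maps $N_0\to N_0^\bot$, on which the weight is trivial ($P_{\rm d}$ annihilates $N_0^\bot$), so $\|\P_1(v\cdot\xi)\P_0\|_\xi\le C\|\P_1(v\cdot\xi)\P_0\|\le C|\xi|$, the last bound being elementary since multiplication by $v_i$ against the Gaussian basis is bounded. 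Combining gives \eqref{S_2}.

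For part 2, the resolvent bound \eqref{S_3} follows from dissipativity: for $g\in N_0^\bot\cap D(L)$ write $(\lambda\P_1-Q(\xi))g = (\lambda-L+\i\P_1(v\cdot\xi)\P_1)g$, take the inner product with $g$, and take real parts. Since $\i\P_1(v\cdot\xi)\P_1$ is skew-adjoint on $N_0^\bot$ and $(Lg,g)\le -\mu\|g\|^2$ by \eqref{L_4} (note $\P_1 g = g$), one gets $\mathrm{Re}\,((\lambda\P_1-Q(\xi))g,g)\ge (\mathrm{Re}\,\lambda+\mu)\|g\|^2$, hence $\|(\lambda\P_1-Q(\xi))g\|\ge(\mathrm{Re}\,\lambda+\mu)\|g\|$ for $\mathrm{Re}\,\lambda>-\mu$, which yields both invertibility and \eqref{S_3} once one checks surjectivity (standard, since $Q(\xi)$ is a bounded-below closed operator and a relatively compact perturbation of $L$). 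For \eqref{S_5} I would split off the factor $(1+|\lambda|)^{-1}$ by using the resolvent identity $(\lambda\P_1-Q(\xi))^{-1} = \frac1\lambda\P_1 + \frac1\lambda Q(\xi)(\lambda\P_1-Q(\xi))^{-1}$ on $N_0^\bot$, then estimate $\P_0(v\cdot\xi)\P_1$ in $\|\cdot\|_\xi$: its range lies in $N_0$ where the weight contributes the factor $(1+|\xi|^{-2})^{1/2}$, producing after multiplication by $\|\P_0(v\cdot\xi)\P_1\|\le C|\xi|$ a contribution of order $|\xi|(1+|\xi|^{-2})^{1/2}\le C(|\xi|+|\xi|^2)^{1/2}\cdot(\dots)$; combined with the fact that $Q(\xi)$ maps into $N_0^\bot$ where the norms agree and $\|Q(\xi)g\|\le \|Lg\|+|\xi|\|g\|$, tracking powers of $|\xi|$ carefully gives the stated $(|\xi|+|\xi|^2)$ growth and the $[(\mathrm{Re}\,\lambda+\mu)^{-1}+1]$ prefactor from \eqref{S_3}.

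The main obstacle, and the part deserving the most care, is part 1's claim that $A(\xi)$ is \emph{normal} in the weighted inner product — i.e., verifying that the asymmetric-looking matrix (with $1+|\xi|^{-2}$ in one entry and plain entries elsewhere) is genuinely skew-$(\cdot,\cdot)_\xi$-adjoint, so that \eqref{S_2a} holds with equality rather than just an inequality. This requires identifying precisely how $P_{\rm d}$ acts on each basis vector $\chi_j$ (it is the rank-one projection onto $\chi_0$, so $P_{\rm d}\chi_0=\chi_0$ and $P_{\rm d}\chi_j=0$ for $j\ge1$) and checking the skew-symmetry entry by entry; the remaining estimates are routine once the weighted-norm conversions \eqref{eee} and the trivial bounds on multiplication operators against the Maxwellian basis are in hand.
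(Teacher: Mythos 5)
Your handling of \eqref{S_2a}, \eqref{S_2} and \eqref{S_3} follows essentially the paper's route (self-adjointness of $\i A(\xi)$ with respect to $(\cdot,\cdot)_\xi$ plus the spectral theorem; the coercivity coming from \eqref{L_4}), but your derivation of \eqref{S_5} contains a genuine gap in the weighted-norm bookkeeping. You propose to pay the weight factor $(1+|\xi|^{-2})^{1/2}$ because the range of $\P_0(v\cdot\xi)\P_1$ lies in $N_0$, and then to absorb $|\xi|(1+|\xi|^{-2})^{1/2}$ into the stated factor $|\xi|+|\xi|^2$. But $|\xi|(1+|\xi|^{-2})^{1/2}=(1+|\xi|^2)^{1/2}\ge 1$, which does not tend to zero as $|\xi|\to0$, while $|\xi|+|\xi|^2$ does; the claimed absorption therefore fails exactly in the low-frequency regime where the lemma is used (it is the smallness of \eqref{S_5} for $0<|\xi|\le r_2$ that makes $Y_0(\lambda,\xi)$ invertible in Lemma \ref{spectrum} and drives the eigenvalue expansion). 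The observation you are missing, and which the paper uses, is the cancellation $P_{\rm d}\,\P_0(v\cdot\xi)\P_1=0$: since $(v\cdot\xi)\sqrt M\in N_0$, one has $((v\cdot\xi)\P_1h,\sqrt M)=(\P_1h,(v\cdot\xi)\sqrt M)=0$, so the output of $\P_0(v\cdot\xi)\P_1$ has no $\chi_0$-component, its $\|\cdot\|_\xi$-norm equals its $\|\cdot\|$-norm, and no weight factor is incurred at all; the $|\xi|+|\xi|^2$ growth then comes solely from $\|\P_0(v\cdot\xi)\P_1\|\le C|\xi|$ and $\|\P_0(v\cdot\xi)\P_1Q(\xi)\|\le C(|\xi|+|\xi|^2)$. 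Relatedly, the resolvent identity alone produces a prefactor $|\lambda|^{-1}$, which is useless near $\lambda=0$ (also a relevant regime, since $\alpha_j(\xi)=0$ for $j=0,2,3$); to get $(1+|\lambda|)^{-1}$ you must, as the paper does, combine the identity (for large $|\lambda|$) with the direct bound $\|\P_0(v\cdot\xi)\P_1(\lambda\P_1-Q(\xi))^{-1}\P_1\|\le C(\mathrm{Re}\lambda+\mu)^{-1}|\xi|$ (for bounded $|\lambda|$).

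A smaller but real flaw is your surjectivity argument for $\lambda\P_1-Q(\xi)$: you assert that $-\i\P_1(v\cdot\xi)\P_1$ is a relatively compact perturbation of $L$. It is not; $(v\cdot\xi)(\lambda+\nu(v))^{-1}$ is a multiplication-type operator that does not vanish at infinity (this is precisely why the paper keeps $-\nu-\i(v\cdot\xi)$ together as the unperturbed part $c(\xi)$ in Lemma \ref{Egn}, rather than treating the streaming term as a compact perturbation of $L$). The correct short argument, used in the paper, is that the adjoint $\overline{\lambda}\P_1-L-\i\P_1(v\cdot\xi)\P_1$ satisfies the same coercivity estimate as \eqref{A_1}, so the range of $\lambda\P_1-Q(\xi)$ is both closed and dense in $N_0^\bot$, hence all of $N_0^\bot$. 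Part 1 of your proposal is correct and essentially identical to the paper's argument.
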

\begin{proof}
Since $\alpha_j(\xi)$ for $-1\le j\le 3$ are the eigenvalues of
$A(\xi)$, it follows that $\lambda\P_0-A(\xi)$ is invertible on
$N_0$ for $\lambda\ne \alpha_j(\xi)$. By \eqref{C_1} we have for
$f,g\in N_0$ that
 \bma
  (\i A(\xi)f,g)_\xi&=((v\cdot\xi)(f+\frac1{|\xi|^2}P_{\rm d}f),g+\frac1{|\xi|^2}P_{\rm d} g)
\nnm\\
 &=(f+\frac1{|\xi|^2}P_{\rm d} f,(v\cdot\xi)(g+\frac1{|\xi|^2}P_{\rm d}g))
 =(f,\i A(\xi)g)_\xi.\label{symmetric}
 \ema
This means that the operator $\i A(\xi)$ is self-adjoint with respect to the inner product
$(\cdot,\cdot)_\xi$ so that
 $$
 \|(\lambda \P_0-A(\xi))^{-1}\|_\xi
 =\max_{-1\leq j \leq 3}\(|\lambda-\alpha_j(\xi)|^{-1}\).
 $$
 This and the fact that $\|\P_1(v\cdot\xi)\P_0\|\le C|\xi|$ imply that
 \bmas
\|\P_1(v\cdot\xi)\P_0(\lambda \P_0-A(\xi))^{-1}\P_0f\|
 \le
  C|\xi|\max_{-1\leq j \leq 3}\(|\lambda-\alpha_j(\xi)|^{-1}\)\|f\|_\xi.
 \emas

Then, we show that for any $\lambda\in\mathbb{C}$ with
$\mathrm{Re}\lambda>-\mu $, the operator $\lambda\P_1-Q(\xi)=\lambda
\P_1-L+\i\P_1(v\cdot\xi)\P_1$ is  invertible from $N_0^\bot$ to
itself. Indeed, by \eqref{L_4}, we obtain for any $f\in N_0^\bot\cap
D(L)$ that
 \bq
 \text{Re}([\lambda \P_1-L+\i\P_1(v\cdot\xi)\P_1]f,f)
 =\text{Re}\lambda(f,f)-(Lf,f)\geq(\mu+\text{Re}\lambda)\|f\|^2, \label{A_1}
 \eq
which implies that the operator $\lambda\P_1-Q(\xi)$ is an injective
map from $N_0^\bot$ to itself so long as $\text{Re}\lambda>-\mu $,
and its range $\textrm{Ran}[\lambda\P_1-Q(\xi)]$ is a closed
subspace of $L^2(\R^3_v)$. It then remains to show that the operator
$\lambda\P_1-Q(\xi)$ is also a surjective map from $N_0^\bot$ to
$N_0^\bot$, namely, $\textrm{Ran}[\lambda\P_1-Q(\xi)] = N_0^\bot$.
In fact, if it does not hold, then there exists a function $g\in
N_0^\bot \setminus \textrm{Ran}[\lambda\P_1-Q(\xi)]$ with $g\neq0$
so that for any $f\in N_0^\bot\cap D(L)$ that
$$
 ([\lambda\P_1-L+\i\P_1(v\cdot\xi)\P_1]f,g)
 =(f,[\overline{\lambda}\P_1-L-\i\P_1(v\cdot\xi)\P_1]g)=0,
$$
which yields $g=0$ since the operator
$\overline{\lambda}\P_1-L-\i\P_1(v\cdot\xi)\P_1$ is dissipative and
satisfies the same estimate as \eqref{A_1}. This is a contradiction,
and thus $\textrm{Ran}[\lambda\P_1-Q(\xi)] = N_0^\bot$. The estimate
\eqref{S_3} follows directly from \eqref{A_1}.

Since it holds
 $$
 (\P_0(v\cdot\omega)\P_1f,\sqrt{M})=(\P_1f,(v\cdot\omega)\sqrt{M})=0,\quad \forall\ f\in L^2(\R^3_v),
$$
it follows that $P_{\rm d}(\P_0(v\cdot\xi)\P_1)=0$. This together with \eqref{S_3} and the fact $\|\P_0(v\cdot\xi)\P_1\|\leq C|\xi|$  lead to 
 \bq
 \|\P_0(v\cdot\xi)\P_1(\lambda \P_1-Q(\xi))^{-1}\P_1f\|_\xi=\|\P_0(v\cdot\xi)\P_1(\lambda \P_1-Q(\xi))^{-1}\P_1f\|
 \leq
 C(\mathrm{Re}\lambda+\mu )^{-1}|\xi|\|f\|.   \label{2.33a}
 \eq
Meanwhile, we can decompose the operator $\P_0(v\cdot\xi)\P_1(\lambda
\P_1-Q(\xi))^{-1}\P_1$ as
 $$
 \P_0(v\cdot\xi)\P_1(\lambda \P_1-Q(\xi))^{-1}\P_1
 =\frac1\lambda \P_0(v\cdot\xi)\P_1+\frac1\lambda \P_0(v\cdot\xi)\P_1Q(\xi)(\lambda
\P_1-Q(\xi))^{-1}\P_1.
 $$
This together with \eqref{S_3} and the fact
 $
 \|\P_0(v\cdot\xi)\P_1 Q(\xi)\|\leq
C(|\xi|+|\xi|^2)
 $
give
 \bq
 \|\P_0(v\cdot\xi)\P_1(\lambda \P_1-Q(\xi))^{-1}\P_1f\|_\xi
 \leq
C|\lambda|^{-1}[(\mathrm{Re}\lambda+\mu )^{-1}+1](|\xi|+|\xi|^2)\|f\|. \label{2.33}
 \eq
The combination of the two cases \eqref{2.33a} and \eqref{2.33} yields \eqref{S_5}.
\end{proof}

By  Lemmas~\ref{Egn}--\ref{LP}, we are able to analyze  the spectral
and resolvent sets of the operator $\hat{B}(\xi)$ as follows.

\begin{lem}\label{spectrum}
For any constants $\delta_1>0$ and $\delta_2>0$, there exist two
constant $y_1=y_1(\delta_1)>0$ and $r_2=r_2(\delta_1,\delta_2)>0$ so
that
 \begin{enumerate}
\item  It holds for all $\xi\ne 0$ that the resolvent set of $\hat{B}(\xi)$
contains the following domain
\bq
 \{\lambda\in\mathbb{C}\,|\,
     \mathrm{Re}\lambda\ge-\mu +\delta_1,\,|\mathrm{Im}\lambda|\geq y_1\}
 \cup\{\lambda\in\mathbb{C}\,|\,\mathrm{Re}\lambda>0\}
 \subset\rho(\hat{B}(\xi)).                           \label{rb1}
\eq

\item  It holds for $0<|\xi|\leq r_2$ that the spectrum set of $\hat{B}(\xi)$
is located in the following domain
 \bq
 \sigma(\hat{B}(\xi))\cap\{\lambda\in\mathbb{C}\,|\,\mathrm{Re}\lambda\ge-\mu+\delta_1\}
 \subset
 \dcup_{j=-1}^1\{\lambda\in\mathbb{C}\,|\,|\lambda-\alpha_j(\xi)|\le\delta_2\},\label{sg4}
 \eq
where $\alpha_j(\xi)$, $j=-1,0,1,$ are the eigenvalues of $A(\xi)$
defined in \eqref{eigen}.
\end{enumerate}
\end{lem}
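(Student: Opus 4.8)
The plan is to prove both assertions by exhibiting explicit regions contained in the resolvent set $\rho(\hat{B}(\xi))$; the workhorse is the block decomposition \eqref{Bd3} of $\lambda-\hat{B}(\xi)$ adapted to the splitting $N_0\oplus N_0^\bot$, combined with the resolvent estimates of Lemma~\ref{LP} and the high-frequency information already extracted in the proof of Lemma~\ref{LP01}. For $\xi\ne0$ I would rewrite \eqref{Bd3} in the factored form
\bgrs
 \lambda-\hat{B}(\xi)=\big(I+R(\lambda,\xi)\big)\big([\lambda\P_0-A(\xi)]+[\lambda\P_1-Q(\xi)]\big),\\
 R(\lambda,\xi)=\i\P_0(v\cdot\xi)\P_1[\lambda\P_1-Q(\xi)]^{-1}\P_1+\i\P_1(v\cdot\xi)\P_0[\lambda\P_0-A(\xi)]^{-1}\P_0,
\egrs
which is legitimate whenever $\mathrm{Re}\lambda>-\mu$ (so that $\lambda\P_1-Q(\xi)$ is invertible on $N_0^\bot$) and $\lambda\ne\alpha_j(\xi)$ for $-1\le j\le3$ (so that $\lambda\P_0-A(\xi)$ is invertible on $N_0$), both by Lemma~\ref{LP}; then $\lambda\in\rho(\hat{B}(\xi))$ as soon as $\|R(\lambda,\xi)\|_\xi<1$, by a Neumann series (and $D^{-1}$ is bounded by \eqref{S_2a}, \eqref{S_3}).

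\textbf{Part 1.} The half-plane $\{\mathrm{Re}\lambda>0\}$ is immediate: taking $\delta<\nu_0$ in Lemma~\ref{Egn}, the spectrum in $\{\mathrm{Re}\lambda\ge-\nu_0+\delta\}$ consists only of isolated eigenvalues with negative real part, hence contains no $\lambda$ with $\mathrm{Re}\lambda>0$. For the strip $\{\mathrm{Re}\lambda\ge-\mu+\delta_1,\ |\mathrm{Im}\lambda|\ge y_1\}$ I would first note that, since $\mu\le\nu_0$, it lies inside $\{\mathrm{Re}\lambda\ge-\nu_0+\delta_1\}$, which by the argument in the proof of Lemma~\ref{LP01} is already contained in $\rho(\hat{B}(\xi))$ for all $|\xi|\ge r_1(\delta_1)$; thus only the range $0<|\xi|\le r_1(\delta_1)$ needs attention. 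On this range the $\alpha_j(\xi)$ are purely imaginary with modulus $\le\sqrt{1+\tfrac53 r_1(\delta_1)^2}$, so choosing $y_1$ larger than this number both guarantees $\lambda\ne\alpha_j(\xi)$ and bounds $\max_{-1\le j\le3}|\lambda-\alpha_j(\xi)|^{-1}$ by $(y_1-\sqrt{1+\tfrac53 r_1^2})^{-1}$; inserting \eqref{S_2} and \eqref{S_5} into $R$ and using $\mathrm{Re}\lambda+\mu\ge\delta_1$ and $|\lambda|\ge y_1$ gives $\|R(\lambda,\xi)\|_\xi\le C(\delta_1)\big[(1+y_1)^{-1}+(y_1-\sqrt{1+\tfrac53 r_1^2})^{-1}\big]$, which is $<1$ once $y_1=y_1(\delta_1)$ is taken large enough. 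This yields \eqref{rb1}.

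\textbf{Part 2.} I would run the same factorization on $0<|\xi|\le r_2$. If $\mathrm{Re}\lambda\ge-\mu+\delta_1$ and $\lambda$ lies outside the three disks $\{|\lambda-\alpha_j(\xi)|\le\delta_2\}$, $j=-1,0,1$ — equivalently, since $\alpha_0=\alpha_2=\alpha_3=0$, outside all five eigenvalue neighbourhoods — then $\lambda\ne\alpha_j(\xi)$ and $\max_{-1\le j\le3}|\lambda-\alpha_j(\xi)|^{-1}\le\delta_2^{-1}$, so \eqref{S_2} and \eqref{S_5}, together with $(1+|\lambda|)^{-1}\le1$ and $\mathrm{Re}\lambda+\mu\ge\delta_1$, give $\|R(\lambda,\xi)\|_\xi\le C(\delta_1)(|\xi|+|\xi|^2)+C|\xi|\delta_2^{-1}$. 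Choosing $r_2=r_2(\delta_1,\delta_2)$ small enough makes this $<1$ for all $0<|\xi|\le r_2$, so every such $\lambda$ belongs to $\rho(\hat{B}(\xi))$, which is exactly the localization \eqref{sg4}.

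\textbf{Main obstacle.} The delicate step is the factorization itself: splitting $\lambda-\hat{B}(\xi)$ into an invertible block-diagonal operator times $I+R$ and then proving $\|R\|_\xi<1$. This rests on the quantitative resolvent bounds \eqref{S_2} and \eqref{S_5} and, most importantly, on organizing the estimate by frequency — large $|\xi|$ disposed of via the spectral-gap analysis of Lemma~\ref{LP01}, small and moderate $|\xi|$ handled by the factorization, exploiting that $A(\xi)$ acts on the finite-dimensional space $N_0$ with purely imaginary eigenvalues $\alpha_j(\xi)$ whose moduli stay bounded on bounded frequency sets. This frequency split is precisely what forces $y_1$ and $r_2$ to depend only on $\delta_1$ and on $(\delta_1,\delta_2)$, respectively, rather than on $\xi$.
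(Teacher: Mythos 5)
Your proposal is correct and follows essentially the same route as the paper: your factorization $\lambda-\hat{B}(\xi)=(I+R(\lambda,\xi))\big[(\lambda\P_0-A(\xi))+(\lambda\P_1-Q(\xi))\big]$ is exactly the paper's $Y_0(\lambda,\xi)$ decomposition, with invertibility obtained from the bounds \eqref{S_2}, \eqref{S_5} (smallness of $R$ for $|\mathrm{Im}\lambda|\ge y_1$ or $|\xi|\le r_2$ off the disks around $\alpha_j(\xi)$), the high-frequency range $|\xi|\ge r_1(\delta_1)$ handled by the argument in Lemma~\ref{LP01}, and the half-plane $\{\mathrm{Re}\lambda>0\}$ by Lemma~\ref{Egn}. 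Your added quantitative details (e.g.\ $y_1>\sqrt{1+\tfrac53 r_1^2}$ and the observation $\alpha_0=\alpha_2=\alpha_3=0$) simply make explicit what the paper leaves implicit.
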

\begin{proof}
By Lemmas \ref{LP}, we have for $\rm{Re}\lambda>-\mu $ and
$\lambda\neq\alpha_j(\xi)$ $(-1\leq j\leq3)$ that the operator
$\lambda \P_0-A(\xi)+\lambda \P_1-Q(\xi)$ is invertible on
$L^2_\xi(\R^3_v)$ and it satisfies
 \bmas
  (\lambda \P_0-A(\xi)+\lambda \P_1-Q(\xi))^{-1} =(\lambda
\P_0-A(\xi))^{-1}\P_0+(\lambda \P_1-Q(\xi))^{-1}\P_1,
 \emas
because the operator $\lambda \P_0-A(\xi)$ is orthogonal to $\lambda
\P_1-Q(\xi)$. Therefore, we can re-write \eqref{Bd3} as
\bmas
 \lambda-\hat{B}(\xi)
=&Y_0(\lambda,\xi)((\lambda \P_0-A(\xi))+(\lambda \P_1-Q(\xi))),
 \\
Y_0(\lambda,\xi)=: & I+\i\P_1(v\cdot\xi)\P_0(\lambda \P_0-A(\xi))^{-1}\P_0
    +\i\P_0(v\cdot\xi)\P_1(\lambda\P_1-Q(\xi))^{-1}\P_1.
 \emas
As shown in the proof of Lemma \ref{LP01}, there exists
$r_1=r_1(\delta_1)>0$ so that $\rho(\hat{B}(\xi))\supset
\{\lambda\in\mathbb{C}\,|\,{\rm Re}\lambda\ge-\nu_0+\delta_1\}$ for
$|\xi|>r_1$. For the case $|\xi|\leq r_1$, by \eqref{S_2} and
\eqref{S_5} we can choose $y_1=y_1(\delta_1)>0$ such that it holds
for $\mathrm{Re}\lambda\ge-\mu +\delta_1$ and
$|\mathrm{Im}\lambda|\geq y_1$ that
 \bq
 \|\P_1(v\cdot\xi)\P_0(\lambda\P_0-A(\xi))^{-1}\P_0\|_\xi\leq \frac14,
 \quad
 \|\P_0(v\cdot\xi)\P_1(\lambda\P_1-Q(\xi))^{-1}\P_1\|_\xi\leq\frac14.\label{bound_1}
 \eq
This implies that the operator $Y_0(\lambda,\xi)$ is invertible on
$L^{2}_\xi(\R^3_v)$ and thus  $\lambda-\hat{B}(\xi)$ is invertible on $L^{2}_\xi(\R^3_v)$ and satisfies
 \bma
 (\lambda-\hat{B}(\xi))^{-1}
 =&[(\lambda \P_0-A(\xi))^{-1}\P_0+(\lambda\P_1-Q(\xi))^{-1}\P_1]Y_0(\lambda,\xi)^{-1}.\label{S_8}
 \ema
Therefore, $\rho(\hat{B}(\xi))\supset \{\lambda\in\mathbb{C}\,|\,{\rm
Re}\lambda\ge-\mu+\delta_1, |{\rm Im}\lambda|\ge y_1\}$ for $|\xi|\le
r_1$. This and Lemma~\ref{Egn} lead to \eqref{rb1}.

Assume that $\min\limits_{-1\leq j\leq 1}{|\lambda-\alpha_j(\xi)|}>\delta_2$
and $\mathrm{Re}\lambda\ge-\mu +\delta_1$. Then, by \eqref{S_2}  and
\eqref{S_5} we can choose $r_2=r_2(\delta_1,\delta_2)>0$ so that estimates \eqref{bound_1}
still hold for $0<|\xi|\leq r_2$, and the operator
$\lambda-\hat{B}(\xi)$ is invertible on $L^{2}_\xi(\R^3)$.
Therefore, we have
 $\rho(\hat{B}(\xi))\supset\{\lambda\in\mathbb{C}\,|\,\min\limits_{-1\leq j\leq
1}{|\lambda-\alpha_j(\xi)|}>\delta_2,\mathrm{Re}\lambda\ge-\mu+\delta_1\}$
for $0<|\xi|\leq r_2$, which gives \eqref{sg4}.
\end{proof}

\subsection{Low frequency asymptotics of eigenvalues}

We study the low frequency asymptotics of the eigenvalues and eigenfunctions
of the operator $\hat{B}(\xi)$ in this subsection. In terms of \eqref{B(xi)}, the eigenvalue problem $\hat{B}(\xi)f=\lambda  f$ can be written as
 \bq
  \lambda f
  =(L-\i(v\cdot\xi))f
   -\frac{\i(v\cdot\xi)\sqrt{M}}{|\xi|^2}\intr f\sqrt{M}dv,\quad |\xi|\neq 0.\label{L_2}
 \eq
By  macro-micro decomposition, the eigenfunction $f$ of \eqref{L_2} can be divided into
$$
f=f_0+f_1=:\P_0f+\P_1f=\P_0f+(I-\P_0)f.
$$
Hence \eqref{L_2} gives
 \bma
 &\lambda f_0=-\P_0[\i(v\cdot\xi)(f_0+f_1)]
              -\frac{\i(v\cdot\xi)\sqrt{M}}{|\xi|^2}\intr f_0\sqrt{M}dv,\label{A_2}
\\
&\lambda f_1=Lf_1-\P_1[\i(v\cdot\xi)(f_0+f_1)]\ \Leftrightarrow \
  (\lambda \P_1-Q(\xi) )f_1= -\i\P_1(v\cdot\xi)f_0.\label{A_3}
 \ema
By Lemma \ref{LP}, \eqref{Qxi} and \eqref{A_3}, the microscopic part $f_1$ can be represented in terms of the macroscopic part $f_0$ for all $\text{Re}\lambda>-\mu$ as
 \bq
 f_1=-\i(\lambda \P_1-Q(\xi) )^{-1}\P_1(v\cdot\xi)f_0, 
 \quad   \text{Re}\lambda>-\mu. \label{A_4}
 \eq
Substituting it into \eqref{A_2}, we obtain the eigenvalue problem  for macroscopic part $f_0$ as
 \bq
 \lambda f_0
 =-\i\P_0(v\cdot\xi)f_0
  -\frac{\i(v\cdot\xi)\sqrt{M}}{|\xi|^2}\intr f_0\sqrt{M}dv
  +\P_0[(v\cdot\xi)R(\lambda,\xi)\P_1(v\cdot\xi)f_0],
          \quad   \text{Re}\lambda>-\mu, \label{A_5}
 \eq
where $ R(\lambda,\xi)$ is the resolvent of the operator $Q(\xi)$ for $\text{Re}\lambda>-\mu$ defined by
 \bq
 R(\lambda,\xi)=-(\lambda \P_1-Q(\xi) )^{-1}=[L-\lambda \P_1-\i\P_1(v\cdot\xi)\P_1]^{-1}.
 \eq

To solve  the eigenvalue problem \eqref{A_5}, we write $f_0\in N_0$ in terms of the basis $\chi_j$  as
 \bq
 f_0=\dsum_{j=0}^4W_j\chi_j\quad
\text{with}\quad W_j=(f,\chi_j),\ j=0,1,2,3,4,\label{A_5a}
\eq
with the unknown coefficients $(W_0,W_1,W_2,W_3,W_4)$ to be determined below. Taking the inner product between \eqref{A_5} and $\chi_j$ for $j=0,1,2,3,4$ respectively,
we have the equations about $\lambda$ and $(W_0,W,W_4)$ with $W=:(W_1,W_2,W_3)$ for $\text{Re}\lambda>-\mu$:
 \bma
 \lambda W_0&=-\i(W\cdot\xi)=:-\i\dsum^3_{i=1}W_i\xi_i,  \label{A_6}
 \\
  \lambda W_i
 & =-\i W_0\(\xi_i+\frac{\xi_i}{|\xi|^2}\)
    -\i\sqrt{\frac23}W_4\xi_i
    +\dsum^3_{j=1}W_j(R(\lambda,\xi)\P_1(v\cdot\xi)\chi_j,(v\cdot\xi)\chi_i)
 \nnm\\
&\quad+W_4(R(\lambda,\xi)\P_1(v\cdot\xi)\chi_4, (v\cdot\xi)\chi_i),\label{A_7}
 \\
 \lambda W_4
 &=-\i\sqrt{\frac23}(W\cdot\xi)
   +\dsum^3_{j=1}W_j(R(\lambda,\xi)\P_1(v\cdot\xi)\chi_j,(v\cdot\xi)\chi_4)
   \nnm\\
   &\quad+W_4(R(\lambda,\xi)\P_1(v\cdot\xi)\chi_4,(v\cdot\xi)\chi_4).  \label{A_8}
 \ema

We apply the following transform so as to simplify the system \eqref{A_6}-\eqref{A_8}.
\begin{lem}\label{eigen_0}
Let  $e_1=(1,0,0)$, $\xi=s\omega$ with $s\in \R$, $\omega=(\omega_1,\omega_2,\omega_3)\in \S^2$. Then, it holds for $1\le i,j\le 3$ and $\text{Re}\lambda>-\mu$ that
\bma
 (R(\lambda,\xi)\P_1(v\cdot\xi)\chi_j,(v\cdot\xi)\chi_i)
 =&s^2(\delta_{ij}-\omega_i\omega_j)(R(\lambda,se_1)\P_1(v_1\chi_2),v_1\chi_2)
\nnm\\
 & +s^2\omega_i\omega_j(R(\lambda,se_1)\P_1(v_1\chi_1),v_1\chi_1),\label{T_1}
\\
 (R(\lambda,\xi)\P_1(v\cdot\xi)\chi_4,(v\cdot\xi)\chi_i)
=&s^2\omega_i(R(\lambda,se_1)\P_1(v_1\chi_4),v_1\chi_1),\label{T_2}
\\
 (R(\lambda,\xi)\P_1(v\cdot\xi)\chi_i,(v\cdot\xi)\chi_4)
=&s^2\omega_i(R(\lambda,se_1)\P_1(v_1\chi_1),v_1\chi_4),\label{T_3}
\\
 (R(\lambda,\xi)\P_1(v\cdot\xi)\chi_4,(v\cdot\xi)\chi_4)
=&s^2(R(\lambda,se_1)\P_1(v_1\chi_4),v_1\chi_4).\label{T_4}
\ema
\end{lem}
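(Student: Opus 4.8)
The plan is to reduce every inner product $(R(\lambda,\xi)\P_1(v\cdot\xi)\chi_k,(v\cdot\xi)\chi_l)$ with the general direction $\xi=s\omega$ to the canonical direction $\xi=se_1$ by exploiting the rotational invariance of the operator $R(\lambda,se_1)$ built out of $L$, $\P_1$, and $v_1$. First I would fix a rotation $\mathcal O\in SO(3)$ with $\mathcal O e_1=\omega$, and let $U_{\mathcal O}$ denote the induced unitary on $L^2(\R^3_v)$ defined by $(U_{\mathcal O}g)(v)=g(\mathcal O^{-1}v)$. The two essential commutation facts are: (i) $L$ commutes with $U_{\mathcal O}$, since the collision kernel $B(|v-v_*|,\cos\theta)$ and the Maxwellian $M$ are rotation invariant; (ii) $\P_0$ (hence $\P_1=I-\P_0$) commutes with $U_{\mathcal O}$, because the null space $N_0$ spanned by $\{\sqrt M,v_j\sqrt M,(|v|^2-3)\sqrt M/\sqrt6\}$ is mapped to itself by any rotation (the span of $\{v_1,v_2,v_3\}\sqrt M$ is rotation invariant, and $\sqrt M$, $(|v|^2-3)\sqrt M$ are rotation invariant individually). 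Finally $U_{\mathcal O}^{-1}(v\cdot\xi)U_{\mathcal O}=(\mathcal O^{-1}v)\cdot\xi=v\cdot(\mathcal O^{-1}\xi)=s\,v_1$, since $\mathcal O^{-1}\xi=s\,\mathcal O^{-1}\omega=s e_1$. Combining these, $U_{\mathcal O}^{-1}R(\lambda,\xi)U_{\mathcal O}=R(\lambda,se_1)$ and $U_{\mathcal O}^{-1}\P_1(v\cdot\xi)U_{\mathcal O}=\P_1(s v_1)$, so that
\[
 (R(\lambda,\xi)\P_1(v\cdot\xi)\chi_k,(v\cdot\xi)\chi_l)
 = s^2\,(R(\lambda,se_1)\P_1 v_1(U_{\mathcal O}^{-1}\chi_k),v_1(U_{\mathcal O}^{-1}\chi_l)).
\]

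The second step is to compute $U_{\mathcal O}^{-1}\chi_k$ explicitly. Since $U_{\mathcal O}^{-1}$ acts as the rotation $\mathcal O^{-1}$ on velocity, we get $U_{\mathcal O}^{-1}\chi_0=\chi_0$, $U_{\mathcal O}^{-1}\chi_4=\chi_4$, and $U_{\mathcal O}^{-1}\chi_i=\sum_{j}(\mathcal O^{-1})_{ij}v_j\sqrt M=\sum_j(\mathcal O^{-1})_{ij}\chi_j$. Using $\mathcal O e_1=\omega$, i.e.\ the first column of $\mathcal O$ is $\omega$, one has $(\mathcal O^{-1})_{i1}=(\mathcal O^{T})_{i1}=\mathcal O_{1i}$; more convenient is to write each $\chi_i$, $1\le i\le 3$, in terms of the rotated basis: $U_{\mathcal O}^{-1}\chi_i = \omega_i\,\chi_1^{\mathrm{rot}} + (\text{combination of }\chi_2^{\mathrm{rot}},\chi_3^{\mathrm{rot}})$ is not quite the cleanest route. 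Instead I would argue directly on the reduced inner products: write $U_{\mathcal O}^{-1}\chi_i=\sum_{k=1}^3 a_{ik}v_k\sqrt M$ with $(a_{ik})$ an orthogonal matrix whose first row relates to $\omega$. Then expand
\[
 (R(\lambda,se_1)\P_1 v_1 U_{\mathcal O}^{-1}\chi_i,\,v_1 U_{\mathcal O}^{-1}\chi_j)
 = \sum_{k,l=1}^3 a_{ik}a_{jl}\,(R(\lambda,se_1)\P_1 v_1\chi_k,\,v_1\chi_l).
\]
The key reduction is that among the nine scalars $(R(\lambda,se_1)\P_1 v_1\chi_k,v_1\chi_l)$ for $1\le k,l\le3$, only the diagonal survives and moreover $(R\P_1 v_1\chi_2,v_1\chi_2)=(R\P_1 v_1\chi_3,v_1\chi_3)$, while the off-diagonal ones vanish. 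This follows from a parity/invariance argument in the remaining two variables $(v_2,v_3)$: the operator $R(\lambda,se_1)$ commutes with rotations fixing the $v_1$-axis (same rotation-invariance argument as above, restricted to $SO(2)$ acting on $(v_2,v_3)$) and with the reflections $v_2\mapsto-v_2$, $v_3\mapsto-v_3$; since $v_1\chi_2 = v_1v_2\sqrt M$ is odd in $v_2$ and even in $v_3$ while $v_1\chi_3$ is even in $v_2$ and odd in $v_3$ and $v_1\chi_1=v_1^2\sqrt M$ is even in both, and $v_1\chi_4$ is even in both, the cross terms between different symmetry types vanish; the $SO(2)$ invariance forces the $(2,2)$ and $(3,3)$ entries to coincide. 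Then $\sum_k a_{ik}a_{jk}\cdot(\text{value depending on whether }k=1\text{ or }k\in\{2,3\})$ collapses, using $\sum_k a_{ik}a_{jk}=\delta_{ij}$ and $a_{i1}=\omega_i$ (the first column of the orthogonal matrix $(a_{ik})=(\mathcal O^{-1})$ applied appropriately is $\omega$), to exactly $s^2(\delta_{ij}-\omega_i\omega_j)(R\P_1 v_1\chi_2,v_1\chi_2)+s^2\omega_i\omega_j(R\P_1 v_1\chi_1,v_1\chi_1)$, which is \eqref{T_1}. The identities \eqref{T_2}--\eqref{T_4} follow the same way, noting $U_{\mathcal O}^{-1}\chi_4=\chi_4$ so that only one factor gets rotated, producing the single $\omega_i$, and that $(R\P_1 v_1\chi_4,v_1\chi_k)=0$ for $k=2,3$ by the $v_2,v_3$ parity argument, leaving only the $k=1$ term.

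I expect the main obstacle to be bookkeeping rather than conceptual: one must be careful that $R(\lambda,se_1)$ genuinely commutes with $U_{\mathcal O'}$ for $\mathcal O'$ fixing $e_1$ — this needs that $\P_1(v_1\,\cdot\,)$ commutes with such $U_{\mathcal O'}$, which is true because $v_1$ is invariant and $\P_1$ commutes with all rotations — and that one correctly identifies the orthogonal change-of-basis matrix so that its relevant entries are $\omega_i$. A clean way to sidestep choosing $\mathcal O$ is to verify \eqref{T_1}--\eqref{T_4} by checking that both sides transform identically as tensors in $\omega$ under $SO(3)$ and agree at $\omega=e_1$: the right-hand sides are manifestly the most general $SO(3)$-covariant expressions (a symmetric $2$-tensor built from $\delta_{ij}$ and $\omega_i\omega_j$, respectively a vector built from $\omega_i$, respectively a scalar), and the left-hand sides are covariant by the invariance of $L,\P_1,M$; evaluating at $\omega=e_1$ and using the $(v_2,v_3)$-parity vanishing of the unwanted components pins down all coefficients. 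Either route works; the tensor-covariance route is shorter to write and I would present that, falling back on the explicit rotation only to justify the covariance claim.
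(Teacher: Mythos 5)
Your proposal is correct and follows essentially the same route as the paper: the paper implements your unitary conjugation as the change of variables $v=\O u$ with $\O^T\xi=se_1$ (so $\O_{i1}=\omega_i$), reduces $R(\lambda,\xi)$ to $R(\lambda,se_1)$ by rotation invariance of $L$ and $\P_1$, kills the off-diagonal entries by the reflections $w_l=-u_l$, identifies the $(2,2)$ and $(3,3)$ entries by the swap $w_2=u_3,\,w_3=u_2$, and collapses the sum using orthogonality of $\O$, exactly as you outline. Your closing tensor-covariance shortcut is only a repackaging of the same invariance facts, so no substantive difference remains.
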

  \def\O{\mathbb{O}}
\begin{proof}
Let $\O$ be an orthogonal transformation of $\R^3$, and denote $(\O f)(v)=f(\O v).$
Recalling the definition \eqref{Lf}
$$
 (Lf)(v)
 =\intr\ints B(|v-v_*|,\omega)
  (\sqrt{M'_*}f'+\sqrt{M'}f'_*-\sqrt{M}f_*-\sqrt{M_*}f)\sqrt{M_*}d\omega dv_*,
$$
and making the variable transforms $v\to \O v$, $v_*\to \O v_*$ and
$\omega\to \O \omega$ which imply  $v'\to \O v'$ and $v'_*\to \O
v'_*$, we can prove $(Lf)(\O v)= L(\O f)(v)$,  $\P_0f(\O v)=\P_0(\O
f)(v)$, and $[R(\lambda,\xi)f](\O v)=R(\lambda,\O ^T\xi)(\O f)(v)$
by straightforward computation.

For any given $\xi\neq 0$, we choose $\O $ to be a rotation
transform of $\R^3$ satisfying $\O ^{T}\xi=se_1$, from which we have
$\O_{i1}=\omega_i$. Take the variable transform $v=\O u$ so that
$v\cdot\xi=u\cdot \O ^{T}\xi=su_1$, we have
 \bma
  (R(\lambda,\xi)\P_1(v\cdot\xi)v_j\sqrt{M},(v\cdot\xi)v_i\sqrt{M})
 &=(R(\lambda,se_1)\P_1(s u_1(\dsum^3_{k=1}\O _{jk}u_k)\sqrt{M}),
    su_1(\dsum^3_{l=1}\O _{il}u_l)\sqrt{M}))
    \nnm\\
 &=s^2\dsum^3_{k,l=1}\O_{jk}\O_{il}
   (R(\lambda,se_1)\P_1(u_1u_k\sqrt{M}),u_1u_l\sqrt{M}).   \label{l}
   \ema
To deal with the right hand side of  \eqref{l}, we assume without
the loss of generality that $l\neq1$ if $k\neq l$. By changing
variable $w_l=-u_l,\ w_j=u_j\ (j\neq l)$, we have
 $$
 (R(\lambda,se_1)\P_1(u_1u_k\sqrt{M}),u_1u_l\sqrt{M})
 =-(R(\lambda,se_1)\P_1(w_1w_k\sqrt{M}),w_1w_l\sqrt{M}),
 $$
where we have used the fact that $R(\lambda,se_1)$ is invariant
under any rotation transform $\O$ with $\O e_1= e_1$. This implies
$$
 (R(\lambda,se_1)\P_1(u_1u_k\sqrt{M}),u_1u_l\sqrt{M})=0,
  \quad \mbox{for}\quad k\neq l.
$$
If $k=l=3$, by changing variable $w_2=u_3,\
w_3=u_2,\ w_1=u_1$, we have
$$
(R(\lambda,se_1)\P_1(u_1u_3\sqrt{M}),u_1u_3\sqrt{M})
 =(R(\lambda,se_1)\P_1(w_1w_2\sqrt{M}),w_1w_2\sqrt{M}).
 $$
 The combination of the above two cases yields \eqref{T_1}.

Applying the above variable transform  $v=\O u$ again, we have
 \bmas
(R(\lambda,\xi)\P_1(v\cdot\xi)\chi_4,(v\cdot\xi)v_i\sqrt M)
&=(R(\lambda,se_1)\P_1(su_1)\chi_4,su_1(\dsum^3_{k=1}\O _{ik}u_k)\sqrt{M}))\\
&=s^2\dsum^3_{k=1}\O _{ik}(R(\lambda,se_1)\P_1(u_1\chi_4),u_1u_k\sqrt{M}),
 \emas
which leads to \eqref{T_2} since it can be shown that
$(R(\lambda,se_1)\P_1(u_1\chi_4),u_1u_k\sqrt{M})=0$ if $k\neq 1$
after changing variable $w_k=-u_k$ and $w_j=u_j$ with $j\neq k$.
Similar argument yields \eqref{T_3} and \eqref{T_4}.
\end{proof}

With the help of \eqref{T_1}-\eqref{T_4}, the equations
\eqref{A_6}-\eqref{A_8} can be simplified as
 \bma
 \lambda W_0&=-\i s(W\cdot\omega),\label{A_9}
 \\
 \lambda W_i
 &=-\i W_0\(s+\frac1s\)\omega_i
   -\i s\sqrt{\frac23}W_4\omega_i
   +s^2(W\cdot\omega)\omega_i(R(\lambda,se_1)\P_1(v_1\chi_1),v_1\chi_1)
   \nnm\\
&\quad+s^2(W_i-(W\cdot\omega)\omega_i)
       (R(\lambda,se_1)\P_1(v_1\chi_2),v_1\chi_2)
 \nnm\\
&\quad+s^2W_4\omega_i(R(\lambda,se_1)\P_1(v_1\chi_4),v_1\chi_1),
      \quad i=1,2,3,    \label{A_10}
 \\
 \lambda W_4
 &=-\i s\sqrt{\frac23}(W\cdot\omega)
    +s^2(W\cdot\omega)(R(\lambda,se_1)\P_1(v_1\chi_1),v_1\chi_4)
   \nnm\\
   &\quad+s^2W_4(R(\lambda,se_1)\P_1(v_1\chi_4),v_1\chi_4).\label{A_11}
 \ema
Multiplying \eqref{A_10} by $\omega_i$ and making the summation of resulted equations, we have
 \bma
\lambda (W\cdot\omega)&=-\i W_0\(s+\frac1
s\)-\i s\sqrt{\frac23}W_4+s^2(W\cdot\omega)(R(\lambda,se_1)\P_1(v_1\chi_1),v_1\chi_1)\nnm\\
&\quad+s^2 W_4(R(\lambda,se_1)\P_1(v_1\chi_4),v_1\chi_1).\label{A_12}
 \ema
Furthermore, we multiply \eqref{A_12} by $\omega_i$ and subtract the resulted equation from \eqref{A_10} to have
 \bq
 (W_i-(W\cdot\omega)\omega_i)
 (\lambda-s^2(R(\lambda,se_1)\P_1(v_1\chi_2),v_1\chi_2))=0,
  \quad i=1,2,3.  \label{A_12a}
 \eq

Denote by $U=(W_0,W\cdot\omega,W_4)$ a vector in $\R^3$. The system \eqref{A_9}, \eqref{A_11} and \eqref{A_12} can be written as $\mathbb{M}U=0$ with the matrix $ \mathbb{M}$ defined by
 \bq
 \mathbb{M}=\left(\ba
  \lambda & \i s & 0\\
   \i(s+\frac1s)
  &\lambda-s^2a_{11} 
  &\i s\sqrt{\frac23}-s^2a_{41}
  \\
    0
  &\i s\sqrt{\frac23}-s^2a_{14}
  &\lambda-s^2 a_{44}
  \ea\right),\label{BM}
 \eq
with $a_{ij}=:(R(\lambda,se_1)\P_1(v_1\chi_i),v_1\chi_j)$.
The equation  $\mathbb{M}U=0$ admits a non-trivial solution $U\neq 0$ for $\text{Re}\lambda>-\mu$ if and only if it holds $\rm{det}(\mathbb{M})=0$ for $\text{Re}\lambda>-\mu$.
To solve this equation, we need to investigate the eigenvalues of the matrix $\mathbb{M}$.
Denote
 \bq
 D_0(\lambda,s)=:\lambda-s^2(R(\lambda,se_1)\P_1(v_1\chi_2),v_1\chi_2).   \label{D0}
 \eq
Then, by direct computation and the  implicit function theorem, we can show
\begin{lem}
\label{eigen_1}
 The equation $D_0(\lambda,s)=0$ has a unique $C^\infty$ solution
$\lambda=\lambda(s)$ for $(s,\lambda)\in[-r_0, r_0]\times B_{r_1}(0)$ with $r_0,r_1>0$ being small constants that satisfies
$$\lambda(0)=0,\quad \lambda'(0)=0,\quad
\lambda''(0)=2(L^{-1}\P_1(v_1\chi_2),v_1\chi_2).$$
\end{lem}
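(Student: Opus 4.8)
The plan is to apply the implicit function theorem to the scalar equation $D_0(\lambda,s)=0$ near the point $(\lambda,s)=(0,0)$. First I would record that $R(0,0)=(L\P_1)^{-1}=L^{-1}$ on $N_0^\bot$ (by \eqref{S_3} the resolvent $R(\lambda,\xi)$ is well-defined and analytic in $\lambda$ for $\mathrm{Re}\lambda>-\mu$ and smooth in $\xi$ near $0$), so that $D_0(0,0)=0$. Next, to invoke the implicit function theorem I need $\partial_\lambda D_0(0,0)\ne 0$: differentiating \eqref{D0} in $\lambda$ gives $\partial_\lambda D_0=1-s^2\partial_\lambda(R(\lambda,se_1)\P_1(v_1\chi_2),v_1\chi_2)$, which equals $1$ at $s=0$. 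Hence there is a unique $C^\infty$ function $\lambda=\lambda(s)$ on a small interval $[-r_0,r_0]$, with values in a small ball $B_{r_1}(0)$, solving $D_0(\lambda(s),s)=0$ and $\lambda(0)=0$. (One should note $D_0$ is genuinely smooth jointly in $(\lambda,s)$: $R(\lambda,se_1)$ depends analytically on $\lambda$ by the Neumann series for the resolvent of the bounded perturbation $\i sP_1(v_1\cdot)P_1$ of $L$, and polynomially/smoothly on $s$, while $\P_1(v_1\chi_2)\in N_0^\bot$ lies in the domain so the pairing is well-defined.)

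The remaining work is to extract the Taylor coefficients of $\lambda(s)$ at $s=0$. I would differentiate the identity $D_0(\lambda(s),s)\equiv 0$ in $s$. For $\lambda'(0)$: write $D_0(\lambda(s),s)=\lambda(s)-s^2 g(\lambda(s),s)$ with $g(\lambda,s)=(R(\lambda,se_1)\P_1(v_1\chi_2),v_1\chi_2)$; then $\lambda'(s)=s^2\partial_s g+2sg+s^2\partial_\lambda g\cdot\lambda'(s)$, so $\lambda'(0)=0$ immediately since every term carries a factor $s$. For $\lambda''(0)$: differentiating once more and evaluating at $s=0$, all terms with a surviving factor $s$ drop, leaving $\lambda''(0)=2g(0,0)=2(R(0,0)\P_1(v_1\chi_2),v_1\chi_2)=2(L^{-1}\P_1(v_1\chi_2),v_1\chi_2)$, using $R(0,0)=(L|_{N_0^\bot})^{-1}$. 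Here I am using that $v_1\chi_2=v_1^2\sqrt M$ has nonzero $\P_1$-component, so $L^{-1}$ acts on it in $N_0^\bot$ and the quantity is well-defined (and in fact strictly negative by \eqref{L_4}, which is consistent with $\lambda''(0)<0$ giving a diffusive mode, though the lemma only asserts the formula).

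The main obstacle, such as it is, is the bookkeeping needed to justify that $D_0$ is smooth in $(\lambda,s)$ up to and including $s=0$ and $\lambda=0$ — i.e. that the resolvent $R(\lambda,se_1)=[L-\lambda\P_1-\i s\P_1(v_1\cdot)\P_1]^{-1}$ on $N_0^\bot$ extends smoothly across $\lambda=0$. This follows from Lemma~\ref{LP}(2): for $\mathrm{Re}\lambda>-\mu$ the operator $\lambda\P_1-Q(\xi)$ is boundedly invertible with norm bound $(\mathrm{Re}\lambda+\mu)^{-1}$, so $R$ is a bounded-operator-valued analytic function of $\lambda$ there (including a neighborhood of $0$) and, writing $Q(se_1)=L-\i s\P_1(v_1\cdot)\P_1$, depends analytically on the real parameter $s$ via the Neumann series; composing with the fixed vectors $\P_1(v_1\chi_2)$ and $v_1\chi_2$ (both in $N_0^\bot$, hence in the domain) yields a $C^\infty$ — indeed real-analytic — scalar function $g(\lambda,s)$, whence $D_0(\lambda,s)=\lambda-s^2g(\lambda,s)$ is $C^\infty$. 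Once this regularity is in hand, the implicit function theorem and the two differentiations above finish the proof.
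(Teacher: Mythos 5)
Your proposal is correct and follows essentially the same route as the paper, which proves Lemma~\ref{eigen_1} precisely by noting $D_0(\lambda,s)=\lambda-s^2(R(\lambda,se_1)\P_1(v_1\chi_2),v_1\chi_2)$ is smooth, checking $D_0(0,0)=0$ and $\partial_\lambda D_0(0,0)=1$, invoking the implicit function theorem, and computing $\lambda'(0)$, $\lambda''(0)$ by differentiating the identity (the same scheme carried out in detail for Lemma~\ref{eigen_2}). The only blemish is the parenthetical identification $v_1\chi_2=v_1^2\sqrt M$, which should read $v_1\chi_2=v_1v_2\sqrt M$; this does not affect the argument, since $v_1v_2\sqrt M\in N_0^\bot$ and the pairing $(L^{-1}\P_1(v_1\chi_2),v_1\chi_2)$ is still well defined and negative.
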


We have the following result about  the eigenvalues of the matrix $\mathbb{M}$.
\begin{lem}
\label{eigen_2}
There exist two small constants $r_0>0$ and $r_1>0$ so that the equation $D(\lambda,s)=:\rm{det}(\mathbb{M})=0$ admits exactly one $C^\infty$ solution $\lambda_j(s)$ $(j=-1,0,1)$ for $(s,\lambda_j)\in
[-r_0,r_0]\times B_{r_1}(j\i)$ that satisfy
 \bma
 \lambda_j(0)=&j\i,\quad \lambda'_j(0)=0, \label{T_5a} \\
 \lambda''_{\pm1}(0)
 =&(L(L+\i\P_1)^{-1}\P_1(v_1 \chi_1),(L+\i\P_1)^{-1}\P_1(v_1\chi_1))
\nnm\\
& \pm \i(\|(L+\i\P_1)^{-1}\P_1(v_1\chi_1)\|^2+\frac53),\label{T_5}
\\
\lambda''_{0}(0)=&2(L^{-1}\P_1(v_1\chi_4),v_1\chi_4).\label{T_6}
 \ema
Moreover, $\lambda_j(s)$ is an even function and satisfies
 \bq
 \overline{\lambda_j(s)}=\lambda_{-j}(-s)
 =\lambda_{-j}(s)\quad \text{for}\quad j=0,\pm 1.\label{L_8}
 \eq
In particular, $\lambda_0(s)$ is a real function.
\end{lem}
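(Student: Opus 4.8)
The plan is to treat $D(\lambda,s):=\det(\mathbb{M})$ as an analytic function of $(\lambda,s)$ near $\{\mathrm{Re}\,\lambda>-\mu\}\times\{s=0\}$ and apply the implicit function theorem at each of the three simple zeros of $\lambda\mapsto D(\lambda,0)$; the expansions \eqref{T_5}--\eqref{T_6} and the symmetries \eqref{T_5a}, \eqref{L_8} are then read off afterwards. First I would expand the determinant \eqref{BM} along the first row: the entry $\i(s+\frac1s)$ occurs only multiplied by $\i s$, so $(\i s)\cdot\i(s+\frac1s)=-(1+s^2)$, the apparent pole at $s=0$ cancels, and
$$D(\lambda,s)=\lambda\big[(\lambda-s^2a_{11})(\lambda-s^2a_{44})-(\i s\sqrt{\tfrac23}-s^2a_{41})(\i s\sqrt{\tfrac23}-s^2a_{14})\big]+(1+s^2)(\lambda-s^2a_{44}),$$
with $a_{ij}=a_{ij}(\lambda,s)=(R(\lambda,se_1)\P_1(v_1\chi_i),v_1\chi_j)$. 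By Lemma~\ref{LP}, $R(\lambda,se_1)=(Q(se_1)-\lambda\P_1)^{-1}$ exists for $\mathrm{Re}\,\lambda>-\mu$, and since $Q(se_1)=L-\i s\P_1 v_1\P_1$ (see \eqref{Qxi}) depends analytically on $s$ near $0$ — standard resolvent perturbation theory, using the smoothing of $(L-\lambda\P_1)^{-1}$ to absorb the unbounded perturbation $\P_1 v_1\P_1$, cf.~\cite{Ukai3} — so do the $a_{ij}$ and hence $D$. Setting $s=0$ removes every $s$-dependent term, so $D(\lambda,0)=\lambda^3+\lambda=\lambda(\lambda-\i)(\lambda+\i)$ has exactly the three simple zeros $\lambda=0,\pm\i$, while $\partial_\lambda D(\lambda,0)=3\lambda^2+1$ equals $1$ at $\lambda=0$ and $-2$ at $\lambda=\pm\i$ — never vanishing there.

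Because $D$ is holomorphic in $\lambda$ and smooth in the real variable $s$, the implicit function theorem (its Jacobian in $(\mathrm{Re}\,\lambda,\mathrm{Im}\,\lambda)$ being nondegenerate exactly when $\partial_\lambda D\ne0$; Rouché's theorem yields the corresponding local uniqueness) gives, for each $j\in\{-1,0,1\}$, small $r_0,r_1>0$ and a unique $C^\infty$ branch $\lambda_j:[-r_0,r_0]\to B_{r_1}(j\i)$ with $D(\lambda_j(s),s)=0$ and $\lambda_j(0)=j\i$; taking the smallest of the three radii gives common $r_0,r_1$. The symmetries then follow by uniqueness. Let $J$ be the reflection $(Jf)(v_1,v_2,v_3)=f(-v_1,v_2,v_3)$; then $JLJ=L$, $J\P_1J=\P_1$, $J$ anticommutes with multiplication by $v_1$, so $JR(\lambda,se_1)J=R(\lambda,-se_1)$, and since $v_1\chi_1$ is even while $v_1\chi_2,v_1\chi_4$ are odd under $J$, the coefficients $a_{11},a_{44}$ are even in $s$ and $a_{14},a_{41}$ are odd in $s$ (in particular $a_{14}(\lambda,0)=a_{41}(\lambda,0)=0$); substituting into the formula above, every term of $D$ is even in $s$, so $D(\lambda,-s)=D(\lambda,s)$, which forces $\lambda_j(-s)=\lambda_j(s)$, i.e. $\lambda'_j(0)=0$. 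Likewise $L$ and $\P_1$ have real kernels, so $\overline{R(\lambda,se_1)}=R(\bar\lambda,-se_1)$, hence $\overline{a_{ij}(\lambda,s)}=a_{ij}(\bar\lambda,-s)$ and $\overline{D(\lambda,s)}=D(\bar\lambda,-s)=D(\bar\lambda,s)$; therefore $\overline{\lambda_j(s)}$ is the branch near $\overline{j\i}=-j\i$, i.e. $\overline{\lambda_j(s)}=\lambda_{-j}(s)=\lambda_{-j}(-s)$, which is \eqref{L_8}, and in particular $\lambda_0$ is real.

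For the second-order coefficients, evenness gives $\lambda_j(s)=j\i+\tfrac12\lambda''_j(0)s^2+O(s^4)$, so I would collect the $s^2$-coefficient of $D$: using that only $a_{ij}(\lambda,0)$ contribute at order $s^2$ and that $(\i s\sqrt{2/3}-s^2a_{41})(\i s\sqrt{2/3}-s^2a_{14})=-\tfrac23 s^2+O(s^4)$, one obtains $D(\lambda,s)=\lambda(\lambda^2+1)+s^2h(\lambda)+O(s^4)$ with $h(\lambda)=\tfrac53\lambda-\lambda^2\big(a_{11}(\lambda,0)+a_{44}(\lambda,0)\big)-a_{44}(\lambda,0)$. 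Inserting the expansion of $\lambda_j$ into $D=0$ and matching the $s^2$-term gives $\lambda''_j(0)=-2h(j\i)/\partial_\lambda D(j\i,0)$. Since $R(\lambda,0)=(L-\lambda\P_1)^{-1}$ on $N_0^\bot$, for $j=0$ this is $2a_{44}(0,0)=2(L^{-1}\P_1(v_1\chi_4),v_1\chi_4)$, which is \eqref{T_6}; for $j=\pm1$, using $(j\i)^2=-1$ and $\partial_\lambda D(\pm\i,0)=-2$, it equals $h(\pm\i)=a_{11}(\pm\i,0)\pm\tfrac53\i$, with $a_{11}(\i,0)=((L-\i\P_1)^{-1}\P_1(v_1\chi_1),\P_1(v_1\chi_1))$. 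Putting $w=(L+\i\P_1)^{-1}\P_1(v_1\chi_1)\in N_0^\bot$, reality of $L$ gives $(L-\i\P_1)^{-1}\P_1(v_1\chi_1)=\bar w$, so $a_{11}(\i,0)=(\bar w,\P_1(v_1\chi_1))=\overline{(w,\P_1(v_1\chi_1))}=\overline{(w,Lw)-\i\|w\|^2}=(Lw,w)+\i\|w\|^2$, using $\P_1(v_1\chi_1)=(L+\i\P_1)w$ and that $(w,Lw)=(Lw,w)\in\R$ by self-adjointness of $L$ on $N_0^\bot$. Hence $\lambda''_1(0)=(Lw,w)+\i(\|w\|^2+\tfrac53)$, and $\lambda''_{-1}(0)=\overline{\lambda''_1(0)}=(Lw,w)-\i(\|w\|^2+\tfrac53)$ by \eqref{L_8}; this is \eqref{T_5}.

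The $3\times3$ determinant algebra and the $s^2$-expansion are routine; the two points requiring care are the analyticity of the coefficients $a_{ij}$ in the frequency $s$ — because $Q(se_1)$ carries the unbounded perturbation $\P_1(v\cdot se_1)\P_1$, one must invoke the regularizing effect of $(L-\lambda\P_1)^{-1}$, as usual — and the conjugation identity reducing $a_{11}(\pm\i,0)$ to the quadratic form in \eqref{T_5}. I expect the analyticity step to be the main obstacle; the conjugation identity is only a careful bookkeeping of complex conjugates.
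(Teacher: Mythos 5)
Your proposal is correct and follows essentially the same route as the paper: expand $\det\mathbb{M}$, apply the implicit function theorem at the three simple roots of $D(\lambda,0)=\lambda(\lambda^2+1)$, read off $\lambda''_j(0)=-\partial_s^2D(j\i,0)/\partial_\lambda D(j\i,0)$ from the $s^2$-coefficient, and deduce \eqref{L_8} from $D(\lambda,-s)=D(\lambda,s)$ and $\overline{D(\lambda,s)}=D(\bar\lambda,-s)$. The only differences are cosmetic refinements of steps the paper asserts without detail: you justify the parity and conjugation symmetries of the $a_{ij}$ via the reflection $v_1\mapsto-v_1$, flag the smoothness of $a_{ij}$ in $s$, and carry out explicitly the conjugation identity converting $((L-\i\P_1)^{-1}\P_1(v_1\chi_1),v_1\chi_1)$ into the quadratic-form expression appearing in \eqref{T_5}.
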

\begin{proof}By  direct computation, we have
 \bma
 D(\lambda,s)=
&\lambda^3-\lambda^2s^2[(R(\lambda,se_1)\P_1(v_1\chi_1),v_1\chi_1)
 +(R(\lambda,se_1)\P_1(v_1\chi_4),v_1\chi_4)]\nnm\\
&+\lambda\Big\{1+\frac53s^2+\i\sqrt{\frac23}s^3[(R(\lambda,se_1)\P_1(v_1\chi_4),v_1\chi_1)
 +(R(\lambda,se_1)\P_1(v_1\chi_1),v_1\chi_4)]\nnm\\
&\qquad\quad+s^4(R(\lambda,se_1)\P_1(v_1\chi_4),v_1\chi_4)(R(\lambda,se_1)\P_1(v_1\chi_1),v_1\chi_1)\nnm\\
&\qquad\quad-s^4(R(\lambda,se_1)\P_1(v_1\chi_4),v_1\chi_1)(R(\lambda,se_1)\P_1(v_1\chi_1),v_1\chi_4)\Big\}\nnm\\
&-(s^2+s^4)(R(\lambda,se_1)\P_1(v_1\chi_4),v_1\chi_4).\label{A_13}
 \ema
\eqref{A_13} has three roots of the form $(\lambda,s)=(j\i,0)$ for $j=-1,0,1$, with $\lambda=j\i$ being the solution to $D(\lambda,0)=\lambda(\lambda^2+1)$.
Since for each $j\i$,  $j=0,\pm1$, it holds
  \bq
  {\partial_s}D(j\i,0)=0,     
 \quad
  {\partial_\lambda}D(j\i,0)=1-3j^2\neq0,  \label{lamd1}
 \eq
the implicit function theorem implies that there exists
small constants $r_0,r_1>0$ and a unique $C^\infty$ function $\lambda_j(s)$: $[-r_0,r_0]\to B_{r_1}(j\i)$ so that $D(\lambda_j(s),s)=0$ for $s\in [-r_0,r_0]$, and in particular
\bq
 \lambda_j(0)=j \i\quad {\rm and}\quad
 \lambda_j'(0)=-\mbox{$\frac{{\partial_s}D(j\i,0)}{{\partial_\lambda}D(j\i,0)}$}=0,
 \quad j=0,\pm1.                 \label{lamd2}
 \eq
Direct computation gives
\bmas
{\partial^2_s}D(j\i,0)
&=2j^2[((L-j\i\P_1)^{-1}\P_1(v_1\chi_1),
v_1\chi_1)+((L-j\i\P_1)^{-1}\P_1(v_1\chi_4),v_1\chi_4)]
\\
&+\frac{10}3j\i-2((L-j\i\P_1)^{-1}\P_1(v_1\chi_4),v_1\chi_4),
\emas
which together with \eqref{lamd1}  yields
 \bq
 \left\{\begin{aligned}
  \lambda_0''(0)
  &=-\mbox{$\frac{\partial_s^2D(0,0)}{\partial_\lambda D(0,0)}$}
   =2(L^{-1}\P_1(v_1\chi_4),v_1\chi_4),
   \\
 \lambda_{\pm1}''(0)
 &=-\mbox{$\frac{\partial_s^2 D(\pm \i,0)}{\partial_\lambda D(\pm \i,0)}$}
  =((L\mp \i\P_1)^{-1}\P_1(v_1\chi_1),v_1\chi_1)
   \pm \frac53\i.
 \end{aligned}\right.                     \label{lamd3}
 \eq  Thus, the properties \eqref{T_5a}--\eqref{T_6} follow from \eqref{lamd2}--\eqref{lamd3} and
$\overline{\lambda''_1(0)}=\lambda''_{-1}(0)$.

Finally, since  by \eqref{A_13}, it holds that
$D(\lambda,s)=D(\lambda,-s)$,
$\overline{D(\lambda,s)}=D(\overline{\lambda},-s)$, we can obtain
\eqref{L_8} by using the fact that $\lambda_j(s)=j\i+O(s^2)$ as
$s\rightarrow0.$
\end{proof}

\begin{remark}
 \label{absence}
In general, the electric potential equation in \eqref{VPB4} takes the form  $\eps^2\Delta_x\Phi=\intr f\sqrt Mdv$ with $\eps>0$.
Then similar to  the above lemma, we can prove that there exists a constant $r_0(\eps)>0$ such that
the equation
$D(\lambda,s)=0$ has exactly three solutions
$\lambda_{j}(s)\,(j=0,\pm1)$ for $|s|\le r_0(\eps)$, which satisfy  $\lambda_j(0)=\frac{j\i}\eps\rightarrow 0$ as
$\eps\rightarrow\infty$.
\end{remark}

With the help of Lemmas \ref{eigen_0}--\ref{eigen_2}, we are able to
construct the eigenfunction $\psi_j(s,\omega)$ corresponding to the
eigenvalue $\lambda_j$ at the low frequency. Indeed, we have
\begin{thm}\label{eigen_3}
There exists a constant $r_0>0$ so that the spectrum $\lambda\in\sigma(B(\xi))\subset\mathbb{C}$ for $\xi=s\omega$ with $|s|\leq r_0$ and $\omega\in \mathbb{S}^2$ consists of five points $\{\lambda_j(s),\ j=-1,0,1,2,3\}$ on the domain $\mathrm{Re}\lambda>-\mu /2$. The spectrum $\lambda_j(s)$ and the corresponding eigenfunction $\psi_j(s,\omega)$ are $C^\infty$ functions of $s$ for $|s|\leq r_0$. In particular, the eigenvalues admit the following asymptotic expansion for $|s|\leq r_0$
 \be                                   \label{specr0}
 \left\{\bln
 \lambda_{\pm1}(s)=&\pm \i+(-a_1\pm\i b_1)s^2+o(s^2),\quad  \overline{\lambda_1}=\lambda_{-1},\\
 \lambda_{0}(s) =& -a_0s^2+o(s^2),\\
 \lambda_{2}(s) =& \lambda_{3}(s) =-a_2s^2+o(s^2),
 \eln\right.
 \ee
where $a_j>0$ $(0\le j\le2)$ and $b_1>0$ are defined by
\bq\left\{\bln
a_1&=-\frac12(L(L+\i\P_1)^{-1}\P_1(v_1 \chi_1),(L+\i\P_1)^{-1}\P_1(v_1\chi_1)),\\
b_1&=\frac12(\|(L+\i\P_1)^{-1}\P_1(v_1\chi_1)\|^2+\frac53),\\
a_0&=-(L^{-1}\P_1(v_1\chi_4),v_1\chi_4),\quad a_2=-(L^{-1}\P_1(v_1\chi_2),v_1\chi_2).
\eln\right.
\eq
The eigenfunctions are   orthogonal to each other and satisfy
 \be
 \left\{\bln
 &(\psi_j(s,\omega),\overline{\psi_k(s,\omega)})_\xi=\delta_{jk},
  \quad  j, k=-1,0,1,2,3,                                  \label{eigfr0}
 \\
&\psi_j(s,\omega)
 =\psi_{j,0}(\omega)+\psi_{j,1}(\omega)s+\psi_{j,2}(\omega)s^2+o(s^2), \quad |s|\leq r_0,
 \eln\right.
 \ee
where the coefficients $\psi_{j,n}$  are given by
 \bq
  \left\{\bln                      \label{eigf1}
 &\psi_{0,0}=\chi_4,\quad
  \psi_{0,1}=\i L^{-1}\P_1(v\cdot\omega)\chi_4,\quad
  (\psi_{0,2},\sqrt{M})=-\mbox{$\sqrt{\frac23}$},
 \\
&\psi_{\pm1,0}=\mbox{$\frac{\sqrt2}2$}(v\cdot\omega)\sqrt{M},\quad
 (\psi_{\pm1,2},\sqrt{M})=0,
 \\
 &\psi_{\pm1,1}=\mp\mbox{$\frac{\sqrt2}2$}\sqrt{M}
  \mp \mbox{$\frac{\sqrt{3}}{3}$}\chi_4+\mbox{$\frac{\sqrt2}2$}\i(L\mp \i\P_1)^{-1}\P_1(v\cdot\omega)^2\sqrt{M},
 \\
&\psi_{j,0}=(v\cdot c^j)\sqrt{M},\quad (\psi_{j,n},\sqrt M)=(\psi_{j,n},\chi_4)=0\,\,\, (n\geq0),\\
  &\psi_{j,1}=\i L^{-1}\P_1[(v\cdot\omega)(v\cdot c^j)\sqrt{M}],\quad j=2,3.
  \eln\right.
  \eq
Here, $c^j$ $(j=2,3)$ are orthonormal vectors satisfying
$c^j\cdot\omega=0$.
\end{thm}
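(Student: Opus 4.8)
The plan is to derive Theorem~\ref{eigen_3} from the finite-dimensional reduction of the eigenvalue problem already prepared in \eqref{A_4}--\eqref{A_12a}, combined with the localization in Lemma~\ref{spectrum} and the root counting in Lemmas~\ref{eigen_1}--\ref{eigen_2}. First I would fix $\delta_1=\mu/2$ in Lemma~\ref{spectrum}. Part~(2) of that lemma, namely \eqref{sg4}, then confines $\sigma(\hat B(\xi))\cap\{\mathrm{Re}\lambda\ge-\mu/2\}$, for $0<s=|\xi|\le r_2$, to the three discs of radius $\delta_2$ around $\alpha_{-1}(\xi),\alpha_0(\xi)=0,\alpha_1(\xi)$ (see \eqref{eigen}), and the spectrum there consists only of discrete eigenvalues by Lemma~\ref{Egn}. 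Shrinking $\delta_2$ and then $r_0\le r_2$, these discs sit inside $B_{r_1}(-\i),B_{r_1}(0),B_{r_1}(\i)$, where $r_1$ is the radius from Lemmas~\ref{eigen_1}--\ref{eigen_2}. On $\mathrm{Re}\lambda>-\mu$ the equation $\hat B(\xi)f=\lambda f$ is equivalent, through \eqref{A_4}, to the finite system \eqref{A_9},\eqref{A_11},\eqref{A_12},\eqref{A_12a}, which splits into the $3\times3$ block $\mathbb{M}U=0$ on $U=(W_0,W\cdot\omega,W_4)$ and the scalar factor $D_0(\lambda,s)$ of \eqref{D0} acting on the two-dimensional space of the components of $W$ transverse to $\omega$. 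Hence, for $\mathrm{Re}\lambda>-\mu/2$, $\lambda\in\sigma(\hat B(\xi))$ iff $D(\lambda,s):=\det\mathbb{M}=0$ or $D_0(\lambda,s)=0$. Lemma~\ref{eigen_2} supplies exactly the three simple roots $\lambda_{\pm1}(s),\lambda_0(s)$ of $D$ in $B_{r_1}(\pm\i)\cup B_{r_1}(0)$; Lemma~\ref{eigen_1} supplies exactly one root of $D_0$ in $B_{r_1}(0)$, carrying the two-dimensional transverse eigenspace $\{(v\cdot c)\sqrt M+f_1:\,c\perp\omega\}$, which I label $\lambda_2(s)=\lambda_3(s)$ ($D_0(\lambda,0)=\lambda$ has no zero near $\pm\i$, so nothing is lost there). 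This yields precisely five eigenvalues (with $\lambda_2=\lambda_3$) on $\mathrm{Re}\lambda>-\mu/2$ for $|s|\le r_0$, with $C^\infty$-dependence on $s$ by Lemmas~\ref{eigen_1}--\ref{eigen_2}.

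Next I would read off the eigenvalue expansions \eqref{specr0} by Taylor expanding the $C^\infty$ solutions of Lemmas~\ref{eigen_1}--\ref{eigen_2} at $s=0$, using $\lambda_j(0)=j\i$ and $\lambda_j'(0)=0$, together with the symmetry \eqref{L_8} (which gives $\overline{\lambda_1}=\lambda_{-1}$ and the reality of $\lambda_0$). Positivity of $a_0,a_1,a_2$ comes from the coercivity \eqref{L_4}: with $g_1=(L+\i\P_1)^{-1}\P_1(v_1\chi_1)\in N_0^\perp\setminus\{0\}$ one has $a_1=-\tfrac12(Lg_1,g_1)\ge\tfrac\mu2\|g_1\|^2>0$, and for $i\in\{2,4\}$, putting $h=L^{-1}\P_1(v_1\chi_i)\in N_0^\perp\setminus\{0\}$ gives $(L^{-1}\P_1(v_1\chi_i),v_1\chi_i)=(h,Lh)=(Lh,h)\le-\mu\|h\|^2<0$, hence $a_0,a_2>0$; $b_1>0$ is clear.

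For the eigenfunctions I would treat the two cases separately. When $\lambda=\lambda_2(s)=\lambda_3(s)$ the macroscopic part is $f_0=(v\cdot c)\sqrt M$ with $c\perp\omega$, and $f_1$ is recovered from \eqref{A_4}; expanding $R(\lambda_2(s),se_1)$ about $s=0$ (where $\lambda_2(0)=0$ and $R(0,0)=L^{-1}$ on $N_0^\perp$) yields $\psi_{j,0}=(v\cdot c^j)\sqrt M$, $\psi_{j,1}=\i L^{-1}\P_1[(v\cdot\omega)(v\cdot c^j)\sqrt M]$, and $(\psi_{j,n},\sqrt M)=(\psi_{j,n},\chi_4)=0$ for $j=2,3$. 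For $j\in\{-1,0,1\}$ one solves $\mathbb{M}U=0$ for $U$ up to a scalar, forms $f_0=W_0\sqrt M+(W\cdot\omega)(v\cdot\omega)\sqrt M+W_4\chi_4$, recovers $f_1$ from \eqref{A_4}, and finally rescales so that $(\psi_j,\overline{\psi_j})_\xi=1$. The main obstacle lies here: showing that after normalization $\psi_j(s,\omega)$ is $C^\infty$ in $s$ \emph{across} $s=0$, and extracting its Taylor coefficients. The difficulty is that the $(2,1)$-entry $\i(s+\tfrac1s)$ of $\mathbb{M}$ --- the imprint of the Poisson forcing $\i(v\cdot\xi)|\xi|^{-2}P_{\rm d}$ --- is singular at $s=0$. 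I would resolve this using the Hadamard factorization $\lambda_j(s)=j\i+s^2\tilde\lambda_j(s)$ with $\tilde\lambda_j$ smooth and $\tilde\lambda_j(0)\neq0$, and then tracking orders in \eqref{A_9},\eqref{A_11},\eqref{A_12}: the components multiplying the singular factor vanish to sufficiently high order ($W_0=O(s^2)$ when $j=0$, $W_0=O(s)$ when $j=\pm1$), so that dividing out the vanishing factors via Hadamard's lemma leaves $C^\infty$ coefficients. It is exactly this mechanism that forces a low-order coefficient of $\psi_j$ to be pinned down by a higher-order balance of the eigen-equation --- for instance $(\psi_{0,2},\sqrt M)=-\sqrt{2/3}$ is read off from the $O(s)$ part of \eqref{A_12}, the $\sqrt M$- and $\chi_4$-coefficients $\mp\tfrac{\sqrt2}{2},\mp\tfrac{\sqrt3}{3}$ of $\psi_{\pm1,1}$ from the $O(s^2)$ balance in \eqref{A_12},\eqref{A_11}, and the $N_0^\perp$-part of $\psi_{\pm1,1}$ from the $s$-linear term of \eqref{A_4} with $R(\pm\i,0)=(L\mp\i\P_1)^{-1}$. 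The weighted inner product is indispensable here: since $P_{\rm d}\psi_{\pm1}=W_0\sqrt M$ with $W_0=O(s)$, the term $|\xi|^{-2}\|P_{\rm d}\psi_{\pm1}\|^2$ contributes at order one, which is what produces the factor $\tfrac{\sqrt2}{2}$ in $\psi_{\pm1,0}$.

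Finally, for the bi-orthonormality \eqref{eigfr0} I would use \eqref{L_7}, $\hat B(\xi)^*=\hat B(-\xi)$ on $L^2_\xi(\R^3_v)$, together with $\overline{\hat B(\xi)f}=\hat B(-\xi)\bar f$ (valid since $L$ and $P_{\rm d}$ are real): the bilinear form $\langle f,g\rangle:=(f,\bar g)_\xi$ then makes $\hat B(\xi)$ symmetric, $\langle\hat B(\xi)f,g\rangle=\langle f,\hat B(\xi)g\rangle$, so eigenfunctions attached to distinct eigenvalues satisfy $(\psi_j,\overline{\psi_k})_\xi=0$. Inside the two-dimensional eigenspace of $\lambda_2=\lambda_3$ one picks a $\langle\cdot,\cdot\rangle$-orthonormal basis --- possible because at leading order $\langle\psi_{2,0},\psi_{3,0}\rangle=c^2\cdot c^3=\delta_{23}$ --- and a last rescaling of each $\psi_j$ gives \eqref{eigfr0}.
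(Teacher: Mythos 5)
Your proposal is correct and follows essentially the same route as the paper: the reduction of the eigenvalue problem to the determinant equations $D(\lambda,s)=0$ and $D_0(\lambda,s)=0$ via \eqref{A_4}--\eqref{A_12a}, the localization of Lemma~\ref{spectrum} together with the implicit-function-theorem roots of Lemmas~\ref{eigen_1}--\ref{eigen_2}, the construction of $\psi_j$ as in \eqref{C_2}--\eqref{C_3} with normalization in $(\cdot,\cdot)_\xi$, the adjoint relation $\hat B(\xi)^*=\hat B(-\xi)$ for the orthogonality, and Taylor expansion for \eqref{specr0} and \eqref{eigf1}. The only differences are that you spell out points the paper leaves implicit or dismisses as straightforward (exhaustiveness of the five eigenvalues, positivity of $a_j$ from \eqref{L_4}, and the resolution of the $1/s$ singularity when expanding the normalized eigenfunctions), which is consistent with, not divergent from, the paper's argument.
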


\begin{remark}\label{Ellis}
Different from the asymptotical behaviors of the eigenvalues of the
linearized Vlasov-Poisson-Boltzmann operator shown in
Theorem~\ref{eigen_3}, the eigenvalues of the linearized Boltzmann
operator $\hat E(\xi):=L-\i(v\cdot\xi)$ have the following Taylor
expansions for $|\xi|\le r_1$ with $r_1>0$ a constant
(cf.\cite{Ellis})
\be
 \left\{\bln
 \lambda_{\pm1}(s)&=\pm \i\sqrt{\frac53} s-a_{\pm1}s^2+o(s^2),\\
 \lambda_{0}(s) &=-a_0s^2+o(s^2),\\
 \lambda_{j}(s) &=-a_js^2+o(s^2), \qquad  j=2,3,
 \eln\right.
 \ee
where $a_j>0$ $(-1\le j\le 3)$ are defined by
\bq
\left\{\bln
a_{\pm1}&=-\frac15(L^{-1}\P_1(v_1\chi_4),v_1\chi_4)-\frac12(L^{-1}\P_1(v_1\chi_1),v_1\chi_1),\\
a_0&=-\frac35(L^{-1}\P_1(v_1\chi_4),v_1\chi_4),\quad a_2=a_3=-(L^{-1}\P_1(v_1\chi_2),v_1\chi_2).
\eln\right.
\eq
\end{remark}

\begin{remark}
The operator $A(\xi)$ defined by \eqref{Qxi} corresponds exactly to
that of the linearized Euler-Poisson system after taking Fourier
transform. Let $\psi_j(\xi)$, $-1\le j\le 3$, be the eigenfunction
of $A(\xi)$. Then we can obtain for $\xi=s\omega$
\bmas
\psi_0(\xi)
& =\(\frac{\sqrt{\frac23}s^2}{\sqrt{1+\frac53s^2}\sqrt{1+s^2}},
      0,0,0,\frac{\sqrt{s^2+1}}{\sqrt{1+\frac53s^2}}\),
      \\
\psi_{\pm1}(\xi)
&=\frac{\sqrt 2}2
  \(\frac{s}{\sqrt{1+\frac53s^2}},\mp\omega_1,\mp\omega_2,\mp\omega_3,
     \frac{\sqrt{\frac23}s}{\sqrt{1+\frac53s^2}}\),\\
\psi_{j}(\xi)&=(0,W^j_1,W^j_2,W^j_3,0),\quad j=2,3,
\emas
where $W^j=(W^j_1,W^j_2,W^j_3)$ satisfies $W^j\cdot\omega=W^i\cdot W^j=0$ and $|W^j|=1$ for $2\le i\ne j\le 3$.
It can be seen that $\psi_j(\xi)$ is not orthonormal to each other with the inner product $(\cdot,\cdot)$, but they obey the orthonormal relation as
$$(\psi_i(\xi),\psi_j(\xi))_\xi=\delta_{ij},\quad -1\le i,j\le 3.$$
\end{remark}

\begin{proof}
The eigenvalues $\lambda_j(s)$ and the eigenfunctions $\psi_j(s,\omega)$ can be constructed as follows. For $j=2,3$, we take $\lambda_j=\lambda(s)$ to be the solution of the equation  $D_0(\lambda,s)=0$ defined in Lemma \ref{eigen_1}, and
choose $W_0=0,\ W_4=0$, and $W^j$ to be the linearly independent vector
so that $W^j\cdot\omega=0$ and $W^2\cdot W^3=0$. And the corresponding eigenfunctions $\psi_2(s,\omega)$  and $\psi_3(s,\omega)$ are defined by
 \bq
  \psi_j(s,\omega)
 =(W^j\cdot v)\sqrt{M}
   +\i s[L-\lambda_j\P_1-\i s\P_1(v\cdot\omega)\P_1]^{-1}
        \P_1[(v\cdot\omega)(W^j\cdot v)\sqrt{M}],  \label{C_2}
\eq
which are orthonormal, i.e.,  $(\psi_2(s,\omega),\overline{\psi_3(s,\omega)})_\xi=0$.

For $j=-1,0,1$, we choose
$\lambda_j=\lambda_j(s)$ to be a solution of $D(\lambda,s)=0$ given by Lemma \ref{eigen_2}, and denote by $\{a_j,b_j,d_j\}=:\{W^j_0,\, (W\cdot\omega)^j,\, W^j_4\}$  a solution of system \eqref{A_9}, \eqref{A_11}, and \eqref{A_12} for $\lambda=\lambda_j(s)$. Then we can construct
$\psi_j(s,\omega)$ ($j=-1,0,1$) as
 \bq \left\{\bln \psi_j(s,\omega)&=\P_0\psi_j(s,\omega)+\P_1\psi_j(s,\omega),\\
\P_0\psi_j(s,\omega)&=a_j(s)\chi_0+b_j(s)(v\cdot\omega)\sqrt{M}+
d_j(s)\chi_4,\\
\P_1\psi_j(s,\omega)&=\i s[L-\lambda_j
\P_1-\i s\P_1(v\cdot\omega)\P_1]^{-1}\P_1[(v\cdot\omega)\P_0\psi_j(s,\omega)].
\eln\right.\label{C_3}
\eq
We write
$$(L-\i s(v\cdot\omega)-\frac \i s(v\cdot\omega)P_{\rm d})\psi_j(s,\omega)=\lambda_j(s)\psi_j(s,\omega), \quad -1\leq j\leq 3.$$
Taking the inner product $(\cdot,\cdot)_\xi$ of the above equation with $\overline{\psi_j(s,\omega)}$ and using the facts that
 \bgrs
 (\hat{B}(\xi) f,g)_\xi
 =(f,\hat{B}(-\xi)g)_\xi,\quad f,g\in L^2_\xi(\R^3_v)\cap D(\hat{B}(\xi)),
\\
 (L+\i s(v\cdot\omega)
 +\frac{\i}{s}(v\cdot\omega)P_{\rm d})\overline{\psi_j(s,\omega)}
 =\overline{\lambda_j(s)}\cdot\overline{\psi_j(s,\omega)},
 \egrs
we have
$$
(\lambda_j(s)-\lambda_{k}(s))(\psi_j(s,\omega),\overline{\psi_k(s,\omega)})_\xi=0,\quad -1\le j, k\le 3.
$$
For $s\ne 0$ being sufficiently small, $\lambda_j(s)\neq\lambda_{k}(s)$ for
$-1\le j\neq k\le1$ and $j=0,\pm1,\, k=2,3$.  Therefore, we have
$$
(\psi_j(s,\omega),\overline{\psi_k(s,\omega)})_\xi=0,\quad -1\leq j\neq k\leq3.
$$
We can normalize them by taking $(\psi_j(s,\omega),\overline{\psi_j(s,\omega)})_\xi=1$ for $-1\leq j\leq 3.$

To investigate the asymptotic expression of eigenfunctions at the
low frequency, we take the Taylor expansion for both eigenvalues and
eigenfunctions as
 $$
 \lambda_{j}(s)= \sum_{n=0}^2 \lambda_{j,n}s^n +O(s^3),\quad
 \psi_j(s,\omega)=\sum_{n=0}^2 \psi_{j,n}(\omega)s^n +O(s^3)
 $$
 Substituting the above expansion into \eqref{C_2}, we obtain the expansion of $\psi_j(s,\omega)$ for $j=2,3$ in \eqref{eigf1}.

To obtain expansion of $\psi_j(s,\omega)$ for $j=0,\pm 1$ defined in
\eqref{C_3}, we deal with its macroscopic part and microscopic part respectively.
The expansion of macroscopic part $\P_0(\psi_j(s,\omega))$ is determined in terms of  the coefficients $\{a_j(s),b_j(s),d_j(s)\}$ that satisfy
 \bq
 \left\{\bln
\lambda_j(s) a_j(s)+\i sb_j(s)&=0,
 \\
 \i(s^2+1)a_j(s)
 +(s\lambda_j(s)-s^3(R(\lambda_j,se_1)\P_1(v_1\chi_1),v_1\chi_1))
  b_j(s)&
  \\
 +(\i s^2\mbox{$\sqrt{\frac23}$}-s^3(R(\lambda_j,se_1)\P_1(v_1\chi_4),v_1\chi_1))d_j(s)&=0,
 \\
 (\i s\mbox{$\sqrt{\frac23}$}-s^2(R(\lambda_j,se_1)\P_1(v_1\chi_1),v_1\chi_4))
 b_j(s)&\\
 +\(\lambda_j(s)-s^2(R(\lambda_j,se_1)\P_1(v_1\chi_4),v_1\chi_4)\)d_j(s)&=0.
 \eln\right.              \label{expan2}
 \eq
Assume
  $$
  a_j(s)= \sum_{n=0}^2  a_{j,n}s^n +O(s^3),\quad
 b_j(s) = \sum_{n=0}^2 b_{j,n}s^n +O(s^3),\quad
 d_j(s) = \sum_{n=0}^2 d_{j,n}s^n + O(s^3).
$$
Substituting the above expansion into \eqref{expan2} and \eqref{C_3},
we can obtain the expansion of $\psi_j(s,\omega)$ for $j=-1,0,1$ given in \eqref{eigf1} after a tedious but straightforward computation. Hence,
we omit the detail for brevity.
\end{proof}

\section{Optimal time-decay rates of linearized VPB}
\label{sect3}

In this section, we consider the Cauchy problem \eqref{VPB} for the linearized Vlasov-Poisson-Boltzmann equations and establish the optimal time-decay rates of global solution based on the results obtained in Sect.~\ref{sect2}.

\subsection{Decomposition and asymptotics of linear semigroup}
\setcounter{equation}{0}

We start by proving

\begin{lem}
 \label{SG_2}
The operator $Q(\xi)=L-\i \P_1(v\cdot\xi)\P_1$ generates a strongly continuous contraction
semigroup on $N_0^\bot$ for any fixed $|\xi|\neq0$, which satisfies for any $t>0$ and $f\in N_0^\bot\cap L^2(\R^3_v)$ that
  \bq
    \|e^{tQ(\xi)}f\|\leq e^{-\mu t}\|f\|. \label{decay_1}
 \eq
In addition, for any $x>-\mu $ and $f\in N_0^\bot\cap L^2(\R^3_v)$ it holds
\bq
 \int^{+\infty}_{-\infty}\|[(x+\i y)\P_1-Q(\xi)]^{-1}f\|^2dy
 \leq
        \pi(x+\mu )^{-1}\|f\|^2.\label{S_4}
\eq
\end{lem}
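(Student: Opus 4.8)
The plan is to obtain the generation statement together with the sharp decay bound \eqref{decay_1} from the Lumer--Phillips theorem, and then to deduce the square-integral resolvent bound \eqref{S_4} from \eqref{decay_1} by a Hilbert-space-valued Plancherel identity, exactly mirroring the argument used for Lemma~\ref{SG_1}.

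For the generation part I would argue as follows. The operator $Q(\xi)$ is densely defined on $N_0^\bot$ with domain $D(L)\cap N_0^\bot$, and by the bound \eqref{S_3} of Lemma~\ref{LP} the resolvent $[\lambda\P_1-Q(\xi)]^{-1}$ exists as a bounded operator on $N_0^\bot$ for every $\lambda$ with $\mathrm{Re}\lambda>-\mu$, so $Q(\xi)$ is closed and $\lambda\P_1-Q(\xi)$ is onto $N_0^\bot$ there. Next, by \eqref{L_4} the shifted operator $Q(\xi)+\mu\P_1$ satisfies $\mathrm{Re}((Q(\xi)+\mu\P_1)f,f)=(Lf,f)+\mu\|f\|^2\le 0$ on $D(L)\cap N_0^\bot$, i.e. it is dissipative, and the same computation applied to its adjoint $(Q(\xi)+\mu\P_1)^*=L+\mu\P_1+\i\P_1(v\cdot\xi)\P_1$ shows the adjoint is dissipative as well. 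Lemma~\ref{S_1} then gives that $Q(\xi)+\mu\P_1$ generates a $C_0$-contraction semigroup on $N_0^\bot$; hence $Q(\xi)$ generates the $C_0$-semigroup $e^{tQ(\xi)}=e^{-\mu t}e^{t(Q(\xi)+\mu\P_1)}$, which obeys $\|e^{tQ(\xi)}f\|\le e^{-\mu t}\|f\|$, namely \eqref{decay_1}.

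For \eqref{S_4}, fix $x>-\mu$ and $f\in N_0^\bot\cap L^2(\R^3_v)$. Since $-\mu$ is a growth bound for $e^{tQ(\xi)}$ by \eqref{decay_1}, the standard Laplace-transform representation of the resolvent of a semigroup generator (see Section~\ref{Prel}) gives $[(x+\i y)\P_1-Q(\xi)]^{-1}f=\int_0^\infty e^{-(x+\i y)t}e^{tQ(\xi)}f\,dt$ for every $y\in\R$, the Bochner integral being absolutely convergent. Set $g(t)=e^{-xt}e^{tQ(\xi)}f$ for $t>0$ and $g(t)=0$ for $t\le 0$; then by \eqref{decay_1} one has $\int_\R\|g(t)\|^2\,dt\le\|f\|^2\int_0^\infty e^{-2(x+\mu)t}\,dt=\frac{\|f\|^2}{2(x+\mu)}<\infty$, so $g\in L^2(\R;L^2(\R^3_v))$, and its Fourier transform in $t$ is precisely $\widehat g(y)=[(x+\i y)\P_1-Q(\xi)]^{-1}f$. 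Applying the Plancherel identity for $L^2(\R^3_v)$-valued functions, $\int_\R\|[(x+\i y)\P_1-Q(\xi)]^{-1}f\|^2\,dy=2\pi\int_\R\|g(t)\|^2\,dt\le\pi(x+\mu)^{-1}\|f\|^2$, which is exactly \eqref{S_4}.

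No step here is a serious obstacle: the content reduces to \eqref{L_4}, Lemma~\ref{LP}, and soft semigroup theory. The only points needing a little care are that the Laplace representation of the resolvent is used on the full half-plane $\mathrm{Re}\lambda>-\mu$ (this is where the sharp decay \eqref{decay_1}, rather than mere contractivity, enters) and that the Plancherel theorem is invoked in its vector-valued form, legitimate because $L^2(\R^3_v)$ is a Hilbert space; the constant $\pi(x+\mu)^{-1}$ comes out on the nose from $2\pi\cdot\frac{1}{2(x+\mu)}$.
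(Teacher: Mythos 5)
Your proposal is correct and follows essentially the same route as the paper: generation and the sharp bound \eqref{decay_1} via dissipativity of $Q(\xi)$ and its adjoint $Q(-\xi)$ together with Lemma~\ref{S_1} (your explicit shift by $\mu\P_1$ just spells out what the paper leaves implicit), and \eqref{S_4} via the Laplace-transform representation of the resolvent combined with the vector-valued Plancherel/Parseval identity and \eqref{decay_1}. No substantive difference from the paper's argument.
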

\begin{proof}
Since  the operator $Q(\xi)$ is a densely defined closed operator on  $N_0^\bot$, and both $Q(\xi)$ and $Q(\xi)^*=Q(-\xi)$ are dissipative on $N_0^\bot$ and satisfy \eqref{S_3}, it follows from Lemma \ref{S_1} that $Q(\xi)$ generates a strongly continuous contraction
semigroup on $N_0^\bot$ and satisfies \eqref{decay_1}.
\par

The resolvent $(\lambda-Q(\xi))^{-1}$ can be expressed for $\lambda\in\rho(Q(\xi))$ as
$$
 [\lambda \P_1-Q(\xi)]^{-1}
=\int^\infty_0 e^{-\lambda t}e^{tQ(\xi)}dt,\quad \text{Re}\lambda>-\mu ,
$$
and for $\lambda=x+iy$ as
$$
[(x+\i y)\P_1-Q(\xi)]^{-1}
 =\frac1{\sqrt{2\pi}}\int^{+\infty}_{-\infty} e^{-\i yt}
   \[\sqrt{2\pi}1_{\{t\geq0\}}e^{-xt}e^{tQ(\xi)}\]dt,
$$
where the right hand side  is the Fourier transform of the function $\sqrt{2\pi}1_{\{t\geq0\}}e^{-xt}e^{tQ(\xi)}$ with respect to the time variable. By Parseval's equality,  we have for $f\in N_0^\bot$ that
 \bmas
&\int^{+\infty}_{-\infty}\|[(x+\i y)\P_1-Q(\xi)]^{-1}f\|^2dy
 =\int^{+\infty}_{-\infty}
  \|(2\pi)^{\frac12}1_{\{t\geq0\}}e^{-xt}e^{tQ(\xi)}f\|^2dt
\\
=&2\pi\int^\infty_0 e^{-2xt}\|e^{tQ(\xi)}f\|^2dt
 \leq 2\pi\int^\infty_0 e^{-2(x+\mu )t}dt\|f\|^2,
 \emas
which completes the proof of the lemma.
\end{proof}

\begin{lem}\label{SG_3}
The operator $A(\xi)=-\i \P_0(v\cdot\xi)\P_0-\mbox{$\frac{\i(v\cdot\xi)}{|\xi|^2}$}P_{\rm d}$ generates a strongly continuous unitary group on $N_0$ for any fixed $|\xi|\neq0$, which satisfies for $t>0$ and $f\in N_0\cap L^2_\xi(\R^3_v)$ that
  \bq
   \|e^{tA(\xi)}f\|_\xi= \|f\|_\xi.\label{decay_2}
 \eq
In addition, for any $x\neq 0$ and $f\in N_0\cap L^2_\xi(\R^3_v)$, it holds
\bq
 \int^{+\infty}_{-\infty}\|[(x+\i y)\P_0-A(\xi)]^{-1}f\|^2_\xi dy
 = \pi|x|^{-1}\|f\|^2_\xi.  \label{S_6}
 \eq
\end{lem}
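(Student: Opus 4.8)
The plan is to mirror the structure of the proof of Lemma~\ref{SG_2}, but with two adjustments forced by the fact that $A(\xi)$ is, up to a factor of $\i$, self-adjoint rather than merely dissipative. First I would recall from \eqref{symmetric} that $\i A(\xi)$ is self-adjoint with respect to the inner product $(\cdot,\cdot)_\xi$ on $N_0$, so that $A(\xi)^* = -A(\xi)$ (adjoint taken in $L^2_\xi(\R^3_v)$); equivalently, $A(\xi)^* = A(-\xi) = -A(\xi)$. Hence both $A(\xi)$ and $A(\xi)^*$ satisfy $\mathrm{Re}(A(\xi)f,f)_\xi = 0$, i.e.\ they are dissipative (in fact conservative). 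Since $A(\xi)$ is a bounded (finite-rank, as it acts on the $5$-dimensional space $N_0$) densely defined closed operator, Lemma~\ref{S_1} applies and shows that $A(\xi)$ generates a $C_0$-semigroup of contractions; applying the same reasoning to $-A(\xi)=A(\xi)^*$ shows $-A(\xi)$ also generates a contraction semigroup, so $e^{tA(\xi)}$ extends to a strongly continuous unitary (one-parameter) group on $N_0$, and $\|e^{tA(\xi)}f\|_\xi = \|f\|_\xi$ for all $t\in\R$, which is \eqref{decay_2}. Alternatively, because $N_0$ is finite-dimensional one may simply diagonalize: by \eqref{eigen} the eigenvalues $\alpha_j(\xi)$ of $A(\xi)$ are purely imaginary and, by \eqref{symmetric}, the associated eigenvectors form a $(\cdot,\cdot)_\xi$-orthonormal basis, so $e^{tA(\xi)}$ is unitary by inspection.

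For the resolvent estimate \eqref{S_6} I would use the same Fourier-transform-in-time device as in Lemma~\ref{SG_2}, but now run it on the whole real line because $A(\xi)$ generates a group. For $x>0$ write
\bq
 [(x+\i y)\P_0-A(\xi)]^{-1} = \int_0^\infty e^{-(x+\i y)t}e^{tA(\xi)}\,dt,
\eq
which is the Fourier transform (in $t$) of $\sqrt{2\pi}\,1_{\{t\ge 0\}}e^{-xt}e^{tA(\xi)}f$; Parseval's identity together with \eqref{decay_2} then gives
\bq
 \int_{-\infty}^{+\infty}\|[(x+\i y)\P_0-A(\xi)]^{-1}f\|_\xi^2\,dy
 = 2\pi\int_0^\infty e^{-2xt}\|e^{tA(\xi)}f\|_\xi^2\,dt
 = 2\pi\|f\|_\xi^2\int_0^\infty e^{-2xt}\,dt
 = \frac{\pi}{x}\|f\|_\xi^2.
\eq
For $x<0$ one instead represents $[(x+\i y)\P_0-A(\xi)]^{-1} = -\int_{-\infty}^0 e^{-(x+\i y)t}e^{tA(\xi)}\,dt$, using the backward part of the group; this is the Fourier transform of $-\sqrt{2\pi}\,1_{\{t\le 0\}}e^{-xt}e^{tA(\xi)}f$, whose $L^2_t$-norm squared is $2\pi\int_{-\infty}^0 e^{-2xt}\|e^{tA(\xi)}f\|_\xi^2\,dt = 2\pi\|f\|_\xi^2\int_{-\infty}^0 e^{-2xt}\,dt = \frac{\pi}{|x|}\|f\|_\xi^2$. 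Combining the two cases yields \eqref{S_6} with the constant $\pi|x|^{-1}$ and, notably, with equality rather than an inequality—reflecting the unitarity of the group.

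The only point requiring a little care, and the place I expect to be the main (minor) obstacle, is justifying the resolvent-as-Laplace-transform formulas on the correct half-planes: for $x>0$ the integral $\int_0^\infty e^{-(x+\i y)t}e^{tA(\xi)}\,dt$ converges in operator norm since $\|e^{tA(\xi)}\|_\xi = 1$ and $e^{-xt}$ is integrable, and it indeed equals $(x+\i y - A(\xi))^{-1}$ because $x+\i y \in \rho(A(\xi))$ (the spectrum of $A(\xi)$ lies on the imaginary axis by \eqref{eigen}); for $x<0$ one must use the \emph{backward} resolvent representation, valid precisely because $A(\xi)$ generates a group so that $e^{tA(\xi)}$ makes sense and is bounded for $t<0$. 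Since $\sigma(A(\xi))\subset\i\R$, the line $\mathrm{Re}\lambda = x$ avoids the spectrum for every $x\neq 0$, which is exactly the hypothesis imposed; the applicability of Parseval's identity is then immediate because the relevant time-domain functions lie in $L^1_t\cap L^2_t$ with values in the Hilbert space $(N_0,(\cdot,\cdot)_\xi)$.
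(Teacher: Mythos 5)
Your proposal is correct and follows essentially the same route as the paper: both rest on the self-adjointness of $\i A(\xi)$ with respect to $(\cdot,\cdot)_\xi$ from \eqref{symmetric} for the unitary group, and both prove \eqref{S_6} via the Laplace-transform representation of the resolvent plus Parseval, treating $x<0$ through the backward part of the group exactly as in the paper. The only (immaterial) difference is that the paper invokes Stone's theorem (Lemma~\ref{stone}) directly, whereas you obtain the isometric group by applying Lemma~\ref{S_1} to $\pm A(\xi)$ or by finite-dimensional diagonalization, which is equally valid.
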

 \begin{proof}
Since the operator $\i A(\xi)$ is self-adjoint on $N_0$ with respect to the inner product $(\cdot,\cdot)_\xi$ defined by \eqref{symmetric}, we conclude by Lemma \ref{stone} that $A(\xi)$ generates a strongly continuous unitary group on $N_0$ and satisfies \eqref{decay_2}.

By a similar argument for proving \eqref{S_4}, we can obtain for $x>0$
$$
 [(x+\i y)\P_0-A(\xi)]^{-1}=\int^\infty_0 e^{-(x+\i y)t}e^{tA(\xi)}dt,
$$
from which we get for $f\in N_0\cap L^2_\xi(\R^3_v)$
 \bmas
&\int^{+\infty}_{-\infty}\|[(x+\i y)\P_0-A(\xi)]^{-1}f\|^2_\xi dy 
=2\pi\int^\infty_0 e^{-2xt}\|e^{tA(\xi)}f\|^2_\xi dt=
2\pi\int^\infty_0 e^{-2xt}dt\|f\|^2_\xi.
 \emas
As for $x<0$, we have
$$
 [-(x+\i y)\P_0+A(\xi)]^{-1}=\int^\infty_0 e^{(x+\i y)t}e^{-tA(\xi)}dt.
$$
This completes the proof of the lemma.
\end{proof}

By Lemma~\ref{LP03}, we are able to show the invertibility of the operator
$I-K(\lambda-c(\xi))^{-1}+\i(v\cdot\xi)|\xi|^{-2}P_{\rm d}(\lambda-c(\xi))^{-1}$
on $L^2(\R^3)$ and estimate the bound of its norm by applying a similar arguments the one for Lemma~\ref{LP01}. And we omit the proof for brevity. Indeed, we have

\begin{lem}\label{F_1}
Given any constant $r_0>0$. Let $\lambda=x+\i y$ with $x>-\alpha(r_0)$ and $\alpha(r_0)$ defined in Lemma~\ref{LP01}. Then, it holds
 \bq
 \sup_{y\in\R, |\xi|\geq r_0}
 \|[I-K(\lambda-c(\xi))^{-1}+\i(v\cdot\xi)|\xi|^{-2}P_{\rm d}(\lambda-c(\xi))^{-1}]^{-1}\|
 \leq C.\label{S_9}
 \eq
\end{lem}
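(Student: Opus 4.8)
The plan is to estimate, uniformly for $|\xi|\ge r_0$ and $\mathrm{Re}\,\lambda=x>-\alpha(r_0)$, the operator norm of
$$
T(\lambda,\xi):=K(\lambda-c(\xi))^{-1}-\mbox{$\frac{\i(v\cdot\xi)}{|\xi|^2}$}P_{\rm d}(\lambda-c(\xi))^{-1},
$$
splitting into the regime of large $|y|$, where a Neumann-series argument applies directly, and the regime of bounded $|y|$, where a compactness/contradiction argument is needed exactly as in the proof of Lemma~\ref{LP01}. First I would fix $\delta>0$ with $-\nu_0+\delta\le -\alpha(r_0)$ (possible by enlarging $\delta$, or by observing $\alpha(r_0)<\nu_0$), so that Lemma~\ref{LP03} is applicable on the half-plane $x\ge -\alpha(r_0)$. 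By \eqref{T_8} with $r_0$ replaced by any fixed $r_1>r_0$ (to handle $r_0\le|\xi|\le r_1$) together with \eqref{T_7} (which already gives smallness as $|\xi|\to\infty$, hence for $|\xi|\ge r_1$ with $r_1$ large), and by \eqref{L_9}--\eqref{L_10}, there exist $r_1>0$ large and $y_0>0$ such that $\|T(\lambda,\xi)\|\le\frac12$ for all $|\xi|\ge r_0$ with either $|\xi|\ge r_1$ or $|y|\ge y_0$. On that set the Neumann series gives invertibility of $I+T$ with $\|(I+T)^{-1}\|\le 2$.

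It remains to handle the compact parameter region $r_0\le|\xi|\le r_1$, $|y|\le y_0$, $-\alpha(r_0)\le x\le x_1$ for any fixed $x_1$ (for $x\ge x_1$ large the resolvent bound $\|(\lambda-c(\xi))^{-1}\|\le (x+\nu_0)^{-1}$ makes $T$ small again, so one may assume $x$ bounded above). Here I would argue by contradiction: if the claimed uniform bound failed, there would be sequences $\xi_n$, $\lambda_n=x_n+\i y_n$ in this compact box and $f_n\in L^2(\R^3)$ with $\|f_n\|=1$ such that $\|[I+T(\lambda_n,\xi_n)]f_n\|\to 0$. Passing to subsequences, $\xi_n\to\xi_0$ (with $|\xi_0|\in[r_0,r_1]$) and $\lambda_n\to\lambda_0$ with $\mathrm{Re}\,\lambda_0\ge-\alpha(r_0)$; since $K$ is compact and $P_{\rm d}$ is finite-rank, $Kf_n$ and $P_{\rm d}f_n$ converge along a further subsequence, and because $|\lambda_n-c(\xi_n)|\ge\delta$ uniformly, one recovers $f_n\to f_0$ in $L^2(\R^3)$ with $\|f_0\|=1$ and $[I+T(\lambda_0,\xi_0)]f_0=0$. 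Multiplying by $(\lambda_0-c(\xi_0))$ this says $(\lambda_0-\hat B(\xi_0))f_0=0$, i.e.\ $\lambda_0\in\sigma(\hat B(\xi_0))$ with $\mathrm{Re}\,\lambda_0\ge-\alpha(r_0)$, contradicting the spectral gap $\mathrm{Re}\,\lambda(\xi)\le-\alpha(r_0)$ for $|\xi|\ge r_0$ established in Lemma~\ref{LP01} (and one should check the boundary case $\mathrm{Re}\,\lambda_0=-\alpha(r_0)$ is excluded either by taking $\alpha(r_0)$ to be the infimum and then shrinking it slightly, or by the strict inequality in Lemma~\ref{Egn}).

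Combining the two regimes, $I+T(\lambda,\xi)$ is invertible with $\|(I+T(\lambda,\xi))^{-1}\|\le C$ uniformly over $y\in\R$ and $|\xi|\ge r_0$, which is \eqref{S_9}. The main obstacle is the bounded-frequency, bounded-$y$ region: there is no decay in the kernel estimates of Lemma~\ref{LP03} to exploit, so the only route is the compactness argument, and the delicate point is verifying that the limiting identity genuinely produces an eigenvalue of $\hat B(\xi_0)$ in the forbidden half-plane — in particular handling the closed boundary $\mathrm{Re}\,\lambda_0=-\alpha(r_0)$ and ensuring the normalization $\|f_0\|=1$ survives the limit (which is why the uniform lower bound $|\lambda-c(\xi)|\ge\delta$ is essential).
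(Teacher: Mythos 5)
Your plan is correct and is essentially the argument the paper intends (the paper omits the proof, citing the bounds of Lemma~\ref{LP03} together with "similar arguments to Lemma~\ref{LP01}"): a Neumann-series bound from \eqref{T_7}, \eqref{T_8}, \eqref{L_9}, \eqref{L_10} for large $|\xi|$, large $|y|$ or large $x$, plus a compactness/contradiction argument on the remaining compact parameter region that would otherwise produce an eigenvalue of $\hat{B}(\xi_0)$ violating the spectral gap. The only touch-ups needed are cosmetic: the limiting eigenfunction is $(\lambda_0-c(\xi_0))^{-1}f_0$ rather than $f_0$ itself (since $\lambda-\hat{B}(\xi)=[I+T(\lambda,\xi)](\lambda-c(\xi))$), and the boundary case $\mathrm{Re}\,\lambda_0=-\alpha(r_0)$ is excluded by fixing $\alpha(r_0)$ strictly below the actual gap, as you already indicate (the strict inequality of Lemma~\ref{Egn} alone would not suffice).
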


With the help of Lemmas~\ref{LP03}--\ref{spectrum} and Lemmas~\ref{SG_2}--\ref{F_1}, we have the decomposition of the semigroup $S(t,\xi)=e^{t\hat{B}(\xi)}$ given by

\begin{thm}\label{E_3}
The semigroup $S(t,\xi)=e^{t\hat{B}(\xi)}$ with $\xi=s\omega\in \R^3$ and $s=|\xi|\neq0$  satisfies
 \be
 S(t,\xi)f=S_1(t,\xi)f+S_2(t,\xi)f,
     \quad f\in L^2_\xi(\R^3_v), \ \ t>0, \label{E_3a}
 \ee
 where
 \bq
 S_1(t,\xi)f=\dsum^3_{j=-1}e^{t\lambda_j(s)}
              (f,\overline{\psi_j(s,\omega)}\,)_\xi\psi_j(s,\omega)
               1_{\{|\xi|\leq r_0\}},           \label{E_5}
 \eq
with $(\lambda_j(s),\psi_j(s,\omega))$ being the eigenvalue and eigenfunction of the operator $\hat{B}(\xi)$ given by Theorem~\ref{eigen_3} for $0<|\xi|\le r_0$,
and $S_2(t,\xi)f =: S(t,\xi)f-S_1(t,\xi)f$ satisfies for a constant $\sigma_0>0$ independent of $\xi$ that
 \bq
 \|S_2(t,\xi)f\|_\xi\leq Ce^{-\sigma_0t}\|f\|_\xi,\quad t\ge0.\label{B_3}
 \eq
\end{thm}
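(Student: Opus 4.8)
\textbf{Proof plan for Theorem~\ref{E_3}.}
The plan is to realize the semigroup $S(t,\xi)=e^{t\hat B(\xi)}$ via the inverse Laplace transform (Bromwich integral) of the resolvent $(\lambda-\hat B(\xi))^{-1}$, and then to deform the contour using the spectral information of Lemmas~\ref{Egn}--\ref{spectrum} and Theorem~\ref{eigen_3} to extract the discrete low-frequency part $S_1(t,\xi)$ and estimate the remainder $S_2(t,\xi)$. The argument splits naturally according to whether $|\xi|\le r_0$ or $|\xi|\ge r_0$, with $r_0$ the radius from Theorem~\ref{eigen_3}.

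\emph{Low frequency, $0<|\xi|\le r_0$.} Here Theorem~\ref{eigen_3} tells us that on $\{\mathrm{Re}\lambda>-\mu/2\}$ the spectrum of $\hat B(\xi)$ consists precisely of the five eigenvalues $\lambda_j(s)$, $j=-1,\dots,3$, with $C^\infty$, mutually $(\cdot,\cdot)_\xi$-orthonormal eigenfunctions $\psi_j(s,\omega)$; moreover by \eqref{specr0} each $\lambda_j(s)$ satisfies $\mathrm{Re}\lambda_j(s)\le -\beta s^2$ for small $s$ (for $j\ne\pm1$ directly, and for $j=\pm1$ since $a_1>0$). The spectral projection onto the eigenvalue $\lambda_j(s)$ is the rank-one operator $P_j(\xi)f=(f,\overline{\psi_j(s,\omega)})_\xi\,\psi_j(s,\omega)$, so by the Dunford integral representation
\[
S(t,\xi)f=\frac1{2\pi\i}\int_{\Gamma}e^{\lambda t}(\lambda-\hat B(\xi))^{-1}f\,d\lambda,
\]
with $\Gamma$ a vertical line in $\mathrm{Re}\lambda>0$, I deform $\Gamma$ leftward to the line $\mathrm{Re}\lambda=-\mu/2$; the residues at the five simple eigenvalues produce exactly $S_1(t,\xi)f=\sum_{j=-1}^3 e^{t\lambda_j(s)}P_j(\xi)f$ as in \eqref{E_5}, while the shifted contour integral is $S_2(t,\xi)f$. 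Using Lemma~\ref{spectrum}(i) to control the resolvent for $|\mathrm{Im}\lambda|\ge y_1$ and Lemma~\ref{SG_2}--\ref{SG_3} (specifically the $L^2$-in-$y$ bounds \eqref{S_4}, \eqref{S_6} applied to the decomposition \eqref{S_8} of the resolvent through $Y_0(\lambda,\xi)^{-1}$) to make the integral over $\mathrm{Re}\lambda=-\mu/2$ absolutely convergent, one gets $\|S_2(t,\xi)f\|_\xi\le Ce^{-\mu t/2}\|f\|_\xi$, uniformly in $0<|\xi|\le r_0$. A small technical point is the $1/s$ and $1/s^2$ singularities in $A(\xi)$ and in the resolvent near $\xi=0$: these are absorbed by the $\|\cdot\|_\xi$-norm, in which $A(\xi)$ is skew-adjoint, so \eqref{S_6} holds with a constant independent of $\xi$.

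\emph{High frequency, $|\xi|\ge r_0$.} Here $S_1(t,\xi)=0$, so $S_2=S$ and the claim is simply $\|S(t,\xi)f\|_\xi\le Ce^{-\sigma_0 t}\|f\|_\xi$. By Lemma~\ref{LP01} there is a spectral gap: $\sigma(\hat B(\xi))\subset\{\mathrm{Re}\lambda\le-\alpha(r_0)\}$ for $|\xi|\ge r_0$, and by \eqref{SpH} together with Lemma~\ref{F_1} (giving a uniform resolvent bound on $\mathrm{Re}\lambda=-\alpha(r_0)/2$, say, via \eqref{E_6} and \eqref{S_9}) the Bromwich contour can be pushed to $\mathrm{Re}\lambda=-\sigma_0$ for any $\sigma_0<\alpha(r_0)$; the same $L^2$-in-$y$ resolvent estimates then yield the exponential bound. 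One must check that the resolvent bounds from Lemma~\ref{LP03} and Lemma~\ref{SG_2}--\ref{SG_3} combine to make $\int_{\mathrm{Re}\lambda=-\sigma_0}\|e^{\lambda t}(\lambda-\hat B(\xi))^{-1}f\|_\xi\,|d\lambda|$ convergent with constants independent of $\xi$ for $|\xi|\ge r_0$; for $|\xi|$ in a bounded annulus $[r_0,r_1]$ one uses \eqref{T_8}, \eqref{L_10} for large $|\mathrm{Im}\lambda|$, and for $|\xi|\ge r_1$ one uses \eqref{T_7}, \eqref{L_9} directly as in the proof of Lemma~\ref{LP01}. Finally, choosing $\sigma_0=\min\{\mu/2,\alpha(r_0)/2\}$ (independent of $\xi$) gives \eqref{B_3} uniformly, completing the proof.

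\emph{Main obstacle.} The routine part is writing down the Dunford/Bromwich representation and the residue computation; the genuine work is establishing the \emph{$\xi$-uniform} integrability and decay of the shifted contour integral. This requires carefully piecing together the various regime-dependent resolvent bounds (Lemma~\ref{LP03}, \eqref{S_4}, \eqref{S_6}, Lemma~\ref{F_1}) and, crucially, doing everything in the weighted norm $\|\cdot\|_\xi$ so that the Poisson-induced $|\xi|^{-2}$ factor does not destroy uniformity near $\xi=0$ — that weighted-norm bookkeeping, rather than any single estimate, is where the argument must be handled with care.
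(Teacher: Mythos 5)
Your overall strategy --- Bromwich representation of $e^{t\hat{B}(\xi)}$, contour shift to ${\rm Re}\,\lambda=-\mu/2$ at low frequency and to ${\rm Re}\,\lambda=-\sigma_0$ at high frequency, residues at the low-frequency eigenvalues, and $L^2$-in-$y$ resolvent bounds in the weighted norm --- is the same as the paper's. But two steps that you treat as routine are genuine gaps.

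First, you assert that the residue at $\lambda_j(s)$ is exactly $e^{\lambda_j(s)t}(f,\overline{\psi_j(s,\omega)})_\xi\psi_j(s,\omega)$, i.e.\ that each eigenvalue is semisimple and its spectral projection is the biorthogonal rank-one operator (rank two at $\lambda_2(s)=\lambda_3(s)$, which in particular is not a simple eigenvalue). Since $\hat{B}(\xi)$ is not normal, the Laurent expansion of $(\lambda-\hat{B}(\xi))^{-1}$ could contain nilpotent terms $D_j^m/(\lambda-\lambda_j(s))^{m+1}$, and then the residue would carry extra contributions $t^me^{\lambda_j(s)t}D_j^m$, destroying \eqref{E_5}. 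The paper rules this out explicitly: assuming $D_j\ne0$ one produces $y=[\hat{B}(\xi)-\lambda_j(s)]^{n_j}P_jx=\psi_j(s,\omega)$ (after normalization) and computes $1=(\psi_j,\overline{\psi_j})_\xi=([\hat{B}(\xi)-\lambda_j(s)]^{n_j-1}P_jx,[\hat{B}(-\xi)-\overline{\lambda_j(s)}]\overline{\psi_j})_\xi=0$, using $\hat{B}(\xi)^*=\hat{B}(-\xi)$ with respect to $(\cdot,\cdot)_\xi$ and that $\overline{\psi_j}$ is an eigenfunction of $\hat{B}(-\xi)$. Some argument of this kind must appear in your proof; the rank-one formula for the projection is precisely what needs to be proved, not assumed.

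Second, your claim that the shifted contour integral of the full resolvent is absolutely convergent is not correct as stated: on any vertical line the resolvent of a semigroup generator decays no faster than $|{\rm Im}\,\lambda|^{-1}$, and what you get from \eqref{S_8} is an $L^2$-in-$y$ factor multiplied by the merely bounded factor $Y_0(\lambda,\xi)^{-1}$, hence an integrand that is only $L^2$ in $y$, not $L^1$. The paper makes the shifted integral estimable by first splitting off the pieces whose contour integrals are computed exactly: at low frequency the term $(\lambda\P_1-Q(\xi))^{-1}\P_1$, which reproduces $e^{tQ(\xi)}\P_1f$ and is controlled by \eqref{decay_1}, and the term $(\lambda\P_0-A(\xi))^{-1}\P_0$, whose integral along ${\rm Re}\,\lambda=-\mu/2$ vanishes by closing the contour; only the remainder $Z_1$ of \eqref{V_1a} has the product-of-two-resolvents structure that, via Cauchy--Schwarz in $y$ together with \eqref{S_4} and \eqref{S_6} (applied to both $f$ and the test function $g$), yields an absolutely convergent integral and the uniform bound $Cr_0\mu^{-1}e^{-\mu t/2}$. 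The same issue recurs at high frequency, where one must split off $e^{tc(\xi)}f$ and estimate only $Z_3$ using Lemma~\ref{F_1} and the $L^2$-in-$y$ bound for $(\lambda-c(\xi))^{-1}$. Your plan stops one decomposition short of this in both regimes, and without that extra step the claimed bound on $S_2(t,\xi)$ does not follow.
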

\begin{proof}
By Lemma \ref{dense}, it is sufficient to prove the above decomposition for $f\in D(\hat{B}(\xi)^2)$ because the domain $D(\hat{B}(\xi)^2)$ is dense in $L^2_\xi(\R^3_v)$. By Lemma \ref{semigroup}, 
the semigroup $e^{t\hat{B}(\xi)}$ can be represented by
 \bq
  e^{t\hat{B}(\xi)}f
  =\frac1{2\pi \i}\int^{\kappa+\i\infty}_{\kappa-\i\infty}
   e^{\lambda t}(\lambda-\hat{B}(\xi))^{-1}fd\lambda,
   \quad  f\in D(\hat{B}(\xi)^2),\,\, \kappa>0.          \label{V_3}
 \eq
It remains to analyze the resolvent $(\lambda-\hat{B}(\xi))^{-1}$ for $\xi\in\R^3$ in order to obtain the decomposition \eqref{E_3a} for the semigroup $e^{t\hat{B}(\xi)}$.

{\it First of all, we investigate the formula \eqref{V_3} for $|\xi|\le r_0$}. By \eqref{S_8} we have
 \bq
  (\lambda-\hat{B}(\xi))^{-1}
 =[(\lambda \P_0-A(\xi))^{-1}\P_0+(\lambda \P_1-Q(\xi))^{-1}\P_1]-Z_1(\lambda,\xi)\label{V_1},
 \eq
with the operator $Z_1(\lambda,\xi)$ defined by
 \bma
 Z_1(\lambda,\xi)
 &=[(\lambda \P_0-A(\xi))^{-1}\P_0+(\lambda \P_1-Q(\xi))^{-1}\P_1]
   [I+Y_1(\lambda,\xi)]^{-1} Y_1(\lambda,\xi),  \label{V_1a}
\\
 Y_1(\lambda,\xi)
 &= \i\P_1(v\cdot\xi)\P_0(\lambda \P_0-A(\xi))^{-1}\P_0
    +\i\P_0(v\cdot\xi)\P_1(\lambda \P_1-Q(\xi))^{-1}\P_1. \label{V_1b}
 \ema
Substituting \eqref{V_1} into \eqref{V_3}, we have the following decomposition of the semigroup $e^{t\hat{B}(\xi)}$
 \be
 e^{t\hat{B}(\xi)}f
 =e^{tQ(\xi)}\P_1f
  -\frac1{2\pi \i}\int^{\kappa+\i\infty}_{\kappa-\i\infty}
    e^{\lambda t}Z_2(\lambda,\xi)fd\lambda,  \quad |\xi|\le r_0,  \label{V_3a}
  \ee
with
$$
Z_2(\lambda,\xi)=Z_1(\lambda,\xi)-(\lambda \P_0-A(\xi))^{-1}\P_0.
$$

To estimate the last term on the right hand side of \eqref{V_3a}, let us denote
 \bq
 U_{\kappa,N}=\frac1{2\pi \i}\int^{N}_{-N}
   e^{(\kappa+\i y)t}Z_2(\kappa+iy,\xi)f1_{|\xi|\le r_0}dy,    \label{UsN}
 \eq
where the constant $N>0$ is chosen large enough so that $N>y_1$ with $y_1$ defined in Lemma \ref{spectrum}. Since $Z_2(\lambda,\xi)$ is analytic on the domain ${\rm Re}\lambda>-\mu/2$ with only finite singularities at 
$\lambda=\lambda_j(s)\in \sigma(\hat{B}(\xi))$ for $j=-1,0,1,2$, we can shift the integration
\eqref{UsN} from the line ${\rm Re}\lambda=\kappa>0$ to Re
$\lambda=-\mu/2$. Then by  the Residue Theorem, we obtain
 \bq
 U_{\kappa,N}
 =U_{-\frac{\mu }2,N}+H_N+2\pi\i\sum^2_{j=-1}{\rm Res}
  \lt\{e^{\lambda t}Z_2(\lambda,\xi)f;\lambda_j(s)\rt\},    \label{UsN2}
 \eq
where Res$\{f(\lambda);\lambda_j\}$ means the residue of $f$ at $\lambda=\lambda_j$ and
 \bmas
  H_N=\frac1{2\pi \i}\(\int^{\kappa+\i N}_{-\frac{\mu}2+\i N}
      -\int^{\kappa-\i N}_{-\frac{\mu }2-\i N}\)
        e^{\lambda t}Z_2(\lambda,\xi)f1_{|\xi|\le r_0}d\lambda.
 \emas
The right hand side of \eqref{UsN2} is estimated as follows.
By Lemma \ref{LP} it is easy to verify that
 \be
 \|H_N\|_\xi\rightarrow0, \quad\mbox{as}\quad N\rightarrow\infty.  \label{UsN2a}
 \ee

Since $ (\lambda-A(\xi))^{-1}$ is analytic on the domain $\{\lambda\in\mathbb{C}\,|\,\lambda\neq\alpha_j(\xi)\}$, we have by Cauchy theorem and \eqref{S_2a} that for any $f\in N_0$,
 \bmas
 &\bigg\|\int^{-\frac{\mu}2+\i N}_{-\frac{\mu}2-\i N}
 e^{\lambda t}(\lambda-A(\xi))^{-1}fd\lambda\bigg\|_{\xi}
   =\bigg\|\int^{\frac{3\pi}2}_{\frac{\pi}2}
    e^{(-\frac{\mu}2+Ne^{\i\theta})t}
    (-\frac{\mu}2+Ne^{\i\theta}-A(\xi))^{-1}f\i Ne^{\i\theta}d\theta\bigg\|_{\xi}
 \\
  \le & \frac2N\|f\|_{\xi} e^{-\frac{\mu t}2}\int^{\frac{3\pi}2}_{\frac{\pi}2}e^{t N\cos\theta }Nd\theta
 \le
 \frac4N\|f\|_{\xi} e^{-\frac{\mu t}2}\int^{\frac{N\pi}{2}}_0e^{- \frac2\pi t s }ds\to 0,\quad N\to \infty,
  \emas
which together with
$$
 \int^{-\frac{\mu}2+\i\infty}_{-\frac{\mu}2-\i\infty}
      e^{\lambda t}( \lambda-A(\xi))^{-1}fd\lambda
 =\lim_{N\to\infty}\int^{-\frac{\mu}2+\i N}_{-\frac{\mu}2-\i N}
      e^{\lambda t}(\lambda-A(\xi))^{-1}fd\lambda=0,
$$
give
 \be
 \lim_{N\to\infty} U_{-\frac{\mu}2,N}(t)
 =U_{-\frac{\mu}2,\infty}(t)
 =:\int^{-\frac{\mu}2+\i \infty}_{-\frac{\mu}2-\i \infty}
  e^{\lambda t}Z_1(\lambda,\xi)fd\lambda.            \label{UsN3}
\ee

Since it holds $\|Y_1(-\frac{\mu }2+\i y,\xi)\|_\xi\leq\frac12$ for $|\xi|\leq r_0$ with $r_0>0$ being sufficiently small, the operator $I-Y_1(-\frac{\mu }2+\i y,\xi)$ is invertible on $L^2_\xi(\R^3_v)$ and satisfies
$
\dsup_{|\xi|\le r_0, y\in\R}
\|[I-Y_1(-\frac{\mu }2+\i y,\xi)]^{-1}\|_\xi\le 2.
$
Thus, we have for any $f,g\in L^2_\xi(\R^3_v)$
  \bmas
 &|(U_{-\frac{\mu }2,\infty}(t)f,g)_\xi|
 \le
 e^{-\frac{\mu t}2}\int^{+\infty}_{-\infty}
  |(Z_1(-\frac{\mu }2+\i y,\xi)f,g)_\xi|dy
  \nnm\\
 &\leq \
 C |\xi|e^{-\frac{\mu t}2}\int^{+\infty}_{-\infty}
  \( \|[(-\frac{\mu}2+\i y)\P_1-Q(\xi)]^{-1}\P_1f\|
    +\|[(-\frac{\mu}2+\i y)\P_0-A(\xi)]^{-1}\P_0f\|_\xi\)
    \nnm \\
&\quad\quad\quad\quad\quad \quad
   \times\(\|[(-\frac{\mu}2-\i y)\P_1-Q(-\xi)]^{-1}\P_1g\|
            +\|[(-\frac{\mu}2-\i y)\P_0-A(-\xi)]^{-1}\P_0g\|_\xi\) dy.
 \emas
This together with \eqref{S_4} and \eqref{S_6} yield
 $
 |(U_{-\frac{\mu}2,\infty}(t)f,g)_\xi|
  \leq
 Cr_0\mu ^{-1} e^{-\frac{\mu t}2}\|f\|_\xi\|g\|_\xi,
 $
and
 \bq
\|U_{-\frac{\mu}2,\infty}(t)\|_\xi
   \le Cr_0\mu ^{-1} e^{-\frac{\mu t}2}. \label{UsN4}
 \eq

By $\lambda_j(s)\in \rho(Q(\xi))$ and $Z_2(\lambda,\xi)=(\lambda\P_1-Q(\xi))^{-1}\P_1-(\lambda-\hat{B}(\xi))^{-1}$, we can prove 
 \be
 {\rm Res}\{e^{\lambda t}Z_2(\lambda,\xi)f;\lambda_j(s)\}
  =-{\rm Res}\{e^{\lambda t}(\lambda-\hat{B}(\xi))^{-1}f;\lambda_j(s)\},
  \quad |\xi|\le r_0,   \label{UsN2b}
 \ee
and in particular
\bma
   {\rm Res}\{e^{\lambda t}(\lambda-\hat{B}(\xi))^{-1}f;\lambda_j(s)\}
 &=e^{\lambda_j(s)t}P_j f
  =e^{\lambda_j(s)t}(f,\overline{\psi_j(s,\omega)})_\xi\psi_j(s,\omega),\quad j=-1,0,1,
  \\
  {\rm Res}\{e^{\lambda t}(\lambda-\hat{B}(\xi))^{-1}f;\lambda_2(s) \}
 &=e^{\lambda_2(s)t}P_2 f
  =e^{\lambda_2(s)t}\sum_{j=2,3}
    (f,\overline{\psi_j(s,\omega)})_\xi\psi_j(s,\omega).\label{projection}
\ema
Indeed, by the spectral representation formula in \cite{Kato}, we
have for $|\xi|\le r_0$ and $|\lambda-\lambda_j(s)|\le \delta$ with
$\delta>0$ being small
\bmas
 (\lambda-\hat{B}(\xi))^{-1}
 =\sum^2_{j=-1}\[\frac{P_j}{\lambda-\lambda_j(s)}
   +\sum^{n_j}_{m=1}\frac{D^m_j}{(\lambda-\lambda_j(s))^{m+1}}\]+S(\lambda),
\emas
where $P_j$ is the projection operator associated with $\lambda_j(s)$, $D_j$ is the nilpotent operator associated with $\lambda_j(s)$, and the operator $S(\lambda)$ is holomorphic on the domain $\{\lambda\,|\,|\lambda-\lambda_j(s)|<\delta\}$ with $\delta>0$ being small enough.

We claim that  $D_j=0$ for all $j$, $-1\le j\le 2$. 
Indeed, if $D_j\ne0$, then there exists $n_j$ such that $D_j^m\ne 0$ for $m\le n_j$ and $D_j^{n_j+1}=0$. Thus $D_j^{n_j}=(\hat{B}(\xi)-\lambda_j(\xi))^{n_j}P_j\ne0$, $D_j^{n_j+1}=(\hat{B}(\xi)-\lambda_j(\xi))^{n_j+1}P_j=0$. Assume that $n_j\ge1$ and let $x\in L^2_\xi(\R^3_v)$ be such that $y=[\hat{B}(\xi)-\lambda_j(\xi)]^{n_j}P_jx\ne0$. Then $[\hat{B}(\xi)-\lambda_j(\xi)]y=0$. Hence $y=C(s)\psi_j(s,\omega)$ for some constant $C(s)\ne 0$. We may normalize $x$ so that $C(s)=1$. Taking the inner product with $\overline{\psi_j(s,\omega)}$, we have
 \bmas
 1=(\psi_j(s,\omega),\overline{\psi_j(s,\omega)})_\xi
 &=([\hat{B}(\xi)-\lambda_j(s)]^{n_j}P_jx,\overline{\psi_j(s,\omega)})_\xi\\
 &=([\hat{B}(\xi)-\lambda_j(s)]^{n_j-1}P_jx,[\hat{B}(-\xi)-\overline{\lambda_j(s)}]
    \overline{\psi_j(s,\omega)})_\xi=0,
\emas
because $\overline{\psi_j(s,\omega)}$ is the eigenfunction of $\hat{B}(-\xi)$ with eigenvalue $\overline{\lambda_j(s)}$. This is a contradiction. Thus it holds $D_j=0$, and then we have \eqref{UsN2b}--\eqref{projection} by Cauchy's theorem.

Therefore, we conclude from \eqref{V_3a} and \eqref{UsN}-\eqref{projection} that
 \bq
   e^{t\hat{B}(\xi)}f
 = e^{tQ(\xi)}\P_1f1_{\{|\xi|\leq r_0\}}
  +U_{-\frac{\mu}2,\infty}(t)
  +\dsum^3_{j=-1}e^{t\lambda_j(s)}
   (f,\overline{\psi_j(s,\omega)})_\xi\psi_j(s,\omega)1_{\{|\xi|\leq r_0\}},\quad |\xi|\leq r_0. \label{low}
 \eq

{\it Next, we turn to prove the formula \eqref{V_3} for $|\xi|> r_0$}.
By \eqref{E_6} we have
 \bq
(\lambda-\hat{B}(\xi))^{-1}=(\lambda-c(\xi))^{-1}+Z_3(\lambda,\xi),\label{V_2}
 \eq
with the operator $ Z_3(\lambda,\xi)$ defined by
 \bma
 Z_3(\lambda,\xi)
 &=(\lambda-c(\xi))^{-1}[I-Y_2(\lambda,\xi)]^{-1}Y_2(\lambda,\xi),\label{V_2a}\\
 Y_2(\lambda,\xi)
  &=:(K-\i(v\cdot\xi)|\xi|^{-2}P_{\rm  d})(\lambda-c(\xi))^{-1}.\label{V_2b}
 \ema
Substituting \eqref{V_2} into \eqref{V_3} yields
 \bq
 e^{t\hat{B}(\xi)}f
 = e^{tc(\xi)}f
  +\frac1{2\pi \i}\int^{\kappa+\i\infty}_{\kappa-\i\infty}
    e^{\lambda t}Z_3(\lambda,\xi)fd\lambda,\quad |\xi|> r_0.   \label{V_3b}
 \eq

Similarly, in order to estimate the last term on the right hand side of \eqref{V_3b},
let us denote
 \bq
V_{\kappa,N}
 =\frac1{2\pi \i}\int^{N}_{-N}e^{(\kappa+\i y)t}Z_3(\kappa+\i y,\xi)1_{|\xi|> r_0}dy    \label{VsN}
 \eq
for  sufficiently large constant $N>0$ as  in \eqref{UsN}.
Since the operator $Z_3(\lambda,\xi)$ is analytic on the domain ${\rm Re}\lambda\ge -\sigma_0$ for the constant $\sigma_0=\frac12\alpha(r_0)$ with $\alpha(r_0)>0$ given by Lemma~\ref{LP01}, we can again shift the integration of \eqref{VsN} from the line ${\rm Re}\lambda=\kappa>0$ to $\mbox{Re}\lambda=-\sigma_0$ to obtain
\bq
 V_{\kappa,N}=V_{-\sigma_0,N}+I_N,   \label{VsN1}
 \eq
with
$$
 I_N=\frac1{2\pi \i}\(\int^{-\kappa+\i N}_{-\sigma_0+\i N}-\int^{-\kappa-\i N}_{-\sigma_0-\i N}\)
     e^{\lambda t}Z_3(\lambda,\xi)f1_{|\xi|> r_0}d\lambda.
$$
By Lemma~\ref{LP03} and Lemma \ref{F_1}, it is easy to verify
 \bgr
 \|I_N\|\rightarrow0 \ \ \mbox{as}\ \  N\rightarrow\infty,    \label{VsNa}\quad \
 \sup_{|\xi|>r_0, y\in\R} \|[I-Y_2(-\sigma_0+\i y,\xi)]^{-1}\| \le C.
  \egr
Define
 \be
  V_{-\sigma_0,\infty}(t)
 =\lim_{N\to \infty}V_{-\sigma_0,N}(t)
 =\int^{+\infty}_{-\infty}e^{(-\sigma_0+\i y)t}Z_3(-\sigma_0+\i y,\xi)fdy.\label{VsNb}
 \ee
We have for any $f,g\in L^2_\xi(\R^3_v)$
 \bma
&|(V_{-\sigma_0,\infty}(t)f,g)|\leq
Ce^{-\sigma_0 t}\int^{+\infty}_{-\infty}|(Z_3(-\sigma_0+\i y,\xi)f,g)|dy
\nnm\\
&\leq
C(\|K\|+r_0^{-1})e^{-\sigma_0 t}\int^{+\infty}_{-\infty}\|(-\sigma_0+\i y-c(\xi))^{-1}f\|\|(-\sigma_0-\i y-c(-\xi))^{-1}g\|dy
\nnm\\
&\leq
 C(\|K\|+r_0^{-1})e^{-\sigma_0 t}(\nu_0-\sigma_0)^{-1}\|f\|\|g\|,  \label{VsNb1}
 \ema
where we have used the fact (cf. Lemma 2.2.13 of \cite{Ukai3})
 \bq
\int^{+\infty}_{-\infty}\|(x+\i y-c(\xi))^{-1}f\|^2dy\leq \pi(x+\nu_0)^{-1}\|f\|^2,\quad x>-\nu_0. \nnm
 \eq
From \eqref{VsNb1} and the fact $\|f\|^2\leq\|f\|_\xi^2\leq(1+r_0^{-2})\|f\|^2$ for $|\xi|> r_0$, we have
  \bq
  \|V_{-\sigma_0,\infty}(t)\|_\xi
   \leq Ce^{-\sigma_0t}(\nu_0-\sigma_0)^{-1}. \label{VsN2}
 \eq
Therefore, we conclude from \eqref{V_3b} and \eqref{VsN}--\eqref{VsNb} that
 \bq
   e^{t\hat{B}(\xi)}f
 =e^{tc(\xi)}f1_{\{|\xi|> r_0\}}+V_{-\sigma_0,\infty}(t), \quad |\xi|> r_0.  \label{high}
 \eq

The combination of \eqref{low} and \eqref{high} gives rise to \eqref{E_3a} with $S_1(t,\xi)f$ and $ S_2(t,\xi)f$ defined by
 \bmas
 S_1(t,\xi)f&=\dsum^3_{j=-1}e^{t\lambda_j(s)}
              (f,\overline{\psi_j(s,\omega)})_\xi
               \psi_j(s,\omega)1_{\{|\xi|\leq r_0\}}, \\
      S_2(t,\xi)f&=\(e^{tQ(\xi)}\P_1f+U_{-\frac{\mu}2,\infty}(t)\)1_{\{|\xi|\leq r_0\}}
                   +\(e^{tc(\xi)}f+V_{-\sigma_0,\infty}(t)\)1_{\{|\xi|> r_0\}}.
\emas
In particular, $S_2(t,\xi)f$  satisfies \eqref{B_3} in terms of \eqref{decay_1}, \eqref{UsN4},  \eqref{VsN2} and the estimate $\| e^{tc(\xi)}1_{\{|\xi|> r_0\}}\|_\xi \le  Ce^{-\nu_0t}$ because \eqref{Cxi} and \eqref{nuv}.
\end{proof}

\subsection{Optimal time-decay rates of linearized VPB}

Let us introduce a Sobolev space of function $f=f(x,v)$ by $ H^l_P=\{f\in L^2(\R^6_{x,v})\,|\, \|f\|_{H^l_P}<\infty\,\}\ (L^2_P=H^0_P)$
with the norm $\|\cdot\|_{H^l_P}$ defined by
 \bmas
 \|f\|_{H^l_P}
 &=\(\intr (1+|\xi|^2)^l\|\hat{f}\|^2_\xi d\xi \)^{1/2}\\
 &=\(\intr (1+|\xi|^2)^l
     \(\intr |\hat{f}|^2dv+\frac1{|\xi|^2}\lt|\intr \hat{f}\sqrt{M}dv\rt|^2\)d\xi
    \)^{1/2},
 \emas
where $\hat{f}=\hat{f}(\xi,v)$ denotes the Fourier transform of $f(x,v)$ with respect to the spatial variable $x\in \R^3$. Note that it holds
$$
 \|f\|^2_{H^l_P}
 =\|f\|^2_{L^2(\R^3_{v},H^l(\R^3_x))}+\| \nabla_x\Delta_x^{-1}
(f,\sqrt{M})\|^2_{H^l(\R^3_x)}.
$$
Denote
$$
L^{2,q}=L^2(\R^3_v,L^q(\R^3_x)),\quad
\|f\|_{L^{2,q}}=\(\intr\(\intr|f(x,v)|^q dx\)^{2/q}dv\)^{1/2}.
$$

For any $f_0\in L^2(\R^6_{x,v})$, set
 \bq
  e^{tB}f_0=(\mathcal{F}^{-1}e^{t\hat{B}(\xi)}\mathcal{F})f_0.
  \eq
By Lemma \ref{SG_1}, it holds
 $$
 \|e^{tB}f_0\|_{H^l_P}=\intr (1+|\xi|^2)^l\|e^{t\hat{B}(\xi)}\hat f_0\|^2_\xi d\xi\le \intr (1+|\xi|^2)^l\|\hat f_0\|^2_\xi d\xi
=\|f_0\|_{H^l_P}.
 $$
This means that the linear operator $B$ generates a strongly continuous contraction semigroup $e^{tB}$ in $H^l_P$, and therefore, $f(x,v,t)=e^{tB}f_0(x,v)$ is a global solution to the IVP~\eqref{VPB} for the linearized Vlasov-Poisson-Boltzmann equation for any $f_0\in H^l_P$. We are now going  to establish the time-decay rates of the global solution.

First of all, we have the upper bounds of the time decay rates of the global solutions as follows.

\begin{thm}
 \label{time1}
Set $\Tdx\Phi(t)=\Tdx\Delta_x^{-1}(e^{tB}f_0,\sqrt M)$. If $f_0\in L^2(\R^3_{v};H^l(\R^3_{x}) \cap L^1(\R^3_{x}))$, then it holds
 \bma
\|(\da_x e^{tB}f_0,\chi_0)\|_{L^2(\R^3_x)}
\leq
&C(1+t)^{-(\frac34+\frac{|\alpha|}2)}
 (\|\da_x f_0\|_{L^2(\R^6_{x,v})}+\|f_0\|_{L^{2,1}}),\label{D_2}
\\
\|(\da_x e^{tB}f_0,\chi_j)\|_{L^2(\R^3_x)}
\leq
&C(1+t)^{-(\frac14+\frac{|\alpha|}2)}
  (\|\da_x f_0\|_{L^2(\R^6_{x,v})}+\|f_0\|_{L^{2,1}}),\quad j=1,2,3, \label{D_3}
\\
\|(\da_x e^{tB}f_0,\chi_4)\|_{L^2(\R^3_x)}
\leq
&C(1+t)^{-(\frac34+\frac{|\alpha|}2)}
  (\|\da_x f_0\|_{L^2(\R^6_{x,v})}+\|f_0\|_{L^{2,1}}), \label{D_4}
\\
\|\dxa \nabla_x\Phi(t)\|_{L^2(\R^3_x)}
\leq
&C(1+t)^{-(\frac14+\frac{|\alpha|}2)}
  (\|\da_x f_0\|_{L^2(\R^6_{x,v})}+\|f_0\|_{L^{2,1}}),\label{D_0}
\\
\|\P_1(\da_x e^{tB}f_0)\|_{L^2(\R^6_{x,v})}
\leq
&C(1+t)^{-(\frac34+\frac{|\alpha|}2)}
  (\|\da_x f_0\|_{L^2(\R^6_{x,v})}+\|f_0\|_{L^{2,1}}).\label{D_5}
 \ema
where $\chi_j,\ j=0,1,2,3,4,$ is defined by \eqref{basis} and $|\alpha|\le l$. Moreover, if $f_0\in L^2(\R^3_{v};H^l(\R^3_{x}) \cap L^1(\R^3_{x}))$ and $(f_0,\chi_0)=0$, then it holds
 \bma
 \|(\da_x e^{tB}f_0,\chi_0)\|_{L^2(\R^3_x)}
\leq &
 C(1+t)^{-(\frac54+\frac{|\alpha|}2)}
 (\|\da_x f_0\|_{L^2(\R^6_{x,v})}+\|f_0\|_{L^{2,1}}), \label{D_9}
\\
\|(\da_x e^{tB}f_0,\chi_j)\|_{L^2(\R^3_x)}
\leq&
  C(1+t)^{-(\frac34+\frac{|\alpha|}2)}
   (\|\da_x f_0\|_{L^2(\R^6_{x,v})}+\|f_0\|_{L^{2,1}}),\quad j=1,2,3, \label{D_10}
\\
\|(\da_x e^{tB}f_0,\chi_4)\|_{L^2(\R^3_x)}
 \leq&
  C(1+t)^{-(\frac34+\frac{|\alpha|}2)}
  (\|\da_x f_0\|_{L^2(\R^6_{x,v})}+\|f_0\|_{L^{2,1}}), \label{D_11}
\\
\|\dxa \nabla_x\Phi(t)\|_{L^2(\R^3_x)}
 \leq &
  C(1+t)^{-(\frac34+\frac{|\alpha|}2)}
   (\|\da_x f_0\|_{L^2(\R^6_{x,v})}+\|f_0\|_{L^{2,1}}),\label{D_12}
\\
\|\P_1(\da_x e^{tB}f_0)\|_{L^2(\R^6_{x,v})}
 \leq&
 C(1+t)^{-(\frac54+\frac{|\alpha|}2)}
  (\|\da_x f_0\|_{L^2(\R^6_{x,v})}+\|f_0\|_{L^{2,1}}).\label{D_13}
 \ema
\end{thm}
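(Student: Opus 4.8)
The plan is to work on the Fourier side and feed in the semigroup decomposition $e^{t\hat B(\xi)}=S_1(t,\xi)+S_2(t,\xi)$ of Theorem~\ref{E_3}. By Plancherel, each left-hand side in \eqref{D_2}--\eqref{D_5} is, up to squaring, an integral $\int_{\R^3}|\xi|^{2|\alpha|}|(\,\cdot\,)|^2 d\xi$ of the corresponding component of $e^{t\hat B(\xi)}\hat f_0$, where $(\,\cdot\,)$ is $(\cdot,\chi_m)$, or $|\xi|^{-1}|(\cdot,\chi_0)|$ for $\nabla_x\Phi$, or $\|\P_1(\cdot)\|$ for the microscopic part. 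I split this into $|\xi|\le r_0$ and $|\xi|>r_0$. On $|\xi|>r_0$ one has $S_1\equiv0$ and $\|S_2(t,\xi)g\|_\xi\le Ce^{-\sigma_0t}\|g\|_\xi$; using $\|g\|\le\|g\|_\xi\le(1+r_0^{-2})^{1/2}\|g\|$ there, $|(g,\chi_0)|\le|\xi|\,\|g\|_\xi$, and the $H^l$ regularity to absorb $|\xi|^{|\alpha|}$, this region contributes an exponentially small term bounded by $Ce^{-\sigma_0 t}\|\da_x f_0\|_{L^2(\R^6_{x,v})}$. On $|\xi|\le r_0$ the $S_2$ part is again $O(e^{-\sigma_0 t})$ in $\|\cdot\|_\xi$ and, since $|\xi|$ is bounded and $\int_{|\xi|\le r_0}\|\hat f_0\|_\xi^2 d\xi<\infty$, it too contributes an exponentially small term; the whole content is the low-frequency $S_1$ part.

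For that part I insert $S_1(t,\xi)\hat f_0=\sum_{j=-1}^3 e^{t\lambda_j(s)}(\hat f_0,\overline{\psi_j(s,\omega)})_\xi\,\psi_j(s,\omega)$ from \eqref{E_5} and take the relevant component, e.g. $(S_1\hat f_0,\chi_m)=\sum_j e^{t\lambda_j(s)}(\hat f_0,\overline{\psi_j})_\xi\,(\psi_j,\chi_m)$. Two inputs are then used. First, from \eqref{specr0}, $|e^{t\lambda_j(s)}|\le e^{-c|\xi|^2 t}$ on $|\xi|\le r_0$ for some $c>0$, the oscillatory factors $e^{\pm\i t}$ of $\lambda_{\pm1}$ having modulus one. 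Second --- and this is where the Poisson forcing enters --- the weighted inner product splits as
$$(\hat f_0,\overline{\psi_j})_\xi=(\hat f_0,\overline{\psi_j})+\tfrac1{|\xi|^2}\,\overline{(\psi_j,\sqrt M)}\,(\hat f_0,\chi_0),$$
and by \eqref{eigf1} one has $(\psi_{\pm1},\sqrt M)=O(|\xi|)$, $(\psi_0,\sqrt M)=O(|\xi|^2)$, $(\psi_{2,3},\sqrt M)\equiv0$, so that for $j=\pm1$ the coefficient carries a genuinely singular term $\sim|\xi|^{-1}(\hat f_0,\chi_0)$. Combining this with the orders, also read off from \eqref{eigf1}, of the components $(\psi_{\pm1},\chi_j)=O(1)$ $(j=1,2,3)$ but $(\psi_{\pm1},\chi_0)=(\psi_{\pm1},\chi_4)=O(|\xi|)$, $\P_1\psi_j=O(|\xi|)$, and $(\psi_0,\chi_4)=O(1)$, $(\psi_0,\chi_0)=O(|\xi|^2)$, $(\psi_{2,3},\chi_0)=(\psi_{2,3},\chi_4)\equiv0$, one sees that the $|\xi|^{-1}(\hat f_0,\chi_0)$-term survives only in the momentum components and, through $\widehat{\nabla_x\Phi}=-\i\xi|\xi|^{-2}(e^{t\hat B(\xi)}\hat f_0,\chi_0)$, in the electric field, whereas in the density, energy and microscopic components it is damped by an extra factor $|\xi|$; all other contributions carry a coefficient that is merely $O(\|\hat f_0(\xi,\cdot)\|_{L^2_v})$.

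It remains to bound the resulting low-frequency integrals. Using $\sup_\xi\|\hat f_0(\xi,\cdot)\|_{L^2_v}\le C\|f_0\|_{L^{2,1}}$, $|(\hat f_0,\chi_0)|\le C\|f_0\|_{L^{2,1}}$, $\|\hat f_0\|_{L^2_{\xi,v}}=\|f_0\|_{L^2(\R^6_{x,v})}$, and the elementary estimate
$$\int_{|\xi|\le r_0}|\xi|^{2|\alpha|+2k}e^{-2ct|\xi|^2}d\xi\le C(1+t)^{-(|\alpha|+k+3/2)},\qquad k\ge-1,$$
everything reduces to matching exponents: $k=-1$ arises exactly for the momentum and $\nabla_x\Phi$ (from the $|\xi|^{-1}(\hat f_0,\chi_0)$-term of the $\lambda_{\pm1}$ modes), giving $(1+t)^{-(\frac14+\frac{|\alpha|}2)}$; $k=0$ governs the density, energy and microscopic part (from the $|\xi|^0(\hat f_0,\chi_0)$-terms of $\lambda_{\pm1}$ and the $O(\|\hat f_0\|)$-term of $\lambda_0$), giving $(1+t)^{-(\frac34+\frac{|\alpha|}2)}$; all remaining terms have $k\ge1$. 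This yields \eqref{D_2}--\eqref{D_5}. For \eqref{D_9}--\eqref{D_13}, the hypothesis $(f_0,\chi_0)=0$ makes $(\hat f_0,\chi_0)$ vanish at $\xi=0$, which (with $f_0\in L^1_x$) raises the order of every $(\hat f_0,\chi_0)$-term by one power of $|\xi|$; this shifts the critical $k$ up by one for the density, momentum, $\nabla_x\Phi$ and microscopic part, while the energy --- controlled by the mass-independent $O(\|\hat f_0\|)$-term of $\lambda_0$ --- keeps the rate $(1+t)^{-(\frac34+\frac{|\alpha|}2)}$.

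The principal obstacle is the bookkeeping of the second step: among the many terms generated by the spectral sum one must single out the one proportional to $|\xi|^{-1}(\hat f_0,\chi_0)$ --- produced by the $\tfrac1{|\xi|^2}P_{\rm d}$ piece of $(\cdot,\cdot)_\xi$, i.e. by the self-consistent Poisson field --- verify that it occurs in exactly the momentum and electric-field components and nowhere else, and confirm that every remaining term (including the Taylor remainders $o(|\xi|^2)$ in $\lambda_j$ and $\psi_j$ and the $S_2$ pieces near $\xi=0$) decays at least at the asserted rate. This is precisely the structural difference from the Boltzmann case, and is where the full strength of Theorem~\ref{eigen_3} is used.
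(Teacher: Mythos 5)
Your argument for \eqref{D_2}--\eqref{D_5} is essentially the paper's own proof: Plancherel plus the decomposition $S=S_1+S_2$ of Theorem~\ref{E_3}, the exponential bound \eqref{B_3} on $S_2$ (with the $|\xi|^{-2}$ singularity near $\xi=0$ absorbed via $\sup_\xi|(\hat f_0,\sqrt M)|\le C\|f_0\|_{L^{2,1}}$ and the integrability of $|\xi|^{-2}$ in $\R^3_\xi$), and, for $S_1$, the low-frequency expansions of Theorem~\ref{eigen_3}, which produce exactly the singular $|\xi|^{-1}\hat n_0$ coefficient in the $\lambda_{\pm1}$ modes (the paper's \eqref{D_8} and \eqref{F_2}--\eqref{F_4}), followed by the same Gaussian-integral exponent matching. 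This part is correct and matches the paper's route.

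The one step that does not stand as written is your treatment of \eqref{D_9}--\eqref{D_13}. You read the hypothesis $(f_0,\chi_0)=0$ as saying only that $(\hat f_0,\chi_0)$ vanishes at $\xi=0$, and you claim that together with $f_0\in L^1_x$ this ``raises the order of every $(\hat f_0,\chi_0)$-term by one power of $|\xi|$''. That inference is false: a mean-zero $L^1$ function has a Fourier transform vanishing at the origin with no rate, and $|\hat n_0(\xi)|\le C|\xi|$ would require a first-moment condition such as $|x|\,n_0\in L^1(\R^3_x)$, which is not assumed. Fortunately the hypothesis is stronger than your reading: $(f_0,\chi_0)$ is the $v$-inner product, a function of $x$, so $(f_0,\chi_0)=0$ means $n_0(x)\equiv0$, hence $\hat n_0(\xi)\equiv0$ for all $\xi$ --- and this is exactly how the paper argues (``the case $\hat n_0=0$''). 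Once the $\hat n_0$-terms are removed identically, your own bookkeeping delivers the improved rates: the leading contributions become the $O(1)\,\hat m_0$- and $\hat q_0$-terms with $|\hat m_0|,|\hat q_0|\le C\|f_0\|_{L^{2,1}}$, the density and microscopic components pick up an extra factor $|\xi|$ from \eqref{eigf1}, and $\nabla_x\Phi$ is $|\xi|^{-1}$ times an $O(|\xi|)$ density, giving $k=1,1,0,0$ and hence \eqref{D_9}--\eqref{D_13}. So the conclusion is right, but you should replace the ``extra power of $|\xi|$ from vanishing at $\xi=0$'' mechanism by the observation that $\hat n_0\equiv0$.
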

 \begin{proof}
We prove \eqref{D_2}--\eqref{D_5} first. It follows from Theorem~\ref{E_3} and the Planchel's equality that
 \bma
   \|\dxa(e^{tB}f_0,\chi_j)\|_{L^2(\R^3_x)}
 &=\|\xi^\alpha(S(t,\xi)\hat f_0,\chi_j)\|_{L^2(\R^3_\xi)}
 \nnm\\
&\le
  \|\xi^\alpha(S_1(t,\xi)\hat f_0,\chi_j)\|_{L^2(\R^3_\xi)}
 +\|\xi^\alpha(S_2(t,\xi)\hat f_0,\chi_j)\|_{L^2(\R^3_\xi)}
 \nnm\\
&\le
  \|\xi^\alpha(S_1(t,\xi)\hat f_0,\chi_j)\|_{L^2(\R^3_\xi)}
 +\|\xi^\alpha S_2(t,\xi)\hat f_0\|_{L^2(\R^6_{\xi,v})},                 \label{D1a}
 \\
 \|\dxa \Tdx \Phi\|_{L^2(\R^3_x)}&\le
  \|\xi^\alpha |\xi|^{-1}(S_1(t,\xi)\hat f_0,\sqrt M)\|_{L^2(\R^3_\xi)}
 +\|\xi^\alpha|\xi|^{-1}(S_2(t,\xi)\hat f_0,\sqrt M)\|_{L^2(\R^3_\xi)}. \label{D1b}
 \ema
 By \eqref{B_3} and
 \bmas
  \intr \frac{(\xi^\alpha)^2}{|\xi|^2}\lt|(\hat{f_0},\sqrt{M})\rt|^2d\xi
&\leq
  \sup_{|\xi|\leq 1} \lt|(\hat{f_0},\sqrt{M})\rt|^2\int_{|\xi|\leq1}\frac{1}{|\xi|^2}d\xi
 + \int_{|\xi|> 1} (\xi^\alpha)^2\lt|(\hat{f_0},\sqrt{M})\rt|^2d\xi
 \\
&\leq
 C(\|(f_0,\sqrt{M})\|^2_{L^1(\R^3_{x})}+\|\da_x f_0\|^2_{L^2(\R^6_{x,v})}),
 \emas
we can estimate the high frequency terms on the right hand side of \eqref{D1a}--\eqref{D1b} as follows:
 \bma
 &\|\xi^\alpha S_2(t,\xi)\hat f_0\|^2_{L^2(\R^6_{\xi,v})}
 +\|\xi^\alpha|\xi|^{-1}(S_2(t,\xi)\hat f_0,\sqrt M)\|^2_{L^2(\R^3_\xi)}
 \nnm\\
=&\intr (\xi^{\alpha})^2(\| S_2(t,\xi)\hat{f}_0\|^2_{L^2(\R^3_{v})}
 +\frac1{|\xi|^2}|(S_2(t,\xi)\hat{f}_0,\sqrt{M})|^2 )d\xi
 \nnm\\
\leq
& C\intr e^{-2\sigma_0 t}(\xi^{\alpha})^2
  (\|\hat f_0\|^2_{L^2(\R^3_{v})}+\frac1{|\xi|^2}|(\hat{f}_0,\sqrt{M})|^2)d\xi
 \nnm\\
\leq
& C e^{-2\sigma_0 t}(\|(f_0,\sqrt{M})\|^2_{L^1(\R^3_{x})}+\|\da_x f_0\|^2_{L^2(\R^6_{x,v})}).\label{D_1}
 \ema

By \eqref{E_5}, we have for $ |\xi|\le r_0$ that
 $$
  S_1(t,\xi)\hat{f}_0=\dsum^3_{j=-1}e^{t\lambda_j(|\xi|)}P_j(\xi)\hat{f}_0,\quad \xi=s\omega,\,\,s=|\xi|\ne0,
$$
where
 $$
P_j(\xi)\hat{f}_0
 =(\hat{f}_0,\overline{\psi_j(s,\omega)}\,)_\xi\psi_j(s,\omega)1_{\{|\xi|\leq r_0\}}.
$$
According to \eqref{eigfr0} and \eqref{eigf1}, we can decompose $P_j(\xi)\hat{f}$  for $ |\xi|\le r_0$  as
 \bma
 P_j(\xi)\hat{f}_0
=&(\hat{m}_0\cdot W^j)(W^j\cdot v)\sqrt{M}
  + |\xi| T_j(\xi)\hat{f}_0,\quad j=2,3,\label{D_6}
 \\
 P_0(\xi)\hat{f}_0
=&\Big(\hat{q}_0-\sqrt{\frac23}\hat{n}_0\Big)\chi_4
 +|\xi| T_0(\xi)\hat{f}_0,\label{D_7}
  \\
  P_{\pm1}(\xi)\hat{f}_0
=&\frac12\Big[(\hat{m}_0\cdot \omega)\mp\frac1{|\xi|} \hat{n}_0\Big]
   ( v\cdot \omega)\sqrt{M}+\frac12\hat{n}_0\Big(\sqrt{M}+\sqrt{\frac23}\chi_4\Big)
   \nnm\\
 & \mp \frac{\i}2\hat n_0(L\mp \i\P_1)^{-1}\P_1(v\cdot\omega)^2\sqrt{M}
   + |\xi| T_{\pm1}(\xi)\hat{f}_0,\label{D_8}
 \ema
where $(\hat{n}_0,\hat{m}_0,\hat{q}_0)$ is the Fourier transform of
the macroscopic density, momentum and energy $(n_0,m_0,q_0)$ of the initial data $f_0\in L^2(\R^6_{x,v})$ defined by
 \be
  (n_0,m_0,q_0)=:((f_0,\chi_0),(f_0,v\sqrt{M}),(f_0,\chi_4)),
  \ee
$W^j$ is given by \eqref{eigf1},  and $T_j(\xi), \ -1\leq j\leq3,$ is the  linear operators with the norm $\|T_j(\xi)\|$ being uniformly bounded for $|\xi|\leq r_0$.

Since $(T_j(\xi)\hat{f}_0,\sqrt{M})=(T_j(\xi)\hat{f}_0,\chi_4)=0$
for $j=2,3$, the macroscopic density, momentum and energy of $ S_1(t,\xi)\hat{f}_0$ satisfy \bma
 (S_1(t,\xi)\hat{f}_0,\sqrt{M})
=&\frac12\sum_{j=\pm1}e^{\lambda_j(|\xi|)t}\hat{n}_0
  + |\xi|\sum_{j=-1}^1e^{\lambda_j(|\xi|)t}(T_j(\xi)\hat{f}_0,\sqrt{M}),\label{F_2}
 \\
(S_1(t,\xi)\hat{f}_0,v\sqrt{M})
=&\frac12\sum_{j=\pm1}e^{\lambda_j(|\xi|)t}\Big[(\hat{m}_0\cdot\omega)
  -\frac{j}{|\xi|} \hat{n}_0\Big]\omega+\sum_{j=2,3}e^{\lambda_j(|\xi|)t}(\hat{m}_0\cdot W^j)W^j
  \nnm\\
 &+|\xi|\sum_{j=-1}^3e^{\lambda_j(|\xi|)t}(T_j(\xi)\hat{f}_0,v\sqrt{M}),\label{F_3}
  \\
(S_1(t,\xi)\hat{f}_0,\chi_4)
 =&\sqrt{\frac16}\sum_{j=\pm1}e^{\lambda_j(|\xi|)t}\hat{n}_0
   +e^{\lambda_0(|\xi|)t}\Big(\hat{q}_0-\sqrt{\frac23}\hat{n}_0\Big)
   +|\xi|\sum_{j=-1}^1 e^{\lambda_j(|\xi|)t}(T_j(\xi)\hat{f}_0,\chi_4).\label{F_4}
 \ema
Note that
 \bq
 \mathrm{Re}\lambda_j(|\xi|)
 =a_j|\xi|^2(1+O(|\xi|))
 \le -\beta |\xi|^2,\quad |\xi|\leq r_0,     \label{ee}
 \eq
where $\beta>0$ denotes a generic constant that will also be used later. We obtain by \eqref{F_2}--\eqref{F_4} that
 \bma
 \|\xi^\alpha(S_1(t,\xi)\hat f_0,\sqrt{M})\|^2_{L^2(\R^3_\xi)}
 &\leq
 C\int_{|\xi|\leq r_0}(\xi^\alpha)^{2}e^{-2\beta |\xi|^2t}
   \(|\hat{n}_0|^2+|\xi|^2\|\hat{f}_0\|^2_{L^2(\R^3_{v})}\)d\xi\nnm\\
&\leq
  C(1+t)^{-(3/2+|\alpha|)}\(\|n_0\|^2_{L^1(\R^3_{x})}+\|f_0\|^2_{L^{2,1}}\),\label{V_4}
\\
 \|\xi^\alpha(S_1(t,\xi)\hat f_0,v\sqrt{M})\|^2_{L^2(\R^3_\xi)}
&\leq
 C\int_{|\xi|\leq r_0}(\xi^\alpha)^{2}e^{-2\beta |\xi|^2t}
 \(|\hat{m}_0|^2+|\xi|^{-2}|\hat{n}_0|^2+|\xi|^2\|\hat{f}_0\|^2_{L^2(\R^3_{v})}\)d\xi
 \nnm\\
&\leq
 C(1+t)^{-(1/2+|\alpha|)}
 \(\|n_0\|^2_{L^1(\R^3_{x})}+\|m_0\|^2_{L^1(\R^3_{x})}+\|f_0\|^2_{L^{2,1}}\),\label{E_1}
\\
 \|\xi^\alpha(S_1(t,\xi)\hat f_0,\chi_4)\|^2_{L^2(\R^3_\xi)}
&\leq
 C\int_{|\xi|\leq r_0}(\xi^\alpha)^{2}e^{-2\beta |\xi|^2t}
 \(|\hat{q}_0|^2+|\hat{n}_0|^2+|\xi|^2\|\hat{f}_0\|^2_{L^2(\R^3_{v})}\)d\xi\nnm\\
&\leq
 C(1+t)^{-(3/2+|\alpha|)}
 \(\|n_0\|^2_{L^1(\R^3_{x})}+\|q_0\|^2_{L^1(\R^3_{x})}+\|f_0\|^2_{L^{2,1}}\).\label{V_5}
\ema
Since
 \bmas
 \|(n_0,m_0,q_0)\|_{L^1(\R^3_{x})}
\leq \intr \intr |f_0| dx(\sqrt{M},v\sqrt{M},\chi_4)dv\leq
\(\intr\(\intr |f_0| dx\)^2dv\)^{1/2},
 \emas
we obtain from \eqref{V_4}--\eqref{V_5} that
 \bmas
\|\xi^\alpha(S_1(t,\xi)\hat f_0,\sqrt{M})\|^2_{L^2(\R^3_\xi)}
  \leq &C(1+t)^{-(3/2+|\alpha|)}\|f_0\|_{L^{2,1}},
  \\
\|\xi^\alpha(S_1(t,\xi)\hat f_0,v\sqrt{M})\|^2_{L^2(\R^3_\xi)}
 \le &C(1+t)^{-(1/2+|\alpha|)}\|f_0\|_{L^{2,1}},
 \\
 \|\xi^\alpha(S_1(t,\xi)\hat f_0,\chi_4)\|^2_{L^2(\R^3_\xi)}
 \le& C(1+t)^{-(3/2+|\alpha|)}\|f_0\|_{L^{2,1}},
 \emas
which together with \eqref{D1a}, \eqref{D1b} and \eqref{D_1} lead to \eqref{D_2}--\eqref{D_0}.

By \eqref{D_6}-\eqref{D_8}, we can represent the corresponding microscopic part of $S_1(t,\xi)\hat{f}_0$ by
 \bma
\P_1(S_1(t,\xi)\hat{f}_0)&=\dsum^3_{j=-1}e^{t\lambda_j(|\xi|)}\P_1(P_j(\xi)\hat{f}_0)
 \nnm\\
 &=|\xi|\dsum^3_{j=-1}e^{t\lambda_j(|\xi|)}\P_1(T_j(\xi)\hat{f}_0)
   -\frac12\dsum_{j=\pm1}e^{t\lambda_j(|\xi|)}
    \hat{n}_0j\i(L-j\i \P_1)^{-1}\P_1(v\cdot\omega)^2\sqrt{M},  \label{B_2}
 \ema
to obtain
 \bma
  \|\xi^\alpha \P_1(S_1(t,\xi)\hat{f}_0)\|^2
 &\leq
  C\int_{|\xi|\leq r_0}(\xi^\alpha)^{2}
    e^{-2\beta|\xi|^2t}|\xi|^2\|\hat{f}_0\|^2_{L^2(\R^3_{v})}d\xi
 +C\int_{|\xi|\leq r_0}(\xi^\alpha)^{2}
    e^{-2\beta |\xi|^2t}|\hat{n}_0|^2d\xi\nnm\\
&\leq
  C(1+t)^{-(3/2+|\alpha|)}
    (\|f_0\|^2_{L^{2,1}}+\|n_0\|^2_{L^1(\R^3_{x})}).\label{E_2}
\ema
This and \eqref{D_1} give  \eqref{D_5}.

Similarly, we can obtain the time decay rates
\eqref{D_9}-\eqref{D_13} from \eqref{V_4}--\eqref{E_2} for the case
$\hat n_0=0$ (which is true if $(f_0,\chi_0)=0$).
\end{proof}

Then, we show that the above time-decay rates of the global solutions are optimal. Indeed, we have
\begin{thm}\label{time2}
 Assume that  $f_0\in L^2(\R^3_{v};H^l(\R^3_{x}) \cap L^1(\R^3_{x}))$ and satisfies that
$\inf_{|\xi|\le r_0}|(\hat{f_0},\chi_0)|\geq d_0>0$ and
$\inf_{|\xi|\le r_0}|(\hat{f_0},\chi_4)|\geq d_1\sup_{
|\xi|\le r_0}|(\hat{f_0},\chi_0)|$ with $d_0>0$ and $d_1>0$ being two
constants, and $\chi_j,\ j=0,1,2,3,4,$ defined by \eqref{basis}, then it holds for $t>0$ being large enough that
  \bma
 C_1(1+t)^{-\frac34}
  \leq&\|(e^{tB}f_0,\chi_0)\|_{L^2(\R^3_x)}\leq C_2(1+t)^{-\frac34},\label{H_1}
\\
 C_1(1+t)^{-\frac14}
  \leq&\|( e^{tB}f_0,\chi_j)\|_{L^2(\R^3_x)}\leq C_2(1+t)^{-\frac14},\ j=1,2,3,\label{H_2}
\\
 C_1(1+t)^{-\frac34}
  \leq&\|( e^{tB}f_0,\chi_4)\|_{L^2(\R^3_x)}\leq C_2(1+t)^{-\frac34}, \label{H_3}
\\
 C_1(1+t)^{-\frac14}
  \leq&\|\Tdx\Phi(t)\|_{L^2(\R^3_x)}\leq C_2(1+t)^{-\frac14}, \label{H_4}
\\
  C_1(1+t)^{-\frac34}
  \leq&\|\P_1( e^{tB}f_0)\|_{L^2(\R^6_{x,v})}\leq C_2(1+t)^{-\frac34}, \label{Q_2}
 \ema
where $C_2\ge C_1>0$ are two generic constants.  In addition, we have
 \bq
  C_1(1+t)^{-\frac14}\leq\|e^{tB}f_0\|_{H^l_P}\leq C_2(1+t)^{-\frac14}.\label{H_5}
\eq

If $(f_0,\chi_0)=0$, $\inf_{|\xi|\le r_0}|(\hat f_0,(v\cdot\omega)\sqrt M)|\ge d_0$ with $\omega=\frac{\xi}{|\xi|}\in\S^2$ and $\inf_{|\xi|\le r_0}|(\hat f_0,\chi_4)|\ge d_0$, then it holds for $t>0$ being large enough that
\bma
 C_1(1+t)^{-\frac54}
  \leq&\|(e^{tB}f_0,\chi_0)\|_{L^2(\R^3_x)}\leq C_2(1+t)^{-\frac54},\label{H_1c}
\\
 C_1(1+t)^{-\frac34}
  \leq&\|( e^{tB}f_0,\chi_j)\|_{L^2(\R^3_x)}\leq C_2(1+t)^{-\frac34},\ j=1,2,3,\label{H_2c}
\\
 C_1(1+t)^{-\frac34}
  \leq&\|( e^{tB}f_0,\chi_4)\|_{L^2(\R^3_x)}\leq C_2(1+t)^{-\frac34}, \label{H_3c}
\\
 C_1(1+t)^{-\frac34}
  \leq&\|\Tdx\Phi(t)\|_{L^2(\R^3_x)}\leq C_2(1+t)^{-\frac34}, \label{H_4c}
 \\
 C_1(1+t)^{-\frac54}
  \leq&\|\P_1( e^{tB}f_0)\|_{L^2(\R^6_{x,v})}\leq C_2(1+t)^{-\frac54}. \label{Q_3}
 \ema
In particular, we have
 \bq
  C_1(1+t)^{-\frac34}\leq\|e^{tB}f_0\|_{H^l_P}\leq C_2(1+t)^{-\frac34}.\label{H_5c}
\eq
\end{thm}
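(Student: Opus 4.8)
The plan is to read off all the upper bounds directly from Theorem~\ref{time1} and to produce the matching lower bounds from the semigroup splitting $S(t,\xi)=S_1(t,\xi)+S_2(t,\xi)$ of Theorem~\ref{E_3} by isolating the leading term of $S_1(t,\xi)$ at low frequency. Every upper bound in \eqref{H_1}--\eqref{Q_2} (resp.\ in \eqref{H_1c}--\eqref{Q_3}) is exactly the case $|\alpha|=0$ of \eqref{D_2}--\eqref{D_5}, \eqref{D_0} (resp.\ of \eqref{D_9}--\eqref{D_13}), since $f_0\in L^2(\R^3_v;H^l(\R^3_x)\cap L^1(\R^3_x))$ yields $f_0\in L^{2,1}$ and $\da_xf_0\in L^2(\R^6_{x,v})$ for $|\alpha|\le l$, and for the second family we also use $(f_0,\chi_0)=0$. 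For \eqref{H_5} I would combine $\|e^{tB}f_0\|^2_{H^l_P}=\|e^{tB}f_0\|^2_{L^2_v(H^l_x)}+\|\nabla_x\Delta_x^{-1}(e^{tB}f_0,\sqrt M)\|^2_{H^l_x}$ with \eqref{D_2}--\eqref{D_5}, \eqref{D_0} summed over $|\alpha|\le l$; the slowest contribution is the momentum and $\nabla_x\Phi$ at $|\alpha|=0$, which gives $(1+t)^{-1/4}$, and \eqref{H_5c} is the same with rate $(1+t)^{-3/4}$ from \eqref{D_9}--\eqref{D_13}.

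For the lower bounds, $\|S_2(t,\xi)f\|_\xi\le Ce^{-\sigma_0t}\|f\|_\xi$ makes the $S_2$--part negligible, so everything comes from the low--frequency part of $S_1$. Feeding $\lambda_{\pm1}(s)=\pm\i+(-a_1\pm\i b_1)s^2+o(s^2)$, $\lambda_0(s)=-a_0s^2+o(s^2)\in\R$, $\lambda_2(s)=\lambda_3(s)=-a_2s^2+o(s^2)\in\R$ and $\mathrm{Re}\lambda_j(s)\le-\beta s^2$ into \eqref{F_2}--\eqref{F_4} and \eqref{B_2}, and writing $s=|\xi|$ and $\theta=t+b_1s^2t$, one reads off the leading term of the density and of $|\xi|\nabla_x\Phi$ as $\tfrac12\sum_{j=\pm1}e^{\lambda_j(s)t}\hat n_0=e^{-a_1s^2t}\cos\theta\,\hat n_0\,(1+o(1))$, of the momentum as $-\tfrac{\i}{s}e^{-a_1s^2t}\sin\theta\,\hat n_0\omega\,(1+o(1))$, and of the microscopic part as $\hat n_0\,\mathrm{Im}\bigl(e^{(\i+(-a_1+\i b_1)s^2)t}(L-\i\P_1)^{-1}\P_1(v\cdot\omega)^2\sqrt M\bigr)(1+o(1))$; in each case this is the only term of its order. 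I would then substitute $\eta=\sqrt t\,\xi$, turning the $L^2_\xi(\R^3_v)$--norm of the leading term into $t^{-3/2}$ (resp.\ $t^{-1/2}$ for the momentum, $t^{-5/2}$ when it carries a factor $|\xi|$) times an integral over $|\eta|\le r_0\sqrt t$ with the $t$--independent weight $e^{-2a_j|\eta|^2}$ and phase $b_1|\eta|^2$ (the error $o(s^2)t=o(|\eta|^2/t)|\eta|^2$ is uniformly small on this set), which is bounded below uniformly for large $t$ by means of $|\hat n_0(\xi)|\ge d_0$ on $|\xi|\le r_0$, the identities $\cos^2\theta=\tfrac12+\tfrac12\cos2\theta$ and $\sin^2\theta=\tfrac12-\tfrac12\cos2\theta$, and the strict comparison $\bigl|\int_{\R^3}e^{-(2a_1-2\i b_1)|\eta|^2}d\eta\bigr|=\bigl(\tfrac{\pi}{2\sqrt{a_1^2+b_1^2}}\bigr)^{3/2}<\bigl(\tfrac{\pi}{2a_1}\bigr)^{3/2}=\int_{\R^3}e^{-2a_1|\eta|^2}d\eta$ (and its $|\eta|^{-2}$--weighted analogue), which makes the oscillatory remainder strictly smaller than the mean; for the microscopic term one also uses that $(L-\i\P_1)^{-1}\P_1(v\cdot\omega)^2\sqrt M$ is not of constant phase, so $\|\mathrm{Im}(e^{\i\theta}\phi)\|_{L^2_v}$ stays positive in $\theta$. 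Since the remaining pieces — the $|\xi|T_j(\xi)\hat f_0$ terms, the bounded modes $(\hat m_0\cdot\omega)\omega$, $(\hat m_0\cdot W^j)W^j$ of \eqref{F_3}, and $S_2$ — are faster by a full power of $(1+t)^{-1/2}$ by Theorem~\ref{time1}, the reverse triangle inequality (leading term minus error) yields \eqref{H_1}, \eqref{H_2}, \eqref{H_4}, \eqref{Q_2}, hence \eqref{H_5}.

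The one estimate where this scheme is insufficient, and the hard part, is the energy bound \eqref{H_3}: by \eqref{F_4} its leading part is the \emph{sum} of two terms of the same order $(1+t)^{-3/4}$, the acoustic part $a_\xi(t):=\sqrt{2/3}\,e^{-a_1s^2t}\cos(t+b_1s^2t)\hat n_0$, which oscillates in $t$ like $e^{\pm\i t}$, and the non--oscillating entropy part $b_\xi(t):=e^{\lambda_0(s)t}(\hat q_0-\sqrt{2/3}\hat n_0)$, so one must exclude cancellation. After $\eta=\sqrt t\,\xi$, the rescaled integrand $|a_\xi(t)+b_\xi(t)|^2$ tends, uniformly on $|\eta|\le R$ for each fixed $R$, to $\bigl|\sqrt{2/3}e^{-a_1|\eta|^2}\cos(t+b_1|\eta|^2)\hat n_0(0)+e^{-a_0|\eta|^2}(\hat q_0(0)-\sqrt{2/3}\hat n_0(0))\bigr|^2$, which is continuous and $2\pi$--periodic in $t$; its $\eta$--integral therefore attains a minimum over $t$, and this minimum is strictly positive, because a zero would force $\cos(t_0+b_1|\eta|^2)=-(\hat q_0(0)-\sqrt{2/3}\hat n_0(0))e^{(a_1-a_0)|\eta|^2}/(\sqrt{2/3}\hat n_0(0))$ for a.e.\ $\eta$, which is impossible: the left side is real and oscillating in $|\eta|$, while the right side is either non--real (when $\hat q_0(0)/\hat n_0(0)\notin\R$) or a slowly varying real function that is not identically zero (when $\hat q_0(0)\ne\sqrt{2/3}\hat n_0(0)$, where $|\hat n_0|\ge d_0$ and $\inf|(\hat f_0,\chi_4)|\ge d_1\sup|(\hat f_0,\chi_0)|$ are used), and in the remaining case $b_\xi$ is of lower order so $a_\xi$ alone dominates. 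Establishing this uniform--in--$t$ positivity — a compactness argument over the circle of phases — is the crux; granting it, $\|(e^{tB}f_0,\chi_4)\|^2_{L^2_x}\ge c(1+t)^{-3/2}$ for $t$ large, which is \eqref{H_3}.

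Finally, in the case $(f_0,\chi_0)=0$ the density and the microscopic part of $S_1$ acquire a factor $|\xi|$, so I would first extract the leading behaviour of $|\xi|^{-1}(S_1(t,\xi)\hat f_0,\sqrt M)$ and of the operators $T_j(\xi)$ from the next order of the eigenfunction expansion \eqref{eigf1}: the surviving modes $e^{\lambda_{\pm1}t}$, $e^{\lambda_0t}$ and $e^{\lambda_jt}$ ($j=2,3$) then carry velocity profiles lying in mutually orthogonal parity/angular sectors of $L^2(\R^3_v)$, so no interference of the previous type arises, and the same rescaling argument gives \eqref{H_1c}--\eqref{H_5c} with $\hat m_0\cdot\omega$ and $\hat q_0$ playing the role of $\hat n_0$.
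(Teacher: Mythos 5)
Your proposal is correct in outline and shares the paper's skeleton: upper bounds read off from Theorem~\ref{time1}, lower bounds by discarding $S_2$ (exponentially small) and isolating the leading eigenmode contributions of $S_1$ in \eqref{F_2}--\eqref{F_4} and \eqref{B_2}. Where you genuinely diverge is in how positivity is extracted from the oscillating factors. The paper rescales $\zeta=\sqrt t\,\xi$, restricts to the fixed shell $L/2\le r\le L$, substitutes $y=t+b_1r^2$, and integrates $\cos^2 y$, $\sin^2 y$, or $\|\sin y\,L\Psi+\cos y\,\Psi\|^2_{L^2_v}$ exactly over a period; this is completely elementary, needs no limit in $t$, and is insensitive to the variable amplitude $\hat n_0(\xi)$ and to the $O(|\xi|^3)t$ phase error, which it dumps into a $(1+t)^{-5/2}$ remainder. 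You instead pass to the rescaled Gaussian limit and use the strict modulus comparison $\bigl|\int e^{-(2a_1-2\i b_1)|\eta|^2}d\eta\bigr|<\int e^{-2a_1|\eta|^2}d\eta$; this works, but it buys nothing extra for \eqref{H_1}, \eqref{H_2}, \eqref{H_4}, \eqref{Q_2} and costs you a uniform-in-$t$ approximation argument that the paper's device avoids. For \eqref{H_3} the two routes really differ: the paper splits off $e^{\lambda_0t}\hat q_0$ as the main term and absorbs the oscillatory $\hat n_0$ contribution wholesale using the hypothesis $\inf|\hat q_0|\ge d_1\sup|\hat n_0|$ (so no non-cancellation analysis is needed), whereas you prove genuine non-interference between the acoustic and entropy parts via real-analyticity in $|\eta|$ plus compactness over the phase circle — more delicate, but it uses less of the hypothesis; your identification of this as the crux, and the case analysis, are sound (note only that for real $f_0$ the "non-real ratio" case is vacuous).

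Two soft spots to repair if you write this up. First, your claim that the phase error $o(s^2)t$ is "uniformly small" on $|\eta|\le r_0\sqrt t$ is false: with $s=|\eta|/\sqrt t$ it is only $o(1)|\eta|^2$, hence small on fixed compact sets $|\eta|\le R$ but of size $|\eta|^2$ near the boundary of the rescaled ball. Since the integrands are nonnegative you may simply restrict to $|\eta|\le R$ for the lower bounds (and bound the oscillatory remainder's tail by its modulus), and the same restriction is what lets you replace $\hat n_0(\eta/\sqrt t)$ by $\hat n_0(0)$ — the non-constant amplitude must be handled by exactly the continuity/uniform-convergence device you describe for \eqref{H_3}, for the density and momentum as well, since pulling $\sup|\hat n_0|^2$ out of the oscillatory integral destroys the cancellation. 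Second, your justification of \eqref{Q_2} via pointwise-in-$\theta$ positivity of $\|\mathrm{Im}(e^{\i\theta}\phi)\|_{L^2_v}$ ("$\phi$ not of constant phase") is asserted, not proved, and is unnecessary: as in the paper's estimate \eqref{F2t}, averaging over the phases swept by $\theta=t+b_1r^2$ (equivalently over $\eta$ in your rescaled integral) gives $\int_0^{2\pi}\|L\Psi\sin y+\Psi\cos y\|^2dy\ge\|\Psi\|^2\int_{\pi/2}^{\pi}\cos^2y\,dy>0$ with $\Psi\ne0$, even if the pointwise norm vanished at isolated phases.
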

\begin{proof}
By Theorem \ref{time1}, we only need to show the lower bounds of the time-decay rates for the solution $e^{tB}f_0$  under the assumptions of Theorem \ref{time2}. Let us prove \eqref{H_1}--\eqref{Q_2} first. Indeed, in terms of Theorem~\ref{E_3}, we can verify that
 \bma
 \|( e^{tB}f,\chi_j)\|_{L^2(\R^3_x)}
 =&\|( S_1(t,\xi)\hat f_0,\chi_j)+(S_2(t,\xi)\hat f_0,\chi_j)\|_{L^2(\R^3_\xi)}
 \nnm\\
\ge&
 \| (S_1(t,\xi)\hat f_0,\chi_j)\|_{L^2(\R^3_\xi)}- \|S_2(t,\xi)\hat f_0\|_{L^2(\R^6_{\xi,v})}
 \nnm\\
\ge&
  \|(S_1(t,\xi)\hat f_0,\chi_j)\|_{L^2(\R^3_\xi)}
 - Ce^{-\sigma_0 t}(\|f_0\|_{L^{2,1}}+\|f_0\|_{L^2(\R^6_{x,v})}),
  \quad j=0,1,2,3,4,             \label{H_1a}
 \\
\|\Tdx\Phi\|_{L^2(\R^3_x)}
\ge &
 \|\, |\xi|^{-1}(S_1(t,\xi)\hat f_0,\chi_0)\|_{L^2(\R^3_\xi)}
   -\||\xi|^{-1}(S_2(t,\xi)\hat f_0,\chi_0)\|_{L^2(\R^3_\xi)} \nnm\\
\ge &
  \|\, |\xi|^{-1}(S_1(t,\xi)\hat f_0,\chi_0)\|_{L^2(\R^3_\xi)}
 - Ce^{-\sigma_0 t}(\|f_0\|_{L^{2,1}}+\| f_0\|_{L^2(\R^6_{x,v})}),  \label{H_1b}
  \\
 \|\P_1( e^{tB}f)\|_{L^2(\R^6_{x,v})}
 \ge&\|\P_1( S_1(t,\xi)\hat f_0)\|_{L^2(\R^6_{\xi,v})}-\|\P_1(S_2(t,\xi)\hat f_0)\|_{L^2(\R^6_{\xi,v})}
 \nnm\\
\ge&
  \|\P_1( S_1(t,\xi)\hat f_0)\|_{L^2(\R^6_{\xi,v})}
 - Ce^{-\sigma_0 t}(\|f_0\|_{L^{2,1}}+\|f_0\|_{L^2(\R^6_{x,v})}), \label{Q_4}
 \ema
where we have used \eqref{D_1} for $\alpha=0$, namely,
$$
 \intr (\|S_2(t,\xi)\hat{f}_0\|^2_{L^2(\R^3_{v})}+|\xi|^{-2}|(S_2(t,\xi)\hat{f}_0,\sqrt
M)|^2)d\xi\leq
Ce^{-\sigma_0 t}(\|f_0\|^2_{L^{2,1}}+\| f_0\|^2_{L^2(\R^6_{x,v})}).
$$

By \eqref{F_2} and $\lambda_{-1}(|\xi|)=\overline{\lambda_1(|\xi|)}$, we have
 \bma
|(S_1(t,\xi)\hat{f}_0,\sqrt{M})|^2
 &=\Big|e^{\mathrm{Re}\lambda_1(|\xi|)t}\cos(\mathrm{Im}\lambda_1(|\xi|)t)\hat{n}_0
    +|\xi|\sum_{j=-1}^1e^{\lambda_j(|\xi|)t}(T_j(\xi)\hat{f}_0,\sqrt{M})\Big|^2
    \nnm\\
&\geq
   \frac12e^{2\mathrm{Re}\lambda_1(|\xi|)t}\cos^2(\mathrm{Im}\lambda_1(|\xi|)t)|\hat{n}_0|^2
  -C|\xi|^2e^{-2\beta |\xi|^2t}\|\hat{f}_0\|^2_{L^2(\R^3_{v})}.\label{B_1}
 \ema
Since
$$ \cos^2(\mathrm{Im}\lambda_1(|\xi|)t)\geq
\frac12\cos^2[(1+b_1|\xi|^2)t]-O([|\xi|^3t]^2),
$$
and
$$\mathrm{Re}\lambda_j(|\xi|)=a_j|\xi|^2(1+O(|\xi|))\ge -\eta |\xi|^2, \quad |\xi|\leq r_0,
$$
for some constant $\eta>0$, we obtain by \eqref{B_1} that \bma
\| (S_1(t,\xi)\hat{f}_0,\sqrt{M})\|^2_{L^2_{\xi}}
\geq&
 \frac{d_0^2}4\int_{|\xi|\leq r_0} e^{-2\eta |\xi|^2t}\cos^2(t+b_1|\xi|^2t)d\xi
 \nnm\\
 &-C\int_{|\xi|\leq r_0} e^{-2\beta |\xi|^2t}
  [(|\xi|^3t)^2|\hat{n}_0|^2 +|\xi|^2\|\hat{f}_0\|^2_{L^2(\R^3_{v})}]d\xi\nnm\\
\geq&
 \frac{d_0^2}4\int_{|\xi|\leq r_0} e^{-2\eta |\xi|^2t}\cos^2(t+b_1|\xi|^2t)d\xi
 -C(\|n_0\|^2_{L^1(\R^3_x)} + \|f_0\|^2_{L^{2,1}})(1+t)^{-5/2}\nnm\\
=:&I_1- C(1+t)^{-5/2}.    \label{E_7a}
 \ema
Since it holds for $t \geq t_0=:\frac{L^2}{r_0^2}$ with the constant $L\ge\sqrt{\frac{4\pi}{b_1}}$ that
 \bma
 I_1
 &=\frac{d_0^2}4\int_{|\xi|\leq r_0}e^{-2\eta |\xi|^2t}\cos^2[(1+b_1|\xi|^2)t]d\xi
 \nnm\\
  &=\frac{d_0^2}4t^{-3/2}\int_{|\zeta|\leq r_0\sqrt{t}}
   e^{-2\eta|\zeta|^2}\cos^2(t+b_1|\zeta|^2)d\zeta
 \geq \pi d_0^2(1+t)^{-3/2}\int^L_0 r^{2}e^{-2\eta r^2}\cos^2(t+b_1r^2)dr
 \nnm\\&
  \geq
   (1+t)^{-3/2}\frac{\pi d_0^2L}{2} e^{-2\eta L^2 }\int^{L}_{L/2}r\cos^2(t+b_1r^2)dr
  =  (1+t)^{-3/2}\frac{\pi d_0^2L}{4b_1}e^{-2\eta L^2 }
     \int^{t+b_1L^2}_{t+\frac {b_1L^2}4}\cos^2ydy\nnm\\
 &\geq
    (1+t)^{-3/2}\frac{\pi d_0^2L}{4b_1}e^{-2\eta L^2 }\int^\pi_0 \cos^2ydy
 \geq C_3(1+t)^{-3/2},   \label{E_9b}
 \ema
where $C_3> 0$ denotes a generic positive constant. We can  substitute \eqref{E_7a} and \eqref{E_9b} into  \eqref{H_1a} with $j=0$  to prove \eqref{H_1}.

By \eqref{F_3}, we can decompose the macroscopic momentum into
$$
 (S_1(t,\xi)\hat{f}_0,v\sqrt{M})
 =-\frac{\i}{|\xi|} e^{\mathrm{Re}\lambda_1(|\xi|)t}
    \sin(\mathrm{Im}\lambda_1(|\xi|)t)\hat{n}_0
  +T_{5}(t,\xi)\hat f_0,
 $$
with $T_{5}(t,\xi)\hat f_0 =: (S_1(t,\xi)\hat{f}_0,v\sqrt{M})
 +\frac{\i}{|\xi|} e^{\mathrm{Re}\lambda_1(|\xi|)t}\sin(\mathrm{Im}\lambda_1(|\xi|)t)\hat{n}_0$ being the remainder terms on right hand side of  \eqref{F_3}. Then
 \bma
 |(S_1(t,\xi)\hat{f}_0,v\sqrt{M})|^2
 &\geq
\frac1{2|\xi|^2}e^{2\mathrm{Re}\lambda_1(|\xi|)t}
   \sin^2(\mathrm{Im}\lambda_1(|\xi|)t)|\hat{n}_0|^2
  -C|T_{5}(t,\xi)\hat f_0|^2
\nnm\\
&\geq
 \frac1{2|\xi|^2}e^{2\mathrm{Re}\lambda_1(|\xi|)t}
  \sin^2(\mathrm{Im}\lambda_1(|\xi|)t)|\hat{n}_0|^2
 \nnm\\
&\quad
 -Ce^{-2\beta |\xi|^2t}(|\hat{n}_0|^2+|\hat{m}_0|^2+|\xi|^2\|\hat{f}_0\|^2_{L^2(\R^3_{v})}). \nnm
 \ema
Similar to  \eqref{E_9b}, we get
 \bma
 \|(S_1(t,\xi)\hat{f}_0,v\sqrt{M})\|^2_{L^2(\R^3_\xi)}
\geq &
 \frac{d_0^2}4\int_{|\xi|\le r_0}\frac1{|\xi|^2}
   e^{-2\eta|\xi|^2t}\sin^2(t+b_1|\xi|^2t)d\xi
\nnm\\
&-C\int_{|\xi|\leq r_0} e^{-2\beta  |\xi|^2 t}
 ( |\hat{n}_0|^2+|\xi|^4t^2|\hat{n}_0|^2
   + |\hat{m}_0|^2+|\xi|^2\|\hat{f}_0\|^2_{L^2(\R^3_{v})})d\xi
\nnm\\
\geq &  C_3(1+t)^{-1/2}- C(1+t)^{-3/2},
\ema
which together with \eqref{H_1a} for $j=1,2,3$ lead to \eqref{H_2} for $t>0$ being large enough.

By \eqref{F_4} and the fact that $\lambda_0(|\xi|)$ is real, we have
 \bmas
 |(S_1(t,\xi)\hat{f}_0,\chi_4)|^2
\geq
 & \frac12e^{2\lambda_0(|\xi|)t}|\hat{q}_0|^2
 -\frac23\Big|e^{\mathrm{Re}\lambda_1(|\xi|)t}\cos(\mathrm{Im}\lambda_1(|\xi|)t)-e^{\lambda_0(|\xi|)t}\Big|^2|\hat{n}_0|^2
 \\
 &-C|\xi|^2e^{-2\beta |\xi|^2t}\|\hat{f}_0\|^2_{L^2(\R^3_{v})}.
\emas
Then
 \bmas
 \|(S_1(t,\xi)\hat{f}_0,\chi_4)\|^2_{L^2(\R^3_\xi)}
 &\geq
 \frac12\int_{|\xi|\le r_0}e^{-2\eta |\xi|^2t}|\hat{q}_0|^2d\xi
  -C\int_{|\xi|\leq r_0}e^{-2\beta |\xi|^2 t}
   (|\hat{n}_0|^2+|\xi|^2\|\hat{f}_0\|^2_{L^2(\R^3_{v})})d\xi
\\
 &\ge C_3\big[
     \inf_{ |\xi|\le r_0} |\hat{q}_0(\xi)|^2(1+t)^{-3/2}
    -d_1\sup_{ |\xi|\le r_0} |\hat{n}_0(\xi)|(1+t)^{-3/2}\big]
   -C(1+t)^{-5/2}.
 \emas
This and \eqref{H_1a} with $j=4$  lead to  \eqref{H_3} for $t>0$  being
large enough.

By \eqref{B_2}, we have
 $$
   \P_1(S_1(t,\xi)\hat{f}_0)
 = e^{{\rm Re}\lambda_1(|\xi|)t}\hat n_0
   \Big[\sin({\rm Im}\lambda_1(|\xi|)t)L\Psi+\cos({\rm Im}\lambda_1(|\xi|)t)\Psi\Big]
   +|\xi|\dsum^3_{j=-1}e^{t\lambda_j(|\xi|)}\P_1(T_j(\xi)\hat{f}_0),
$$
where $\Psi\in N_0^\bot$ is a non-zero real function given by
$$
\Psi=(L-\i\P_1)^{-1}(L+\i\P_1)^{-1}\P_1(v\cdot\omega)^2\sqrt M\neq 0.
$$
Thus,  direct computation yields
 \bma
 \|\P_1(S_1(t,\xi)\hat{f}_0)\|^2_{L^2(\R^3_{v})}
 \ge &
  \frac12|\hat n_0|^2 e^{2{\rm Re}\lambda_1(|\xi|)t}
   \| \sin({\rm Im}\lambda_1(|\xi|)t)L\Psi
     +\cos({\rm Im}\lambda_1(|\xi|)t)\Psi\|^2_{L^2(\R^3_{v})}\nnm\\
  & -C|\xi|^2e^{-2\beta |\xi|^2t}\|\hat f_0\|^2_{L^2(\R^3_{v})}\nnm\\
 \ge &
   \frac14|\hat n_0|^2 e^{-2\eta |\xi|^2t}
   \|\sin(t+b_1|\xi|^2t)L\Psi+\cos(t+b_1|\xi|^2t)\Psi\|^2_{L^2(\R^3_{v})}\nnm\\
  & -Ce^{-2\beta |\xi|^2t}(|\xi|^3t)^2|\hat n_0|^2
    -C|\xi|^2e^{-2\beta |\xi|^2t}\|\hat f_0\|^2_{L^2(\R^3_{v})}.\nnm
\ema
This leads to
\bma
  \|\P_1(S_1(t,\xi)\hat{f}_0)\|^2_{L^2(\R^6_{\xi,v})}
\ge &
  \frac{d_0^2}4 \int_{|\xi|\le r_0}e^{-2\eta |\xi|^2t}
   \|\sin(t+b_1|\xi|^2t)L\Psi+\cos(t+b_1|\xi|^2t)\Psi\|^2_{L^2(\R^3_{v})}d\xi
   \nnm\\
 & -C\int_{|\xi|\leq r_0} e^{-2\beta |\xi|^2 t}
    [(|\xi|^3t)^2|\hat{n}_0|^2 + |\xi|^2\|\hat f_0\|^2_{L^2(\R^3_{v})}]d\xi\nnm\\
\ge &
  \frac{d_0^2}4 \int_{|\xi|\le r_0}e^{-2\eta |\xi|^2t}\|\sin(t+b_1|\xi|^2t)L\Psi+\cos(t+b_1|\xi|^2t)\Psi\|^2_{L^2(\R^3_{v})}d \xi
  \nnm\\
 & -C(\|n_0\|_{L^1(\R^3_{x})}^2+\|f_0\|_{L^{2,1}})(1+t)^{-5/2}\nnm\\
&=:I_2 -C(\|n_0\|_{L^1(\R^3_{x})}^2+\|f_0\|_{L^{2,1}})(1+t)^{-5/2},  \label{P1I7}
\ema
where $C>0$ and $\beta>0$ are two generic positive constants. The term $I_2$ can be estimated as follows.
We obtain for time $t \geq t_0=:\frac{L^2}{r_0^2}$ with the constant $L\ge\sqrt{\frac{4\pi}{b_1}}$ that
\bma
I_2&=\frac{d_0^2}4t^{-3/2}\int_{|\zeta|\le r_0\sqrt t}e^{-2\eta |\zeta|^2}\|\sin(t+b_1|\zeta|^2)L\Psi+\cos(t+b_1|\zeta|^2)\Psi\|^2_{L^2(\R^3_{v})}d\zeta
 \nnm\\
&\ge \pi d_0^2(1+t)^{-3/2}\int^L_0 r^2e^{-2\eta r^2}
  \|\sin(t+b_1r^2)L\Psi+\cos(t+b_1r^2)\Psi\|^2_{L^2(\R^3_{v})}dr
 \nnm\\
&=:\pi d_0^2(1+t)^{-3/2}F_2(t).         \label{I7}
\ema
Since for any $t\ge t_0$ there exists an integer $k\ge 0$ so that $[2k\pi,(2k+2)\pi]\subset[t+\frac{b_1}4L^2,t+b_1L^2]$, 
we can estimate the lower bound of $F_2(t)$ as follows
 \bma
 F_2(t)
 &\ge
   \frac{L}{2}e^{-2\eta L^2} \int^{L}_{\frac{L}{2}}
    \|\sin(t+b_1r^2)L\Psi+\cos(t+b_1r^2)\Psi\|^2_{L^2(\R^3_v)}r dr
     \nnm\\
 &= \frac{L}{4b_1} e^{-2\eta L^2} \int^{t+b_1L^2}_{t+\frac{b_1}{4}L^2}
      \|L\Psi\sin y+\Psi\cos y\|^2_{L^2(\R^3_v)}d y
      \nnm\\
 &\ge
   \frac{L}{4b_1} e^{-2\eta L^2}\int^{(2k+2)\pi}_{2k\pi}
    \|L\Psi\sin y+\Psi\cos y\|^2_{L^2(\R^3_{v})}d y
   =
   \frac{L}{4b_1} e^{-2\eta L^2}\int_{0}^{2\pi}
    \|L\Psi\sin y+\Psi\cos y\|^2_{L^2(\R^3_v)}d y
     \nnm\\
 &\ge\frac{L}{4b_1} e^{-2\eta L^2}\int_{\frac{\pi}2}^{\pi}
    \|L\Psi\sin y+\Psi\cos y\|^2_{L^2(\R^3_v)}d y
     \nnm\\
 &=
   \frac{L}{4b_1} e^{-2\eta L^2}\int_{\frac\pi2}^{\pi}
    \Big[ \|L\Psi\|^2_{L^2(\R^3_v)}\sin^2 y
         +\|\Psi\|^2_{L^2(\R^3_v)}\cos^2 y
         +(L\Psi,\Psi)\sin2 y\Big]d y\nnm\\
  &\ge
    \frac{L}{4b_1}  e^{-2\eta L^2}\|\Psi\|^2_{L^2(\R^3_v)}
    \int^\pi_{\frac\pi2}\cos^2 y d y
    =\frac{L\pi}{8b_1}e^{-2\eta L^2}\|\Psi\|^2_{L^2(\R^3_v)}
    >0,\quad t\ge t_0.                   \label{F2t}
\ema
Substituting \eqref{F2t} into \eqref{I7}, we obtain
$$
I_2\ge \frac{L d_0^2\pi^2}{8b_1}e^{-2\eta L^2}\|\Psi\|^2_{L^2(\R^3_v)}(1+t)^{-3/2},
$$
which together with \eqref{P1I7} and \eqref{Q_4} imply \eqref{Q_2} for sufficiently large $t\ge t_0$.

By \eqref{H_1}--\eqref{Q_2} and Theorem \ref{time1}, we have \eqref{H_5} for $t>0$ being sufficiently large by the  fact that
 \bmas
\|e^{tB}f_0\|_{H^l_P}
 &\ge
  \|\P_0(e^{tB}f_0)\|_{L^2_P}
 -\|\P_1(e^{tB}f_0)\|_{L^2(\R^6_{x,v})}-C\sum_{1\le |\alpha|\le l}\|\dxa e^{tB}f_0\|_{L^2_P}
  \\
&\ge C_1(1+t)^{-\frac14}-C(1+t)^{-\frac34}\ge C(1+t)^{-\frac14}.
 \emas

Next,  we turn to deal with \eqref{H_1c}--\eqref{Q_3} for the case $(\hat f_0,\chi_0)=0$. Indeed, by \eqref{F_2}--\eqref{F_4} we can have
 \bma
 (S_1(t,\xi)\hat{f}_0,\sqrt M)
 &=-\frac12|\xi|\sum_{j=\pm1}e^{\lambda_j(|\xi|)t}j(\hat{m}_0\cdot\omega)
   +|\xi|^2(T_7(t,\xi)\hat f_0,\sqrt M),  \label{H_6}
 \\
 (S_1(t,\xi)\hat{f}_0,v\sqrt{M})
 &=\frac12\sum_{j=\pm1}e^{\lambda_j(|\xi|)t}(\hat{m}_0\cdot\omega)\omega
  +\sum_{j=2,3}e^{\lambda_j(|\xi|)t}(\hat{m}_0\cdot W^j)W^j
  +|\xi|(T_6(t,\xi)\hat f_0,v\sqrt M),         \label{H_7}
 \\
  (S_1(t,\xi)\hat{f}_0,\chi_4)
  &=e^{\lambda_0(|\xi|)t}\hat{q}_0   +|\xi|(T_6(t,\xi)\hat f_0,\chi_4),\label{H_8}
   \\
  \P_1(S_1(t,\xi)\hat{f}_0)
 &=\frac{\i} 2|\xi|\sum_{j=\pm1}e^{\lambda_j(|\xi|)t}
  (\hat{m}_0\cdot\omega)(L-j\i\P_1)^{-1}\P_1(v\cdot\omega)^2\sqrt M
  +\i |\xi|e^{\lambda_0(|\xi|)t}\hat q_0L^{-1}\P_1(v\cdot\omega)\chi_4
  \nnm\\
  &\quad+\i |\xi|\sum_{j=2,3}e^{\lambda_j(|\xi|)t}
         (\hat{m}_0\cdot W^j) L^{-1}\P_1(v\cdot\omega)(v\cdot W^j)\sqrt M
  +|\xi|^2\P_1(T_6(t,\xi)\hat f_0),\label{H_9}
   \ema
where $T_j(t,\xi)\hat f_0$ for $j=6,7$ is the remainder term satisfying
$\|T_j(t,\xi)\hat f_0\|^2_{L^2(\R^3_{v})}\le Ce^{-2\beta |\xi|^2t}\|\hat f_0\|^2_{L^2(\R^3_{v})}$.
Since the vectors $W^2,W^3$ and $\omega$ are orthogonal to each other, and the terms $(L\pm \i\P_1)^{-1}\P_1(v\cdot\omega)^2\sqrt M$, $L^{-1}\P_1(v\cdot\omega)\chi_4$, $L^{-1}\P_1(v\cdot\omega)(v\cdot W^2)\sqrt M$ and $L^{-1}\P_1(v\cdot\omega)(v\cdot W^3)\sqrt M$ are orthogonal. Hence, we  get from \eqref{H_6}--\eqref{H_9} that
 \bma
 |(S_1(t,\xi)\hat{f}_0,\sqrt M)|^2
 \ge & \frac12|\xi|^2e^{2\mathrm{Re}\lambda_1(|\xi|)t}\sin^2(\mathrm{Im}\lambda_1(|\xi|)t)
     |(\hat{m}_0\cdot\omega)|^2
 -C|\xi|^4e^{-2\beta |\xi|^2t}\|\hat f_0\|^2_{L^2(\R^3_{v})},
   \nnm\\
 |(S_1(t,\xi)\hat{f}_0,v\sqrt{M})|^2
 \ge & \frac12e^{2\mathrm{Re}\lambda_1(|\xi|)t}\cos^2(\mathrm{Im}\lambda_1(|\xi|)t)|
 (\hat{m}_0\cdot\omega)|^2
  +\frac12\sum_{j=2,3}e^{2{\rm Re}\lambda_j(|\xi|)t}|(\hat{m}_0\cdot W^j)|^2
   \nnm\\
 & -C|\xi|^2e^{-2\beta |\xi|^2t}\|\hat f_0\|^2_{L^2(\R^3_{v})},
   \nnm\\
  |(S_1(t,\xi)\hat{f}_0,\chi_4)|^2
 \ge &\frac12 e^{2\lambda_0(|\xi|)t}|\hat{q}_0|^2
    -C|\xi|^2e^{-2\beta |\xi|^2t}\|\hat f_0\|^2_{L^2(\R^3_{v})},
    \nnm\\
  \|\P_1(S_1(t,\xi)\hat{f}_0)\|^2_{L^2(\R^3_{v})}
 \ge &
     \frac12\|L^{-1}\P_1(v_1\chi_2)\|^2_{L^2(\R^3_{v})}
       |\xi|^2\sum_{j=2,3}e^{2{\rm Re}\lambda_j(|\xi|)t}|(\hat{m}_0\cdot W^j)|^2
   \nnm\\
   & +\frac12\|L^{-1}\P_1(v_1\chi_4)\|^2_{L^2(\R^3_{v})}
            |\xi|^2e^{2\lambda_0(|\xi|)t}|\hat{q}_0|^2
     -C|\xi|^2e^{-2\beta |\xi|^2t}\|\hat f_0\|^2_{L^2(\R^3_{v})}.\nnm
   \ema
This together with the assumptions that $\inf_{|\xi|\le r_0}|\hat m_0\cdot \omega|\ge d_0$ and $\inf_{|\xi|\le r_0}|\hat q_0|\ge d_0$ give
   \bgrs
 \|(S_1(t,\xi)\hat{f}_0,\sqrt M)\|^2_{L^2(\R^3_\xi)} \ge C_6(1+t)^{-5/2},
  \qquad \
  \|\P_1(S_1(t,\xi)\hat{f}_0)\|^2_{L^2(\R^6_{\xi,v})} \ge C_6(1+t)^{-5/2},
   \\
 \|(S_1(t,\xi)\hat{f}_0,v\sqrt{M})\|^2_{L^2(\R^3_\xi)} \ge C_6(1+t)^{-3/2},
   \quad \
  \|(S_1(t,\xi)\hat{f}_0,\chi_4)\|^2_{L^2(\R^3_\xi)} \ge C_6(1+t)^{-3/2}.
\egrs
With this,  \eqref{H_1a}, \eqref{H_1b} and \eqref{Q_4} imply  \eqref{H_1c}--\eqref{H_4c}.
The proof is then completed.
\end{proof}

\section{The original nonlinear problem}
\label{sect4}
 \setcounter{equation}{0}
In this section, we prove the long time decay rates of the solution to the Cauchy problem for Vlasov-Poisson-Boltzmann system with the help of the asymptotic behaviors of linearized problem established in Section~\ref{sect3}.

\subsection{Hard sphere case}

Denote the weighted function $w(v)$ by
$$
w(v)=(1+|v|^2)^{1/2}
$$
and the Sobolev spaces $ H^N$ and $ H^N_w$ as
$$
 H^N=\{\,f\in L^2(\R^6_{x,v})\,|\,\|f\|_{H^N}<\infty\,\},\quad
 H^N_w=\{\,f\in L^2(\R^6_{x,v})\,|\,\|f\|_{H^N_w}<\infty\,\}
$$ with the norms
$$
 \|f\|_{H^N}=\sum_{|\alpha|+|\beta|\le N}\|\dxa\dvb f\|_{L^2(\R^6_{x,v})},\quad
\|f\|_{H^N_w}=\sum_{|\alpha|+|\beta|\le N}\|w(v)\dxa\dvb f\|_{L^2(\R^6_{x,v})}.
$$

For the hard sphere model, we will prove
\begin{thm}\label{time3}
 Assume that $f_0\in H^N\cap L^{2,1}$ with $N\ge 4$, and
$\|f_0\|_{H^N_{w}\cap L^{2,1}}\le \delta_0$ with $\delta_0>0$ being small enough.
Let $f$ be a solution of the VPB system \eqref{VPB4}. Then, it holds for $k=0,1$ that
 \be
 \left\{\bln
 &\|\dx^k(f(t),\chi_0)\|_{L^2(\R^3_x)}
 \le   C \delta_0(1+t)^{-\frac34-\frac k2},   \label{t4.1}
\\
 &  \|\dx^k(f(t),\chi_j)\|_{L^2(\R^3_x)}
 \le C \delta_0(1+t)^{-\frac14-\frac k2},\quad j=1,2,3,  
\\
 &  \|\dx^k(f(t),\chi_4)\|_{L^2(\R^3_x)}
   +\|\dx^k\Tdx\Phi(t)\|_{L^2(\R^3_x)}
 \le C \delta_0(1+t)^{-\frac14-\frac k2},   
 \\
 & \|\P_1f(t)\|_{H^N_w} +\|\Tdx \P_0f(t)\|_{L^2(\R^3_{v},H^{N-1}(\R^3_x))}
 \le
 C \delta_0(1+t)^{-\frac34},   
 \eln\right.
 \ee
where $\chi_j,\ j=0,1,2,3,4,$ is defined by \eqref{basis}. Moreover, if $(f_0,\chi_0)=0$, then it holds for $k=0,1$ that
 \be
 \left\{\bln
 &\|\dx^k(f(t),\chi_0)\|_{L^2(\R^3_x)}
 +\|\dx^k \P_1f(t)\|_{L^2(\R^6_{x,v})}
  \le C \delta_0(1+t)^{-\frac54-\frac k2},    \label{t4.1a}
 \\
 & \|\dx^k(f(t),\chi_j)\|_{L^2(\R^3_x)}
  \le
  C \delta_0(1+t)^{-\frac34-\frac k2},\quad j=1,2,3,  
 \\
 &  \|\dx^k(f(t),\chi_4)\|_{L^2(\R^3_x)}
 +\|\dx^k\Tdx\Phi(t)\|_{L^2(\R^3_x)}
  \le
  C \delta_0(1+t)^{-\frac34-\frac k2},   
 \\
  &\|\P_1f(t)\|_{H^N_w} +\|\Tdx \P_0f(t)\|_{L^2(\R^3_{v},H^{N-1}(\R^3_x))}
\le  C \delta_0(1+t)^{-\frac54}.  
 \eln\right.
 \ee
\end{thm}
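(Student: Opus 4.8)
The plan is to combine the linear semigroup decay estimates of Theorem~\ref{time1} with a weighted nonlinear energy estimate and to close everything by a continuity argument on a suitable time-weighted norm. Write the solution of \eqref{VPB4} in Duhamel form
$$
 f(t)=e^{tB}f_0+\int_0^t e^{(t-s)B}G(f(s))\,ds,\qquad G=\Gamma(f,f)+G_2,\quad \Tdx\Phi=\Tdx\Delta_x^{-1}(f,\chi_0).
$$
The structural facts I would exploit at the outset are: (a) $G$ carries no $\chi_0$-component, since $(\Gamma(f,f),\chi_j)=0$ for $0\le j\le4$ by the collision invariants, while an integration by parts in $v$ gives $(G_2,\sqrt M)=0$, $(G_2,\chi_j)=(f,\chi_0)(\Tdx\Phi)_j$ for $j=1,2,3$ and $(G_2,\chi_4)=\tfrac{2}{\sqrt{6}}\,\Tdx\Phi\cdot(f,v\sqrt M)$; and (b) $\Gamma(f,f)$ is purely microscopic, $\P_0\Gamma(f,f)=0$. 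By (a) the nonlinear continuity equation $\dt(f,\chi_0)+\divx(f,v\sqrt M)=0$ holds, so the macroscopic density inherits the linear $(1+t)^{-3/4}$ rate and, when $(f_0,\chi_0)=0$, the zero-mean relation $\intr(f(t),\chi_0)\,dx\equiv0$ is propagated in time, so that the improved linear rate \eqref{D_9} becomes available for the Duhamel term as well.

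Next I would record the a priori energy estimate for the full system, obtained by the macro--micro decomposition: there are an energy functional $\mathcal E_N(t)\sim\|f(t)\|_{H^N_w}^2+\|\Tdx\Phi(t)\|_{H^{N-1}_x}^2$ and a dissipation rate $\mathcal D_N(t)\sim\|\P_1f(t)\|_{H^N_{w,\nu}}^2+\|\Tdx\P_0f(t)\|_{H^{N-1}_x}^2+\|\Tdx\Phi(t)\|_{H^{N-1}_x}^2$ satisfying
$$
 \Dt\mathcal E_N+\mathcal D_N\le C(\sqrt{\mathcal E_N}+\mathcal E_N)\,\mathcal D_N ,
$$
so that smallness of $\delta_0$ together with the coercivity \eqref{L_4} yields $\sup_{t\ge0}\mathcal E_N(t)+\int_0^\infty\mathcal D_N(s)\,ds\le C\mathcal E_N(0)\le C\delta_0^2$; the polynomial weight $w(v)$ is precisely what absorbs the loss term $\nu(v)\sim1+|v|$ produced by $\dxa\dvb\Gamma(f,f)$ and by $\Tdx\Phi\cdot\Tdv f$ in the hard-sphere case. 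A companion ``purely high-order'' inequality $\Dt\mathcal E_N^h+\mathcal D_N^h\le\cdots$, combined with Gronwall and the decay of the low-order part, then upgrades this to $\|\P_1f(t)\|_{H^N_w}+\|\Tdx\P_0f(t)\|_{H^{N-1}_x}\lesssim(1+t)^{-3/4}$ (resp.\ $(1+t)^{-5/4}$ when $(f_0,\chi_0)=0$).

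Then I would introduce a time-weighted norm $\mathcal M(t)$ collecting $(1+s)^{3/4}\|(f,\chi_0)(s)\|_{L^2_x}$, $(1+s)^{1/4}\|(f,\chi_j)(s)\|_{L^2_x}$ ($j=1,\dots,4$), $(1+s)^{1/4}\|\Tdx\Phi(s)\|_{L^2_x}$ and $(1+s)^{3/4}(\|\P_1f(s)\|_{L^2_{x,v}}+\|\dx f(s)\|_{L^2_{x,v}})$, together with the $\dx$-analogues with exponents raised by $\tfrac12$, and with all exponents shifted by $\tfrac12$ in the case $(f_0,\chi_0)=0$, and prove the closed estimate $\mathcal M(t)\le C\delta_0+C\mathcal M(t)^2$, whence Theorem~\ref{time3} for $\delta_0$ small. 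Each component is estimated by applying Theorem~\ref{time1} componentwise to $e^{(t-s)B}G(f(s))$ and splitting $\int_0^t=\int_0^{t/2}+\int_{t/2}^t$: on $[0,t/2]$ one uses the $L^{2,1}\cap L^2$ decay estimates with the gain $t-s\sim t$, and on $[t/2,t]$, where the kernel is no longer small, the decay of $\|G(f(s))\|_{L^2_{x,v}}$ itself. The nonlinear inputs are the bilinear bounds $\|\da_x\Gamma(f,f)\|_{L^{2,1}}\lesssim\|f\|_{H^{|\alpha|}_{x,v}}\|f\|_{H^{|\alpha|+2}_w}$, their sharper versions in which $\|\da_x\Gamma(f,f)\|_{L^2_{x,v}}$ and $\|\da_xG_2\|_{L^2_{x,v}}$ are controlled by putting one factor in $L^6_x$ and the other in $L^3_x$, and the elliptic gain $\|\Tdx^2\Phi\|_{L^2_x}\lesssim\|(f,\chi_0)\|_{L^2_x}$ which by Sobolev embedding gives $\|\Tdx\Phi\|_{L^6_x}\lesssim\|(f,\chi_0)\|_{L^2_x}$.

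\textbf{The main obstacle} is that the momentum $(f,\chi_j)$ and the field $\Tdx\Phi$ decay only at the slow rate $(1+t)^{-1/4}$, so the crude estimates of the relevant Duhamel integrals — e.g.\ the bound $\int_0^{t/2}(1+t-s)^{-3/4}(1+s)^{-1/2}\,ds\sim(1+t)^{-1/4}$ for the density, fed by the slowly-decaying quadratic nonlinearity — fall short of the sharp rates. Overcoming this rests on three points. First, the \emph{frequency gains} visible in \eqref{F_2}--\eqref{F_4}: the $\chi_0$-component of $e^{\tau B}g$ carries a factor $|\xi|$ whenever $(g,\chi_0)=0$, and the $\chi_j$-components carry a factor $|\xi|$ whenever $g$ is purely microscopic; since $(G,\chi_0)=0$ and $\P_1 G\supset\Gamma(f,f)$, the corresponding Duhamel contributions decay an extra half-power faster. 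Second, the \emph{$L^p$-improved decay of $G$}: estimating $\|G(f(s))\|_{L^2_{x,v}}$ by $L^6_x\times L^3_x$ H\"older, using the derivative decay of the macroscopic quantities (so that this norm decays like $(1+s)^{-5/4}$ rather than $(1+s)^{-1/2}$), is what makes the $[t/2,t]$ part of the Duhamel integral compatible with the $(1+t)^{-3/4}$ target. Third, the \emph{quadratic electric-field structure}: one factor in $G_2$ is always $\Tdx\Phi=\Tdx\Delta_x^{-1}(f,\chi_0)$, which decays like the density in $L^6_x$ by the Calder\'on--Zygmund/Sobolev estimate above, so that the force and work terms $(f,\chi_0)\Tdx\Phi$ and $\Tdx\Phi\cdot(f,v\sqrt M)$ inherit enough decay. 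With these in hand the bootstrap closes and yields \eqref{t4.1}; the zero-mean case \eqref{t4.1a} follows by the same scheme using the improved linear rates \eqref{D_9}--\eqref{D_13} together with the propagated relation $\intr(f(t),\chi_0)\,dx\equiv0$.
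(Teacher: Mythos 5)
Your proposal follows essentially the same route as the paper: Duhamel representation, the linear decay estimates of Theorem~\ref{time1} (improved via the structural fact $(G,\chi_0)=0$ and the low-frequency expansions \eqref{F_2}--\eqref{F_4}), bilinear bounds on $G$ in $L^2_{x,v}$, $L^{2,1}$ and for $\Tdx G$, a time-weighted functional closed as $\mathcal M(t)\le C\delta_0+C\mathcal M(t)^2$, and a weighted energy--dissipation inequality (the paper's $\Dt H(f)+\mu D(f)\le C\|\Tdx\P_0 f\|^2$, cited from Duan--Strain) whose Gronwall estimate supplies the $H^N_w$ decay of the microscopic part. The only slight imprecision is the appeal to propagation of the zero spatial mean of $(f(t),\chi_0)$: what the improved rates \eqref{D_9}--\eqref{D_13} actually require is that the density component of the input vanish identically in $x$, which holds for the Duhamel term because $(G,\chi_0)\equiv0$ (a fact you also state) and for the linear part by the hypothesis $(f_0,\chi_0)=0$, so the argument is unaffected.
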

\begin{proof}
Let $f$ be a solution to the IVP problem \eqref{VPB4} for $t>0$. We can represent this solution in terms of the semigroup $e^{tB}$  as
 \bq
 f(t)=e^{tB}f_0+\intt e^{(t-s)B}G(s)ds,     \label{Duh}
 \eq
where the nonlinear term $G$ is given by \eqref{G0}. For this global
solution $f$,  we define two functionals $Q_1(t)$ and $Q_2(t)$ for
any $t>0$ as
\bmas
 Q_1(t)=\sup_{0\le s\le t}\sum_{k=0}^1
 &\Big\{
    (1+s)^{\frac34+\frac k2}\|\dx^k(f(s),\sqrt M)\|_{L^2(\R^3_x)}
   +(1+s)^{\frac14+\frac k2}\|\dx^k(f(s),v\sqrt M)\|_{L^2(\R^3_x)}
\nnm\\
&  +(1+s)^{\frac14+\frac k2}\|\dx^k(f(s),\chi_4)\|_{L^2(\R^3_x)}
   +(1+s)^{\frac14}\|\Tdx\Phi(s)\|_{L^2(\R^3_x)}
\nnm\\
&+(1+s)^{\frac34}
  (\|\P_1f(s)\|_{H^N_w} +\|\Tdx\P_0f(s)\|_{L^2(\R^3_{v},H^{N-1}(\R^3_x))})\, \Big\},
 \emas
and
 \bmas
 Q_2(t)=\sup_{0\le s\le t}\sum_{k=0}^1
 &\Big\{
     (1+s)^{\frac54+\frac k2}\|\dx^k(f(s),\sqrt M)\|_{L^2(\R^3_x)}
   +(1+s)^{\frac34+\frac k2}  \|\dx^k(f(s),v\sqrt M)\|_{L^2(\R^3_x)}
 \nnm\\
&  +(1+s)^{\frac34+\frac k2}\|\dx^k(f(s),\chi_4)\|_{L^2(\R^3_x)}
   +(1+s)^{\frac34}\|\Tdx\Phi(s)\|_{L^2(\R^3_x)}
 \nnm\\
&  +(1+s)^{\frac54+\frac k2}\|\dx^k \P_1f(s)\|_{L^2(\R^6_{x,v})}
 \nnm\\
&+(1+s)^{\frac54}(\|\P_1f(s)\|_{H^N_w} +\|\Tdx \P_0f(s)\|_{L^2(\R^3_{v},H^{N-1}(\R^3_x))})\, \Big\}.
\emas

We claim that it holds under the assumptions of Theorem~\ref{time3} that
  \bq
 Q_1(t)\le C\delta_0,  \label{assume}
 \eq
and if it is further satisfied $(f_0,\chi_0)=0$ that
  \bq
 Q_2(t)\le C\delta_0.  \label{assume2}
 \eq
It is easy to verify that the estimates  \eqref{t4.1} and \eqref{t4.1a} follow  from \eqref{assume} from \eqref{assume2} respectively.

{\it First of all, we prove the claim \eqref{assume} as follows.} To begin with, let us deal with the time-decay rate of the macroscopic density, momentum and energy, which in terms of \eqref{Duh} satisfy the following equations
 \be
(f(t),\chi_j)=(e^{tB}f_0,\chi_j)+\intt (e^{(t-s)B}G(s),\chi_j)ds,\quad j=0,1,2,3,4. \label{maceq}
 \ee
In the case of $(f_0,\sqrt M)=0$, we can obtain by
\eqref{H_6}--\eqref{H_9} that
  \bma
  \|\dxa (e^{tB}f_0,\sqrt M)\|_{L^2(\R^3_x)}
   &\le
    C(1+t)^{-\frac12-\frac{|\alpha|}2}
     ( \|\dxa f_0\|_{L^2(\R^6_{x,v})}
      +\|(f_0,v\sqrt M)\|_{L^2(\R^3_x)}+\|\Tdx f_0\|_{L^2(\R^6_{x,v})}),\label{de_1}
     \\
   \|\dxa (e^{tB}f_0,\sqrt M)\|_{L^2(\R^3_x)}
   &\le
    C(1+t)^{-\frac34-\frac{|\alpha|}2}
      (\|\dxa f_0\|_{L^2(\R^6_{x,v})}+\|\Tdx f_0\|_{L^{2,1}}),\label{de_2}
\\
 \|\dxa (e^{tB}f_0,v\sqrt M)\|_{L^2(\R^3_x)}
 &\le
 C(1+t)^{-\frac34-\frac{|\alpha|}2}
 (\|\dxa f_0\|_{L^2(\R^6_{x,v})}+\|(f_0,v\sqrt M)\|_{L^1(\R^3_{x})}+\|\Tdx f_0\|_{L^{2,1}}),\label{de_3}
\\
\|\dxa (e^{tB}f_0,\chi_4)\|_{L^2(\R^3_x)}
  &\le
 C(1+t)^{-\frac34-\frac{|\alpha|}2}
 (\|\dxa f_0\|_{L^2(\R^6_{x,v})}
   +\|(f_0,\chi_4)\|_{L^1(\R^3_{x})}+\|\Tdx f\|_{L^{2,1}}),\label{de_4}\\
\|\P_1(\da_x e^{tB}f_0)\|_{L^2(\R^6_{x,v})}
 &\le  C(1+t)^{-\frac34-\frac{|\alpha|}2}
  (\|\da_x f_0\|_{L^2(\R^6_{x,v})}+\|\Tdx f_0\|_{L^{2,1}}),   \label{de_5}
  \ema
for $|\alpha|\ge0$.

By  \eqref{de_1}--\eqref{de_5} for $|\alpha|=0,1$, we are
able to establish the a-priori estimates of the nonlinear terms in
the right hand side of \eqref{Duh} as follows.
Since the term $\Gamma(f,g)$ satisfies (cf.\cite{Duan3})
 \bma
  \|\Gamma(f,g)\|_{L^2(\R^3_{v})} &\le
C(\|f\|_{L^2(\R^3_{v})}\|\nu g\|_{L^2(\R^3_{v})}
 +\|\nu f\|_{L^2(\R^3_{v})}\|g\|_{L^2(\R^3_{v})}),\label{gamma1}
 \\
 \|\Gamma(f,g)\|_{L^{2,1}}&\le C(\|f\|_{L^2(\R^6_{x,v})}\|\nu g\|_{L^2(\R^6_{x,v})}+\|\nu
f\|_{L^2(\R^6_{x,v})}\|g\|_{L^2(\R^6_{x,v})}),\label{gamma2}
\ema
we can estimate the nonlinear term $G(s)$ given by
\eqref{G0} for $0\le s\le t$ in terms of $Q_1(t)$ as
 \bma
 \| G(s)\|_{L^2(\R^6_{x,v})}
 &\le
 C\{\|wf\|_{L^{2,3}}\|f\|_{L^{2,6}}
    +\|\Tdx\Phi\|_{L^3(\R^3_x)}(\|wf\|_{L^{2,6}}
    +\|\Tdv f\|_{L^{2,6}})\}
    \nnm\\
&\le C(1+s)^{-1}Q_1(t)^2,  \label{GG_1}
\\
 \| G(s)\|_{L^{2,1}}
 & \le
 C\{ \|f\|_{L^2(\R^6_{x,v})}\|w f\|_{L^2(\R^6_{x,v})}
    +\|\Tdx\Phi\|_{L^2(\R^3_x)}(\|w f\|_{L^2(\R^6_{x,v})}
    +\|\Tdv  f\|_{L^2(\R^6_{x,v})})\}
    \nnm\\
 &\le C(1+s)^{-\frac12}Q_1(t)^2,\label{GG_2}
 \\
  \|\Tdx G(s)\|_{L^{2,1}}
  &\le
  C\{  \|\Tdx f\|_{L^2(\R^6_{x,v})}\|w f\|_{L^2(\R^6_{x,v})}
     +\|f\|_{L^2(\R^6_{x,v})}\|w\Tdx f\|_{L^2(\R^6_{x,v})}
 \nnm\\
 &\qquad+\|\dx^2 \Phi\|_{L^2(\R^3_x)}(\|w f\|_{L^2(\R^6_{x,v})}
  +\|\Tdv f\|_{L^2(\R^6_{x,v})})
\nnm\\
 &\qquad+ \|\Tdx\Phi\|_{L^2(\R^3_x)}(\|w\Tdx f\|_{L^2(\R^6_{x,v})}
 +\|\Tdv\Tdx f\|_{L^2(\R^6_{x,v})})\}
 \nnm\\
 &\le C(1+s)^{-1}Q_1(t)^2.  \label{GG_3}
 \ema
Noting further that it holds $(G,\chi_0)=0$, we obtain by \eqref{D_2}, \eqref{D_9}, \eqref{de_2}, and
\eqref{GG_1}--\eqref{GG_3} the long time decay rate of the
macroscopic density  $(f(t),\chi_0)$ as
 \bma
\|(f(t),\sqrt M)\|_{L^2(\R^3_x)}&\le
C(1+t)^{-\frac34}(\|f_0\|_{L^2(\R^6_{x,v})}+\|f_0\|_{L^{2,1}})\nnm\\
&\quad+C\int^{t/2}_0 (1+t-s)^{-\frac54}(\| G(s)\|_{L^2(\R^6_{x,v})}+\|G(s)\|_{L^{2,1}})ds\nnm\\
&\quad+C\int^t_{t/2} (1+t-s)^{-\frac34}(\| G(s)\|_{L^2(\R^6_{x,v})}+\|\Tdx G(s)\|_{L^{2,1}})ds\nnm\\
&\le C\delta_0(1+t)^{-\frac34}+C\int^{t/2}_0
(1+t-s)^{-\frac54}(1+s)^{-\frac12}Q_1(t)^2ds\nnm\\
&\quad+C\int^t_{t/2} (1+t-s)^{-\frac34}(1+s)^{-1}Q_1(t)^2ds\nnm\\
&\le
C\delta_0(1+t)^{-\frac34}+C(1+t)^{-\frac34}Q_1(t)^2. \label{density_1}
 \ema
Meanwhile, we have by \eqref{D_9} and \eqref{de_1}  the long time
decay rate of the spatial derivative $(\Tdx f(t),\chi_0)$  of the
macroscopic density as
 \bma
 \|(\Tdx f(t),\sqrt M)\|_{L^2(\R^3_x)}
 &\le
 C(1+t)^{-\frac54}(\|\Tdx f_0\|_{L^2(\R^6_{x,v})}+\|f_0\|_{L^{2,1}})
 \nnm\\
&\quad+C\int^{t/2}_0 (1+t-s)^{-\frac74}(\|\Tdx G(s)\|_{L^2(\R^6_{x,v})}
  +\| G(s)\|_{L^{2,1}})ds
  \nnm\\
&\quad+C\int^{t}_{t/2} (1+t-s)^{-1}(\|( G(s),v\sqrt M)\|_{L^2(\R^3_x)}
 +\|\Tdx G(s)\|_{L^2(\R^6_{x,v})})ds
 \nnm\\
&\le
    C\delta_0(1+t)^{-\frac54}
   +C\int^{t/2}_0(1+t-s)^{-\frac74}(1+s)^{-\frac12}Q_1(t)^2ds
\nnm\\
&\quad+C\int^{t}_{t/2} (1+t-s)^{-1}(1+s)^{-\frac32}Q_1(t)^2ds
 \nnm\\
&\le
 C\delta_0(1+t)^{-\frac54}+C (1+t)^{-\frac54}Q_1(t)^2,\label{density_2}
 \ema
where we have used the following estimates
 \be
   \|( G(s),v\sqrt M)\|_{L^2(\R^3_x)}
 + \|\Tdx G(s)\|_{L^2(\R^6_{x,v})} \le C(1+s)^{-\frac32}Q_1(t)^2,\quad 0\le s\le t.\nnm
\ee

Similarly, in terms of \eqref{D_3} and \eqref{de_3} we can establish
the a-priori estimates on the long time decay rates of the
macroscopic momentum $(f(t),v\sqrt M)$ and its spatial derivative as
 \bma
 & \quad\|(f(t),v\sqrt M)\|_{L^2(\R^3_x)}\nnm\\
&\le
C(1+t)^{-\frac14}(\|f_0\|_{L^2(\R^6_{x,v})}+\|f_0\|_{L^{2,1}})\nnm\\
&\quad+C\intt (1+t-s)^{-\frac34}(\|
G(s)\|_{L^2(\R^6_{x,v})}+\|(G(s),v\sqrt M)\|_{L^1(\R^3_{x})}+\|\Tdx G(s)\|_{L^{2,1}})ds\nnm\\
&\le
C\delta_0(1+t)^{-\frac14}+C\intt (1+t-s)^{-\frac34}(1+s)^{-1}Q_1(t)^2ds\nnm\\
&\le C\delta_0(1+t)^{-\frac14}+C(1+t)^{-\frac34+\eps}Q_1(t)^2 \label{momentum_1},
 \ema
where $\eps>0$ is a small but fixed constant. Here, we have
used  the fact that
 $
 \|(G(s),v\sqrt M)\|_{L^1(\R^3_x)}
 \le C(1+s)^{-1}Q_1(t)^2,
$ and
 \bma
 & \quad  \|\Tdx(f(t),v\sqrt M)\|_{L^2(\R^3_x)} \nnm\\
 &\le C(1+t)^{-\frac34}(\|\Tdx f_0\|_{L^2(\R^6_{x,v})}+\|\dxa f_0\|_{L^{2,1}})
   \nnm\\
  &\quad+C\intt (1+t-s)^{-\frac54}
    (\|\Tdx G(s)\|_{L^2(\R^6_{x,v})}
      +\|(G(s),v\sqrt M)\|_{L^1(\R^3_{x})}+\|\Tdx G(s)\|_{L^{2,1}})ds\nnm\\
&\le
   C\delta_0(1+t)^{-\frac34}
  +C\intt (1+t-s)^{-\frac54}(1+s)^{-1}Q_1(t)^2ds
 \nnm\\
&\le
  C\delta_0(1+t)^{-\frac34}+C(1+t)^{-1}Q_1(t)^2.   \label{momentum_2}
\ema
In terms of \eqref{D_4}, \eqref{de_4}, and
 \bgrs
 \|(G(s),\chi_4)\|_{L^1(\R^3_{x})}
 \le C(1+s)^{-\frac12}Q_1(t)^2,
 \quad  \
 \|\Tdx(G(s),\chi_4)\|_{L^1(\R^3_{x})}
  \le C(1+s)^{-1}Q_1(t)^2,
 \egrs
we can estimate the macroscopic energy $(f(t),\chi_4)$ and its spatial derivative as
 \bma
 & \quad  \|(f(t),\chi_4)\|_{L^2(\R^3_x)}\nnm\\
 &\le
   C(1+t)^{-\frac34}(\|f_0\|_{L^2(\R^6_{x,v})}+\|f_0\|_{L^{2,1}})
   +C\intt (1+t-s)^{-\frac34}\|G(s)\|_{L^2(\R^6_{x,v})}ds
 \nnm\\
&\quad+C\intt (1+t-s)^{-\frac34}
 (\|(G(s),\chi_4)\|_{L^1(\R^3_{x})}
   +\|\Tdx G(s)\|_{L^{2,1}})ds\nnm\\
&\le
 C\delta_0(1+t)^{-\frac34}+C(1+t)^{-\frac14}Q_1(t)^2, \label{energy_1}
\ema
and
\bma
 & \quad  \|\Tdx(f(t),\chi_4)\|_{L^2(\R^3_x)}\nnm\\
 &\le C(1+t)^{-\frac54}(\|\Tdx f_0\|_{L^2(\R^6_{x,v})}+\|f_0\|_{L^{2,1}})\nnm\\
&\quad+C\int^{t/2}_0 (1+t-s)^{-\frac54}(\|\Tdx
G(s)\|_{L^2(\R^6_{x,v})}+\|(G(s),\chi_4)\|_{L^1(\R^3_{x})}
  +\|\Tdx G(s)\|_{L^{2,1}})ds\nnm\\
&\quad+C\int^{t}_{t/2} (1+t-s)^{-\frac34}
 (\|\Tdx G(s)\|_{L^2(\R^6_{x,v})}  +\|\Tdx(G(s),\chi_4)\|_{L^1(\R^3_{x})}
  +\|\Tdx G(s)\|_{L^{2,1}})ds\nnm\\
&\le
 C\delta_0(1+t)^{-\frac54}+C(1+t)^{-\frac34}Q_1(t)^2.\label{energy_2}
\ema
Moreover, the electricity potential $\Tdx\Phi$ is bounded by
   \bma
 \|\Tdx \Phi(t)\|_{L^2(\R^3_x)}
 &\le
  C(1+t)^{-\frac14}(\|f_0\|_{L^2(\R^6_{x,v})}+\|f_0\|_{L^{2,1}})
\nnm\\
&\quad  +C\intt (1+t-s)^{-\frac34}(\| G(s)\|_{L^2(\R^6_{x,v})}+\|G(s)\|_{L^{2,1}})ds
\nnm\\
&\le
 C\delta_0(1+t)^{-\frac14}+C(1+t)^{-\frac14}Q_1(t)^2.\label{potential}
\ema
In addition, the microscopic part $\P_1f(t)$ can be estimated by
\eqref{D_5}, \eqref{D_13} and \eqref{de_5} as follows
\bma
\|\P_1f(t)\|_{L^2(\R^6_{x,v})}
&\le C(1+t)^{-\frac34}(\|f_0\|_{L^2(\R^6_{x,v})}+\|f_0\|_{L^{2,1}})
\nnm\\
&\quad+\int^{t/2}_0(1+t-s)^{-\frac54}
         (\|G(s)\|_{L^2(\R^6_{x,v})}+\|G(s)\|_{L^{2,1}})ds
 \nnm\\
&\quad+\int^t_{t/2}(1+t-s)^{-\frac34}(\|G(s)\|_{L^2(\R^6_{x,v})}+\|\Tdx G(s)\|_{L^{2,1}})ds
 \nnm\\
 &\le
   C\delta_0(1+t)^{-\frac34}+C\int^{t/2}_0 (1+t-s)^{-\frac54}(1+s)^{-\frac12}Q_1(t)^2ds
 \nnm\\
 &\quad+\int^t_{t/2}(1+t-s)^{-\frac34}(1+s)^{-1}Q_1(t)^2ds
 \nnm\\
&\le C\delta_0(1+t)^{-\frac34}+C(1+t)^{-\frac34}Q_1(t)^2.\label{miscro2}
\ema

With the help of the apriori estimates
\eqref{density_1}--\eqref{potential}, we are able to verify the
claim \eqref{assume}. Indeed, similar to that of Lemma 4.6 in
\cite{Duan1}, we claim that there are two functionals $H(f)$ and
$D(f)$ related to the global solution $f$:
 \bma
 H(f)\sim&
 \sum_{|\alpha|+|\beta|\le N}\|w\dxa\dvb\P_1f\|^2_{L^2(\R^6_{x,v})}
 +\sum_{|\alpha|\le N-1}(\|\dxa\Tdx \P_0f\|^2_{L^2(\R^6_{x,v})}
            +\|\dxa P_{\rm d} f\|^2_{L^2(\R^6_{x,v})}),  \label{H(f)}
            \\
D(f)\sim&
 \sum_{|\alpha|+|\beta|\le N}\|w^{\frac32}\dxa\dvb \P_1f\|^2_{L^2(\R^6_{x,v})}
 +\sum_{|\alpha|\le N-1}(\|\dxa\Tdx \P_0f\|^2_{L^2(\R^6_{x,v})}
   +\|\dxa P_{\rm d} f\|^2_{L^2(\R^6_{x,v})}),\nnm
 \ema
such that
 \bq
  \Dt H(f(t))+\mu D(f(t))\le C\|\Tdx \P_0f(t)\|^2_{L^2(\R^6_{x,v})}.   \label{G_4}
 \eq
This together with \eqref{density_2}, \eqref{momentum_2}, and \eqref{energy_2} leads to
 \bma
  H(f(t))&\le e^{-\mu t}H(f_0)
  +\intt e^{-\mu(t-s)}\|\Tdx \P_0f(s)\|^2_{L^2(\R^6_{x,v})}ds
  \nnm\\
&\le
 C\delta_0^2e^{-\mu t}
  +\intt e^{-\mu(t-s)}(1+s)^{-\frac32}(\delta_0+Q_1(t)^2)^2ds
  \nnm\\
&\le
  C(1+t)^{-\frac32}(\delta_0+Q_1(t)^2)^2.  \label{Hf1}
\ema
Making  summation of  \eqref{density_1}--\eqref{potential} and \eqref{Hf1}, we have
$$
Q_1(t)\le C\delta_0+CQ_1(t)^2,
$$
from which the claim \eqref{assume} can be  verified provided that $\delta_0>0$ is small enough.

{\it Next, we turn to prove the claim \eqref{assume2} for the case
$(f_0,\chi_0)=0$ as follows.}  Indeed, if it holds $(f_0,\chi_0)=0$,
the time decay rates of the macroscopic density, momentum and energy
on the right hand side of \eqref{maceq} can be estimated by using
\eqref{D_9}--\eqref{D_12} as follows. In fact, the macroscopic
density and its spatial derivative can be estimated by
 \bma
 \|(f(t),\sqrt M)\|_{L^2(\R^3_x)}&\le
C(1+t)^{-\frac54}(\|f_0\|_{L^2(\R^6_{x,v})}+\|f_0\|_{L^{2,1}})\nnm\\
&\quad+C\intt (1+t-s)^{-\frac54}(\|
G(s)\|_{L^2(\R^6_{x,v})}+\|G(s)\|_{L^{2,1}})ds\nnm\\
&\le
   C\delta_0(1+t)^{-\frac54}+C(1+t)^{-\frac54} Q_2(t)^2, \label{density_3}
 \\
  \|\Tdx (f(t),\sqrt M)\|_{L^2(\R^3_x)}
  &\le
 C(1+t)^{-\frac74}(\|\Tdx f_0\|_{L^2(\R^6_{x,v})}+\|f_0\|_{L^{2,1}})\nnm\\
 &\quad+C\int^{t/2}_0 (1+t-s)^{-\frac74}
    (\|\Tdx G(s)\|_{L^2(\R^6_{x,v})}+\|G(s)\|_{L^{2,1}})ds\nnm\\
 &\quad+C\int^t_{t/2} (1+t-s)^{-\frac54}
   (\|\Tdx G(s)\|_{L^2(\R^6_{x,v})}+\|\Tdx G(s)\|_{L^{2,1}})ds\nnm\\
&\le
C\delta_0(1+t)^{-\frac74}+C(1+t)^{-\frac74} Q_2(t)^2,   \label{density_3a}
 \ema
where we have used the fact that 
 \bgrs
  \|G(s)\|_{L^2(\R^6_{x,v})}+\|G(s)\|_{L^{2,1}} \le C(1+s)^{-\frac32}Q_2(t)^2,
\ \
  \|\Tdx G(s)\|_{L^2(\R^6_{x,v})}+\|\Tdx G(s)\|_{L^{2,1}} \le C(1+s)^{-2}Q_2(t)^2,
\egrs
because of \eqref{GG_1}, \eqref{GG_2}, and \eqref{GG_3}.
In terms of  \eqref{D_10}, the macroscopic momentum and its spatial derivative can be estimated as
\bma \|(f(t),v\sqrt M)\|_{L^2(\R^3_x)}&\le
C(1+t)^{-\frac34}(\|f_0\|_{L^2(\R^6_{x,v})}+\|f_0\|_{L^{2,1}})\nnm\\
&\quad+C\intt (1+t-s)^{-\frac34}(\|
G(s)\|_{L^2(\R^6_{x,v})}+\|G(s)\|_{L^{2,1}})ds\nnm\\
&\le C\delta_0(1+t)^{-\frac34}+C(1+t)^{-\frac34} Q_2(t)^2, \label{momentum_3}
 \\
 \|\Tdx (f(t),v\sqrt M)\|_{L^2(\R^3_x)}
 &\le
 C(1+t)^{-\frac54}(\|\Tdx f_0\|_{L^2(\R^6_{x,v})}+\|f_0\|_{L^{2,1}})\nnm\\
 &\quad+C\intt (1+t-s)^{-\frac54}
  (\|\Tdx G(s)\|_{L^2(\R^6_{x,v})}+\|G(s)\|_{L^{2,1}})ds\nnm\\
&\le
 C\delta_0(1+t)^{-\frac54}+C(1+t)^{-\frac54} Q_2(t)^2.  \label{momentum_3a}
\ema
Furthermore, the macroscopic energy and its spatial derivative can be bounded in terms of \eqref{D_11} as
 \bma
 \|(f(t),\chi_4)\|_{L^2(\R^3_x)}
 &\le
  C(1+t)^{-\frac34}(\|f_0\|_{L^2(\R^6_{x,v})}+\|f_0\|_{L^{2,1}})\nnm\\
 &\quad+C\intt (1+t-s)^{-\frac34}
   (\|G(s)\|_{L^2(\R^6_{x,v})}+\|G(s)\|_{L^{2,1}})ds\nnm\\
&\le
  C\delta_0(1+t)^{-\frac34}+C(1+t)^{-\frac34} Q_2(t)^2, \label{energy_3}
  \\
  \|\Tdx (f(t),\chi_4)\|_{L^2(\R^3_x)}
  &\le
  C(1+t)^{-\frac54}(\|\Tdx f_0\|_{L^2(\R^6_{x,v})}+\|f_0\|_{L^{2,1}})\nnm\\
 &\quad+C\intt (1+t-s)^{-\frac54}(\|\Tdx G(s)\|_{L^2(\R^6_{x,v})}+\|G(s)\|_{L^{2,1}})ds\nnm\\
 &\le
  C\delta_0(1+t)^{-\frac54}+C(1+t)^{-\frac54} Q_2(t)^2,   \label{energy_3a}
 \ema
and the electric filed can be controlled by \eqref{D_12} as
 \bma
 \|\Tdx \Phi(t)\|_{L^2(\R^3_x)}
 &\le
  C(1+t)^{-\frac34}(\|f_0\|_{L^2(\R^6_{x,v})}+\|f_0\|_{L^{2,1}})\nnm\\
  &\quad+C\intt (1+t-s)^{-\frac34}(\| G(s)\|_{L^2(\R^6_{x,v})}
        +\|G(s)\|_{L^{2,1}})ds\nnm\\
 &\le
  C\delta_0(1+t)^{-\frac14}+C(1+t)^{-\frac14}Q_2(t)^2.\label{potential_1}
\ema
In addition, the microscopic part of $f$ can be controlled by  \eqref{D_13} and \eqref{de_5} as
\bma
\|\P_1f(t)\|_{L^2(\R^6_{x,v})}
&\le C(1+t)^{-\frac54}(\|f_0\|_{L^2(\R^6_{x,v})}+\|f_0\|_{L^{2,1}})\nnm\\
&\qquad+\intt(1+t-s)^{-\frac54}(\|G(s)\|_{L^2(\R^6_{x,v})}+\|G\|_{L^{2,1}})ds
 \nnm\\
&\le C\delta_0(1+t)^{-\frac54}+C(1+t)^{-\frac54}Q_1(t)^2, \label{miscro}
\\
\|\Tdx\P_1f(t)\|_{L^2(\R^6_{x,v})}
&\le C(1+t)^{-\frac74}(\|\Tdx f_0\|_{L^2(\R^6_{x,v})}+\|f_0\|_{L^{2,1}})
 \nnm\\
&\quad+\int^{t/2}_0(1+t-s)^{-\frac74}(\|\Tdx G(s)\|_{L^2(\R^6_{x,v})}+\|G\|_{L^{2,1}})ds
 \nnm\\
&\quad+ \int^t_{t/2}(1+t-s)^{-\frac54}(\|\Tdx G(s)\|_{L^2(\R^6_{x,v})}+\|\Tdx G\|_{L^{2,1}})ds
 \nnm\\
&\le C\delta_0(1+t)^{-\frac74}+C(1+t)^{-\frac74}Q_1(t)^2. \label{miscro1}
\ema

Therefore, with the help of \eqref{density_3}--\eqref{energy_3a} we can obtain by \eqref{G_4} that
 \bmas
 H(f(t))
 &\le
   e^{-\mu t}H(f_0)
  +\intt e^{-\mu(t-s)}\|\Tdx \P_0f(s)\|^2_{L^2(\R^6_{x,v})}ds
  \\
 &\le
   C\delta_0^2e^{-\mu t}
  +\intt e^{-\mu(t-s)}(1+s)^{-\frac52}(\delta_0+Q_2(t)^2)^2ds
  \\
&\le
 C(1+t)^{-\frac52}(\delta_0+Q_2(t)^2)^2.
 \emas
This together with \eqref{H(f)} yields
$$
 Q_2(t)\le C\delta_0+CQ_2(t)^2,
$$
which implies the claim \eqref{assume2} provided that $(f_0,\chi_0)=0$ and $\delta_0>0$ is small enough.
\end{proof}

Finally, we can establish the optimal time decay rates of the global solution in the following sense.

\begin{thm}\label{Main_2}
Assume that $f_0\in H^N\cap L^{2,1}$ for $N\ge 4$ satisfying
$\|f_0\|_{H^N_{w}\cap L^{2,1}}\le \delta_0$ with $\delta_0>0$ being
small enough, and that there exist two positive constants $d_0>0$
and $d_1>0$ so that $\inf_{|\xi|\le r_0}|(\hat{f}_0,\chi_0)|\geq
d_0$ and $\inf_{|\xi|\le r_0}|(\hat{f}_0,\chi_4)|\geq d_1\sup_{
|\xi|\le r_0}|(\hat{f}_0,\chi_0)|$. Then, for time $t>0$ large
enough it holds for the global solution $f$ to the IVP
problem~\eqref{VPB4} that
 \bgr
C_1\delta_0(1+t)^{-\frac34}
\le \|(f(t),\chi_0)\|_{L^2(\R^3_x)}\le C_2\delta_0(1+t)^{-\frac34},\label{B_4}
\\
C_1\delta_0(1+t)^{-\frac14}
\le \|(f(t),\chi_j)\|_{L^2(\R^3_x)}\le C_2\delta_0(1+t)^{-\frac14},\label{B_5}
\\
C_1\delta_0(1+t)^{-\frac14}
\le \|\Tdx \Phi(t)\|_{L^2(\R^3_x)}\le C_2\delta_0(1+t)^{-\frac14},\label{B_6}
\\
C_1\delta_0(1+t)^{-\frac34}
\le \|\P_1f(t)\|_{L^2(\R^6_{x,v})}\le C_2\delta_0(1+t)^{-\frac34},\label{B_7}
\\
C_1\delta_0(1+t)^{-\frac14}
\le \|f(t)\|_{H^N_{w}}\le C_2\delta_0(1+t)^{-\frac14},\label{B_8}
\egr
with $j=1,2,3$ and $C_2\ge C_1>0$ being two constants.
\par

If in addition $(f_0,\chi_0)=0$ is assumed, and $\inf_{|\xi|\le
r_0}|(\hat f_0,(v\cdot\omega)\sqrt M)|\ge d_0$  with
$\omega=\frac{\xi}{|\xi|}\in\S^2$, and $\inf_{|\xi|\le r_0}|(\hat
f_0,\chi_4)|\ge d_0$ for some constant
 $d_0>0$. Then, it holds
\bgr
C_1\delta_0(1+t)^{-\frac54}
\le \|(f(t),\chi_0)\|_{L^2(\R^3_x)}
\le C_2\delta_0(1+t)^{-\frac54},    \label{B_4a}
\\
C_1\delta_0(1+t)^{-\frac34}
\le \|(f(t),\chi_j)\|_{L^2(\R^3_x)}
\le C_2\delta_0(1+t)^{-\frac34},    \label{B_5a}
\\
C_1\delta_0(1+t)^{-\frac34}
\le \|(f(t),\chi_4)\|_{L^2(\R^3_x)}
\le C_2\delta_0(1+t)^{-\frac34},    \label{B_5b}
\\
C_1\delta_0(1+t)^{-\frac34}
\le \|\Tdx \Phi(t)\|_{L^2(\R^3_x)}
\le C_2\delta_0(1+t)^{-\frac34},    \label{B_6a}
\\
C_1\delta_0(1+t)^{-\frac54}
\le \|\P_1f(t)\|_{L^2(\R^6_{x,v})}
\le C_2\delta_0(1+t)^{-\frac54},    \label{B_7a}
\\
C_1\delta_0(1+t)^{-\frac34}
\le \|f(t)\|_{H^N_{w}}
\le C_2\delta_0(1+t)^{-\frac34}.   \label{B_8a}
\egr
\end{thm}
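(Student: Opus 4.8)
The upper bounds in \eqref{B_4}--\eqref{B_7} and in \eqref{B_4a}--\eqref{B_7a} are already contained in Theorem~\ref{time3}. For the weighted-norm bounds \eqref{B_8} and \eqref{B_8a} one combines these with the estimate on $\|\P_1f(t)\|_{H^N_w}+\|\Tdx\P_0f(t)\|_{L^2(\R^3_v,H^{N-1}(\R^3_x))}$ from Theorem~\ref{time3}, after noting that $\|\P_0f(t)\|_{H^N_w(\R^6_{x,v})}$ is controlled by $\|\Tdx\P_0f(t)\|_{L^2(\R^3_v,H^{N-1}(\R^3_x))}$ together with the $L^2(\R^3_x)$-bounds on $(f(t),\chi_j)$, $j=0,1,\dots,4$; since the momentum part carries the slowest rate, this yields the $(1+t)^{-1/4}$ (resp.\ $(1+t)^{-3/4}$) upper bound. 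Thus the new content is the matching lower bounds, which I treat by regarding the nonlinear solution as a small perturbation of the linear one.

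The starting point is the Duhamel representation \eqref{Duh}, which I write as $f(t)=e^{tB}f_0+N(t)$ with $N(t)=\intt e^{(t-s)B}G(s)ds$. For any of the functionals $\Lambda$ occurring on the left of \eqref{B_4}--\eqref{B_8} --- i.e.\ $\Lambda(g)=\|(g,\chi_j)\|_{L^2(\R^3_x)}$, $\Lambda(g)=\|\Tdx\Delta_x^{-1}(g,\sqrt M)\|_{L^2(\R^3_x)}$, $\Lambda(g)=\|\P_1g\|_{L^2(\R^6_{x,v})}$, and $\Lambda(g)=\|g\|_{H^N_w}$ which is bounded below by $\|(g,v\sqrt M)\|_{L^2(\R^3_x)}$ --- the triangle inequality gives $\Lambda(f(t))\ge \Lambda(e^{tB}f_0)-\Lambda(N(t))$. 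The hypotheses imposed on $f_0$ here are exactly those of Theorem~\ref{time2}, so $\Lambda(e^{tB}f_0)\ge C(1+t)^{-r}$ for $t$ large, where $r$ is the exponent claimed in \eqref{B_4}--\eqref{B_8} (resp.\ \eqref{B_4a}--\eqref{B_8a}) and $C>0$ depends on $d_0,d_1$. It therefore remains to show that $\Lambda(N(t))$ decays at least as fast as the linear term but with an extra factor $\delta_0^2$.

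For this I re-use the estimates from the proof of Theorem~\ref{time3}. By \eqref{assume} one has $Q_1(t)\le C\delta_0$ (resp.\ $Q_2(t)\le C\delta_0$ when $(f_0,\chi_0)=0$), so the source obeys $\|G(s)\|_{L^2(\R^6_{x,v})}+\|G(s)\|_{L^{2,1}}\le C\delta_0^2(1+s)^{-1}$, $\|\Tdx G(s)\|_{L^{2,1}}\le C\delta_0^2(1+s)^{-1}$ by \eqref{GG_1}--\eqref{GG_3}, together with the refinements $\|(G(s),v\sqrt M)\|_{L^2(\R^3_x)}+\|\Tdx G(s)\|_{L^2(\R^6_{x,v})}\le C\delta_0^2(1+s)^{-3/2}$, and the sharper bounds $\le C\delta_0^2(1+s)^{-3/2}$ on $\|G\|_{L^2\cap L^{2,1}}$ in the case $(f_0,\chi_0)=0$. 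Splitting $\intt=\int_0^{t/2}+\int_{t/2}^t$ and inserting the linear decay estimates of Theorem~\ref{time1} --- with $L^{2,1}$ data on $[0,t/2]$ and with $L^2$ data, using $\|\Tdx G\|_{L^{2,1}}$ to recover the extra half power, on $[t/2,t]$, exactly as in \eqref{density_1}--\eqref{miscro2} --- gives $\Lambda(N(t))\le C\delta_0^2(1+t)^{-r}$. Hence $\Lambda(f(t))\ge(C-C'\delta_0)(1+t)^{-r}\ge C_1\delta_0(1+t)^{-r}$ once $\delta_0$ is small and $\delta_0\le1$, which together with the upper bounds proves \eqref{B_4}--\eqref{B_8} and, under the additional assumption $(f_0,\chi_0)=0$, \eqref{B_4a}--\eqref{B_8a}.

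The one point requiring care is that $N(t)$ must be kept of genuinely higher order at low frequency, precisely where the slow rates $(1+t)^{-1/4}$ and $(1+t)^{-3/4}$ come from. This is ensured because the density moment of the quadratic nonlinearity vanishes, $(G,\chi_0)=0$, so the low-frequency momentum component of $N(t)$ still only decays like $(1+t)^{-1/4}$ rather than slower, and because the $L^{2,1}\to L^2$ estimates of Theorem~\ref{time1} are already sharp in the frequency weight $|\xi|^{-1}$ entering $\Tdx\Phi$; the endpoint bookkeeping near $s=t$ is identical to that already performed in the proof of Theorem~\ref{time3}. No estimate beyond those of Theorems~\ref{time1}--\ref{time3} is needed, and the lower-bound argument is essentially a perturbation of the linear result Theorem~\ref{time2}.
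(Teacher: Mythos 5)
Your proposal is correct and follows essentially the same route as the paper: the upper bounds are quoted from Theorem~\ref{time3}, and the lower bounds are obtained from the Duhamel formula \eqref{Duh} by subtracting from the linear lower bounds of Theorem~\ref{time2} the Duhamel contribution of $G$, which is of size $O(\delta_0^2)$ with the same (or faster) decay rate thanks to the bounds $Q_1(t),Q_2(t)\le C\delta_0$ and the estimates \eqref{GG_1}--\eqref{GG_3} fed into Theorem~\ref{time1}. The paper's argument for \eqref{B_8}, \eqref{B_8a} likewise reduces the weighted norm to the macroscopic momentum plus faster-decaying remainders, so your reduction $\|f\|_{H^N_w}\ge\|(f,v\sqrt M)\|_{L^2(\R^3_x)}$ is in line with it.
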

\begin{proof}
By \eqref{Duh}, Theorem~\ref{time2} and Theorem~\ref{time3}, we can
establish the lower bounds of the time decay rates of macroscopic
density, momentum and energy of the global solution $f$ and its
microscopic part for $t>0$ large enough that
 \bma
     \|(f(t),\chi_0)\|_{L^2(\R^3_x)}
  &\ge
    \|(e^{tB}f_0,\chi_0)\|_{L^2(\R^3_x)}
   -\intt\|(e^{(t-s)B}G(s),\chi_0)\|_{L^2(\R^3_x)}ds
\nnm \\
&\ge
  C_1\delta_0(1+t)^{-3/4}-C_2\delta_0^2(1+t)^{-3/4},
\nnm\\
  \|(f(t),\chi_j)\|_{L^2(\R^3_x)}
&\ge
   \|(e^{tB}f_0,\chi_j)\|_{L^2(\R^3_x)}
 -\intt\|( e^{(t-s)B}G(s),\chi_j)\|_{L^2(\R^3_x)}ds\nnm\\
&\ge
  C_1\delta_0(1+t)^{-1/4}-C_2\delta_0^2(1+t)^{-1/4},\quad j=1,2,3,
\nnm\\
  \|(f(t),\chi_4)\|_{L^2(\R^3_x)}
 &\ge
  \|(e^{tB}f_0,\chi_4)\|_{L^2(\R^3_x)}
  -\intt\|( e^{(t-s)B}G(s),\chi_4)\|_{L^2(\R^3_x)}ds
\nnm\\
&\ge C_1\delta_0(1+t)^{-3/4}-C_2\delta_0^2(1+t)^{-1/4}, \label{energy0}
\\
  \|\P_1f(t)\|_{L^2(\R^6_{x,v})}
&\ge
   \|\P_1(e^{tB}f_0)\|_{L^2(\R^6_{x,v})}
 -\intt\|\P_1( e^{(t-s)B}G(s))\|_{L^2(\R^6_{x,v})}ds\nnm\\
&\ge
C_1\delta_0(1+t)^{-3/4}-C_2\delta_0^2(1+t)^{-3/4}, \nnm
\ema
from which and Theorem \ref{time3}, we can obtain
\bmas
\|f(t)\|_{H^N_w}&\ge\|\P_0f(t)\|_{L^2(\R^6_{x,v})}-\|w\P_1f(t)\|_{L^2(\R^6_{x,v})}-\sum_{1\le |\alpha|\le N}\|w\dxa f(t)\|_{L^2(\R^6_{x,v})}
\\
&\ge C_1\delta_0(1+t)^{-1/4}-3C_2\delta_0^2(1+t)^{-1/4}-C_3\delta_0(1+t)^{-3/4}.
\emas
This gives rise to \eqref{B_4}--\eqref{B_8} for sufficiently large $t>0$  and small enough $\delta_0>0$. \eqref{B_5a}--\eqref{B_8a} can be proved
similarly so that we omit  the detail for brevity.
\end{proof}

\begin{rem}
Let us give an example of the initial function $f_0$ which satisfies the assumptions of Theorem~\ref{Main_2}.
For two positive constants $d_0$ and $d_1$, we define $ f_0(x,v)$ in terms of the orthonormal basis $\chi_j$, $j=0,1,2,3,4$, as
$$
 f_0(x,v)
 = d_0e^{\frac{r_0^2}2}e^{-\frac{|x|^2}2}\chi_0(v)
  +d_1d_0e^{r_0^2}e^{-\frac{|x|^2}2}\chi_4(v).
$$
We can verify that $f_0$ satisfies the assumptions in the first part
of Theorem \ref{Main_2} provided that $d_0>0$ is small enough
because
\bgrs (\hat f_0,\chi_0)=d_0e^{\frac{r_0^2}2}e^{-\frac{|\xi|^2}2},\quad (\hat f_0,\chi_4)=d_1d_0e^{r_0^2}e^{-\frac{|\xi|^2}2},\\
\inf_{|\xi|\le r_0}|(\hat f_0,\chi_0)|=d_0,\quad  \sup_{|\xi|\le r_0}|(\hat f_0,\chi_0)|=d_0e^{\frac{r_0^2}2},\quad \inf_{|\xi|\le r_0}|(\hat f_0,\chi_4)|=d_0d_1e^{\frac{r_0^2}2},\\
\|f_0\|_{H^N_w}\le Cd_0e^{\frac{r_0^2}2}(1+d_1e^{\frac{r_0^2}2})\le \delta_0.
\egrs
In addition, the additional assumption  $(f_0,\chi_0)=0$ in the second part of
 Theorem \ref{Main_2} is satisfied by
$$
f_0(x,v)
 = d_0e^{\frac{r_0^2}2}(m\cdot v)\sqrt M
  +d_0e^{\frac{r_0^2}2}e^{-\frac{|x|^2}2}\chi_4(v),
$$
with $m(x)=\intr \frac{\xi}{|\xi|}e^{-\frac{|\xi|^2}2}e^{\i x\cdot\xi}d\xi$. And
then for  $d_0>0$ being small enough, we have
 \bgrs
  \hat f_0=d_0e^{\frac{r_0^2}2}e^{-\frac{|\xi|^2}2}(v\cdot\omega)\sqrt M+d_0e^{\frac{r_0^2}2}e^{-\frac{|\xi|^2}2}\chi_4,\\
\inf_{|\xi|\le r_0}|(\hat f_0,(v\cdot\omega)\sqrt M)|=d_0,\quad \inf_{|\xi|\le r_0}|(\hat f_0,\chi_4)|=d_0,\quad \|f_0\|_{H^N_w}\le Cd_0e^{\frac{r_0^2}2}\le \delta_0.
\egrs
\end{rem}

\subsection{Hard potential case}
For hard potential case, we can use a mixed time-velocity weight function introduced in   \cite{Duan5} defined by
$$w_l(t,v)=(1+|v|^2)^{\frac l2}e^{\frac{a|v|}{(1+t)^b}},$$
where $l\in\R$, $a>0$ and $b>0$, and the  energy norms
\bq \|f(t)\|_{N,l}=\sum_{|\alpha|+|\beta|\le N}\|w_l(t,v)\dxa\dvb f(t)\|_{L^2(\R^6_{x,v})}, \quad \|f_0\|_{N,l}=\sum_{|\alpha|+|\beta|\le N}\|w_l(0,v)\dxa\dvb f_0\|_{L^2(\R^6_{x,v})}\eq
to prove

\begin{thm}\label{time4}
Let $N\ge4$, $l\ge1$, $a>0$ and $0<b\le 1/4$. Assume that
$ \|f_0\|_{N,l}+\|f_0\|_{L^{2,1}}\le \delta_0$ with $\delta_0>0$ small. Let $f$ be a solution of the VPB system \eqref{VPB4}. Then, it holds for $k=0,1$ that
 \be
 \left\{\bln
 &\|\dx^k(f(t),\chi_0)\|_{L^2(\R^3_x)}
 \le   C \delta_0(1+t)^{-\frac34-\frac k2},   \label{t4.3}
\\
 &  \|\dx^k(f(t),\chi_j)\|_{L^2(\R^3_x)}
 \le C \delta_0(1+t)^{-\frac14-\frac k2},\quad j=1,2,3,  
\\
 &  \|\dx^k(f(t),\chi_4)\|_{L^2(\R^3_x)}
   +\|\dx^k\Tdx\Phi(t)\|_{L^2(\R^3_x)}
 \le C \delta_0(1+t)^{-\frac14-\frac k2},   
 \\
 & \|\P_1f(t)\|_{N,l} +\|\Tdx \P_0f(t)\|_{L^2(\R^3_{v},H^{N-1}(\R^3_x))}
 \le
 C \delta_0(1+t)^{-\frac34}.   
 \eln\right.
 \ee
 Moreover, if $(f_0,\chi_0)=0$, then it holds for $k=0,1$ that
 \be
 \left\{\bln
 &\|\dx^k(f(t),\chi_0)\|_{L^2(\R^3_x)}
 +\|\dx^k \P_1f(t)\|_{L^2(\R^6_{x,v})}
  \le C \delta_0(1+t)^{-\frac54-\frac k2},    \label{t4.4}
 \\
 & \|\dx^k(f(t),\chi_j)\|_{L^2(\R^3_x)}
  \le
  C \delta_0(1+t)^{-\frac34-\frac k2},\quad j=1,2,3,  
 \\
 &  \|\dx^k(f(t),\chi_4)\|_{L^2(\R^3_x)}
 +\|\dx^k\Tdx\Phi(t)\|_{L^2(\R^3_x)}
  \le
  C \delta_0(1+t)^{-\frac34-\frac k2},   
 \\
  &\|\P_1f(t)\|_{N,l} +\|\Tdx \P_0f(t)\|_{L^2(\R^3_{v},H^{N-1}(\R^3_x))}
\le  C \delta_0(1+t)^{-\frac54}.  
 \eln\right.
 \ee
\end{thm}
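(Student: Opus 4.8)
The plan is to repeat the bootstrap scheme of the proof of Theorem~\ref{time3}, replacing the polynomial weight $w(v)=(1+|v|^2)^{1/2}$ by the mixed time-velocity weight $w_l(t,v)$ and the norm $\|\cdot\|_{H^N_w}$ by $\|\cdot\|_{N,l}$. First I would write the solution of \eqref{VPB4} through the Duhamel formula $f(t)=e^{tB}f_0+\intt e^{(t-s)B}G(s)ds$ with $G=G_1+G_2$ as in \eqref{G0}, and introduce the decay functionals $Q_1(t)$ and, when $(f_0,\chi_0)=0$, $Q_2(t)$ exactly as in the proof of Theorem~\ref{time3}, but with $\|\P_1f\|_{H^N_w}$ replaced by $\|\P_1f\|_{N,l}$. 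The spectral analysis of Section~\ref{sect2} --- in particular Lemma~\ref{LP03}, the low-frequency eigenvalue expansions of Theorem~\ref{eigen_3}, and the semigroup decomposition of Theorem~\ref{E_3} --- was carried out for general $0\le\gamma<1$, so the linear decay estimates \eqref{D_2}--\eqref{D_13} and the lower bounds of Theorem~\ref{time2} are already available in the hard potential case; the only genuinely new ingredient is the closed weighted energy estimate in the $\|\cdot\|_{N,l}$ norm.

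Next I would carry out the nonlinear estimates that feed the Duhamel terms. Using \eqref{gamma1}--\eqref{gamma2} for $\Gamma(f,f)$ and estimating $G_2=\frac12(v\cdot\Tdx\Phi)f-\Tdx\Phi\cdot\Tdv f$ by absorbing the collision frequency $\nu(v)$ and the factor $|v|$ into the weight $w_l(s,v)$ (legitimate since $l\ge1$), one obtains the same bounds as in \eqref{GG_1}--\eqref{GG_3}, namely $\|G(s)\|_{L^2(\R^6_{x,v})}+\|G(s)\|_{L^{2,1}}\le C(1+s)^{-\theta}Q_i(t)^2$ and $\|\Tdx G(s)\|_{L^{2,1}}\le C(1+s)^{-1}Q_i(t)^2$ with the appropriate exponents $\theta$. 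Plugging these into the Duhamel formula together with \eqref{D_2}--\eqref{D_13} reproduces verbatim the decay estimates \eqref{density_1}--\eqref{miscro} for $(f,\chi_j)$, $\Tdx\Phi$ and $\P_1f$, so that the macroscopic and fluid parts of $Q_1(t)$ (resp.\ $Q_2(t)$) are controlled by $C\delta_0+CQ_1(t)^2$ (resp.\ $C\delta_0+CQ_2(t)^2$).

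The main obstacle, and the only place the hard-potential structure genuinely enters, is the energy inequality for $\|f(t)\|_{N,l}$. Because $\nu(v)\sim(1+|v|)^\gamma$ with $\gamma<1$, the coercivity \eqref{L_4} only yields a dissipation weighted by $(1+|v|)^\gamma$, which is too weak to absorb the transport term $v\cdot\Tdx f$ after differentiation; this is precisely why the factor $e^{a|v|/(1+t)^b}$ is inserted. The crucial computation is $\dt w_l(t,v)^2=-\frac{2ab|v|}{(1+t)^{1+b}}w_l(t,v)^2$, which produces an extra good term $\sim\frac1{(1+t)^{1+b}}\sum_{|\alpha|+|\beta|\le N}\|w_l|v|^{1/2}\dxa\dvb\P_1f\|^2_{L^2(\R^6_{x,v})}$ in the weighted energy identity; combined with the $\nu$-dissipation this recovers the full weight $w_l(1+|v|)^{1/2}$ at the price of the time-integrable loss $(1+t)^{-1-b}$ (here $a>0$ and $0<b\le1/4$ are taken small). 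Following \cite{Duan5} --- the analogue of Lemma~4.6 of \cite{Duan1} used in Theorem~\ref{time3} --- I would construct $H_l(f)\sim\sum_{|\alpha|+|\beta|\le N}\|w_l\dxa\dvb\P_1f\|^2_{L^2(\R^6_{x,v})}+\sum_{|\alpha|\le N-1}(\|\dxa\Tdx\P_0f\|^2_{L^2(\R^6_{x,v})}+\|\dxa P_{\rm d}f\|^2_{L^2(\R^6_{x,v})})$ and a matching dissipation functional $D_l(f)$ such that
\bmas
 \Dt H_l(f(t))+\mu D_l(f(t))\le C\|\Tdx\P_0f(t)\|^2_{L^2(\R^6_{x,v})}+\frac{C}{(1+t)^{1+b}}H_l(f(t)).
\emas
Integrating this Gronwall-type inequality, using $\int_0^{\infty}(1+t)^{-1-b}dt<\infty$ and the decay $\|\Tdx\P_0f(s)\|_{L^2(\R^6_{x,v})}\le C(1+s)^{-3/4}(\delta_0+Q_1(t)^2)$ already obtained above (resp.\ $(1+s)^{-5/4}(\delta_0+Q_2(t)^2)$ when $(f_0,\chi_0)=0$), yields $H_l(f(t))\le C(1+t)^{-3/2}(\delta_0+Q_1(t)^2)^2$ (resp.\ $(1+t)^{-5/2}(\delta_0+Q_2(t)^2)^2$). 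Collecting all the bounds gives $Q_i(t)\le C\delta_0+CQ_i(t)^2$, and a standard continuity argument closes the bootstrap for $\delta_0$ small, establishing \eqref{t4.3}--\eqref{t4.4}. I expect the delicate points to be the choice of $a$ and $b$ and the interpolation balancing the $\nu$-dissipation against the streaming term at top order in the $x$- and $v$-derivatives, exactly as in \cite{Duan5}; the remaining computations are a routine repetition of the hard-sphere case.
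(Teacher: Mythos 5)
Your proposal follows essentially the same route as the paper: Duhamel representation with the decay functionals of Theorem~\ref{time3} (the paper calls them $Q_3,Q_4$), the same nonlinear bounds via $\nu(v)\le w_l(t,v)$ for $l\ge1$, and the weighted energy functional of \cite{Duan5} (Lemma 4.4 there) closing the Gronwall/bootstrap argument; your variant of the energy inequality with the extra integrable term $C(1+t)^{-1-b}H_l(f)$ is harmless and yields the same $(1+t)^{-3/2}$ (resp. $(1+t)^{-5/2}$) bound. So the argument is correct and matches the paper's proof in substance.
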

\begin{proof}
For the global solution $f$ to the IVP problem \eqref{VPB4},
we define two functionals $Q_3(t)$ and $Q_4(t)$ for any $t>0$ by
\bmas
 Q_3(t)=\sup_{0\le s\le t}\sum_{k=0}^1
 &\Big\{
    (1+s)^{\frac34+\frac k2}\|\dx^k(f(s),\sqrt M)\|_{L^2(\R^3_x)}
   +(1+s)^{\frac14+\frac k2}\|\dx^k(f(s),v\sqrt M)\|_{L^2(\R^3_x)}
\nnm\\
&  +(1+s)^{\frac14+\frac k2}\|\dx^k(f(s),\chi_4)\|_{L^2(\R^3_x)}
   +(1+s)^{\frac14}\|\Tdx\Phi(s)\|_{L^2(\R^3_x)}
\nnm\\
&+(1+s)^{\frac34}
  (\|\P_1f(s)\|_{N,l} +\|\Tdx\P_0f(s)\|_{L^2(\R^3_{v},H^{N-1}(\R^3_x))})\, \Big\},
 \emas
and
 \bmas
 Q_4(t)=\sup_{0\le s\le t}\sum_{k=0}^1
 &\Big\{
     (1+s)^{\frac54+\frac k2}\|\dx^k(f(s),\sqrt M)\|_{L^2(\R^3_x)}
   +(1+s)^{\frac34+\frac k2}  \|\dx^k(f(s),v\sqrt M)\|_{L^2(\R^3_x)}
 \nnm\\
&  +(1+s)^{\frac34+\frac k2}\|\dx^k(f(s),\chi_4)\|_{L^2(\R^3_x)}
   +(1+s)^{\frac34}\|\Tdx\Phi(s)\|_{L^2(\R^3_x)}
 \nnm\\
&  +(1+s)^{\frac54+\frac k2}\|\dx^k \P_1f(s)\|_{L^2(\R^6_{x,v})}
 \nnm\\
&+(1+s)^{\frac54}(\|\P_1f(s)\|_{N,l} +\|\Tdx \P_0f(s)\|_{L^2(\R^3_{v},H^{N-1}(\R^3_x))})\, \Big\}.
\emas

We claim that it holds under the assumptions of Theorem~\ref{time4} that
  \bq
 Q_3(t)\le C\delta_0,  \label{assume3}
 \eq
and if it is further satisfied $(f_0,\chi_0)=0$ that
  \bq
 Q_4(t)\le C\delta_0.  \label{assume4}
 \eq
It is easy to verify that the estimates  \eqref{t4.3} and \eqref{t4.4} follow  from \eqref{assume3} from \eqref{assume4} respectively.

 Since $\nu(v)\le w_l(t,v)$ for all $l\ge1$ and $(t,v)\in \R^+\times\R^3$, it follows from \eqref{gamma1} and \eqref{gamma2} that
 \bmas
  \|\Gamma(f,g)\|_{L^2(\R^3_{v})} &\le
C(\|f\|_{L^2(\R^3_{v})}\|w_lg\|_{L^2(\R^3_{v})}
 +\|w_lf\|_{L^2(\R^3_{v})}\|g\|_{L^2(\R^3_{v})}),
 \\
 \|\Gamma(f,g)\|_{L^{2,1}}&\le C(\|f\|_{L^2(\R^6_{x,v})}\|w_l g\|_{L^2(\R^6_{x,v})}+\|w_l
f\|_{L^2(\R^6_{x,v})}\|g\|_{L^2(\R^6_{x,v})}).
\emas
Then, we can obtain by using the  similar arguments for  \eqref{GG_1}--\eqref{GG_3} to obtain 
 \bmas
 \| G(s)\|_{L^2(\R^6_{x,v})}&\le C(1+s)^{-1}Q_3(t)^2,
\\
 \| G(s)\|_{L^{2,1}}&\le C(1+s)^{-\frac12}Q_3(t)^2,
 \\
  \|\Tdx G(s)\|_{L^{2,1}} &\le C(1+s)^{-1}Q_3(t)^2.  
 \emas
 Similar to the proof of Theorem 4.1, we have
\bma
\|\dxa(f(t),\sqrt M)\|_{L^2(\R^3_x)}&\le C\delta_0(1+t)^{-\frac34-\frac{|\alpha|}2}+C(1+t)^{-\frac34-\frac{|\alpha|}2}Q_3(t)^2,\label{a_2}\\
\|\dxa(f(t),v\sqrt M)\|_{L^2(\R^3_x)}&\le C\delta_0(1+t)^{-\frac14-\frac{|\alpha|}2}+C(1+t)^{-\frac14-\frac{|\alpha|}2}Q_3(t)^2,\label{a_3}\\
\|\dxa(f(t),\chi_4)\|_{L^2(\R^3_x)}&\le C\delta_0(1+t)^{-\frac34-\frac{|\alpha|}2}+C(1+t)^{-\frac14-\frac{|\alpha|}2}Q_3(t)^2,\label{a_4}
\ema
for $|\alpha|=0,1$, and
\bma
\|\Tdx\Phi(t)\|_{L^2(\R^3_x)}&\le C\delta_0(1+t)^{-\frac14}+C(1+t)^{-\frac14}Q_3(t)^2,\label{a_5}\\
\|\P_1f(t)\|_{L^2(\R^3_{x,v})}&\le C\delta_0(1+t)^{-\frac34}+C(1+t)^{-\frac34}Q_3(t)^2.\label{a_6}
\ema
By Lemma 4.4 in \cite{Duan5}, there are two functionals $H_{N,l}(f)$ and $D_{N,l}(f)$ with
\bmas
H_{N,l}(f(t))&\sim \sum_{|\alpha|+|\beta|\le N}\|w_l(t,v)\P_1f(t)\|^2_{L^2(\R^6_{x,v})}+\sum_{|\alpha|\le N-1}\|\dxa\Tdx\P_0f(t)\|^2_{L^2(\R^6_{x,v})}+\|P_{\rm d}f(t)\|^2_{L^2(\R^6_{x,v})},\\
D_{N,l}(f(t))&\sim \sum_{|\alpha|+|\beta|\le N}\|\nu^{1/2}w_l(t,v)\P_1f(t)\|^2_{L^2(\R^6_{x,v})}+\sum_{|\alpha|\le N-1}\|\dxa\Tdx\P_0f(t)\|^2_{L^2(\R^6_{x,v})}+\|P_{\rm d}f(t)\|^2_{L^2(\R^6_{x,v})},
\emas
such that
\bq
\Dt H_{N,l}(f(t))+\kappa D_{N,l}(f(t))\le C\|\Tdx\P_0 f(t)\|^2_{L^2(\R^6_{x,v})}.
\eq
This together with \eqref{a_2}, \eqref{a_3} and \eqref{a_4} leads to
\bma
H_{N,l}(f(t))&\le e^{-\kappa t}H_{N,l}(f_0)+\intt e^{-\kappa(t-s)}\|\Tdx\P_0 f(s)\|^2_{L^2(\R^6_{x,v})} ds\nnm\\
&\le e^{-\kappa t}H_{N,l}(f_0)+\intt e^{-\kappa(t-s)}(1+s)^{-3/2}(\delta_0+Q_3(t)^2)^2 ds\nnm\\
&\le C(1+t)^{-3/2}(\delta_0+Q_3(t)^2)^2.\label{a_1}
\ema
Summing up \eqref{a_2}--\eqref{a_6} and \eqref{a_1}, we have
$$
Q_3(t)\le C\delta_0+CQ_3(t)^2,
$$
from which the claim \eqref{assume3} can be  verified provided that $\delta_0>0$ is small enough.

Similarly, we can prove \eqref{t4.4} in the case of $P_{\rm d}f_0=0$. The
detail is omitted for brevity.
\end{proof}

Note that the same  statements on the optimal decay rates
given Theorem \ref{Main_2} for hard sphere model hold also for the hard potential case.

\section{Further discussions}
\setcounter{equation}{0}
\label{Further}

\subsection{Decay rate of the temperature}
\label{sect.theta}
In this subsection, we make a comparison of the Vlasov-Poisson-Boltzmann~\eqref{VPB4} system with the classical compressible Navier-Stokes-Poisson equations
about  the time-asymptotic behavior of the macroscopic density, momentum and the temperature.  In order to apply the macro-micro decomposition \eqref{macro-micro} to the system~\eqref{VPB4} to deduce the equations for the macroscopic quantities $(n, m, q)$, we take the inner products of $\chi_j\ (j=-1,\cdots,3)$ and $\eqref{VPB4}_1$ respectively to obtain the system of  compressible Euler-Poisson type (EP) as
 \be
 \left\{\bln
 &\dt n+\divx m=0,\label{G_3}\\
 &\dt m+\Tdx n+\sqrt{\frac23}\Tdx q-\Tdx\Phi
   =n\Tdx \Phi-\intr v\cdot\Tdx(\P_1f) v\sqrt Mdv,\\  
 &\dt q+\sqrt{\frac23}\divx m
   =\sqrt{\frac23}\Tdx \Phi\cdot m-\intr v\cdot\Tdx(\P_1f) \chi_4 dv. 
 \eln\right.
 \ee
Taking the microscopic projection $\P_1$ to \eqref{VPB4}, we have
 \be
 \dt(\P_1f)+\P_1(v\cdot\Tdx \P_1f)-\P_1 G
  =L(\P_1f)-\P_1(v\cdot\Tdx \P_0f), \label{G_2}
 \ee
where the nonlinear term $G$ is denoted by \eqref{G0}, and express the microscopic part $\P_1f$ as
 $$
   \P_1f=L^{-1}[\dt(\P_1f)+\P_1(v\cdot\Tdx \P_1f)-\P_1 G]+L^{-1}\P_1(v\cdot\Tdx \P_0f).
 $$
Substituting the above representation into \eqref{G_3}, we obtain
the  system  of the compressible Navier-Stokes-Poisson type (NSP) as
 \be
 \left\{\bln
 & \dt n+\divx m=0,    \label{G_9}\\
 &\dt m+\Tdx n+\sqrt{\frac23}\Tdx q-\Tdx\Phi
   =\eta (\Delta_xm+\frac13\Tdx{\rm div}_xm)+n\Tdx \Phi+R_1,\\ 
 &\dt q+\sqrt{\frac23}\divx m
   =\alpha \Delta_x q+\sqrt{\frac23}\Tdx \Phi\cdot m+R_2,
 \eln\right.
 \ee
where the viscosity coefficients $\eta$, $\alpha$ and the remainder terms $R_1,\ R_2$ are defined by
 \bmas
 \eta&=-(L^{-1}\P_1(v_1\chi_2),v_1\chi_2),\quad \alpha=-(L^{-1}\P_1(v_1\chi_4),v_1\chi_4),\\
 R_1&=-\intr v\cdot\Tdx L^{-1}[\dt(\P_1f)+\P_1(v\cdot\Tdx \P_1f)-\P_1 G]v\sqrt Mdv,\\
 R_2&=-\intr v\cdot\Tdx L^{-1}[\dt(\P_1f)+\P_1(v\cdot\Tdx \P_1f)-\P_1 G]\chi_4 dv.
 \emas

Define the macroscopic temperature of the solution to the VPB system~\eqref{VPB4} as
 \be
\theta=q-\sqrt{\frac16}m^2. \label{theta}
 \ee
Then we obtain the system for density, momentum and temperature $(n,m,\theta)$ as
 \be
 \left\{\bln
 & \dt n+\divx m=0,    \label{G_9a}
 \\
 &\dt m+\Tdx n+\sqrt{\frac23}\Tdx q-\Tdx\Phi
  -\eta (\Delta_xm+\frac13\Tdx{\rm div}_xm) =n\Tdx \Phi+R_1,
   \\ 
 & \dt \theta+\sqrt{\frac23}\divx m - \alpha \Delta_x \theta=R_2+R_3,
 \eln\right.
 \ee
where the reminder terms $R_1,R_2$ are defined above and the term $R_3$ is given by
 \bmas
 R_3=&\sqrt{\frac23}\Tdx (n+\sqrt{\frac23} q)\cdot m-\sqrt{\frac23}
\eta (\Delta_xm+\frac13\Tdx{\rm div}_xm)\cdot m-\sqrt{\frac23}n\Tdx\Phi\cdot m
\\
  &-\sqrt{\frac23}mR_1 +\sqrt{\frac16}\alpha\Delta_x(m^2).
\emas

We first consider the linearized NSP system for $(n,m,\theta)$:
 \be
 \left\{\bln
 & \dt n+\divx m =0,\\
 &\dt m+\Tdx n+\sqrt{\frac23}\Tdx q-\Tdx\Phi
   -\eta (\Delta_xm+\frac13\Tdx{\rm div}_xm)=0,\\
 &\dt \theta+\sqrt{\frac23}\divx m-\alpha \Delta_x \theta=0.   \label{G_9b}
 \eln\right.
 \ee
with the initial data
 \bq
 (n,m,\theta)(x,0)=(n_0,m_0,\theta_0)(x).   \label{G_9c}
 \eq
We have from \cite{Li,Zhang} the following optimal time convergence rages of the global solution to the Cauchy problem~\eqref{G_9b}--\eqref{G_9c} below.

\begin{lem}[\cite{Li,Zhang}]
Let $U_0=(n_0,m_0,\theta_0)\in H^N(\R^3_{x})\cap L^1(\R^3_{x})$, $N\ge 4$ and denote $(n,m,\theta)$  the global solution to the Cauchy problem~\eqref{G_9b}--\eqref{G_9c}. Then, it holds
 \bmas
 \|\dxa n(t)\|_{L^2(\R^3_{x})}&\le C(1+t)^{-(\frac34+\frac{|\alpha|}2)}(\|\dxa U_0\|_{L^2(\R^3_{x})}+\|U_0\|_{L^1(\R^3_{x})}),\\
\|\dxa m(t)\|_{L^2(\R^3_{x})}&\le C(1+t)^{-(\frac14+\frac{|\alpha|}2)}(\|\dxa U_0\|_{L^2(\R^3_{x})}+\|U_0\|_{L^1(\R^3_{x})}),\\
\|\dxa \theta(t)\|_{L^2(\R^3_{x})}&\le C(1+t)^{-(\frac34+\frac{|\alpha|}2)}(\|\dxa U_0\|_{L^2(\R^3_{x})}+\|U_0\|_{L^1(\R^3_{x})}). \emas
Moreover, if $n_0=0$, then
 \bmas
 \|\dxa n(t)\|_{L^2(\R^3_{x})}&\le C(1+t)^{-(\frac54+\frac{|\alpha|}2)}(\|\dxa U_0\|_{L^2(\R^3_{x})}+\|U_0\|_{L^1(\R^3_{x})}),\\
\|\dxa m(t)\|_{L^2(\R^3_{x})}&\le C(1+t)^{-(\frac34+\frac{|\alpha|}2)}(\|\dxa U_0\|_{L^2(\R^3_{x})}+\|U_0\|_{L^1(\R^3_{x})}),\\
\|\dxa \theta(t)\|_{L^2(\R^3_{x})}&\le C(1+t)^{-(\frac34+\frac{|\alpha|}2)}(\|\dxa U_0\|_{L^2(\R^3_{x})}+\|U_0\|_{L^1(\R^3_{x})}). \emas
\end{lem}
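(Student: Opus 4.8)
The plan is to solve the Cauchy problem \eqref{G_9b}--\eqref{G_9c} by taking the Fourier transform in $x$, which turns it into a family of linear ODE systems $\dt\hat U(\xi,t)=\mathcal A(\xi)\hat U(\xi,t)$, $\hat U=(\hat n,\hat m,\hat\theta)\in\mathbb{C}^5$, indexed by $\xi\in\R^3$. Decomposing $\hat m$ into the longitudinal part $(\hat m\cdot\omega)\omega$ and the transverse part, with $\omega=\xi/|\xi|$, the symbol $\mathcal A(\xi)$ splits into a $3\times3$ acoustic block acting on $(\hat n,\hat m\cdot\omega,\hat\theta)$ and a $2\times2$ transverse block acting on the two transverse momentum components. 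The transverse block is purely parabolic with symbol $-\eta|\xi|^2\,\mathrm{Id}$. The acoustic block carries the Poisson coupling: the force term $-\Tdx\Phi=-\Tdx\Delta_x^{-1}n$ contributes an entry of size $\sim|\xi|^{-1}$ coupling $\hat n$ to $\hat m\cdot\omega$ (in fact $i|\xi|(1+|\xi|^{-2})$ along $\omega$), which is exactly the structure of the operator $A(\xi)$ in \eqref{Qxi} with eigenvalues \eqref{eigen}.

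First I would handle the high-frequency region $|\xi|\ge r_0$. There the viscous terms dominate; a Routh--Hurwitz analysis of the characteristic polynomial of $\mathcal A(\xi)$ (or a direct Fourier-side energy estimate) shows that all eigenvalues have real part $\le-\sigma_0<0$ uniformly in $|\xi|\ge r_0$, hence $\|e^{t\mathcal A(\xi)}\hat U_0(\xi)\|\le Ce^{-\sigma_0 t}\|\hat U_0(\xi)\|$, and after using the $H^N$ bound on $U_0$ this part of the solution decays exponentially, faster than any of the claimed polynomial rates.

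The heart of the matter is the low-frequency region $0<|\xi|\le r_0$, where I would expand the eigenvalues of $\mathcal A(\xi)$ in $s=|\xi|$ via the implicit function theorem, parallel to Lemmas \ref{eigen_1}--\ref{eigen_2}. The transverse block gives a double eigenvalue $-\eta s^2+o(s^2)$; the acoustic block gives a real ``thermal'' mode $-\alpha s^2+o(s^2)$ and a complex-conjugate pair $\lambda_\pm(s)=\pm i\sqrt{1+\tfrac53 s^2}-a_{\pm}s^2+o(s^2)$ with $a_\pm>0$, the limiting frequency $\pm i$ as $s\to0$ being the plasma-oscillation effect of the Poisson term. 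Writing $\hat U(\xi,t)=\sum_{j} e^{\lambda_j(s)t}\Pi_j(\xi)\hat U_0$ over the five modes, the crucial observation is that the eigenprojections onto $\lambda_\pm$, applied to $\hat U_0$, contribute a term proportional to $s^{-1}\hat n_0$ in the momentum component (just as in \eqref{D_8} and \eqref{F_3}). Consequently, for $L^1$ initial data (so that $|\hat U_0(\xi)|\le\|U_0\|_{L^1}$),
\[
 \|\dxa n(t)\|_{L^2_x}^2+\|\dxa\theta(t)\|_{L^2_x}^2
 \le C\!\!\int_{|\xi|\le r_0}\!\!|\xi|^{2|\alpha|}e^{-c|\xi|^2 t}\big(|\hat n_0|^2+|\hat\theta_0|^2+|\xi|^2|\hat U_0|^2\big)\,d\xi+(\text{h.f.}),
\]
yielding the rate $(1+t)^{-(3/4+|\alpha|/2)}$, whereas for the momentum the extra weight $|\xi|^{-2}$ coming from the $s^{-1}\hat n_0$ term is still integrable near $\xi=0$ in $\R^3$ but costs one power of $t^{1/2}$, giving $(1+t)^{-(1/4+|\alpha|/2)}$. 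When $n_0=0$ this singular contribution vanishes and the $\lambda_\pm$ modes enter only through $\hat m_0,\hat\theta_0$ with a gain of one power of $s$ from the coupling; the density gains an extra $s^2$, hence an extra factor $t^{-1}$, producing the improved rates in the second part.

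The main obstacle I expect is the careful justification of these low-frequency eigenvalue and eigenprojection expansions — in particular, verifying $\mathrm{Re}\,\lambda_\pm(s)=-a_\pm s^2+o(s^2)$ with $a_\pm>0$ (so that the oscillatory plasma modes genuinely dissipate at the parabolic rate) and that the remainder operators $\Pi_j(\xi)$ minus their leading terms are $O(s)$ in operator norm uniformly for $|\xi|\le r_0$. This is a finite-dimensional but delicate Routh--Hurwitz and implicit-function-theorem computation, entirely analogous to Lemmas \ref{eigen_1}--\ref{eigen_2} and Theorem \ref{eigen_3} above, and is carried out in detail in \cite{Li,Zhang}; once it is in place, the decay estimates follow by the same Fourier-splitting argument as in Theorem \ref{time1}.
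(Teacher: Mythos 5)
Your outline is correct and is essentially the argument behind this lemma: the paper itself gives no proof (it only quotes the result from \cite{Li,Zhang}), and those references establish it by exactly the Fourier-side mode decomposition you describe — low-frequency eigenvalue and eigenprojection expansions with the $|\xi|^{-1}\hat n_0$ contribution in the momentum coefficient of the plasma modes, plus a uniform spectral gap at high frequency — which also parallels the paper's own Theorems \ref{eigen_3} and \ref{time1} for the kinetic semigroup. No essential step is missing.
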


Then,  we turn to deal with the nonlinear NSP type system \eqref{G_9a} with initial data \eqref{G_9c} in the vector form
 \bq
 \dt U=DU+H,\quad U(0)=U_0,\label{bbb}
 \eq
where $U=(n,m,\theta)^T$, $U_0=(n_0,m_0,\theta_0)^T$, the Fourier transform $\hat{D}$ of the linear differential operator $D$ is
$$
 \hat{D}(\xi)=\left(\ba
 0, & -\i\xi^T, & 0
  \\
  -\i\xi(1+\frac1{|\xi|^2}), & -\eta (|\xi|^2I+\frac13\xi\otimes\xi), & -\i\sqrt{\frac23}\xi
  \\ 0, &
  -\i\sqrt{\frac23}\xi^T, & -\alpha |\xi|^2
  \ea\right),
$$
and the nonlinear term $H$ is given by
$$
 H=(0,n\Tdx \Phi+R_1,R_2+R_3 )^T.
 $$
The solution to the problem \eqref{bbb} can be represented by
\bq
 U(t)=e^{tD}U_0+\intt e^{(t-s)D}H ds.
 \eq
Then, we have the following theorem concerned with the time decay rate of the temperature under the same assumptions  in Theorem~\ref{Main_2}.
\begin{thm} \label{theta1}
Under the same assumptions  in Theorem~\ref{Main_2} that $f_0\in
H^N\cap L^{2,1}$ for $N\ge 4$ satisfying $\|f_0\|_{H^N_{w}\cap
L^{2,1}}\le \delta_0$ with $\delta_0>0$ small enough, let $f$ be the
global solution to the VPB system~\eqref{VPB4}. Denote
$n_0=(f_0,\chi_0)$, $m_0=(f_0,v\sqrt{M})$, and
$\theta_0=q_0-\sqrt{\frac16}m_0^2$ with $q_0=(f_0,\chi_4)$, and let
$U=(n,m,\theta)^T$ be the corresponding macroscopic density,
momentum and temperature of $f$ and satisfy the problem~\eqref{bbb}
with $U_0=(n_0,m_0,\theta_0)^T$. Then, it holds for $t>0$ and
$k=0,1$ that
\bma
  \| n(t)\|_{L^2(\R^3_{x})}&\le C(1+t)^{-\frac34},   \\
  \| m(t)\|_{L^2(\R^3_{x})}&\le C(1+t)^{-\frac14},    \\
  \| \theta(t)\|_{L^2(\R^3_{x})}&\le C(1+t)^{-\frac34+\eps},  \label{energy4}
 \ema where $\eps>0$ is an any  constant.
\end{thm}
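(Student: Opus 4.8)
The plan is to estimate the macroscopic quantities $(n,m,\theta)$ via the Duhamel formula $U(t)=e^{tD}U_0+\int_0^t e^{(t-s)D}H(s)\,ds$, using the linear decay rates from the preceding lemma on the linearized NSP system together with the nonlinear decay rates for $f$ already established in Theorem \ref{Main_2} (via Theorem \ref{time3}/\ref{time4}). First I would observe that the bounds on $n$ and $m$ are immediate: since $n=(f,\chi_0)$ and $m=(f,v\sqrt M)$ are literally the macroscopic density and momentum of the VPB solution $f$, the estimates $\|n(t)\|_{L^2_x}\le C(1+t)^{-3/4}$ and $\|m(t)\|_{L^2_x}\le C(1+t)^{-1/4}$ are exactly the upper bounds already proved in Theorem \ref{time3} (hard sphere) or Theorem \ref{time4} (hard potential). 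So the real content is the bound for $\theta=q-\sqrt{1/6}\,m^2$.

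The key steps for $\theta$ are as follows. Using $\theta_0=q_0-\sqrt{1/6}\,m_0^2$ and the linear lemma applied to $U_0=(n_0,m_0,\theta_0)^T$, the linear part $e^{tD}U_0$ contributes $C(1+t)^{-3/4}(\|U_0\|_{H^N}+\|U_0\|_{L^1})$ to $\|\theta(t)\|_{L^2_x}$, which is controlled by $\delta_0$ using the embedding $L^{2,1}\hookrightarrow L^1_x$ for the macroscopic moments and the smallness assumption $\|f_0\|_{H^N_w\cap L^{2,1}}\le\delta_0$. For the Duhamel term $\int_0^t e^{(t-s)D}H(s)\,ds$, I would use that the $\theta$-component of $e^{tD}$ enjoys the $(1+t)^{-3/4}$ (and $(1+t)^{-5/4}$ for $\nabla_x$) decay, split $\int_0^t=\int_0^{t/2}+\int_{t/2}^t$, and bound $H=(0,\,n\nabla_x\Phi+R_1,\,R_2+R_3)^T$ in $L^2_x$ and $L^1_x$ (equivalently $L^{2,1}$ after recovering the velocity integrals in $R_1,R_2,R_3$ with the weight). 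Here one invokes the decay rates from Theorem \ref{time3}: $\|\P_1 f\|_{H^N_w}, \|\nabla_x\P_0 f\|\lesssim\delta_0(1+t)^{-3/4}$, $\|\nabla_x\Phi\|_{L^2_x}\lesssim\delta_0(1+t)^{-1/4}$, $\|(f,v\sqrt M)\|_{L^2_x}\lesssim\delta_0(1+t)^{-1/4}$, and the $L^1$/$L^{2,1}$ bilinear estimates for $\Gamma$ as in \eqref{gamma1}--\eqref{gamma2}. The terms $R_1,R_2$ are spatial derivatives of $L^{-1}$ applied to $\partial_t\P_1 f$, $\P_1(v\cdot\nabla_x\P_1 f)$ and $\P_1 G$, and the worst quadratic term in $R_3$ is $\sqrt{2/3}\nabla_x(n+\sqrt{2/3}q)\cdot m$, which is a product of two macroscopic quantities; because $m$ only decays at rate $(1+t)^{-1/4}$ while $\nabla_x n,\nabla_x q$ decay at $(1+t)^{-5/4}$, this product decays like $(1+t)^{-3/2}$ in $L^1_x$ and somewhat slower in $L^2_x$, and its time convolution against $(1+t-s)^{-3/4}$ produces the $(1+t)^{-3/4+\eps}$ rate — this is exactly why one can only claim $-3/4+\eps$ rather than $-3/4$.

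The main obstacle will be the $\int_{t/2}^t$ part of the Duhamel integral involving the slowly-decaying quadratic terms in $R_3$, in particular the $n\nabla_x\Phi\cdot m$ and $\nabla_x n\cdot m$ type terms: one needs $\int_{t/2}^t (1+t-s)^{-3/4}(1+s)^{-p}\,ds$ with $p$ close to (but perhaps not exceeding) $1$, which forces the $\eps$-loss via an elementary estimate of the form $\int_{t/2}^t(1+t-s)^{-3/4}ds\cdot\sup_{s\ge t/2}(1+s)^{-p}\lesssim (1+t)^{1/4-p}$, and then matching $1/4-p$ against $-3/4+\eps$. A secondary technical point is that $R_1,R_2$ contain $\partial_t\P_1 f$, which is not directly controlled by the functionals $Q_i(t)$; one handles this either by using equation \eqref{G_2} to replace $\partial_t\P_1 f$ by $L\P_1 f-\P_1(v\cdot\nabla_x f)+\P_1 G$ (all of which are controlled), or by integrating by parts in $s$ inside the Duhamel integral to move the time derivative onto $e^{(t-s)D}$. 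Once these terms are organized and the convolution inequalities are applied with the rates from Theorems \ref{time3}--\ref{time4}, summing all contributions and absorbing the quadratic terms by smallness of $\delta_0$ yields the stated bounds, with the unavoidable $\eps$-loss isolated to the temperature estimate \eqref{energy4}.
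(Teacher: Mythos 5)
Your overall plan coincides with the paper's: reduce to the bound \eqref{energy4} for $\theta$ (the $n$, $m$ bounds being already contained in Theorem~\ref{Main_2}), write $U$ by Duhamel for \eqref{bbb}, invoke the linear NSP decay lemma, split $H$ into the microscopic "linear" part and the genuinely nonlinear part, and close by convolution estimates with an $\eps$-loss. The one point where you hedge is the decisive one. Of the two options you offer for the terms $\partial_t(\P_1 f)$ in $R_1,R_2$, only the second (integrating by parts in $s$, moving $\partial_s$ onto $e^{(t-s)D}$) is viable, and it is exactly what the paper does: using the spectral representation \eqref{ccc} of $e^{t\hat D(\xi)}$ one gets \eqref{tem2}, i.e.\ boundary terms $(e^{tD}U_0,e_4)$ and $-\theta(t)$ plus a kernel carrying an extra factor $|\xi|$ at low frequency (exponential decay at high frequency), which against $\|\P_1 f(s)\|_{L^2(\R^6_{x,v})}\lesssim\delta_0(1+s)^{-3/4}$ yields $\int_0^t(1+t-s)^{-1}(1+s)^{-3/4}ds\lesssim(1+t)^{-3/4+\eps}$ as in \eqref{theta3}. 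Your first option — substituting $\partial_t(\P_1 f)=L\P_1 f-\P_1(v\cdot\Tdx f)+\P_1 G$ from \eqref{G_2} into $R_1,R_2$ — does not close: this substitution exactly undoes the Chapman--Enskog step that produced \eqref{G_9a}, so the dissipative terms cancel and the source regains $\alpha\Delta_x q$ (equivalently $\Delta_x\theta$ plus $\Delta_x(m^2)$) and $-\int v\cdot\Tdx(\P_1 f)\chi_4\,dv$. For these you control only $L^2_x$ norms (no $L^1_x$ control of $\nabla_x q$ or $\Delta_x q$ is available from $Q_1$), so the low-frequency part of $e^{(t-s)D}$ gives no time decay and the Duhamel integral $\int_0^t(1+s)^{-3/4}ds$ does not even stay bounded. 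So you should commit to the integration-by-parts route.

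A second, smaller correction: you attribute the $\eps$-loss essentially to the slowly decaying quadratic terms in $R_3$ (e.g.\ $\Tdx n\cdot m$, $n\Tdx\Phi\cdot m$). In the paper the nonlinear part indeed contributes $\int_0^t(1+t-s)^{-3/4}(1+s)^{-1}ds\lesssim(1+t)^{-3/4+\eps}$ as you predict, but the \emph{same} loss already occurs in the microscopic linear coupling term $I$ just described, through $\int_0^t(1+t-s)^{-1}(1+s)^{-3/4}ds$; this linear coupling is identified in Remark~\ref{theta4} as the real reason the temperature cannot reach the clean $(1+t)^{-3/4}$ rate of the pure Navier--Stokes--Poisson system. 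Apart from fixing these two points (choosing the correct treatment of $\partial_t\P_1 f$ and acknowledging both sources of the $\eps$-loss), your argument matches the paper's proof.
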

\begin{proof} By Theorem~\ref{Main_2}, we only need to prove \eqref{energy4}. Split the nonlinear term $H$ into two parts
$$
 H=I+J, \quad
 I=(0,I_{1},I_{2}),\quad J=(0,J_{1},J_{2}),
$$
where $I$ is the linear part and $J$ is the nonlinear part
 \bmas
 &I_{1}=\intr v\cdot\Tdx L^{-1}[\dt(\P_1f)+\P_1(v\cdot\Tdx \P_1f)]v\sqrt{M}dv,\\
 &I_{2}=\intr v\cdot\Tdx L^{-1}[\dt(\P_1f)+\P_1(v\cdot\Tdx \P_1f)]\chi_4dv,\\
 &J_1=\intr (v\cdot\Tdx L^{-1}\P_1G)v\sqrt{M}dv+n\Tdx \Phi,\qquad
  J_2=\intr (v\cdot\Tdx L^{-1}\P_1G) \chi_4 dv+R_3.
 \emas
Let us denote $U=(U^0,U^1,U^2,U^3,U^4)$. Since
\bq e^{t\hat D(\xi)}\hat{U}=\sum^3_{j=-1}e^{\lambda_j(|\xi|)t}(\hat{U},\overline{\psi_j(\xi)})_\xi \psi_j(\xi),
\label{ccc}\eq
where $(U,V)_\xi=(U,V)+\frac1{|\xi|^2}U^0\overline{V^0}$, and $\lambda_j(|\xi|)$, $\psi_j(\xi)$, $j=-1,0,1,2,3$, are the eigenvalues and the corresponding normalized eigenfunctions of $\hat D(\xi)$, it follows that
\bmas
e^{(t-s)\hat D(\xi)}\ds\hat{U}&=-\ds\sum^3_{j=-1}e^{\lambda_j(|\xi|)(t-s)}(\hat{U},\overline{\psi_j(\xi)})_\xi \psi_j(\xi)+\sum^3_{j=-1}e^{\lambda_j(|\xi|)(t-s)}\lambda_j(|\xi|)(\hat{U},\overline{\psi_j(\xi)})_\xi \psi_j(\xi).
\emas
Then by \cite{Zhang}, one has
\bmas
(e^{(t-s)\hat D(\xi)}\ds\hat{U},e_4)=-\ds (e^{(t-s)\hat D(\xi)}\hat U,e_4)+|\xi|T_8(t-s,\xi)\hat U1_{|\xi|\le r_0}+T_9(t-s,\xi)\hat U1_{|\xi|> r_0},
\emas
where $e_4=(0,0,0,0,1)$, $T_8(t,\xi)$ is the low frequency term satisfying $|T_8(t,\xi) \hat U|^2\le Ce^{-2a_1|\xi|^2t}|\hat U|^2$ and $T_9(t,\xi)$ is the high frequency term satisfying $|T_9(t,\xi) \hat U|^2\le Ce^{-2a_1t}|\hat U|^2$ for $a_1>0$ some constant.
This leads to
\bma
&\intt (e^{(t-s)\hat D(\xi)}\ds \hat U,e_4) ds\nnm\\
&=\intt -\ds (e^{(t-s)\hat D(\xi)}\hat{U},e_4)ds+\intt (|\xi|T_8(t-s,\xi)\hat U1_{|\xi|\le r_0}+T_9(t-s,\xi)\hat U1_{|\xi|> r_0})ds\nnm\\
&= (e^{t\hat D(\xi)}\hat{U}_0,e_4)-\hat U^4(t)+\intt (|\xi|T_8(t-s,\xi)\hat U1_{|\xi|\le r_0}+T_9(t-s,\xi)\hat U1_{|\xi|> r_0})ds.\label{tem2}
\ema
Note that
\bmas
\theta(t)=(e^{tD}U_0,e_4)+\intt (e^{(t-s)D}H,e_4)ds,
\emas  we obtain by Lemma 5.1 that
\bmas
\|\theta(t)\|_{L^2(\R^3_x)}\le C(1+t)^{-\frac34}+\bigg\|\intt(e^{(t-s)D}I,e_4)ds\bigg\|_{L^2(\R^3_x)} +\bigg\|\intt(e^{(t-s)D}J,e_4)ds\bigg\|_{L^2(\R^3_x)}.
\emas
Then we estimate the terms on the right hand side of above.
By \eqref{tem2} and Lemma 5.1, the linear part is bounded by
 \bma
  \bigg\|\intt ( e^{(t-s)D}I,e_4)ds\bigg\|_{L^2(\R^3_x)}
&\le C(1+t)^{-\frac34}\|\P_1f_0\|_{L^2(\R^6_{x,v})}+\|\P_1f(t)\|_{L^2(\R^6_{x,v})}\nnm\\
&\quad+\intt (1+t-s)^{-1}\|\P_1f\|_{L^2(\R^6_{x,v})}ds\nnm\\
&\le C(1+t)^{-\frac34}+\intt(1+t-s)^{-1}(1+s)^{-\frac34}ds\le C(1+t)^{-\frac34+\eps} ,\label{theta3}
 \ema
 and the nonlinear part is bounded by
\bma
 \bigg\|\intt ( e^{(t-s)D}J,e_4) ds \bigg\|_{L^2(\R^3_x)}
&\le C\intt (1+t-s)^{-\frac34}(\|R_3\|_{L^2(\R^3_x)}+\|R_3\|_{L^1(\R^3_x)})ds\nnm\\
&\quad+C\intt(1+t-s)^{-\frac34}(\|\Tdx G\|_{L^2(\R^6_{x,v})}+\|\Tdx G\|_{L^{2,1}})ds\nnm\\
&\le C\intt (1+t-s)^{-\frac34}(1+s)^{-1}ds
\le C(1+t)^{-\frac34+\eps},  \label{theta3a}
 \ema where $\eps>0$ is a small but fixed constant and $$\|R_3\|_{L^2(\R^3_x)}+\|R_3\|_{L^1(\R^3_x)}\le C(1+t)^{-1}.$$
The combination of \eqref{theta3}--\eqref{theta3a} leads to \eqref{energy4}. This completes the proof of the Lemma.
\end{proof}

\begin{rem}\label{theta4}
By Theorem~\ref{theta1} the time decay rate of the temperature $\theta$ to \eqref{bbb} is  $(1+t)^{-3/4+\eps}$, which is slower than the optimal algebraic rate $(1+t)^{-3/4}$ of temperature established in \cite{Zhang} for the  macroscopic compressible non-isentropic Navier-Stokes-Poisson equations. The main reason is the coupling of the temperature equation with the microscopic part $\dt(\P_1f)+\P_1(v\cdot\Tdx \P_1f)$ in \eqref{G_9a} and \eqref{bbb}, which reduces the time convergence rate of the temperature to its equilibrium state according to the optimal decay rates of microscopic part of $F$ shown by Theorem~\ref{Main_2}. Indeed, if the nonlinear terms $R_1$ and $R_2$ are removed from the equations~\eqref{G_9a}, the system~\eqref{G_9a} becomes the  macroscopic compressible non-isentropic Navier-Stokes-Poisson equations for the density, momentum and temperature, and then one can prove the same optimal time decay rates of the global solution to \eqref{G_9a} and \eqref{G_9c} as those in \cite{Zhang}, in particular, the temperature decays like $(1+t)^{-3/4}$.
\end{rem}

\subsection{Comparison with Boltzmann equation}

For the Cauchy problem of the Boltzmann equation
\bq
  \left\{\bln
 &F_t+v\cdot\Tdx F=\Q(F,F),\\
 & F(x,v,0)=F_0(x,v),
  \eln\right.                     \label{Boltzmann}
\eq
which after taken the decomposition $F=M +\sqrt{M}f$, it can be re-written as
\be \left\{\bln
 &f_t = E f +\Gamma(f,f),\\
 &f(x,v,0)=f_0(x,v)=:M^{-\frac12}(F_0-M)(x,v),
 \eln\right.\label{Boltzmann2}
\ee
where the operator $E =L-(v\cdot\nabla_x)$ and the operators $L$ and $\Gamma$ are defined by \eqref{Lf} and \eqref{gf} respectively.

Based on the  well-known result
on the spectrum obtained by \cite{Ellis}, we can show the following theorem on the optimal convergence decay rates.

\begin{thm}[\cite{zhong}]
\label{Botlzmm_rate1} Suppose that $f_0\in
L^2(\R^3_{v};H^N(\R^3_{x}) \cap L^1(\R^3_{x}))$ with $N\ge 4$,
$ \|\sqrt{\nu}\,f_0\|_{L^2(\R^3_{v};
H^N(\R^3_x)) }+\|f_0\|_{L^{2,1}}\le \delta_0$ with $\delta_0>0$ being
small enough. Assume that there exist two positive constants $d_0>0$
and $d_1>0$ such that $\inf_{|\xi|\le r_0}|(\hat f_0,\sqrt M)|\ge
d_0$, $\inf_{|\xi|\le r_0}|(\hat f_0,\chi_4)|\ge d_1\sup_{|\xi|\le
r_0}|(\hat f_0,\sqrt M)|$ and $(\hat f_0,v\sqrt M)=0$ for $|\xi|\le
r_0$. Then, the global solution $f$ to the Cauchy
problem~\eqref{Boltzmann2} satisfies for time $t>0$ that
 \bmas
 &\|\partial_x^k(f(t),\chi_j)\|_{L^2(\R^3_{x})}
\leq C\delta_0(1+t)^{-\frac34-\frac{k}{2}},\quad j=0,1,2,3,4,   
\\
 &\|\partial_x^k\P_1f(t)\|_{L^2(\R^6_{x,v})}
\leq C\delta_0(1+t)^{-\frac54-\frac{k}{2}},   
 \\
 & \|\sqrt{\nu}\P_1f\|_{L^2(\R^3_{v};H^N(\R^3_x)) }+\|\Tdx  \P_0f\|_{L^2(\R^3_v,H^{N-1}(\R^3_x))}\le
C\delta_0(1+t)^{-\frac54},
 \emas
for $k=0,1$,  and for time $t>0$ large enough that
 \bmas
 C_1\delta_0(1+t)^{-\frac34}
 \leq& \|(f(t),\chi_j)\|_{L^2(\R^3_{x})}
 \leq C_2\delta_0(1+t)^{-\frac34}, \quad j=0,1,2,3,4,  
\\
C_1\delta_0(1+t)^{-\frac54}
 \leq &\|\P_1f(t)\|_{L^2(\R^6_{x,v})}
 \leq C_2\delta_0(1+t)^{-\frac54},  
\\
 C_1\delta_0(1+t)^{-\frac34}
\leq & \|f(t)\|_{L^2(\R^3_v,H^N(\R^3_x))}
 \leq C_2\delta_0(1+t)^{-\frac34},   
 \emas
 with two positive  constants $C_2\ge C_1$. In addition, the same upper and lower bounds of the decay rates do
not change for the global solution $f$ if it further holds
$(f_0,\chi_0)=0$.

If $\P_0 f_0=0$, $\inf_{|\xi|\le r_0}|(\hat f_0,L^{-1}\P_1(v\cdot\omega)^2\sqrt M)|\ge d_0$ and $ (\hat f_0,L^{-1}\P_1(v\cdot\omega)\chi_4)=0$ for $|\xi|\le r_0$, then
\bmas
 &\|\partial_x^k(f(t),\chi_j)\|_{L^2(\R^3_{x})}
\leq C\delta_0(1+t)^{-\frac34-\frac{k}{2}},\quad j=0,1,2,3,4,   
\\
 &\|\partial_x^k\P_1f(t)\|_{L^2(\R^6_{x,v})}
 \leq C\delta_0(1+t)^{-\frac74-\frac{k}{2}},   
 \\
 &\|\sqrt{\nu}\P_1f\|_{L^2(\R^3_{v};H^N(\R^3_x)) }+\|\Tdx  \P_0f\|_{L^2(\R^3_v,H^{N-1}(\R^3_x))}\le
C\delta_0(1+t)^{-\frac74},
 \emas
for $k=0,1$,  and for time $t>0$ large enough that
 \bmas
C_1\delta_0(1+t)^{-\frac54}
 \leq& \|(f(t),\chi_j)\|_{L^2(\R^3_{x})}
 \leq C_2\delta_0(1+t)^{-\frac54}, \quad j=0,1,2,3,4,   
\\
C_1\delta_0(1+t)^{-\frac74}
 \leq &\|\P_1f(t)\|_{L^2(\R^6_{x,v})}
 \leq C_2\delta_0(1+t)^{-\frac74},  
\\
 C_1\delta_0(1+t)^{-\frac54}
\leq & \|f(t)\|_{L^2(\R^3_v,H^N(\R^3_x))}
 \leq C_2\delta_0(1+t)^{-\frac54},   
 \emas
 with two positive  constants $C_2\ge C_1$.
 \end{thm}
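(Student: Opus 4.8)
The plan is to run the argument of Sections~\ref{sect2}--\ref{sect4} with the operator $\hat B(\xi)$ replaced by the linearized Boltzmann operator $\hat E(\xi)=L-\i(v\cdot\xi)$ acting on the \emph{unweighted} space $L^2(\R^3_v)$: there is no $P_{\rm d}$--term, so the weighted inner product $(\cdot,\cdot)_\xi$ disappears, the low--frequency spectrum no longer splits into three clusters, and the curves of Remark~\ref{Ellis} play the role of $\lambda_j(\xi)$. First I would record the classical spectral picture of \cite{Ellis}: $\hat E(\xi)$ generates a $C_0$--contraction semigroup on $L^2(\R^3_v)$; the factorization $\lambda-\hat E(\xi)=(I-K(\lambda-c(\xi))^{-1})(\lambda-c(\xi))$ together with Lemma~\ref{LP03}(1)--(2) gives a spectral gap on $\{\mathrm{Re}\,\lambda\ge-\nu_0+\delta\}$ for $|\xi|\ge r_0$, while for $0<|\xi|\le r_0$ the spectrum in $\mathrm{Re}\,\lambda>-\mu/2$ consists exactly of the $C^\infty$ eigenvalue curves $\lambda_j(s)$, $-1\le j\le 3$, with the expansions of Remark~\ref{Ellis} (in particular the acoustic pair $\lambda_{\pm1}(s)=\pm\i\sqrt{5/3}\,s-a_{\pm1}s^2+o(s^2)$, $\overline{\lambda_1}=\lambda_{-1}$, and the diffusive curves of order $-s^2$), and eigenfunctions $\psi_j(s,\omega)$ whose leading terms $\psi_{j,0}$ lie in $N_0$, so that $\P_1\psi_j(s,\omega)=O(|s|)$. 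Exactly as in Theorem~\ref{E_3} this yields $e^{t\hat E(\xi)}=S_1(t,\xi)+S_2(t,\xi)$ with $S_1(t,\xi)f=\sum_{j=-1}^3 e^{t\lambda_j(|\xi|)}(f,\overline{\psi_j(|\xi|,\omega)})\psi_j(|\xi|,\omega)1_{\{|\xi|\le r_0\}}$ and $\|S_2(t,\xi)f\|\le Ce^{-\sigma_0 t}\|f\|$ for some $\sigma_0>0$; then $e^{tE}:=\mathcal{F}^{-1}e^{t\hat E(\xi)}\mathcal{F}$ is the linear Boltzmann semigroup on $L^2(\R^6_{x,v})$.

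For the linear upper bounds I would argue as in Theorem~\ref{time1}: by Plancherel, $\|(\da_x e^{tE}f_0,\chi_j)\|_{L^2_x}\le\|\xi^\alpha(S_1(t,\xi)\hat f_0,\chi_j)\|_{L^2_\xi}+Ce^{-\sigma_0t}(\|\da_x f_0\|_{L^2(\R^6_{x,v})}+\|f_0\|_{L^{2,1}})$, and on $|\xi|\le r_0$ one uses $|\hat f_0(\xi,\cdot)|\lesssim\|f_0\|_{L^{2,1}}$ in $L^2_v$, $\mathrm{Re}\,\lambda_j(|\xi|)\le-\beta|\xi|^2$, and $\int_{|\xi|\le r_0}|\xi|^{2|\alpha|+2k}e^{-2\beta|\xi|^2t}d\xi\le C(1+t)^{-3/2-|\alpha|-k}$. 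Since \emph{no} eigenfunction coefficient here carries a factor $|\xi|^{-1}$, all macroscopic components decay at $(1+t)^{-3/4-|\alpha|/2}$, while $\P_1S_1(t,\xi)=O(|\xi|)$ yields the extra half power $(1+t)^{-5/4-|\alpha|/2}$ for $\P_1(\da_x e^{tE}f_0)$; when $\P_0f_0=0$ one has $\hat f_0\in N_0^\bot$, so $(\hat f_0,\overline{\psi_{j,0}})=0$, the order--$|\xi|^0$ terms of $(S_1(t,\xi)\hat f_0,\chi_j)$ cancel, one more factor $|\xi|$ appears, and the rates improve to $(1+t)^{-5/4-|\alpha|/2}$ and $(1+t)^{-7/4-|\alpha|/2}$ — this needs the order--$|\xi|$ (and, for the microscopic part, order--$|\xi|^2$) eigenfunction expansions of \cite{Ellis}. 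For the linear lower bounds, mirroring Theorem~\ref{time2} and using $\overline{\lambda_1}=\lambda_{-1}$, I would isolate in $(S_1(t,\xi)\hat f_0,\chi_j)$ and in $\P_1S_1(t,\xi)\hat f_0$ the leading terms, each of the form (initial moment)$\times e^{\mathrm{Re}\,\lambda_1(|\xi|)t}\cdot\{\cos\text{ or }\sin\}(\mathrm{Im}\,\lambda_1(|\xi|)t)$ plus an $O(|\xi|)$ remainder; the stated lower bounds on the relevant moments ($\inf_{|\xi|\le r_0}|(\hat f_0,\sqrt M)|\ge d_0$, etc., and, in the degenerate case $\P_0f_0=0$, $\inf_{|\xi|\le r_0}|(\hat f_0,L^{-1}\P_1(v\cdot\omega)^2\sqrt M)|\ge d_0$) together with the vanishing conditions ($(\hat f_0,v\sqrt M)=0$, resp.\ $(\hat f_0,L^{-1}\P_1(v\cdot\omega)\chi_4)=0$) prevent cancellation, and the annulus estimates of the type \eqref{E_9b}, \eqref{F2t}, namely $\int_{|\xi|\le r_0}e^{-2\eta|\xi|^2t}\cos^2(\sqrt{5/3}|\xi|t+O(|\xi|^2)t)d\xi\ge C(1+t)^{-3/2}$, deliver the matching lower bounds $C_1(1+t)^{-3/4}$, resp.\ $C_1(1+t)^{-5/4}$, for the moments, and $C_1(1+t)^{-5/4}$, resp.\ $C_1(1+t)^{-7/4}$, for $\P_1 e^{tE}f_0$.

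For the nonlinear problem I would write $f(t)=e^{tE}f_0+\intt e^{(t-s)E}\Gamma(f(s),f(s))\,ds$; the nonlinearity is now the single bilinear term $\Gamma(f,f)\in N_0^\bot$, with no force contribution, so the closure is simpler than in Section~\ref{sect4}. Introduce a time--weighted functional $Q(t)$ collecting $(1+s)^{3/4+k/2}\|\dx^k(f(s),\chi_j)\|_{L^2_x}$, $(1+s)^{5/4+k/2}\|\dx^k\P_1f(s)\|_{L^2(\R^6_{x,v})}$ and $(1+s)^{5/4}(\|\sqrt{\nu}\,\P_1f(s)\|_{L^2_v(H^N_x)}+\|\Tdx\P_0f(s)\|_{L^2_v(H^{N-1}_x)})$ for $k=0,1$ (all exponents raised by $1/2$ when $\P_0f_0=0$). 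Using the bilinear estimates \eqref{gamma1}--\eqref{gamma2} and Sobolev embedding in $x$, one bounds $\|\Gamma(f,f)(s)\|_{L^2(\R^6_{x,v})}$, $\|\Gamma(f,f)(s)\|_{L^{2,1}}$ and $\|\Tdx\Gamma(f,f)(s)\|_{L^{2,1}}$ by $C(1+s)^{-3/2}Q(t)^2$, resp.\ $C(1+s)^{-2}Q(t)^2$; inserting these into the linear decay estimates above, and for the top--order norm into a macro--micro energy inequality $\Dt H_N(f)+\mu D_N(f)\le C\|\Tdx\P_0f\|^2$ analogous to \eqref{G_4} (with the Poisson terms deleted; for the hard potential one uses the mixed time--velocity weight of \cite{Duan5} as in Theorem~\ref{time4}), the Duhamel bookkeeping closes $Q(t)\le C\delta_0+CQ(t)^2$, hence $Q(t)\le C\delta_0$ for $\delta_0$ small, giving all the stated upper bounds. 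The lower bounds then follow as in Theorem~\ref{Main_2} from $\|(f(t),\chi_j)\|_{L^2_x}\ge\|(e^{tE}f_0,\chi_j)\|_{L^2_x}-\intt\|(e^{(t-s)E}\Gamma(f,f)(s),\chi_j)\|_{L^2_x}\,ds\ge C_1\delta_0(1+t)^{-3/4}-C_2\delta_0^2(1+t)^{-3/4}$, the Duhamel term being of order $\delta_0^2$ at the same rate, and likewise for $\|\P_1f(t)\|_{L^2(\R^6_{x,v})}$ and $\|f(t)\|_{L^2_v(H^N_x)}$; the degenerate case $\P_0f_0=0$ is identical with the improved rates.

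The main obstacle is \emph{not} the nonlinear closure, which is routine here because the only nonlinearity is $\Gamma(f,f)$ (no self--consistent force), but the sharp moment bookkeeping in the linear analysis: one must track exactly which initial moment $(\hat f_0,\chi_j)$ (or which higher--order pairing such as $(\hat f_0,L^{-1}\P_1(v\cdot\omega)^2\sqrt M)$) feeds which eigen--mode, so that the stated spectral hypotheses translate into the precise rates without cancellation, and in particular push the low--frequency eigenfunction expansions far enough — to order $|\xi|$ in general, and to order $|\xi|^2$ in the microscopic part for the degenerate regime $\P_0f_0=0$ — to extract both the upper and the lower $(1+t)^{-5/4}$ and $(1+t)^{-7/4}$ bounds. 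This is precisely the computation carried out in \cite{zhong}.
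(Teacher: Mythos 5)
You are not missing anything the paper supplies: Theorem \ref{Botlzmm_rate1} is stated without proof and attributed to \cite{zhong}, and the intended argument is precisely the one you sketch, namely transposing the machinery of Sections \ref{sect2}--\ref{sect4} to $\hat E(\xi)=L-\i(v\cdot\xi)$ on the unweighted space $L^2(\R^3_v)$ (no $P_{\rm d}$ term, hence no $(\cdot,\cdot)_\xi$), with the Ellis--Pinsky expansions of Remark \ref{Ellis} in place of Theorem \ref{eigen_3}, the semigroup splitting as in Theorem \ref{E_3}, and the Duhamel plus time-weighted functional closure as in Theorems \ref{time1}--\ref{time2}, \ref{time3} and \ref{Main_2}. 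Your proposal is correct in outline and follows essentially that same route, including the decisive points: the extra $|\xi|$ factors when $\P_0f_0=0$, the non-cancellation hypotheses on the low-frequency moments for the lower bounds, and the annulus estimates of the type \eqref{E_9b} and \eqref{F2t}.
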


\section{Appendix}
\label{Prel}
Let us list the following results on the  semigroup theory
(cf. \cite{Pazy}) for the easy reference by the readers. Let $H$ be a Hilbert space with the inner product denoted by $(\cdot,\cdot)$.

\begin{defn}
A linear operator $A$ is dissipative if  $Re(Af,f)\leq0$ for every $f\in D(A)\subset H$.
\end{defn}

\begin{lem}\label{S_1}
Let $A$ be a densely defined closed linear operator on $H$. If both
$A$ and its adjoint operator $A^*$ are dissipative, then $A$ is the
infinitesimal generator of a $C_0$-semigroup  on $H$.
\end{lem}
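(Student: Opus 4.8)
The plan is to verify the hypotheses of the Lumer--Phillips theorem, namely that a densely defined dissipative operator $A$ on $H$ generates a $C_0$-contraction semigroup provided $\mathrm{Ran}(\lambda_0 I-A)=H$ for some (hence all) $\lambda_0>0$. The dissipativity of $A$ and of $A^*$ will be used precisely to produce this surjectivity.

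First I would record the basic injectivity-with-closed-range estimate. For $\lambda>0$ and $f\in D(A)$, dissipativity gives
$$
 \|(\lambda I-A)f\|\,\|f\|\ge \mathrm{Re}\big((\lambda I-A)f,f\big)=\lambda\|f\|^2-\mathrm{Re}(Af,f)\ge \lambda\|f\|^2,
$$
so $\|(\lambda I-A)f\|\ge\lambda\|f\|$. In particular $\lambda I-A$ is injective, and since $A$ is closed, $\mathrm{Ran}(\lambda I-A)$ is a closed subspace of $H$: if $(\lambda I-A)f_n\to h$, the estimate shows $(f_n)$ is Cauchy, so $f_n\to f$, then $Af_n=\lambda f_n-(\lambda I-A)f_n\to\lambda f-h$, and closedness of $A$ yields $f\in D(A)$, $(\lambda I-A)f=h$.

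Next I would show $\mathrm{Ran}(I-A)=H$. Since this range is closed, it suffices to show it is dense, i.e.\ that its orthogonal complement is trivial. Suppose $g\in H$ satisfies $\big((I-A)f,g\big)=0$ for all $f\in D(A)$. Then $f\mapsto(Af,g)=(f,g)$ is bounded on $D(A)$, so by definition $g\in D(A^*)$ and $(I-A^*)g=0$, i.e.\ $A^*g=g$. Dissipativity of $A^*$ then forces $\|g\|^2=\mathrm{Re}(A^*g,g)\le 0$, hence $g=0$. Therefore $\mathrm{Ran}(I-A)=H$, and combined with the estimate $\|(I-A)f\|\ge\|f\|$ this means $1\in\rho(A)$ with $\|(I-A)^{-1}\|\le 1$. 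Applying the Lumer--Phillips theorem (or, equivalently, checking the Hille--Yosida resolvent bounds $\|(\lambda I-A)^{-1}\|\le\lambda^{-1}$ for all $\lambda>0$, which follow from the same estimate together with the fact that the resolvent set is open and $\lambda I-A$ has closed range for every $\lambda>0$), we conclude that $A$ generates a $C_0$-semigroup of contractions on $H$.

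The only genuinely substantive point is the surjectivity of $I-A$, and the argument above reduces it entirely to the hypothesis that $A^*$ is dissipative; everything else is the standard bookkeeping of the Hille--Yosida/Lumer--Phillips machinery, which may simply be cited from \cite{Pazy}. I do not expect any real obstacle here.
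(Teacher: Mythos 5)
Your argument is correct: the dissipativity estimate $\|(\lambda I-A)f\|\ge\lambda\|f\|$, the closed-range observation, the duality step showing $A^*g=g$ for any $g$ orthogonal to $\mathrm{Ran}(I-A)$ (which dissipativity of $A^*$ kills), and the appeal to Lumer--Phillips are exactly the standard proof of this result. The paper itself offers no proof --- it quotes the lemma from \cite{Pazy} (where it appears as a corollary of the Lumer--Phillips theorem), and your write-up is essentially that cited proof, so there is nothing to add.
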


The following basic results  are from \cite{Pazy}.

\begin{lem}[Stone]\label{stone}
The operator $A$ is the infinitesimal generator of a continuous unitary group on a Hilbert space $H$ if and only if the operator $\i A$ is self-adjoint.
\end{lem}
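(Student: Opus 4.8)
The statement is Stone's theorem, an equivalence, so the plan is to treat the two implications separately after recording the standard reformulation that $\i A$ is self-adjoint precisely when $A$ is skew-adjoint, i.e. $A^{*}=-A$.

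For the necessity, I would start from the hypothesis that $A$ generates a strongly continuous unitary group $\{U(t)\}_{t\in\R}$. Since $A$ is the generator of a $C_{0}$-semigroup it is automatically densely defined and closed. The first step is to differentiate $t\mapsto(U(t)f,U(t)g)$ for $f,g\in D(A)$: its derivative is $(AU(t)f,U(t)g)+(U(t)f,AU(t)g)$, but unitarity makes $(U(t)f,U(t)g)\equiv(f,g)$, so the derivative is zero and at $t=0$ we get $(Af,g)+(f,Ag)=0$; thus $-A\subseteq A^{*}$, i.e. $A$ is skew-symmetric. The key remaining point is to upgrade this to $A^{*}=-A$ (the domain inclusion $D(A^{*})\subseteq D(A)$). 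Here I would exploit the group structure: $U(t)^{*}=U(t)^{-1}=U(-t)$, so the $C_{0}$-semigroup $\{U(t)^{*}\}_{t\ge0}$, which has generator $A^{*}$, coincides with $\{U(-t)\}_{t\ge0}$, the semigroup generated by $-A$; uniqueness of the generator gives $A^{*}=-A$, hence $(\i A)^{*}=-\i A^{*}=\i A$.

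For the sufficiency, I would assume $\i A$ self-adjoint, equivalently $A^{*}=-A$; then $A$ is densely defined and closed, and for $f\in D(A)$ the quantity $(Af,f)=-(f,Af)=-\overline{(Af,f)}$ is purely imaginary, so $\mathrm{Re}(Af,f)=0$. Consequently both $A$ and $A^{*}=-A$ are dissipative, and Lemma~\ref{S_1} yields $C_{0}$-contraction semigroups $U_{+}(t)$ and $U_{-}(t)$ generated by $A$ and $-A$. I would then check that $U_{+}$ and $U_{-}$ commute (their Yosida approximants are functions of the commuting resolvents of $\pm A$) and that $\frac{d}{dt}\big(U_{+}(t)U_{-}(t)f\big)=0$ for $f\in D(A)$, giving $U_{+}(t)U_{-}(t)=I$ for $t\ge0$. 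Splicing, $U(t):=U_{+}(t)$ for $t\ge0$ and $U(t):=U_{-}(-t)$ for $t<0$ defines a $C_{0}$-group with $U(t)^{-1}=U(-t)$; since $U_{\pm}(t)$ are contractions, $\|U(t)f\|\le\|f\|\le\|U(-t)U(t)f\|\le\|U(t)f\|$, so each $U(t)$ is an isometric bijection of $H$, hence unitary, and its generator is $A$ by construction. An alternative, arguably cleaner, route is to invoke the spectral theorem for the self-adjoint operator $\i A$ and define $U(t):=e^{-\i t(\i A)}$ via the functional calculus, whereupon $|e^{-\i t\lambda}|=1$ on $\R$ forces unitarity, the group law and strong continuity are read off from the calculus, and differentiation identifies the generator as $-\i(\i A)=A$.

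The main obstacle is the same in spirit on both sides: converting a \emph{symmetry}-type relation into an \emph{adjointness}-type relation. In the necessity direction this is the step $D(A^{*})\subseteq D(A)$, which genuinely needs the group (the invertibility $U(t)^{*}=U(-t)$) and not merely the semigroup law; in the sufficiency direction it is the verification that the two one-sided semigroups fit together into a group of isometries, i.e. the identity $U_{+}(t)U_{-}(t)=I$, which rests on the commutativity of the two semigroups through their resolvents. Everything else---density and closedness of $D(A)$, differentiability of orbits, passage to $t=0$---is routine.
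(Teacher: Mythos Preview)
Your argument is correct and is the standard proof of Stone's theorem. Note, however, that the paper does not actually prove this lemma: it is listed in the appendix among ``basic results from \cite{Pazy}'' and is simply quoted without proof. So there is no paper proof to compare against; your outline supplies exactly the kind of textbook argument the authors are implicitly deferring to, and even makes good use of the paper's own Lemma~\ref{S_1} in the sufficiency direction.
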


\begin{lem}\label{semigroup}
Let $A$ be the infinitesimal generator of a $C_0$-semigroup $T(t)$ satisfying $\|T(t)\|\le Me^{\kappa t}$. Then, it holds for $f\in D(A^2)$ and $\sigma>\max(0,\kappa )$ that
 \bq
T(t)f
 =\frac1{2\pi \i}\int^{\sigma+\i\infty}_{\sigma-\i\infty}
  e^{\lambda t}(\lambda-A)^{-1}f d\lambda.
 \eq
\end{lem}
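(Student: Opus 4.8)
The plan is to identify $T(t)f$ with the inverse Laplace transform of the resolvent and then to verify the formula by comparing Laplace transforms in the variable $t$. Since $\|T(t)\|\le Me^{\kappa t}$, for $\mathrm{Re}\,\lambda>\kappa$ one has the vector-valued Laplace representation $(\lambda-A)^{-1}g=\int_0^\infty e^{-\lambda t}T(t)g\,dt$, together with the bound $\|(\lambda-A)^{-1}\|\le M/(\mathrm{Re}\,\lambda-\kappa)$. The role of the hypothesis $f\in D(A^2)$ is to gain decay in $\lambda$: iterating the identity $(\lambda-A)^{-1}g=\tfrac1\lambda g+\tfrac1\lambda(\lambda-A)^{-1}Ag$, valid for $g\in D(A)$, gives
\bq
 (\lambda-A)^{-1}f=\frac1\lambda f+\frac1{\lambda^2}Af+\frac1{\lambda^2}(\lambda-A)^{-1}A^2f,
 \qquad \mathrm{Re}\,\lambda>\kappa, \label{PP1}
\eq
so that on the line $\mathrm{Re}\,\lambda=\sigma$ the remainder $\tfrac1{\lambda^2}(\lambda-A)^{-1}A^2f$ is $O(|\lambda|^{-2})$ and hence absolutely integrable along the line.

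First I would set, for $t>0$,
\bq
 w(t)=f+tAf+\frac1{2\pi\i}\int_{\sigma-\i\infty}^{\sigma+\i\infty}
   e^{\lambda t}\,\frac{(\lambda-A)^{-1}A^2f}{\lambda^2}\,d\lambda , \label{PP2}
\eq
the integral being absolutely convergent; then $w$ is continuous on $(0,\infty)$ with $\|w(t)\|\le Ce^{\sigma t}$. Using the elementary scalar identities $\tfrac1{2\pi\i}\int_{\sigma-\i\infty}^{\sigma+\i\infty}e^{\lambda t}\lambda^{-1}\,d\lambda=1$ and $\tfrac1{2\pi\i}\int_{\sigma-\i\infty}^{\sigma+\i\infty}e^{\lambda t}\lambda^{-2}\,d\lambda=t$ for $t>0$, $\sigma>0$, together with \eqref{PP1}, one recognizes that $w(t)$ is exactly the Bromwich integral $\tfrac1{2\pi\i}\int_{\sigma-\i\infty}^{\sigma+\i\infty}e^{\lambda t}(\lambda-A)^{-1}f\,d\lambda$ appearing in the statement. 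Next I would compute the Laplace transform of $w$: for $\mathrm{Re}\,\mu>\sigma$, an interchange of integrals (legitimate by absolute convergence of the integral in \eqref{PP2}) gives
\[
 \int_0^\infty e^{-\mu t}w(t)\,dt=\frac1\mu f+\frac1{\mu^2}Af
  +\frac1{2\pi\i}\int_{\sigma-\i\infty}^{\sigma+\i\infty}\frac{(\lambda-A)^{-1}A^2f}{\lambda^2(\mu-\lambda)}\,d\lambda .
\]
Closing the last contour to the right — where $(\lambda-A)^{-1}$ is analytic because $\mathrm{Re}\,\lambda\ge\sigma>\kappa$, and where the only enclosed singularity is the simple pole at $\lambda=\mu$, the pole at $\lambda=0$ lying to the left of the line — and noting that the arc at infinity contributes $O(R^{-1})$ since the integrand is $O(|\lambda|^{-3})$ there, one obtains $\tfrac1{2\pi\i}\int_{\sigma-\i\infty}^{\sigma+\i\infty}\tfrac{(\lambda-A)^{-1}A^2f}{\lambda^2(\mu-\lambda)}\,d\lambda=\tfrac1{\mu^2}(\mu-A)^{-1}A^2f$. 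Substituting back and invoking \eqref{PP1} once more yields $\int_0^\infty e^{-\mu t}w(t)\,dt=(\mu-A)^{-1}f=\int_0^\infty e^{-\mu t}T(t)f\,dt$ for every $\mathrm{Re}\,\mu>\sigma$.

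Finally, since both $w(\cdot)$ and $t\mapsto T(t)f$ are continuous on $[0,\infty)$ and of exponential order, the uniqueness theorem for the (vector-valued) Laplace transform forces $w(t)=T(t)f$ for all $t\ge0$, which is the asserted identity. I expect the only genuinely delicate points to be the contour manipulations — recognizing \eqref{PP2} as the Bromwich integral and evaluating the residue integral above, including the vanishing of the arcs at infinity — but all of these reduce, via \eqref{PP1}, to the elementary kernels $\lambda^{-1}$ and $\lambda^{-2}$ plus an absolutely integrable remainder, so no essential difficulty remains. (Alternatively one may bypass the Laplace uniqueness step: for $f\in D(A^2)$ the map $t\mapsto T(t)f$ is $C^1$ with $\Dt T(t)f=AT(t)f$, and one checks directly from \eqref{PP2}, using $A(\lambda-A)^{-1}=\lambda(\lambda-A)^{-1}-I$, that $w$ solves the same abstract Cauchy problem with $w(0)=f$, whence $w\equiv T(\cdot)f$ by uniqueness for this problem.)
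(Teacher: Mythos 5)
The lemma you were asked to prove is the appendix result that the authors do not prove at all: it is quoted verbatim from Pazy's book, so there is no in-paper argument to compare against. Judged on its own, your proof is correct. The second-order resolvent expansion $(\lambda-A)^{-1}f=\lambda^{-1}f+\lambda^{-2}Af+\lambda^{-2}(\lambda-A)^{-1}A^{2}f$ is precisely what the hypothesis $f\in D(A^{2})$ buys: it isolates an absolutely convergent remainder, and the explicit kernels $\lambda^{-1},\lambda^{-2}$ reproduce $f+tAf$, so your $w(t)$ is indeed the Bromwich integral in the statement — provided, as is standard and in fact necessary here, that this integral is read as the symmetric improper limit $\lim_{N\to\infty}\int_{\sigma-\i N}^{\sigma+\i N}$, since $(\lambda-A)^{-1}f$ by itself decays only like $|\lambda|^{-1}$ on the line. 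The Fubini step, the rightward contour closure picking up the simple pole at $\lambda=\mu$ (on the arc one has $\|(\lambda-A)^{-1}\|\le M(\sigma-\kappa)^{-1}$ for $\mathrm{Re}\,\lambda\ge\sigma$, so the arc contribution is actually $O(R^{-2})$ rather than your $O(R^{-1})$ — harmless, it vanishes either way), and the resulting identity $\int_0^\infty e^{-\mu t}w(t)\,dt=(\mu-A)^{-1}f=\int_0^\infty e^{-\mu t}T(t)f\,dt$ are all sound; uniqueness of the vector-valued Laplace transform (reduced to the scalar case via bounded linear functionals) then forces $w(t)=T(t)f$. For comparison, the classical proof in Pazy inserts the Laplace representation of the resolvent into the truncated contour integral and runs a Fourier/Dirichlet-kernel inversion argument; your route replaces that harmonic-analysis step by Laplace-transform uniqueness (or, in your alternative ending, by uniqueness for the abstract Cauchy problem $\frac{\rm d}{{\rm d}t}w=Aw$, $w(0)=f$, which needs the closedness of $A$ to pass $A$ under the integral), which is a clean and perfectly acceptable trade.
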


\begin{lem} \label{dense}
Let $A$ be the infinitesimal generator of the $C_0$ semigroup
$T(t)$. If $D(A^n)$ is the domain of $A^n$, then
$\cap^\infty_{n=1}D(A^n)$ is dense in $X$.
\end{lem}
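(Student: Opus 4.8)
The plan is to prove the lemma by the standard mollification (smoothing along the orbit) argument, which uses only strong continuity of $T(t)$ and the closedness of $A$. Fix an arbitrary $x\in X$ and $\varepsilon>0$; it suffices to produce an element of $\bigcap_{n=1}^\infty D(A^n)$ within distance $\varepsilon$ of $x$.

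First I would associate to each test function $\varphi\in C_c^\infty((0,\infty))$ the (Bochner) integral
$$x_\varphi=\int_0^\infty \varphi(t)\,T(t)x\,dt,$$
which is well defined because $t\mapsto T(t)x$ is continuous and $\varphi$ has compact support in $(0,\infty)$. The key point is that $x_\varphi\in D(A)$ with $Ax_\varphi=-x_{\varphi'}$. To see this I would form $h^{-1}\big(T(h)-I\big)x_\varphi$, use the semigroup property $T(h)T(t)=T(t+h)$ and the change of variable $s=t+h$, and observe that for $h$ smaller than the left endpoint of $\operatorname{supp}\varphi$ the translated integrand vanishes on $(0,h)$, so that
$$\frac{T(h)-I}{h}\,x_\varphi=\int_0^\infty \frac{\varphi(s-h)-\varphi(s)}{h}\,T(s)x\,ds.$$
As $h\to0^+$ the difference quotients $h^{-1}(\varphi(\cdot-h)-\varphi(\cdot))$ converge to $-\varphi'$ uniformly, with supports contained in a fixed compact set $K\subset(0,\infty)$ independent of small $h$; combining this with $\sup_{s\in K}\|T(s)x\|<\infty$ lets me pass the limit inside the integral, giving $Ax_\varphi=-\int_0^\infty\varphi'(s)T(s)x\,ds=-x_{\varphi'}$. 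Since $\varphi'\in C_c^\infty((0,\infty))$ again, iterating this identity and using the closedness of the iterated generators shows by induction that $x_\varphi\in D(A^n)$ for every $n$, with $A^nx_\varphi=(-1)^n x_{\varphi^{(n)}}$; hence $x_\varphi\in\bigcap_{n\ge1}D(A^n)$.

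Next I would take an approximate identity concentrating at the origin: a sequence $\varphi_k\in C_c^\infty((0,\infty))$ with $\varphi_k\ge0$, $\operatorname{supp}\varphi_k\subset(0,1/k)$ and $\int_0^\infty\varphi_k(t)\,dt=1$. Then
$$x_{\varphi_k}-x=\int_0^\infty\varphi_k(t)\,\big(T(t)x-x\big)\,dt,$$
so $\|x_{\varphi_k}-x\|\le\sup_{0<t\le1/k}\|T(t)x-x\|\to0$ as $k\to\infty$ by strong continuity of $T(t)$ at $t=0$. Choosing $k$ large so that this supremum is $<\varepsilon$ yields the required approximant of $x$ inside $\bigcap_{n\ge1}D(A^n)$, which completes the proof.

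There is no substantial obstacle here: the only step needing any care is the interchange of limit and Bochner integral in the difference quotient defining $Ax_\varphi$, which hinges on the uniform convergence $h^{-1}(\varphi(\cdot-h)-\varphi(\cdot))\to-\varphi'$ on a compact set independent of small $h$, combined with the local boundedness of $\|T(s)x\|$. Since the argument uses no Hilbert-space structure and no dissipativity, it applies verbatim to all of the $C_0$-semigroups appearing earlier ($e^{t\hat B(\xi)}$, $e^{tQ(\xi)}$, $e^{tA(\xi)}$, $e^{tB}$), so that $D(A^2)$ is in particular dense, as needed for the inverse Laplace representation \eqref{V_3}.
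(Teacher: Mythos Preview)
The paper does not actually give a proof of this lemma; it is quoted in the Appendix from Pazy's monograph \cite{Pazy} without argument, purely for reference. Your mollification proof is correct and is precisely the standard argument that appears there, so there is nothing to compare.
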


\medskip
\noindent {\bf Acknowledgements:} The research of the first author
was partially supported by the NNSFC grants No. 11171228, 11231006
and 11225102, and by the Key Project of Beijing Municipal Education
Commission. The research of the second author was supported by the
General Research Fund of Hong Kong, CityU No.104511.
\bigskip



\begin{thebibliography}{99}
\setlength{\itemsep}{-4pt}
\renewcommand{\baselinestretch}{1}
\small

\bibitem{Cercignani} C. Cercignani, R. Illner and M. Pulvirenti, {\it The Mathematical Theory of Dilute Gases}. Applied
Mathematical Sciences, 106. Springer-Verlag, New York, 1994.


\bibitem{Duan1} R.J. Duan and R. M. Strain, Optimal time decay of the Vlasov-Poisson-Boltzmann system in $\R^3$.
{\it Arch. Ration. Mech. Anal.}, 199 (2011), no. 1, 291-328

\bibitem{Duan2} R.J. Duan and T. Yang, Stability of the one-species
Vlasov-Poisson-Boltzmann system. {\it SIAM J. Math. Anal.}, 41 (2010), 2353-2387.

\bibitem{Duan3} R.J. Duan, S. Ukai, T. Yang and H.J. Zhao, Optimal decay estimates
on the linearized Boltzmann equation with time-dependent forces and
their applications. {\it Comm. Math. Phys.}, 277
(1)(2008), 189-236.

\bibitem{Duan4}  R.J. Duan, T. Yang and C.J. Zhu, Boltzmann equation with external force and Vlasov-Poisson-Boltzmann system in infinite vacuum. {\it Discrete Contin. Dyn. Syst.}, 16 (2006), 253-277.

\bibitem{Duan5}  R.J. Duan, T. Yang and H.J. Zhao, The Vlasov-Poisson-Boltzmann system in the whole space:
The hard potential case. {\it J. Differential equations}, 252 (2012), 6356-6386.

\bibitem{Ellis} R.S. Ellis and M.A. Pinsky, The first and second fluid approximations to the linearized Boltzmann equation. {\it J. Math. pure et appl.}, 54 (1975), 125-156.


\bibitem{Guo2} Y. Guo, The Vlasov-Poisson-Boltzmann system near Maxwellians. {\it Comm. Pure Appl. Math.},
55 (9) (2002), 1104-1135.
\bibitem{Guo3} Y. Guo, The Vlasov-Poisson-Boltzmann system near vacuum. {\it Comm. Math. Phys.}, 218 (2)
(2001), 293-313.

\bibitem{Kato}  T. Kato, {\it Perturbation Theory of Linear Operator}. Springer, New York, 1996.

\bibitem{Li}  H.L. Li, A. Matsumura and G. Zhang, Optimal decay rate of the compressible Navier-Stokes-Poisson system in $\R^3$. {\it Arch. Rational Mech. Anal.}, 196 (2010), 681-713.

\bibitem{Liu1} T.-P. Liu and S.-H. Yu, The Green's function and large-time behavior of solutions for the one-dimensional Boltzmann equation, {\it Comm. Pure Appl. Math.}, 57 (2004), 1543-1608.

\bibitem{Liu2} T.-P. Liu, T. Yang and S.-H. Yu, Energy method for the Boltzmann equation. {\it Physica D}, 188 (3-4) (2004), 178-192.


\bibitem{Markowich} P.A. Markowich, C.A. Ringhofer and C. Schmeiser, {\it Semiconductor Equations}. Springer-Verlag, Vienna, 1990.

\bibitem{Mischler} S. Mischler, On the initial boundary value problem for the Vlasov-Poisson-Boltzmann system. {\it Comm. Math. Phys.}, 210 (2000), 447-466.



\bibitem{Pazy} A. Pazy, {\it Semigroups of Linear Operators and
Applications to Partial Differential Equations}. Applied
Mathematical Sciences, 44. Springer-Verlag, New York, 1983.


\bibitem{Ukai1} S. Ukai, On the existence of global solutions of mixed problem for non-linear Boltzmann equation. {\it Proceedings of the Japan Academy}, 50 (1974), 179-184.

\bibitem{Ukai2} S. Ukai and T. Yang, The Boltzmann equation in the space
 $L^2\cap L^\infty_\beta$: Global and time-periodic solutions.
 Analysis and Applications, 4 (2006), 263-310.

\bibitem{Ukai3} S. Ukai and T. Yang, {\it Mathematical Theory of Boltzmann
Equation}. Lecture Notes Series-No. 8, Hong Kong: Liu Bie Ju Center for Mathematical Sciences, City University of Hong Kong, March 2006.


\bibitem{Yang1} T. Yang, H.J. Yu and H.J. Zhao, Cauchy problem for the Vlasov-Poisson-Boltzmann system.
{\it Arch. Rational Mech. Anal.}, 182 (2006), 415-470.


\bibitem{Yang3} T. Yang and H.J. Zhao, Global existence of classical solutions to the Vlasov-Poisson-Boltzmann system. {\it Comm. Math. Phys.}, 268 (2006), 569-605.

\bibitem{Yang4} T. Yang and H.J. Yu, Optimal convergence rates of classical solutions
for Vlasov-Poisson-Boltzmann system. {\it Comm. Math. Phys.}, 301 (2011), 319-355

\bibitem{Zhang} G. Zhang, H.L. Li and C. Zhu, Optimal decay rate of the non-isentropic compressible
Navier-Stokes-Poisson system in $\R^3$. {\it J. Differential Equations},
250(2), (2011),  866-891.

\bibitem{zhong} M. Zhong, Optimal time-decay rates of the Boltzmann equation. preprint, 2012.


\end{thebibliography}
\end{document}